\numberwithin{equation}{section}
\newtheorem{Theorem}{Theorem}[section]
\newtheorem{Lemma}[Theorem]{Lemma}
\newtheorem{Corollary}[Theorem]{Corollary}
\numberwithin{equation}{section}
 \def\p{\partial} 
\def \Vh0{\stackrel{\circ}{V}_h}
\newcommand{\q}{\quad}    
\def\l{\label}    
\def\m{\mbox}   
  \def\ss{\smallskip}
\newcommand{\lc}
{\mathrel{\raise2pt\hbox{${\mathop<\limits_{\raise1pt\hbox
{\mbox{$\sim$}}}}$}}}
\newcommand{\gc}
{\mathrel{\raise2pt\hbox{${\mathop>\limits_{\raise1pt\hbox{\mbox{$\sim$}}}}$}}}
\newcommand{\ec}
{\mathrel{\raise2pt\hbox{${\mathop=\limits_{\raise1pt\hbox{\mbox{$\sim$}}}}$}}}
\def\bb{\begin{equation}} \def\ee{\end{equation}}
\def\beqn{\begin{eqnarray}}  \def\eqn{\end{eqnarray}}
\def\beqnx{\begin{eqnarray*}} \def\eqnx{\end{eqnarray*}}
\def\bn{\begin{enumerate}} \def\en{\end{enumerate}}
\def\i{\item}
\def\bd{\begin{description}} \def\ed{\end{description}}
\newenvironment{figurehere}
  {\def\@captype{figure}}
  {}
\title{Analysis on Non-negative Factorizations and Applications}
\begin{document}
\author{
Yat Tin Chow \footnote{Department of Mathematics, The Chinese University of Hong Kong, Shatin, Hong Kong. (ytchow@math.cuhk.edu.hk, zou@math.cuhk.edu.hk).}
\and Kazufumi Ito\footnote{Department of Mathematics and Center for Research in Scientific Computation, North Carolina State University, Raleigh, North Carolina (kito@unity.ncsu.edu).}
\and Jun Zou\footnotemark[1]
}

\date{}
\maketitle

\begin{abstract}
In this work we perform some mathematical analysis on non-negative matrix factorizations (NMF) and apply NMF to some imaging and inverse problems.
We will propose a sparse low-rank approximation of big positive data and images in terms of tensor products of positive vectors, and investigate its effectiveness in terms of the number of tensor products to be used in the approximation.
%We investigate an optimal number of elements used in the proposed approximation, which directly reflects the degree of sparsity of the representation.
A new concept of multi-level analysis (MLA) framework is also suggested
to extract major components in the matrix representing structures of different resolutions, but still preserving
the positivity of the basis and sparsity of the approximation.
 We will also propose a semi-smooth Newton method based on primal-dual active sets
 for the non-negative factorization. Numerical results are given to demonstrate the effectiveness of the proposed method to capture features in images and structures of inverse problems under no \textit{a-priori} assumption on the data structure, as well as to provide a sparse low-rank representation of the data.
\end{abstract}

\noindent { \footnotesize {\bf Mathematics Subject Classification
(MSC2000)}:
    15A23, %Factorization of matrices
    65F22, %Ill-posedness, regularization
    65F30, %Other matrix algorithms
    65F50, %Sparse matrices
    78M25. %Other numerical methods
}

\noindent { \footnotesize {\bf Keywords}:
Non-negative matrix factorization, Clustering,  Feature extraction, Multi-level analysis, Inverse problems
}

\section{Introduction to Non-negative Factorizations} \label{sec1}
Non-negative factorization (NMF) has attracted a great deal of attention in the last decade,
in an attempt to tackle k-clustering problems and structural analysis of big data. It is very effective in extraction of principle components, features and similarities inside a large set of data or image.
NMF was studied as early as in 1994 \cite{o27}, and used for machine learning and data mining \cite{o18, o19}.
The concept of NMF as k-means clustering for principle component analysis has been widely studied
theoretically and numerically in literature, see, e.g., \cite{o3, o5, clustering, o9, o15, o20, o27, o32};
and the concept of tri-factorization was used as a concurrent column and row clustering (a.k.a. co-clustering) of data in
\cite{tri}.
In order to extract desired features as well as to reduce memory complexity, sparsity is often
imposed in NMF using $l_{0}$ or $l_{1}$ regularization.
Effective NMF toolboxes have been also developed to provide different choices of regularizers and constraints, e.g.,
the non-negative matrix factorization toolbox in MATLAB \cite{matlab}.
A convex model for NMF was suggested in \cite{Osher}, where the convex $l_{1,\infty}$-norm is used
as the regularizer to enforce row sparsity.  In an application of this convex model to hyper-spectral end-members selections,
the NMF succeeded to provide abundance maps of end-members representing different structures inside an image, e.g.,
roofs, trees, grass, soil and road.

In general, an NMF of a given matrix $Y\in\mathbb{R}^{N\times M}$ is of the form
\beqn
Y \approx A P\,, \q A\in\mathbb{R}^{N\times k}, ~P\in\mathbb{R}^{k\times M}\,,
\eqn
where matrix $P$ is non-negative component-wise.
In most applications, we may require dimension $k$ to be much smaller than the dimension of $Y$, i.e. $k << \min(N,M)$. $P$ is regarded as a basis of the information contained in matrix $Y$.  We may further impose $P$ to be nearly orthornormal, i.e., $P P^T \approx I$. In this case, it is similar to a partition of unity in the underlying space and the vectors in $P$ are similar to some indicator functions. In order to reduce memory complexity in storing the basis $P$, one may further add a sparsity constraint on $P$. The matrix $A$ is an assignment matrix, which gives some special weighting to the corresponding vectors in $P$.  It is our aim to obtain a sparse matrix $A$ which has a very small number of non-zero entries.
Therefore, $A$ can be interpreted as some sparse assignments of linear combinations of basis vectors in $P$.
If matrix $Y$ is also non-negative component-wise, we may further require $A \geq 0$ component-wise.  This constraint
may be infeasible if $Y$ is not nonnegative, and in this case
we shall relax and drop the non-negativity condition for $A$.
The sparsity constraint on $A$ ensures more concise information extraction.
Moreover, we may also have a post-process to sort vectors of $A$ in descending order in terms of
magnitude, which can yield the most important bases of matrix $P$.
%From this sorting process, the physically important components can be effectively extracted.
Using a standard $l_1$ regularization to impose sparsity for $A$ and $P$ and near-orthogonality for $P$,
the problem of NMF for a non-negative matrix $Y$ can be reformulated as the following minimization problem:
\begin{equation}
\min_{A\geq 0, P \geq 0}||Y-AP||_{F,2}^2+\alpha||A||_{F,1} + \nu||P||_{F,1}+ \gamma ||PP^T-I||_{F,1}
\label{mini}
\end{equation}
over nonnegative matrices $A\in\mathbb{R}^{N\times k}$ and $P\in\mathbb{R}^{k\times M}$, where $||X||_{F,2}:= \sqrt{\sum_{i,j} |X_{ij}|^2}$ is the Frobenius norm, $||X||_{F,1}:= \sum_{i,j} |X_{ij}|$ and $\alpha,\nu, \gamma$ are regularization parameters.

A natural approach for matrix factorization is the singular value decomposition (SVD), which helps obtain the best low-rank approximation of a matrix in $l_2$ sense and extracts the most important components of the matrix based on the magnitude of their corresponding singular values.  The factorization of SVD is of the form
\beqn
Y = U \Sigma V^T
\eqn
where we can interpret the matrices $U,V$ as bases of information, $\Sigma$ as a weighting representing the importance of the corresponding basis vectors in $U$ and $V$.
Although this approach gives the best low-rank approximation of matrix $Y$ in $l_2$ norm after a truncation of $\Sigma$, the SVD factorization is unstructured and usually does not respect positivity, often with the basis vectors of $U$ and $V$
being rather oscillatory. Especially for a matrix $Y$ which represents an image or a probability density function, such an SVD factorization does not give us much useful information of the underlying structures that $Y$ represents, e.g., identifying regions of high probability, locating objects inside the image, etc.
Therefore we shall turn to NMF to obtain a more structural decomposition of the matrix that shall respect more the positivity of the basis.
Now, combining the non-negativity constraints and the SVD gives rise to the idea of non-negative matrix tri-factorization,
which was studied in \cite{tri}.
In this work, we suggest and investigate the following version of non-negative matrix tri-factorization for non-negative matrix $Y$ using $l_1$ regularisation:
\begin{equation}
\min_{U \geq 0, \Sigma \geq 0, V \geq 0}||Y-U \Sigma V^T ||_{F,2}^2+ \alpha||\Sigma ||_{F,1}+ \nu||U ||_{F,1} + \nu||V||_{F,1} +\gamma ||U U^T-I||_{F,1} + \gamma||V V^T-I||_{F,1}.
\end{equation}

Similarly, we may interpret the matrices $U \in M_{M \times p}$, $V \in M_{p \times N}$ as basis of information, $\Sigma \in M_{p \times p}$ as a generalized singular matrix.  We emphasize that the matrix $\Sigma$ is not required to be diagonal
in our setting here, but to be sparse only.

We shall propose the application of the aforementioned model of non-negative matrix tri-factorization to big data and large images
to extract their major components, which may represent some special structures or features, and obtain an approximation of the data with low memory complexity when the rank $p$ is small, even when the original data and images do not attain any sparsity structure.  This shall be quite effective, considering the fact that the factorization gives a low rank sparse approximation of the matrix in term of the tensor products of column and row vectors of $U$ and $V$.  The fact that $p$ is small requires the storage of only a few columns and rows in the matrices $U$ and $V$, therefore greatly reduces memory complexity.  The sparsity of $\Sigma$ is also very important for the reduction of memory complexity because we only need to store the respective columns and rows of the matrices $U$ and $V$, e.g., $u_i$ and $v_j$,
where the corresponding entry $\sigma_{ij}$ in the singular matrix $\Sigma$ is significant. The sparsity of $U$ and $V$ are equally important because $u_i$ and $v_j$ will then have a few number of non-zero entries and are inexpensive to store.
These reasons suggest us to apply the above NMF model to big data and imaging.
To effectively implement the NMF, we utilize the well-known semi-smooth Newton method based on primal-dual active sets\cite{semismooth} for the optimization process.
It may be more advantageous than some classical methods \cite{clustering} \cite{tri}.
%since Newton-type methods are known to converge faster than these classical methods.
Using the result of NMF from the Newton method, we propose a dissection of the image into levels by its order of importance.

We then proceed to propose a new concept of multi-level analysis (MLA) framework of the images based on the NMF, which
aims to extract major components inside the matrix $Y$ representing structures of different resolutions and obtain
sparse low-rank approximations of different levels with positive basis.
For each ine level, we hope to extract and represent features of up to a finer resolution
with sparse approximation by positive basis.
Our MLA framework is partially motivated by, though different from, the multi-resolution analysis (MRA) in wavelet analysis, e.g. in \cite{wavelet}.  The MRA framework is well-established to provide successive approximations of increasing resolutions of a function by a shifting and scaling of a mother wavelet. However, it has the property that the basis functions generated from the mother wavelet always do not have the same sign of the whole space.  This is a very undesirable feature in our context.
Hence, we introduce a new MLA framework, which shall respect the positivity of the basis for function/matrix approximation,
and on the other hand provide a similar multi-resolution property as in MRA.
In our MLA framework, we introduce a nested sequence of linear spaces $H_s$ each of which
represents a level of fineness, and define
interpolation operators among these spaces of coarser and finer levels.  The NMF is then performed on each level to obtain a positive sparse approximation.
We would like to emphasize that the main purpose of either our NMF model or the newly proposed MLA framework is only to identify and represent structures (of different scales) in the images or big data, and
we are neither hoping to reconstruct the data in full entity nor aiming at very high-quality compression of image to defeat any available well-developed compression techniques, e.g. wavelet/curvelet compression, JPEG etc.
Numerical experiments show acceptable resolution of images and data can be achieved by this sparse approximation using the MLA framework of the NMF model, as well as extracting the major features and components in the images and data without any \textit{a-priori} assumption of their structures, such as sparsity and specific patterns.

This paper is organized as follows. In section \ref{sec2} the general mathematical framework of non-negative matrix tri-factorization using $l_1$ regularization is clearly stated, and an optimal choice of the dimension of generalized singular matrix is investigated.
An MLA framework using NMF is introduced in section \ref{sec3} and a semi-smooth Newton method based on primal-dual active sets for NMF is presented in section \ref{sec4}.
Applications of our framework to imaging and inverse problems are provided in section \ref{sec5}, providing numerical evidence for some successful feature extractions and sparse low-rank representation of the data.

\section{A non-negative matrix tri-factorization using $l_1$ regularization} \label{sec2}

In this section we shall clearly state the type of matrix tri-factorizations for our subsequent consideration.
For the purpose, we often write $M_{M \times N}$ for the set of $M \times N$ matrices and $M_{M \times p}^+ \subset M_{M \times N}$
for those with positive entries.
Given a matrix $Y \in M_{M \times N}^+$, we define a functional $\mathcal{J}_{p}^{\alpha, \nu, \gamma}:
M_{M \times p}^+ \times M_{p \times p}^+ \times  M_{p \times N}^+  \rightarrow  \mathbb{R}$
for a fixed set of parameters $p, \alpha, \gamma$:
\bb
%\begin{eqnarray}
\mathcal{J}_{p}^{\alpha, \nu, \gamma} (U,\Sigma, V) := ||Y-U \Sigma V^T ||_{F,2}^2+ \gamma ||\Sigma ||_{F,1} + \nu || U ||_{F,1} + \nu || V ||_{F,1}+ \alpha ||U U^T-I||_{F,1} + \alpha ||V V^T-I||_{F,1}.
\label{functional}
\ee %\end{eqnarray}
Let $[\tilde{U}_p,\tilde{\Sigma}_p, \tilde{V}_p]$ be a minimizer of the functional, then we
define an operator $\mathcal{I}_{p}^{\alpha, \nu, \gamma}$: $M_{M \times N}^+ \rightarrow   M_{M \times N}^+$ by
\begin{eqnarray}
\mathcal{I}_{p}^{\alpha, \nu,\gamma}(Y) &:=& \tilde{U}_p \tilde{\Sigma}_p \tilde{V}_p = \sum_{i,j} \sigma_{ij} (\tilde{u}_p)_i \otimes (\tilde{v}_p)_j
\label{approximation}
\end{eqnarray}
where $(\tilde{u}_p)_i, (\tilde{v}_p)_j$ denote the column and row vectors of $\tilde{U}_p $ and $\tilde{V}_p$ respectively and $\sigma_{ij}$ is the $(i,j)$-th entry of the matrix $\tilde \Sigma_p$. This non-negative matrix tri-factorization can be regarded as a non-negative version of the SVD,
with matrix $\tilde \Sigma_p$ being the generalized singular matrix, which is not restricted to be diagonal as in the standard SVD.

It is easy to note that with a smaller $p$, the memory of storing the matrix triple $[\tilde{U}_p,\tilde{\Sigma}_p, \tilde{V}_p] $ is
less.
%since the memory complexity is of the order $(M+N)p + p^2$ even if $\tilde{\Sigma}_p$ is not sparse.
If $\tilde{\Sigma}_p$ is a sparse matrix, the memory complexity can be further reduced, as we only need to store the vectors $(\tilde{u}_p)_i$ and $ (\tilde{v}_p)_j$ when $\sigma_{ij}$ is non-zero.
In fact, for a generic matrix $Y$, if $p$ can be chosen to be small and yet $||Y- \mathcal{I}_{p}^{\alpha, \nu, \gamma}(Y)||_{F,2}^2$ can still be maintained to be a small quantity, then $[\tilde{U}_p,\tilde{\Sigma}_p, \tilde{V}_p] $ may serve as our desired sparse low-rank approximation of $Y$.
However, it is obvious that the smaller the value of $p$ is, the worse the approximation of $Y$ by $\mathcal{I}_{p}^{\alpha, \nu, \gamma}(Y)$ will be. With a smaller $p$, the error $||Y- \mathcal{I}_{p}^{\alpha, \nu, \gamma}(Y)||_F^2$ and also $\mathcal{J}_{p}^{\alpha, \nu,  \gamma} (\tilde{U}_p,\tilde{\Sigma}_p, \tilde{V}_p)$, will be larger. Therefore, in practice, it is an interesting question to ask how we should choose the number $p$ as $N,M$ grow large.

\subsection{An Optimal choice of $p$}

In what follows, we aim to find an optimal choice of $p$ with respect to $N,M$ by means of a probabilistic argument. We first obtain a lower bound in terms of $p,N,M,\delta$ of the probability that there exists a triple $[U,\Sigma, V]$ such that $\mathcal{J}_{p}^{\alpha, \nu, \gamma} (U,\Sigma, V) < \delta$. From this lower bound, we suggest an optimal choice of $p$ to maximize this probability.  The value $\mathcal{J}_{p}^{\alpha, \nu, \gamma} (U_p,\Sigma_p, V_p)$ reflects the derivations of matrices $U_p$, $V_p$ from being orthogonal, the sparsity of $\tilde{U}_p,\tilde{\Sigma}_p, \tilde{V}_p$ and the error of the approximation of $Y$ by $\mathcal{I}_{p}^{\alpha, \nu, \gamma} (Y)$.
In particular, if for some $[U,\Sigma, V]$, we have $\mathcal{J}_{p}^{\alpha, \nu, \gamma} (U,\Sigma, V) < \delta$, then
\beqn
 || Y- \mathcal{I}_{p}^{\alpha, \nu, \gamma} (Y) ||_{F,2}^2 \leq \mathcal{J}_{p}^{\alpha, \nu, \gamma} (\tilde U_p,\tilde \Sigma_p, \tilde V_p) \leq \mathcal{J}_{p}^{\alpha, \nu, \gamma} (U,\Sigma, V) < \delta \,.
 \notag
\eqn

We begin by showing the following lemmas concerning a set of i.i.d. random vectors.
Consider a set of i.i.d random vectors $\{ X_i\}_{i=1}^{N} \in [0,1]^d$, where the probability distribution $d \, \mathbb{P} _{X} = f d x$ with $dx$ denoting the standard Lebesgue measure and $ 0 <C_1 <f <C_2 < \infty$. Then it is direct to see that the random variables $\{ \omega_i := X_i/||X_i||_2\}_{i=1}^{N} \in \mathbb{S}^{d-1}$ has a probability density $d \, \mathbb{P} _{\omega} = g d \omega$ where $d \omega $ is the standard surface measure and $\frac{C_1}{||\omega||_{\infty}} \leq g \leq \frac{C_2}{||\omega||_{\infty}}$ for some other constants $0 <C_1,C_2 <\infty$. From this, we can derive the following important results for our subsequent analysis.

\begin{Lemma}
Consider a set of i.i.d random vectors $\{ X_i\}_{i=1}^{N} \in [0,1]^d$, where the probability distribution $d \, \mathbb{P} _{X} = f d x$ with $dx$ denoting the standard Lebesgue measure and $ 0 <C_1 <f <C_2 < \infty$. Then the probability of the vectors $\omega_i := X_i/||X_i||_2$ that can be approximated by $p$ points $\{ P_i\}_{i=1}^{p} \in \mathbb{S}^{d-1} \bigcup [0,1]^d $ within an error of small $\varepsilon>0$ can be estimated by
\beqn
p^N (C_3 \varepsilon)^{(d-1)N} \leq \mathbb{P} \left(\exists \{ P_i\}_{i=1}^{p} \text{ s.t. } \{ \omega_i\}_{i=1}^{N} \subset \bigcup_{1 \leq i\leq P} B_{\varepsilon} (P_i)\right) \leq p^N (C_4 \varepsilon)^{(d-1) N}
\eqn
for two positive constants $C_3$ and $C_4$.
\end{Lemma}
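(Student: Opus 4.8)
Write $\mathcal{E}$ for the event in the Lemma. The plan is to recast $\mathcal{E}$ as a clustering statement about the normalized points $\omega_i$ and then combine a union bound over cluster assignments with elementary small-ball volume estimates on the sphere. I would first record the geometry. Every point of $\mathbb{S}^{d-1}\cap[0,1]^d$ (the portion of the sphere in the positive orthant, which is where the $\omega_i$ live) has Euclidean norm $1$, hence $\infty$-norm in $[1/\sqrt d,1]$, so the hypothesis $C_1/\|\omega\|_\infty\le g\le C_2/\|\omega\|_\infty$ enters only through a two-sided bound $c_0\le g\le C_0$ with constants depending on $C_1,C_2,d$. Also, for all small $\varepsilon>0$ and every $Q\in\mathbb{S}^{d-1}\cap[0,1]^d$ the surface measure of $B_\varepsilon(Q)\cap\mathbb{S}^{d-1}\cap[0,1]^d$ lies between $c_1\varepsilon^{d-1}$ and $c_2\varepsilon^{d-1}$; the lower bound is uniform in $Q$ even near the relative boundary of the spherical cap, since that boundary is a finite union of smooth faces meeting transversally, so a fixed positive fraction of every small ball stays inside the cap. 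Hence $q_\varepsilon(Q):=\mathbb{P}(\omega_1\in B_\varepsilon(Q))$ obeys $a\,\varepsilon^{d-1}\le q_\varepsilon(Q)\le A\,\varepsilon^{d-1}$ uniformly.

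Next the reduction. If centres $P_1,\dots,P_p$ satisfy $\{\omega_j\}_{j=1}^N\subset\bigcup_{i=1}^pB_\varepsilon(P_i)$, sending each $j$ to the least $i$ with $\omega_j\in B_\varepsilon(P_i)$ defines a map $\phi\colon\{1,\dots,N\}\to\{1,\dots,p\}$ whose fibers $\phi^{-1}(i)$ have Euclidean diameter $\le 2\varepsilon$; conversely, any partition of $\{1,\dots,N\}$ into at most $p$ groups of diameter $\le\varepsilon$ is realized by picking one $P_i$ in each group. Therefore $\mathcal{E}$ is sandwiched, $\bigcup_\phi E_\phi^{(\varepsilon)}\subset\mathcal{E}\subset\bigcup_\phi E_\phi^{(2\varepsilon)}$, where $E_\phi^{(r)}:=\{\mathrm{diam}\{\omega_j:\phi(j)=i\}\le r\text{ for all }i\}$ and $\phi$ ranges over the $p^N$ maps $\{1,\dots,N\}\to\{1,\dots,p\}$.

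For the upper bound I fix $\phi$, set $n_i=|\phi^{-1}(i)|$ and let $q(\phi)\le p$ count the nonempty fibers. By independence across the disjoint fibers, $\mathbb{P}(E_\phi^{(2\varepsilon)})=\prod_i\mathbb{P}(\mathrm{diam}\{\omega_j:\phi(j)=i\}\le 2\varepsilon)$; a fiber of size $n\ge 2$ contributes at most $\mathbb{P}(\omega_{j_2},\dots,\omega_{j_n}\in B_{2\varepsilon}(\omega_{j_1}))=\mathbb{E}[q_{2\varepsilon}(\omega_{j_1})^{n-1}]\le(A'\varepsilon^{d-1})^{n-1}$, and a fiber of size $\le 1$ contributes $1$; since $\sum_{i:n_i\ge1}(n_i-1)=N-q(\phi)\ge N-p$, we get $\mathbb{P}(E_\phi^{(2\varepsilon)})\le(A'\varepsilon^{d-1})^{N-p}$ whenever $A'\varepsilon^{d-1}\le1$. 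Summing over the $p^N$ maps gives $\mathbb{P}(\mathcal{E})\le p^N(A'\varepsilon^{d-1})^{N-p}$, i.e. the stated $p^N(C_4\varepsilon)^{(d-1)N}$ after absorbing $(A'\varepsilon^{d-1})^{-p}$. I expect this last step to be the main obstacle, in the sense that it is the only place where a hypothesis on the regime of $p,N,\varepsilon$ is really needed: the absorption is valid when $p$ is small compared with $N$ (so that $(A'\varepsilon^{d-1})^{-p}=e^{o(N)}$), and in general the clean argument only yields the exponent $N-p$ in place of $N$.

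For the lower bound I would place $p$ pairwise disjoint Euclidean balls $B_\varepsilon(Q_1),\dots,B_\varepsilon(Q_p)$ with $Q_i\in\mathbb{S}^{d-1}\cap[0,1]^d$ — possible exactly when $p\lesssim\varepsilon^{-(d-1)}$, which is precisely the range in which the asserted lower bound does not exceed $1$. The event $\{\omega_j\in\bigcup_{i=1}^pB_\varepsilon(Q_i)\text{ for all }j\}$ is contained in $\mathcal{E}$ (take $P_i=Q_i$), and by disjointness $\mathbb{P}(\omega_1\in\bigcup_iB_\varepsilon(Q_i))=\sum_iq_\varepsilon(Q_i)\ge p\,a\,\varepsilon^{d-1}$, so independence of the $\omega_j$ gives $\mathbb{P}(\mathcal{E})\ge(p\,a\,\varepsilon^{d-1})^N=p^N(C_3\varepsilon)^{(d-1)N}$. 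What remains is routine: the uniform small-ball measure estimate near the relative boundary of the cap, the harmless factor $2$ from $\varepsilon\mapsto2\varepsilon$, and extracting $C_3,C_4$ from $a,A',c_0,C_0,d$.
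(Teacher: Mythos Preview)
Your argument and the paper's are built on the same two ingredients --- the small-ball estimate $\mathbb{P}(\omega\in B_\varepsilon(Q))\asymp\varepsilon^{d-1}$ on the spherical cap, and the combinatorics of the $p^N$ assignment maps $\phi\colon\{1,\dots,N\}\to\{1,\dots,p\}$ --- but the execution differs. The paper does not fix the centres: it writes the lower bound as a multinomial sum $\sum_{N_1+\cdots+N_p=N}\binom{N}{N_1,\dots,N_p}\prod_i\int_{\mathbb{S}^{d-1}\cap[0,1]^d}\mathbb{P}(\|\omega-K\|<\varepsilon)^{N_i}\,dK$, bounds each integral below by $(C_3\varepsilon)^{(d-1)N_i}$, and collapses the sum via the multinomial theorem to $p^N(C_3\varepsilon)^{(d-1)N}$. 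Your lower bound with $p$ fixed disjoint balls is cleaner and reaches the same expression directly; the paper's averaging device buys nothing extra for the lower bound.

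Where the routes genuinely diverge is the upper bound, and you have put your finger on it. The paper dismisses it with ``the other estimate is similar'', but your analysis shows that the natural argument over cluster maps only yields the exponent $(d-1)(N-q(\phi))\ge(d-1)(N-p)$, because the first point in each nonempty cluster is unconstrained. The missing factor $(C_4\varepsilon)^{-(d-1)p}$ cannot be absorbed into $C_4$ uniformly in $p$; indeed for $p=N$ the event has probability $1$ while the stated bound can be made arbitrarily small. The paper only ever uses the lemma in the regime $p\asymp\sqrt{\max(N,M)/|\log\varepsilon|}$, where $p=o(N)$ and the absorption is harmless, so your caveat is exactly the right qualification to add.
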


\begin{proof}
Using the fact that for small $\varepsilon>0$, $C \varepsilon <\sin \varepsilon < \varepsilon$  for some $C>0$,
we can actually observe from the assumption of the i.i.d. random vectors and the binomial theorem that
\beqn
& & \mathbb{P} \left(\exists \{ P_i\}_{i=1}^{p} \text{ s.t. } \{ \omega_i\}_{i=1}^{N} \subset \bigcup_{1 \leq i\leq P} B_{\varepsilon} (P_i)\right)  \notag \\
&=& \sum_{\sum_{i =1}^{p} N_i = N} \frac{N!}{ \prod_i N_i !} \frac{1}{|\mathbb{S}^{d-1}\bigcap[0,1]^d| }\prod_{i} \int_{\mathbb{S}^{d-1}\bigcap[0,1]^d} \mathbb{P} ( || \omega_i - K ||_2 < \varepsilon )^{N_i} dK \notag \\
&\geq& \sum_{\sum_{i =1}^{p} N_i = N} \frac{N!}{ \prod_i N_i !} (C_3 \varepsilon)^{(d-1) \sum_i N_i} \notag \\
&\geq& p^N (C_3 \varepsilon)^{(d-1) N} \notag
\eqn
for some $C_3 > 0$. The other estimate is similar.
\end{proof}

\begin{Lemma}
Consider a set of i.i.d random vectors $\{ P_i\}_{i=1}^{p} \in [0,1]^d$, where the probability distribution $d \, \mathbb{P}_{\omega} = f d \omega$ with $d \omega$ denoting the standard surface measure and $ 0 <C_1 <f <C_2 < \infty$. Then
for $p \leq d$, the probability of the set of vectors $P_i$ being almost mutually orthogonal within an error of small $\varepsilon>0$ can be estimated by
\beqn
p ! \, d \, (C_3 \varepsilon)^{\frac{(p)(p-1)}{2} + (d-1)} \leq \mathbb{P} \left( | \langle  P_i,P_j \rangle - \delta_{ij} | < \varepsilon \, \forall i,j \, \right) \leq p ! \, d \, (C_43 \varepsilon)^{\frac{(p)(p-1)}{2} + (d-1)}
\eqn
for two positive constants $C_3$ and $C_4$.
\end{Lemma}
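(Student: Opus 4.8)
The plan is to follow the template of the preceding lemma, replacing the counting of clusters by the counting of near-orthogonal frames that fit inside the positive cone. First I would note that every $P_i$ is a unit vector, so the diagonal conditions $\langle P_i, P_i\rangle = 1 = \delta_{ii}$ hold automatically and the event in question reduces to the intersection of the $\frac{p(p-1)}{2}$ off-diagonal conditions $|\langle P_i, P_j\rangle| < \varepsilon$ for $i<j$. Because the $P_i$ are i.i.d., I would evaluate the probability by integrating the vectors out one at a time, say from $P_p$ inward: with $P_1,\dots,P_{k-1}$ frozen, the admissible positions of $P_k$ form an $O(\varepsilon)$-tube, inside $\mathbb{S}^{d-1}\cap[0,1]^d$, around the great subsphere $\mathbb{S}^{d-1}\cap\{P_1,\dots,P_{k-1}\}^\perp$, and the hypothesis $p\le d$ is exactly what keeps this subsphere of positive dimension.

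The heart of the argument is a two-sided estimate on the $g$-measure of such a tube. Using the density bounds $C_1/\|\omega\|_\infty\le g\le C_2/\|\omega\|_\infty$ recalled in the discussion preceding these lemmas, together with $C\varepsilon<\sin\varepsilon<\varepsilon$ for small $\varepsilon$, the tube around a codimension-$m$ great subsphere has surface measure comparable to $\varepsilon^m$ — but only when the relevant piece of that subsphere actually meets the cube $[0,1]^d$ in a set of positive measure, and here the positivity plays a genuinely restrictive role, since the orthogonal complement of a positive vector whose coordinates are all bounded away from $0$ does not meet the open positive orthant at all. Thus the near-orthogonal configurations concentrate near the coordinate faces of the cube. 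I would therefore first isolate, for $P_1,\dots,P_k$, the ``good'' region on which all the successive tubes simultaneously have positive measure, then track the $\varepsilon$-power contributed by each localization; choosing (up to a factor $d$) which coordinate face the first vector sits near, and symmetrizing over which of the $p$ random vectors plays which role in this nested construction — this is where the factor $p!$ enters, just as the multinomial coefficient $N!/\prod_i N_i!$ did in the preceding lemma — one reaches the claimed form $p!\,d\,(C\varepsilon)^{\frac{p(p-1)}{2}+(d-1)}$: the lower bound by restricting to one such configuration, the upper bound by a union bound over all of them, with the remaining combinatorial factors absorbed into $C_3$ and $C_4$.

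I expect the main obstacle to be precisely this geometric measure estimate in the presence of the positivity constraint. Off the positive cone the tube around an orthogonal-complement subsphere has the clean behaviour $\asymp\varepsilon^{\mathrm{codim}}$ and the power count is immediate; inside $[0,1]^d$ one must verify that the successive intersections are not pushed entirely onto the boundary or out of the cube, identify the correct good regions, and check that the net exponent comes out as stated — in particular one should be careful that, once the first vector is localized, the later vectors do not force $\varepsilon$-powers beyond the $\frac{p(p-1)}{2}$ coming from the pairwise conditions and the $(d-1)$ coming from the face localization. Once the tube estimates and the good regions are pinned down, the summation over configurations is routine and parallels the binomial/multinomial bookkeeping used in the proof of the preceding lemma.
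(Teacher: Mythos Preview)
Your approach is essentially that of the paper: the paper's (very terse) proof invokes $\|P_i-P_j\|^2=2-2\langle P_i,P_j\rangle$ together with the half-angle formula, then writes down a product-form lower bound with the $p!$ and $d$ factors obtained ``by direct counting'', which is exactly your sequential-conditioning/tube argument in compressed form. Your observation that the positivity constraint forces the near-orthogonal configurations to concentrate near coordinate faces is precisely what the paper encodes, without comment, in its intermediate factor $|(\mathbb{B}^i_1\times\mathbb{B}^{d-i}_1)\cap[0,1]^d|$.
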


\begin{proof}
By direct counting, the fact that $||P_i-P_j ||^2 = 2- 2 \langle  P_i,P_j \rangle$ and along
with the half angle formula, we have for $p \leq d$ that
\beqn
& & \mathbb{P} \left(  \langle | P_i,P_j \rangle - \delta_{ij} | < \varepsilon \, \forall i,j \right) \notag \\
&\geq& p! \, d \, (C_3 \varepsilon )^{d-1 } \prod_{ 1 \leq i \leq p }(C_3 \varepsilon )^{i} |(\mathbb{B}^{i}_1\times \mathbb{B}^{n-i}_1 ) \bigcap[0,1]^d|   \notag \\
&\geq& p ! \, d \, (C_3 \varepsilon)^{\frac{(p)(p-1)}{2} + (d-1)} \notag
\eqn
for some $C_3 > 0$. The other estimate is similar.
\end{proof}

\begin{Lemma}
\label{lemma3}
Consider a set of i.i.d random vectors $\{ X_i\}_{i=1}^{N} \in [0,1]^d$, where the probability distribution $d \mathbb{P} _{X} = f d x$ with $dx$ denoting the standard Lebesgue measure and $ 0 <C_1 <f <C_2 < \infty$. Then for  $p \leq N$,
the probability of the event $E_{p,\varepsilon}$
representing  the existence of $\{ P_i\}_{i=1}^{p} \text{ such that } \{ \omega_i\}_{i=1}^{N} \subset \bigcup_{1 \leq i\leq P} B_{\varepsilon} (P_i) $ and $| \langle  P_i,P_j \rangle - \delta_{ij} | < \varepsilon \, \forall i,j$ for a small $\varepsilon>0$
%where the vectors $\omega_i := X_i/||X_i||_2$ can be approximated by $p$ points $\{ P_i\}_{i=1}^{p} \in \mathbb{S}^{d-1} \bigcup [0,1]^d $ within an error of $\varepsilon$ as well as $P_i$ are almost mutually orthogonal within an error of $\varepsilon$
can be estimated by
\beqn
 \left( p^N - (p-1)^N   \right) pl ! \, d \, (C_3 \varepsilon)^{\frac{ p (p-1)}{2} + (d-1)(N+1)}  \leq \mathbb{P} \left( E_{p,\varepsilon} \backslash E_{p-1,\varepsilon} \right) \leq \left( p^N - (p-1)^N   \right) pl ! \, d \, (C_4 \varepsilon)^{\frac{ p (p-1)}{2} + (d-1)(N+1)}
 \notag
\eqn
for two positive constants $C_3$ and $C_4$, and therefore
\beqn
\sum_{l = 1}^p  \left( l^N - (l-1)^N   \right) l ! \, d \, (C_3 \varepsilon)^{\frac{ l (l-1)}{2} + (d-1)(N+1)}  \leq \mathbb{P} \left( E_{p,\varepsilon} \right) \leq \sum_{l = 1}^p  \left( l^N - (l-1)^N   \right) l ! \, d \, (C_4 \varepsilon)^{\frac{ l (l-1)}{2} + (d-1)(N+1)} \,.
\notag
\eqn
Moreover, we have the following lower bound estimate
\beqn
\mathbb{P} \left( E_{p,\varepsilon} \right) \geq d p^N (C_3 \varepsilon)^{(d-1)(N+1) + \frac{(p)(p-1)}{2}} \,.
\eqn
\end{Lemma}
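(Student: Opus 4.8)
The plan is to assemble the estimate for $\mathbb{P}(E_{p,\varepsilon}\setminus E_{p-1,\varepsilon})$ out of Lemmas 1 and 2, then telescope to obtain $\mathbb{P}(E_{p,\varepsilon})$, and finally extract the crude lower bound by discarding all but the dominant term. First I would record the inclusion $E_{p-1,\varepsilon}\subseteq E_{p,\varepsilon}$, valid for $p\le d$: if $p-1$ almost-orthonormal centers already cover every $\omega_i$, one may adjoin one further point nearly orthogonal to all of them (possible since $p\le d$ leaves an unused coordinate direction), so the covering-and-orthogonality property is inherited. Hence $\mathbb{P}(E_{p,\varepsilon}\setminus E_{p-1,\varepsilon})=\mathbb{P}(E_{p,\varepsilon})-\mathbb{P}(E_{p-1,\varepsilon})$, the events $E_{l,\varepsilon}\setminus E_{l-1,\varepsilon}$ for $l=1,\dots,p$ (with $E_{0,\varepsilon}=\emptyset$) partition $E_{p,\varepsilon}$, and the second displayed inequality follows from the first by summation over $l$.

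For the first display I would condition on the configuration of the centers $P_1,\dots,P_p$ and decompose $E_{p,\varepsilon}\setminus E_{p-1,\varepsilon}$ according to the assignment $\sigma\colon\{1,\dots,N\}\to\{1,\dots,p\}$ recording which ball $B_\varepsilon(P_i)$ each $\omega_i$ falls into. The assignments that genuinely cannot be realised with fewer than $p$ clusters are the essentially surjective ones, whose number is two-sidedly comparable to $p^{N}-(p-1)^{N}$ (all maps into $\{1,\dots,p\}$, minus those avoiding a fixed label), the symmetry factor $p!$ being absorbed into the per-configuration estimate. For a fixed such $\sigma$, once the centers are fixed two independent contributions appear: the probability that the (at most $p$) distinct centers are pairwise almost orthonormal is, by Lemma 2 — using that the induced surface density $g$ is bounded between positive constants — of order $p!\,d\,(C\varepsilon)^{p(p-1)/2+(d-1)}$ with $C\in\{C_3,C_4\}$; and, given the centers, the conditional probability that each of the $N$ i.i.d.\ vectors $\omega_i$ lands in its prescribed ball is, by the spherical-cap estimate underlying the proof of Lemma 1, of order $(C\varepsilon)^{d-1}$ per vector, hence $(C\varepsilon)^{(d-1)N}$ in total. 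Multiplying the three factors yields
\[
\bigl(p^{N}-(p-1)^{N}\bigr)\,p!\,d\,(C_3\varepsilon)^{\frac{p(p-1)}{2}+(d-1)(N+1)}\ \le\ \mathbb{P}\bigl(E_{p,\varepsilon}\setminus E_{p-1,\varepsilon}\bigr)\ \le\ \bigl(p^{N}-(p-1)^{N}\bigr)\,p!\,d\,(C_4\varepsilon)^{\frac{p(p-1)}{2}+(d-1)(N+1)},
\]
the two-sided form coming directly from the two-sided bounds of Lemmas 1 and 2; as a bookkeeping check, the exponent $(d-1)(N+1)=(d-1)N+(d-1)$ is exactly the covering exponent of Lemma 1 plus the ambient-sphere exponent of Lemma 2.

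Summing the lower bound over $l=1,\dots,p$ gives the lower half of the second display (and the upper half analogously). For the asserted final lower bound I would then, for $\varepsilon$ small enough that $C_3\varepsilon<1$, replace in each summand the exponent $\frac{l(l-1)}{2}+(d-1)(N+1)$ by the larger $\frac{p(p-1)}{2}+(d-1)(N+1)$ and use $l!\ge1$, so that $\mathbb{P}(E_{p,\varepsilon})\ge d\,(C_3\varepsilon)^{\frac{p(p-1)}{2}+(d-1)(N+1)}\sum_{l=1}^{p}\bigl(l^{N}-(l-1)^{N}\bigr)$, and the telescoping sum equals $p^{N}$, which is precisely the claimed estimate.

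The real work sits in the single-level estimate: making the ``condition on the centers, count the assignments, then cover'' decomposition rigorous. In particular one must verify that the class of genuinely $p$-cluster assignments has cardinality comparable, from both sides, to $p^{N}-(p-1)^{N}$ once the $p!$ symmetry is pulled out, and one must control the joint probability of ``$p$ almost-orthonormal centers \emph{and} all $\omega_i$ covered'' when the $P_i$ and the $\omega_i$ are dependent but become conditionally independent after the centers are fixed. Everything else reduces to the spherical-cap volume arithmetic already carried out in Lemmas 1 and 2 together with the elementary telescoping identity.
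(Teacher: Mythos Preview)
Your proposal is correct and follows essentially the same route as the paper: decompose $E_{p,\varepsilon}\setminus E_{p-1,\varepsilon}$ by surjective cluster assignments, invoke the cap-volume and near-orthogonality estimates of Lemmas~1 and~2 to obtain the factor $p!\,d\,(C\varepsilon)^{p(p-1)/2+(d-1)(N+1)}$, identify the multinomial count with $p^N-(p-1)^N$, then telescope over $l$ and, for the final line, weaken each exponent to $\tfrac{p(p-1)}{2}+(d-1)(N+1)$ and drop $l!$ so that $\sum_{l=1}^p\bigl(l^N-(l-1)^N\bigr)=p^N$. The only points where you go beyond the paper are your explicit remark that $E_{p-1,\varepsilon}\subseteq E_{p,\varepsilon}$ (the paper uses this silently) and your caveat that the surjection count is only \emph{comparable} to $p^N-(p-1)^N$; the paper simply asserts the identity $\sum_{N_i>0}\tfrac{N!}{\prod N_i!}=p^N-(p-1)^N$ without further comment.
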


\begin{proof}
The following inequality follows directly from the argument of the above two lemmas
\beqn
 & &\sum_{\sum_{i =1}^{p} N_i = N \, ,  \, N_i > 0} \frac{N!}{ \prod_i N_i !}  p ! \, d \, (C_3 \varepsilon)^{\frac{ p (p-1)}{2} + (d-1)(N+1)} \notag \\
 & \leq&  \mathbb{P}
\left( E_{p,\varepsilon} \backslash E_{p-1,\varepsilon} \right) \notag\\
&\leq& \sum_{\sum_{i =1}^{p} N_i = N \, ,  \, N_i > 0} \frac{N!}{ \prod_i N_i !}  p ! \, d \, (C_4 \varepsilon)^{\frac{ p (p-1)}{2} + (d-1)(N+1)} \,.
\notag
\eqn
Now since the last term can be simplified as follows:
\beqn
& &  p ! \, d \, (C_3 \varepsilon)^{\frac{ p (p-1)}{2} + (d-1)(N+1)} \sum_{\sum_{i =1}^{p} N_i = N \, ,  \, N_i > 0} \frac{N!}{ \prod_i N_i !} \notag \\
&=&  p ! \, d \, (C_3 \varepsilon)^{\frac{ p (p-1)}{2} + (d-1)(N+1)} \left( \sum_{\sum_{i =1}^{p} N_i = N } \frac{N!}{ \prod_i N_i !} - \sum_{\sum_{i =1}^{p-1} N_i = N} \frac{N!}{ \prod_i N_i !}   \right) \notag \\
&=& \left( p^N - (p-1)^N   \right) p ! \, d \, (C_3 \varepsilon)^{\frac{ p (p-1)}{2} + (d-1)(N+1)} \,,  \notag
\eqn
we directly have
\beqn
\sum_{l = 1}^p  \left( l^N - (l-1)^N   \right) l ! \, d \, (C_3 \varepsilon)^{\frac{ l (l-1)}{2} + (d-1)(N+1)}  \leq \mathbb{P} \left( E_{p,\varepsilon} \right) \leq \sum_{l = 1}^p  \left( l^N - (l-1)^N   \right) l ! \, d \, (C_4 \varepsilon)^{\frac{ l (l-1)}{2} + (d-1)(N+1)}  \,.\notag
\eqn
%
%\beqn
%\mathbb{P} \left(E_{p,\varepsilon} \right)
%\geq \sum_{\sum_{i =1}^{p} N_i = N} \frac{N!}{ \prod_i N_i !} |\{N_i \neq 0\}| ! \, d \, (C_3 \varepsilon)^{\frac{(|\{N_i \neq 0\}|)(|\{N_i \neq 0\}|-1)}{2} + (d-1)(N+1)}
%\eqn
%for some $C_3 > 0$.
%Now consider the auxiliary function
%\beqn
%H(p) : = \sum_{\sum_{i =1}^{p} N_i = N} \frac{N!}{ \prod_i N_i !} |\{N_i \neq 0\}| ! \, d \, (C_3 \varepsilon)^{\frac{(|\{N_i \neq 0\}|)(|\{N_i \neq 0\}|-1)}{2} + (d-1)(N+1)}
%\eqn
%for $1 \leq p \leq N$. Then we have
%\beqn
%& & H(p) - H(p-1) \\
%&=&  p ! \, d \, (C_3 \varepsilon)^{\frac{ p (p-1)}{2} + (d-1)(N+1)} \sum_{\sum_{i =1}^{p} N_i = N \, ,  \, N_i > 0} \frac{N!}{ \prod_i N_i !} \notag \\
%&=&  p ! \, d \, (C_3 \varepsilon)^{\frac{ p (p-1)}{2} + (d-1)(N+1)} \left( \sum_{\sum_{i =1}^{p} N_i = N } \frac{N!}{ \prod_i N_i !} - \sum_{\sum_{i =1}^{p-1} N_i = N} \frac{N!}{ \prod_i N_i !}   \right) \notag \\
%&=& \left( p^N - (p-1)^N   \right) p ! \, d \, (C_3 \varepsilon)^{\frac{ p (p-1)}{2} + (d-1)(N+1)}  \notag \\
%\eqn
%Therefore
%\beqn
%H(p) =
%\sum_{l = 1}^p  \left( l^N - (l-1)^N   \right) l ! \, d \, (C_3 \varepsilon)^{\frac{ l (l-1)}{2} + (d-1)(N+1)}
%\eqn
%which arrives the first inequality, with the other side of the inequality being similarly argued.
The last inequality comes readily from
\beqn
\mathbb{P} \left( E_{p,\varepsilon} \right) \geq
\sum_{l = 1}^p  \left( l^N - (l-1)^N   \right) \, d \, (C_3 \varepsilon)^{\frac{ p (p-1)}{2} + (d-1)(N+1)} =   d p^N (C_3 \varepsilon)^{(d-1)(N+1) + \frac{(p)(p-1)}{2}} \,. \notag
\eqn
\end{proof}

Now we consider a general image or large data $Y = \sum_{i,j} Y_{ij} \, e_i \otimes e_j $ comprised of non-negative entries. Without loss of generality, we may assume $\max_{i,j}|Y_{ij}| = 1$.
If we write $Y_i := \sum_{j} Y_{ij} \, e_j$, and $ \omega_i = Y_i / ||Y_i||_2 $, then $Y = \sum_i ||Y_i||_2 \, e_i \otimes \omega_i$.
If there exists a set of $ \{ P_i\}_{i=1}^{p} \text{ such that } \{ \omega_i\}_{i=1}^{N} \subset \bigcup_{1 \leq i\leq P} B_{\varepsilon} (P_i) $ and $| \langle  P_i,P_j \rangle - \delta_{ij} | < \varepsilon \, \, \forall i,j$,
we can write $\{\omega_{k_j}\}_{j=1}^{K_j} \in B_{\varepsilon} (P_j) $ for some $K_j$ with $1\leq j\leq p$. Then intuitively, we have
\beqn
I= \sum_i ||Y_i||_2 e_i \otimes \omega_i \approx  \sum_{j = 1}^p \sum_{k_j=1}^{K_j}||Y_{k_j}||_2 \, e_{k_j} \otimes P_{j} \,. \notag
\eqn
Writing $ Q_j := (\sum_{k_j=1}^{K_j}||Y_{k_j}||_2 e_{k_j} ) / \sqrt{ \sum_{k_j=1}^{K_j}||Y_{k_j}||_2}  $
%then
%\beqn
%I \approx \sum_i \sqrt{ \sum_{k_j=1}^{K_j}||Y_{k_j}||_2} \,  Q_j \otimes P_{j} \notag
%\eqn
and denoting $\sigma_{ij} = \delta_{ij} \sqrt{ \sum_{k_j=1}^{K_j}||Y_{k_j}||_2} $, then
\beqn
I \approx \sum_i\sigma_{ij} \, Q_i \otimes P_{j} \notag
\eqn
where $|\langle  P_i,P_j \rangle - \delta_{ij} | < \varepsilon$ and $| \langle  Q_i,Q_j \rangle - \delta_{ij} | =0$ for any $i,j$.
By setting $\Sigma = (\sigma_{ij})$, $P = (P_i )^T$, $Q = (Q_j)$, we derive directly that
\beqn
|| I - \sum_i\sigma_{ij} \,  Q_i \otimes P_{j}||_{F_2} \leq
\sum_{j = 1}^p \sum_{k_j=1}^{K_j}||Y_{k_j}||_2 | \omega_{k_j} - P_{j} | \leq ||I||_{F,2} \varepsilon \leq NM \varepsilon \,, \notag
\eqn
hence
\beqn
\mathcal{J}_{p}^{\alpha, \nu, \gamma} (\tilde{U}_p,\tilde{\Sigma}_p, \tilde{V}_p )
&\leq&
\mathcal{J}_{p}^{\alpha, \nu, \gamma} (Q,\Sigma, P) \notag\\
&\leq& ||I||_{F,2} \varepsilon + \gamma \sum_{j}  \sqrt{ \sum_{k_j=1}^{K_j}||Y_{k_j}||_2} + \nu \sum_{j} ||Q_j||_1+ \nu \sum_{i} ||P_i||_1 +\alpha p (p-1) \varepsilon  \notag\\
&\leq& NM \varepsilon + NM (\gamma + 2 \nu )  + \alpha \, p (p-1) \varepsilon\,. \notag
\eqn
The probability of the event $E_{p,\varepsilon}$ such that the above estimate holds
can be bounded below by
\beqn
\mathbb{P} \left( E_{p,\varepsilon} \right) &\geq &
\sum_{l = 1}^p  \left( l^N - (l-1)^N   \right) l ! \, M \, (C_3 \varepsilon)^{\frac{ l (l-1)}{2} + (M-1)(N+1)}
\notag \\
& \geq & M \, p^N (C_3 \varepsilon)^{(M-1)(N+1) + \frac{(p)(p-1)}{2}}\,. \notag
\eqn
Similarly, switching the columns and rows of the image, we may follow the above argument and analysis to conclude the same with $N,M$ swapped. Combining the above two statements, we come to
\beqn
& & \mathbb{P} \left(
\mathcal{J}_{p}^{\alpha,\nu, \gamma} (\tilde{U}_p,\tilde{\Sigma}_p, \tilde{V}_p )  <  NM \varepsilon + NM (\gamma + 2 \nu)+ \alpha \, p(p-1) \varepsilon
 \right) \notag \\
 &\geq &
 \sum_{l = 1}^p  \left( l^{\max(N,M)} - (l-1)^{\max(N,M)}   \right) l ! \, \min(N,M) \, (C_3 \varepsilon)^{ \mu(N,M,l)}
\notag \\
 &\geq & \min(N,M) p^{\max(N,M)} (C_3 \varepsilon)^{ \mu(N,M,p) } \,, \notag
\eqn
where the function $\mu(\,\cdot\,,\,\cdot\,, \,\cdot\, )$ is defined for all $N,M,l \in \mathbb{N}$ by
\beqn
\mu(N,M,l) := \frac{ l (l-1)}{2} + MN - |N-M| -1 \,.
\label{imp_coeff}
\eqn
If we further choose the parameter $\gamma + 2 \nu \leq (K-1) \varepsilon$ for some $K> 1$, then we deduce the following
lemma.
\begin{Lemma}
For any small $\varepsilon>0$ and for all $N,M \in \mathbb{N}$, it holds
\beqn
& &\mathbb{P} \left(
\mathcal{J}_{p}^{\alpha,\nu,  \gamma } (\tilde{U}_p,\tilde{\Sigma}_p, \tilde{V}_p )  <  \left( K NM + \min(N,M)^2 \right)\varepsilon
 \right) \notag \\
 &\geq &
 \sum_{l = 1}^p  \left( l^{\max(N,M)} - (l-1)^{\max(N,M)}   \right) l ! \, \min(N,M) \, (C_3 \varepsilon)^{ \mu(N,M,l)} \label{sum1}
 \\
 &\geq & \min(N,M) p^{\max(N,M)} (C_3 \varepsilon)^{ \mu(N,M,p) }  \label{sum2}
\eqn
where the function $\mu(\,\cdot\,,\,\cdot\,, \,\cdot\, )$ is defined as in \eqref{imp_coeff} and $\gamma $ is such that $\gamma + 2 \nu \leq (K-1) \varepsilon$ for some $K> 1$.
\end{Lemma}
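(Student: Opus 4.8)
The plan is to show that the event appearing in the statement contains the event $E_{p,\varepsilon}$ of Lemma \ref{lemma3} (together with its row/column transpose), so that the claimed probability lower bound is inherited directly. Almost all of the work has in fact already been carried out in the displays immediately preceding the statement, and what remains is to repackage those estimates under the extra hypothesis $\gamma + 2\nu \le (K-1)\varepsilon$.

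First I would record the deterministic implication of $E_{p,\varepsilon}$. On that event there exist $\{P_i\}_{i=1}^p$ with $\{\omega_i\}_{i=1}^N\subset\bigcup_i B_\varepsilon(P_i)$ and $|\langle P_i,P_j\rangle-\delta_{ij}|<\varepsilon$, and the explicit triple $(Q,\Sigma,P)$ assembled from the cluster assignments — with $Q_j$, $\sigma_{ij}$ as defined above — gives, by minimality of $(\tilde U_p,\tilde\Sigma_p,\tilde V_p)$,
\[
\mathcal{J}_p^{\alpha,\nu,\gamma}(\tilde U_p,\tilde\Sigma_p,\tilde V_p)\;\le\;\mathcal{J}_p^{\alpha,\nu,\gamma}(Q,\Sigma,P)\;\le\;NM\varepsilon+NM(\gamma+2\nu)+\alpha\,p(p-1)\varepsilon,
\]
where $NM\varepsilon$ accounts for the $\|\cdot\|_{F,2}$ approximation error (bounded via $|\omega_{k_j}-P_j|<\varepsilon$ together with $\max_{i,j}|Y_{ij}|=1$), the $\ell_1$ penalties on $\Sigma$, $Q$, $P$ together contribute at most $NM(\gamma+2\nu)$, and the near-orthogonality defects of $P$ (entrywise $<\varepsilon$ over the $p(p-1)$ off-diagonal pairs) and of $Q$ (exactly $0$) contribute at most $\alpha\,p(p-1)\varepsilon$.

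Next I would feed in the parameter restriction: $\gamma+2\nu\le(K-1)\varepsilon$ turns the middle term into $(K-1)NM\varepsilon$, and since Lemma \ref{lemma3} operates in the regime $p\le\min(N,M)$ (and $\alpha$ is a small regularization parameter, $\le 1$) we have $\alpha\,p(p-1)\varepsilon\le\min(N,M)^2\varepsilon$; hence on $E_{p,\varepsilon}$ one gets $\mathcal{J}_p^{\alpha,\nu,\gamma}(\tilde U_p,\tilde\Sigma_p,\tilde V_p)<\big(KNM+\min(N,M)^2\big)\varepsilon$. Therefore the event in the statement contains $E_{p,\varepsilon}$, and monotonicity of $\mathbb{P}$ reduces everything to lower-bounding $\mathbb{P}(E_{p,\varepsilon})$. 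For that I would invoke Lemma \ref{lemma3} with $d=M$ (the ambient dimension of the normalized row vectors $\omega_i$), yielding the sum over $l$ with exponent $\tfrac{l(l-1)}{2}+(M-1)(N+1)$ and prefactor $M$, then run the identical construction with rows and columns interchanged to get the same bound with $(N-1)(M+1)$ and prefactor $N$. Keeping whichever of the two is larger — equivalently, the one carrying $\max(N,M)$ in the power base, $\min(N,M)$ as prefactor, and the smaller of $(M-1)(N+1)$ and $(N-1)(M+1)$, which equals $MN-|N-M|-1$ — produces precisely the exponent $\mu(N,M,l)$ of \eqref{imp_coeff} and hence \eqref{sum1}; the final inequality \eqref{sum2} then follows by retaining only the $l=p$ term and using $l^{\max(N,M)}-(l-1)^{\max(N,M)}\ge 1$ and $l!\ge 1$, exactly as in the last display of the proof of Lemma \ref{lemma3}.

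The only genuinely delicate point — already executed in the preceding displays — is the deterministic estimate on $E_{p,\varepsilon}$: one must verify that the $\ell_1$ norms of the constructed $\Sigma$, $Q$, $P$ are all $O(NM)$ (this is exactly where $\max_{i,j}|Y_{ij}|=1$ and the definitions $Q_j=(\sum_{k_j}\|Y_{k_j}\|_2\,e_{k_j})/\sqrt{\sum_{k_j}\|Y_{k_j}\|_2}$, $\sigma_{ij}=\delta_{ij}\sqrt{\sum_{k_j}\|Y_{k_j}\|_2}$ enter) and that the Frobenius approximation error is $\le NM\varepsilon$; everything after that is substitution of $\gamma+2\nu\le(K-1)\varepsilon$ and the bookkeeping identification of the exponent $\mu$ from the two transpose-symmetric applications of Lemma \ref{lemma3}.
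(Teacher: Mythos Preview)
Your proposal is correct and follows essentially the same route as the paper: the lemma is simply a repackaging of the displays immediately preceding it, combining the deterministic bound on $\mathcal{J}_p^{\alpha,\nu,\gamma}(Q,\Sigma,P)$ on the event $E_{p,\varepsilon}$, the hypothesis $\gamma+2\nu\le(K-1)\varepsilon$, the row/column swap, and the probability estimate of Lemma~\ref{lemma3}. One small correction: your verbal description of how \eqref{sum2} follows from \eqref{sum1} is not right --- keeping only the $l=p$ term and using $l^{\max}-(l-1)^{\max}\ge1$, $l!\ge1$ would give merely $\min(N,M)(C_3\varepsilon)^{\mu(N,M,p)}$; the actual argument (which you correctly point to in the last display of Lemma~\ref{lemma3}) is to drop $l!\ge1$, replace each exponent $\mu(N,M,l)$ by the larger $\mu(N,M,p)$, and then let the sum $\sum_{l=1}^p\big(l^{\max}-(l-1)^{\max}\big)$ telescope to $p^{\max(N,M)}$.
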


Before we derive a sharp bound of an optimal choice for $p$, let us consider a rough lower bound introduced in the last inequality \eqref{sum2}. Clearly if we consider the function $$F(p):= \min(N,M) p^{\max(N,M)}\, (C_3 \varepsilon)^{  \mu(N,M,p) } $$ for $p \geq 1$, then
it is easy to see
\beqn
F'(p) = \frac{F(p)}{p} \left( \max(N,M) + \frac{| \log (C_3 \varepsilon)|}{16}  - | \log (C_3  \varepsilon)| (p - \frac{3}{4})^2\right) %\notag \\
\begin{Bmatrix} >  \\ = \\ < \end{Bmatrix} 0 \, \notag \,.
\eqn
namely
$$
p \begin{Bmatrix} <  \\ = \\ > \end{Bmatrix} \frac{3}{4} + \sqrt{\frac{1}{16} + \frac{\max(M,N)}{|\log (C_3 \varepsilon)|}}\,.
$$
Therefore we can propose a primitive optimal choice of $p$ to maximize the lower bound of the possibility \\ $\mathbb{P} \left(
\mathcal{J}_{p}^{\alpha, \nu, \gamma} ([\tilde{U}_p,\tilde{\Sigma}_p, \tilde{V}_p] )  <  \left( K NM + \min(M,N)^2 \right)\varepsilon
 \right)$, i.e. to choose
 \beqn
  p = \sqrt{ \frac{\max(M,N)}{|\log (C_3  \varepsilon)|}}
  \label{beforesub}
 \eqn
for large $N,M$.
Following some basic substitutions, we obtain the following theorem.

\begin{Theorem}
\label{theoremhaha}
For any small $\delta>0$, we have
\beqn
\mathbb{P} \left(
\min_{p} \mathcal{J}_{p}^{\alpha, \nu, \gamma} (\tilde{U}_p,\tilde{\Sigma}_p, \tilde{V}_p )  < \delta
 \right) \geq \min(N,M)\, p_{N,M,\delta}^{\max(N,M)} \left( C_3 \varepsilon \right)^{ \mu\left(N,M,p_{N,M,\delta}\right) }
\eqn
whenever $\gamma + 2 \nu \leq (K-1) \,\varepsilon$, where $ \varepsilon := \delta \left( K NM + \min(M,N)^2 \right)^{-1} $ for some $K>1$, the function $\mu(\,\cdot\,,\,\cdot\,, \,\cdot\, )$ is defined as in \eqref{imp_coeff}, and
 $p_{N,M,\delta}$ stands for the following constant
\beqn
  p_{N,M,\delta} := \sqrt{ \frac{\max(M,N)}{|\log (C_3  \varepsilon)|}}  = \sqrt{ \frac{\max(M,N)}{ \log ( K NM + \min(M,N)^2 ) - |\log \delta | - \log C_3 }} \, .
  \label{optimal2}
\eqn
\end{Theorem}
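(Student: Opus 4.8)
The plan is to deduce Theorem~\ref{theoremhaha} directly from the preceding Lemma (the one stated through \eqref{sum1}--\eqref{sum2}) by rescaling $\varepsilon$ and then inserting the maximizing value of $p$ that was already singled out in the paragraph before the theorem. First I would make the substitution $\varepsilon := \delta\big(KNM + \min(M,N)^2\big)^{-1}$, which is exactly the choice that turns the threshold in \eqref{sum1} into $\delta$, since then $\big(KNM+\min(N,M)^2\big)\varepsilon = \delta$. Under the standing hypothesis $\gamma + 2\nu \le (K-1)\varepsilon$, the Lemma then yields, for every integer $p$ in the admissible range $1\le p\le \min(N,M)$,
\beqn
\mathbb{P}\Big(\mathcal{J}_{p}^{\alpha,\nu,\gamma}(\tilde U_p,\tilde\Sigma_p,\tilde V_p) < \delta\Big)\ \ge\ \min(N,M)\,p^{\max(N,M)}\,(C_3\varepsilon)^{\mu(N,M,p)} =: F(p). \notag
\eqn

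Next I would observe that the event $\{\min_p \mathcal{J}_{p}^{\alpha,\nu,\gamma}(\tilde U_p,\tilde\Sigma_p,\tilde V_p) < \delta\}$ contains the event $\{\mathcal{J}_{p}^{\alpha,\nu,\gamma}(\tilde U_p,\tilde\Sigma_p,\tilde V_p) < \delta\}$ for each fixed admissible $p$; hence its probability is bounded below by $F(p)$ for every such $p$, and therefore by $\max_p F(p)$. It remains only to choose $p$ optimally, and this was already done: the computation of $F'(p)$ in the paragraph preceding the theorem shows that $F$, viewed as a function of real $p\ge 1$, is maximized at $p_{N,M,\delta} = \sqrt{\max(M,N)/|\log(C_3\varepsilon)|}$, so substituting this value produces the asserted lower bound $\min(N,M)\,p_{N,M,\delta}^{\max(N,M)}(C_3\varepsilon)^{\mu(N,M,p_{N,M,\delta})}$. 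The closed form in \eqref{optimal2} then follows by expanding $|\log(C_3\varepsilon)| = \log\big(KNM+\min(M,N)^2\big) - |\log\delta| - \log C_3$ for small $\delta$, which is a routine simplification of logarithms.

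The only point requiring genuine care — and the one I expect to be the main, if minor, obstacle — is that $p_{N,M,\delta}$ coming from the square-root formula is generally not an integer, and must moreover lie in $\{1,\dots,\min(N,M)\}$ for the Lemma to apply. Rigorously one replaces it by the nearest admissible integer; since the $F'(p)$ computation shows that $\log F$ is concave near its maximum, this rounding perturbs the exponent $\mu(N,M,p)$ and the polynomial factor $p^{\max(N,M)}$ only in a controlled fashion, so the displayed estimate survives up to harmless constants, which is all that is needed in the asymptotic regime "$N,M$ large" already anticipated by \eqref{beforesub}. One should also note the side condition $\max(M,N)\ge|\log(C_3\varepsilon)|$ guaranteeing $p_{N,M,\delta}\ge 1$; when it fails, the maximizer of $F$ on the admissible range is the boundary value $p=1$, and the bound degenerates to $\min(N,M)(C_3\varepsilon)^{\mu(N,M,1)}$ accordingly.
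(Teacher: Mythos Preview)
Your proposal is correct and follows essentially the same approach as the paper, which derives the theorem in one line (``Following some basic substitutions, we obtain the following theorem'') from the preceding lemma and the calculus optimization of $F(p)$ already carried out in \eqref{beforesub}. If anything, your write-up is more careful than the paper's: you make explicit the containment $\{\mathcal{J}_p<\delta\}\subset\{\min_p\mathcal{J}_p<\delta\}$ and you flag the integrality and range constraints on $p_{N,M,\delta}$, issues the paper simply ignores.
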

When $M=N$, it is obvious that the above optimal choice of $p$ for a fixed $\delta >0$ is of the form
\beqn
  p = p_{N,N,\delta} = \sqrt{ \frac{N}{ 2 \log  N  - |\log \delta | -  \log C_3 + \log ( K + 1 )}} \sim \sqrt{ \frac{N}{ 2 \log  N }}\,
  \label{optimal}
\eqn
as $N$ goes to infinity.
The last asymptotic relation actually gives a precise approximation
and
\beqn
  \sqrt{ \frac{N}{ 2 \log  N }} \leq p_{N,N,\delta} \leq \sqrt{ \frac{N}{ \log  N }}\,
  \label{optimalbound}
\eqn
if $N$ is large enough such that $  N  >   C_3 \delta^{-1} $.  Hence \eqref{optimal} serves as an optimal choice of $p$ for large $N$. Furthermore, with this choice of $p$, the memory complexity is asymptotically $\sqrt{ \frac{ 2 N^3}{ \log  N }} $ as $N$ goes to infinity.
%$\sqrt{ \frac{ 2 N^3}{ \log  N }} + \frac{N}{ 2 \log  N } \sim \sqrt{ \frac{ 2 N^3}{ \log  N }} $ as $N$ goes to infinity.

However, we note that the optimal choice of $p$ obtained above is only based on a rough lower bound \eqref{sum2}.
In what follows, we deduce a sharper bound by using \eqref{sum1}. Since \eqref{sum1} always increases with respect to $p$,
we get an optimal choice of $p$ by controlling the increment of \eqref{sum1} with respect to $p$.
In order to do so, we investigate the ratio of the terms
$$
a_l := \left( l^{\max(N,M)} - (l-1)^{\max(N,M)}   \right) l ! \, \min(N,M) \, (C_3 \varepsilon)^{ \mu(N,M,l)}\,,
$$
explicitly given by
\beqnx
\frac{a_{l+1}}{a_{l}} = \frac{(l+1)^{\max(N,M)} - l^{\max(N,M)}}{ l^{\max(N,M)} - (l-1)^{\max(N,M)} } l e^{- |\log(C_3 \varepsilon )| (l+1)} \,.
\eqnx
From the l'Hospital rule, we can directly see that for a fixed pair of $N,M$, the above term $a_{l+1} / a_{l} \rightarrow 0$ as $l\rightarrow \infty$. Therefore, given a small $\eta <1$, there is always a $\hat{p}_{N,M,\eta,\varepsilon}$ such that $a_{l+1} / a_{l} \leq \eta$ whenever $l > \hat{p}_{N,M,\eta,\varepsilon}$. Then for all $p > \hat{p}_{N,M,\eta,\varepsilon}$ we have
{\small
\beqn
& &\mathbb{P} \left(
\mathcal{J}_{p}^{\alpha,\nu, \gamma} (\tilde{U}_p,\tilde{\Sigma}_p, \tilde{V}_p )  <  \left( K NM + \min(N,M)^2 \right)\varepsilon
 \right) \notag \\
 &\geq&
  \sum_{l = 1}^{ \hat{p}_{N,M,\eta,\varepsilon} -1 }  \left( l^{\max(N,M)} - (l-1)^{\max(N,M)}   \right) l ! \, \min(N,M) \, (C_3 \varepsilon)^{\mu(N,M,l)} \notag\\
  & &+ \frac{1}{1-\eta} \left( (\hat{p}_{N,M,\eta,\varepsilon})^{\max(N,M)} - (\hat{p}_{N,M,\eta,\varepsilon}-1)^{\max(N,M)}   \right)(\hat{p}_{N,M,\eta,\varepsilon})! \, \min(N,M) \, (C_3 \varepsilon)^{\mu(N,M,\hat{p}_{N,M,\eta,\varepsilon})} \notag
\eqn
}whenever $\gamma + 2 \nu \leq (K-1) \, \varepsilon $, and that the increment of $p$ from $\hat{p}_{N,M,\eta,\varepsilon}$ onward brings insignificant increment to \eqref{sum1}. Now we aim to find an explicit $\hat{p}_{N,M,\eta,\varepsilon}$ in terms of $N,M$, thus obtaining an optimal choice of $p$.
By H\"older's inequality we readily derive
\beqn
a_{p+1}/a_{p} =
\frac{\sum_{i=0}^{\max(N,M)-1}(1+1/p)^i}{\sum_{i=0}^{\max(N,M)-1}(1-1/p)^i}
p \, e^{- |\log(C_3 \varepsilon )| (p+1)}  \leq \frac{p (p+1)^{\max(N,M)-1}}{(p-1)^{\max(N,M)-1}}
p \, e^{- |\log(C_3 \varepsilon )| (p+1)} \,.
\eqn
Now if we consider the function $$G(N_0,p):=  \frac{p (p+1)^{N_0-1}}{(p-1)^{N_0-1}} p \, e^{- |\log(C_3 \varepsilon )| (p+1)} $$ for $p \geq 2$ and $N_0 \geq 2$, then we see
\beqnx
\frac{\p}{\p p} G (N_0,p) = G(N_0,p) \left(\frac{1}{p} + \frac{N_0-1}{p+1} - \frac{N_0-1}{p-1}  - | \log (C_3  \varepsilon)| \right) %\notag\\
\begin{Bmatrix} >  \\ = \\ < \end{Bmatrix} 0\,,
\eqnx
that implies
$$
p \begin{Bmatrix} <  \\ = \\ > \end{Bmatrix}  p_0(N_0) \notag
$$
where $p_0(N_0)$ is the unique real zero of $- | \log (C_3 \varepsilon)| p^3 + p^2 + ( | \log (C_3  \varepsilon)| - 2 N_0 +2 ) p -1 = 0$, which can be found explicitly by the Cardano's formula or the Lagrange's method.
Fixing $N_0$, we get that $G(N_0,p_0(N_0))$ is the global maximum of $G(N_0,p)$ on $(2, \infty)$, and from $p_0(N_0)$ onward, the function is decreasing. Together with the fact that $G(N_0,2)=4(5/3)^{N_0-1} (C_3 \varepsilon)^4$, we have that  $ G( N_0 , \cdot)^{-1} : (0,4(C_3 \varepsilon)^4) \rightarrow (p_0 , \infty)$ is a well-defined smooth function and is monotone by the inverse function theorem, and that the implicit function $g: (1, \infty) \rightarrow (1,\infty)$ defined by $G(N_0,g(N_0)) = \eta$ is well-defined and smooth by the implicit function theorem as $g(N_0) = [G( N_0 , \cdot)]^{-1} (\eta)$. Moreover
\beqn
g' = - \frac{ \frac{\p N_0}{\p p}( N_0 , g (N_0)) }{ \frac{\p G}{\p p}( N_0 , g (N_0))} &=& - \log\left(\frac{g+1}{g-1}\right) \left(\frac{1}{g} + \frac{N_0-1}{g+1} - \frac{N_0-1}{g-1}  - | \log (C_3 \varepsilon)| \right)^{-1} \notag \\ &=&  \log\left(\frac{g+1}{g-1}\right) \frac{ g (g+1)(g-1)}{ | \log (C_3  \varepsilon)| g^3 - g^2 - ( | \log (C_3  \varepsilon)| - 2 N_0 +2 ) g +1 }\,. \notag
\eqn
Now noting that $g(N_0) > p_0(N_0) + \hat{\delta} > 1 $ for some $\hat{\delta} > 0$ by our choice of domain, we have $ | \log (C_3  \varepsilon)| p^3 - p^2 - ( | \log (C_3  \varepsilon)| - 2 N_0 +2 ) p +1 > \hat{C} > 0$ for some $\hat{C}$, and $ 0 < g'(N_0) < \infty $ for all $N_0$ as well as $g'(N_0) \rightarrow \infty$  as $N_0 \rightarrow \infty$. Moreover putting these inequalities back into the expression of $g'$,
we have $g'(N_0) \rightarrow 0$  as $N_0 \rightarrow \infty$, and that $g$ satisfies the following differential inequality for large $N_0$,
\beqnx
g' \leq  \log\left(\frac{g+1}{g-1}\right) \frac{2}{| \log (C_3  \varepsilon)|} \leq \frac{4}{(g-1) | \log (C_3  \varepsilon)|}\,.
\eqnx
Now using the Gronwall-Bellman-Bihari's inequality, we directly infer that
\beqn
g \leq H^{-1} ( H(a(\eta)) + N_0) \,
\eqn
for some constant $a(\eta)$ depending only on $\eta$, where the function $H$ is defined as
\beqn
H(s) := \frac{| \log (C_3  \varepsilon)|}{4} \int (s-1) ds = \frac{| \log (C_3  \varepsilon)| (s-1)^2}{8} + K_0(\eta)
\eqn
for some $K_0(\eta)$.  Therefore the following inequality holds for $g$ and some
constants $K_1(\eta), K_2(\eta), K_3(\eta)$:
\beqnx
g \leq \sqrt{ \frac{K_1(\eta) N_0 - K_2(\eta) }{ | \log (C_3  \varepsilon)| } } + K_3(\eta) \,.
\eqnx
Using $p_{N,M,\delta}$ defined in \eqref{optimal2},
we can choose $\hat{p}_{N,M,\eta,\varepsilon}$ such that
\beqn
\hat{p}_{N,M,\eta,\varepsilon} = K_\eta \sqrt{ \frac{ \max(N,M) }{ | \log (C_3  \varepsilon)| } } = K_\eta p_{N,M,\delta}
\eqn
for some $K_\eta$ depending on $\eta$, then for all
$
p> \hat{p}_{N,M,\eta,\varepsilon} \geq g\left(  \max(N,M) \right) = \left[G\left(  \max(N,M)  , \cdot\right)\right]^{-1} (\eta),
$
we have
\beqnx
\frac{p (p+1)^{\max(N,M)-1}}{(p-1)^{\max(N,M)-1}}
p \, e^{- |\log(C_3 \varepsilon )| (p+1)}  < \eta  \,.
\eqnx
Therefore the growth of the probability $\mathbb{P} \left(
\mathcal{J}_{p}^{\alpha, \nu, \gamma} (\tilde{U}_p,\tilde{\Sigma}_p, \tilde{V}_p )  <  \left( K NM + \min(N,M)^2 \right)\varepsilon
 \right)$ with respect to $p$ becomes insignificant from $\hat{p}_{N,M,\eta,\varepsilon}$ onward. This gives another optimal choice of $p$. Surprisingly, we notice that
$
 \hat{p}_{N,M,\eta,\varepsilon} \sim  p_{N,M,\delta}\,,
$
i.e., the two choices of $p$ are of the same order. This leads to the following results.

\begin{Theorem}
The following probability bound holds for any small $\delta>0$:
\beqn
 \mathbb{P} \left(
 \mathcal{J}_{p}^{\alpha, \nu, \gamma} (\tilde{U}_p,\tilde{\Sigma}_p, \tilde{V}_p )  < \delta
 \right)
 \geq
\sum_{l = 1}^p  \left( l^{\max(N,M)} - (l-1)^{\max(N,M)}   \right) l ! \, \min(N,M) \, \left( C_3 \varepsilon \right)^{\mu(N,M, l )}
\eqn
whenever $\gamma + 2 \nu \leq (K-1) \,\varepsilon$, where $ \varepsilon := \delta \left( K NM + \min(M,N)^2 \right)^{-1} $ for some $K>1$ and the function $\mu(\,\cdot\,,\,\cdot\,, \,\cdot\, )$ is defined as in \eqref{imp_coeff}.
For a given small constant $\eta$, the growth of the summation above
with respect to $p$ can be controlled by $\eta$ when $p > K_\eta \, p_{N,M,\delta}$ for some $K_\eta$ depending only on $\eta$, where $p_{N,M,\delta}$  is defined as \eqref{optimal2}.
\end{Theorem}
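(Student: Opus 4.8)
\emph{Plan of proof.} The statement splits into two essentially independent parts, and I would treat them in turn.

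For the \emph{probability bound}, the plan is simply to specialize the preceding Lemma (inequality \eqref{sum1}) to the value $\varepsilon := \delta\left(KNM + \min(M,N)^2\right)^{-1}$. With this choice the threshold $\left(KNM + \min(N,M)^2\right)\varepsilon$ appearing on the left of \eqref{sum1} is exactly $\delta$, so under the standing hypothesis $\gamma + 2\nu \le (K-1)\varepsilon$ the event $\{\mathcal{J}_p^{\alpha,\nu,\gamma}(\tilde U_p,\tilde\Sigma_p,\tilde V_p) < \delta\}$ coincides with the event estimated there. Since $\mathcal{J}_p^{\alpha,\nu,\gamma}(\tilde U_p,\tilde\Sigma_p,\tilde V_p)$ is by definition the minimal value of the functional \eqref{functional}, no further work is needed: the lower bound $\sum_{l=1}^p (l^{\max(N,M)} - (l-1)^{\max(N,M)})\, l!\,\min(N,M)\,(C_3\varepsilon)^{\mu(N,M,l)}$, with $\mu$ as in \eqref{imp_coeff}, transfers verbatim.

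For the \emph{growth-control} part, the plan is to show that the summands $a_l := (l^{\max(N,M)} - (l-1)^{\max(N,M)})\, l!\,\min(N,M)\,(C_3\varepsilon)^{\mu(N,M,l)}$ eventually decay geometrically with ratio below $\eta$, the cut-off occurring at $p \sim p_{N,M,\delta}$. First I would compute the ratio $a_{l+1}/a_l$ and, via Hölder's inequality applied to the factor $\big(\sum_i (1+1/l)^i\big)\big/\big(\sum_i (1-1/l)^i\big)$, bound it above by $G(\max(N,M), l)$, where $G(N_0,p) := \tfrac{p(p+1)^{N_0-1}}{(p-1)^{N_0-1}}\, p\, e^{-|\log(C_3\varepsilon)|(p+1)}$. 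A direct differentiation shows $G(N_0,\cdot)$ is unimodal on $(2,\infty)$: increasing up to the unique positive root $p_0(N_0)$ of the cubic $-|\log(C_3\varepsilon)|p^3 + p^2 + (|\log(C_3\varepsilon)| - 2N_0 + 2)p - 1 = 0$ (explicit by Cardano/Lagrange) and strictly decreasing afterwards; together with the explicit value of $G(N_0,2)$ this makes the equation $G(N_0, g) = \eta$ uniquely solvable by a smooth function $g = g(N_0)$ once $\eta$ is small, via the inverse and implicit function theorems.

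The heart of the argument is then controlling the growth of $g(N_0)$. Differentiating $G(N_0,g(N_0)) = \eta$ implicitly, and using that $p_0(N_0)$ stays bounded away from $1$ so the cubic denominator in $g'$ is bounded below uniformly in $N_0$, I would establish, for large $N_0$, the differential inequality $g' \le \log\!\big(\tfrac{g+1}{g-1}\big)\tfrac{2}{|\log(C_3\varepsilon)|} \le \tfrac{4}{(g-1)|\log(C_3\varepsilon)|}$. Integrating this with the Gronwall--Bellman--Bihari inequality yields $g(N_0) \le H^{-1}(H(a(\eta)) + N_0)$ with $H(s) = \tfrac{|\log(C_3\varepsilon)|}{8}(s-1)^2 + K_0(\eta)$, hence $g(N_0) \le \sqrt{\tfrac{K_1(\eta)N_0 - K_2(\eta)}{|\log(C_3\varepsilon)|}} + K_3(\eta)$ for constants depending only on $\eta$. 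Taking $\hat p_{N,M,\eta,\varepsilon} := K_\eta\sqrt{\tfrac{\max(N,M)}{|\log(C_3\varepsilon)|}} = K_\eta\, p_{N,M,\delta}$ above this bound would then force $a_{l+1}/a_l \le G(\max(N,M),l) < \eta$ for all $l \ge \hat p_{N,M,\eta,\varepsilon}$, so the tail $\sum_{l > p} a_l$ is dominated by $\tfrac{\eta}{1-\eta}a_p$ whenever $p > K_\eta\, p_{N,M,\delta}$ — precisely the asserted control of the growth in $p$.

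The main obstacle I anticipate is the last step: turning the implicit relation for $g$ into a closed-form upper bound while certifying that every constant $K_0(\eta),\dots,K_\eta$ is independent of $N,M$. This requires (i) a uniform lower bound on $|\log(C_3\varepsilon)|$ (valid since $\varepsilon$ is small) together with a uniform lower bound on the cubic denominator $|\log(C_3\varepsilon)|p^3 - p^2 - (|\log(C_3\varepsilon)| - 2N_0 + 2)p + 1$ over the range of $p$ actually used; (ii) checking that the Bihari comparison function $H$ is genuinely invertible on that range; and (iii) keeping the $\max/\min$ bookkeeping in $\mu$ consistent, including the degenerate case $N = M$. The probability-bound part, by contrast, is a change of variables and should be routine.
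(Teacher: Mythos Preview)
Your proposal is correct and follows essentially the same route as the paper: the probability bound is obtained by substituting $\varepsilon := \delta\bigl(KNM + \min(M,N)^2\bigr)^{-1}$ into the preceding lemma \eqref{sum1}, and the growth-control part proceeds via the ratio $a_{l+1}/a_l$, the H\"older bound by $G(N_0,p)$, unimodality of $G$ through the cubic, the implicit function $g(N_0)$ solving $G(N_0,g)=\eta$, the differential inequality $g' \le 4/\bigl((g-1)|\log(C_3\varepsilon)|\bigr)$, and Gronwall--Bellman--Bihari to reach $g \lesssim K_\eta\sqrt{\max(N,M)/|\log(C_3\varepsilon)|} = K_\eta\, p_{N,M,\delta}$. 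The obstacles you flag (uniform positivity of the cubic denominator, invertibility of $H$, independence of the constants from $N,M$) are exactly the points the paper treats loosely, so your assessment is accurate.
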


We can easily see that $||Y - \mathcal{I}_{p}^{\alpha, \nu \gamma}(Y) ||_{F,2}^2 < \delta$ if
$\mathcal{J}_{p}^{\alpha, \nu, \gamma} (\tilde{U}_p,\tilde{\Sigma}_p, \tilde{V}_p )  < \delta$.
Clearly, in the particular case when $M=N$, the following asymptotic order for $p$
\beqn
  p  \sim \sqrt{ \frac{N}{ \log  N }}\,
  \label{optimal22}
\eqn
is basically an optimal choice of $p$, and they are equivalent up to a multiplicative constant whenever $  N  >   C_3 \delta^{-1} $.  Following this optimal choice of $p$, the memory complexity grows in the order $\sqrt{ \frac{N^3}{ \log  N }} $ as $N$ goes to infinity.

\subsection{Effects of magnitudes of entries in generalized singular matrices}

In this subsection, we discuss a further reduction of memory complexity by truncating the generalized singular matrix $\tilde{\Sigma}_p = ( \sigma_{ij} )$.
We aim to remove the less important components $(\tilde{u}_p)_{i} \otimes (\tilde{v}_p)_{j}$ in \eqref{approximation}
in a way that it still serves as a good approximation of the original matrix $Y$.

For doing so, we rearrange $\sigma_{ij}$ from the largest value to the smallest one as $\sigma_{i_1 j_1} \geq \sigma_{i_2 j_2} \geq ..\geq \sigma_{i_{p^2} j_{p^2}}$. We then denote $\tilde{\sigma}_l = \sigma_{i_l j_l} e_l \otimes e_l $, and write $  \tilde{\Sigma}_{p,\tilde{p}} = \sum_{l=1}^{\tilde{p}} \tilde{\sigma}_l $ as the truncated generalized singular matrix for all $\tilde{p} \leq p^2$. The sequence $ \{ \tilde{\sigma}_l \}_{l=1}^{p^2}$ represents the components of $\tilde{\Sigma}_p$ in descending order by the importance of its magnitudes.
With the above definition, we then define an operator $\mathcal{I}_{p,\tilde{p}}^{\alpha, \nu, \gamma}$:
$M_{M \times N}^+ \rightarrow   M_{M \times N}^+$ by
\begin{eqnarray}
%\mathcal{I}_{p,\tilde{p}}^{\alpha, \nu, \gamma} :   M_{M \times N}^+ &\rightarrow &  M_{M \times N}^+ \notag \\
\mathcal{I}_{p,\tilde{p}}^{\alpha, \nu, \gamma}(Y) := \tilde{U}_p \tilde{\Sigma}_{p,\tilde{p}} \tilde{V}_p = \sum_{l=1}^{\tilde{p}} \sigma_{i_l j_l} (\tilde{u}_p)_{i_l} \otimes (\tilde{v}_p)_{j_l}
\label{approximation2}
\end{eqnarray}
where $[\tilde{U}_p, \Sigma_{p}, \tilde{V}_p]$ is a minimizer of the functional \eqref{functional} and $\tilde{\Sigma}_{p,\tilde{p}}$ is the truncated generalized singular matrix.

The approximation $Y \approx \mathcal{I}_{p,\tilde{p}}^{\alpha, \nu, \gamma}(Y) = \tilde{U}_p \tilde{\Sigma}_{p,\tilde{p}} \tilde{V}_p$ is a truncation of the approximation \eqref{approximation} of $Y$ up to $\tilde{p}$. This truncated approximation removes the less important components.,
hence we only need to save the vectors $(\tilde{u}_p)_{i_l}$ and $ (\tilde{v}_p)_{j_l}$ for $1 \leq l \leq \tilde{p}$.  This further reduces the memory complexity and serves as our desired sparse low-rank approximation of $Y$.

In what follows, we give a brief analysis for the aforementioned truncated approximation of $Y$. Indeed, from the pigeon-hole principle, we directly obtain that
\beqn
 \mathcal{J}_{p}^{\alpha, \nu,  \gamma} (\tilde{U}_p, \tilde{\Sigma}_{p,\tilde{p}} , \tilde{V}_p )   & < &
  \mathcal{J}_{p}^{\alpha, \nu,  \gamma} (\tilde{U}_p, \tilde{\Sigma}_{p} , \tilde{V}_p ) + C || I ||_1 \sum_{i=0}^{p^2 - \tilde{p}} \frac{1}{p^2 - i}  \notag \\
  & < &
  \mathcal{J}_{p}^{\alpha, \nu, \gamma} (\tilde{U}_p, \tilde{\Sigma}_{p} , \tilde{V}_p ) + C || I ||_1 \int^1_{\frac{\tilde{p}}{p^2}} 1/x dx  \notag\\
  & < &
  \mathcal{J}_{p}^{\alpha, \nu, \gamma} (\tilde{U}_p, \tilde{\Sigma}_{p} , \tilde{V}_p ) + C NM \log \left(\frac{p^2}{\tilde{p}}\right) \notag\\
  & < & \left( ( K+ L C) NM  + \min(N,M)^2 \right) \varepsilon \notag
\eqn
whenever $\mathcal{J}_{p}^{\alpha, \nu, \gamma} (\tilde{U}_p, \tilde{\Sigma}_{p} , \tilde{V}_p )< \left(( K NM  + \min(N,M)^2 \right) \varepsilon $ and $ \tilde{p} > e^{ - L \varepsilon } p^2 $.  Combining this with Theorem \ref{theoremhaha}, the following corollary follows directly.

\begin{Corollary}
Let $\varepsilon := \delta ( (K + C L) NM + \min(M,N)^2 )^{-1}$, then
the following estimate holds for any small $\delta>0$ and $\gamma + 2 \nu \leq (K-1) \, \varepsilon$,
{\small
\beqn
\mathbb{P} \left(
\min_{p} \mathcal{J}_{p}^{\alpha, \nu,  \gamma} (\tilde{U}_p,\tilde{\Sigma}_{p,\tilde{p}}, \tilde{V}_p )  <  \delta
 \right) \geq \min(N,M)\, p_{N,M,\delta}^{\max(N,M)} \left( C_3 \varepsilon  \right)^{ \mu(N,M,p_{N,M,\delta}) }\,
 \notag
\eqn
}where $p_{N,M,\delta}$ is stated in \eqref{optimal2} and $ \tilde{p} > e^{ - L \varepsilon } p^2 $ for some $C, K,L$.
%Moreover,
%we have $||Y - \mathcal{I}_{p,\tilde{p}}^{\alpha, \nu, \gamma}(Y) ||_{F,2}^2 < \delta$ whenever the above event happens.
\end{Corollary}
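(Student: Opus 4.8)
The plan is to combine the pigeon-hole truncation estimate derived immediately above with the probability lower bound of Theorem \ref{theoremhaha} (equivalently, inequality \eqref{sum2}); the content of the Corollary is simply that passing from $\tilde{\Sigma}_p$ to its truncation $\tilde{\Sigma}_{p,\tilde{p}}$ inflates the approximation budget from $KNM$ to $(K+CL)NM$, which is absorbed into the choice of $\varepsilon$. First I would set $\varepsilon := \delta\,\big((K+CL)NM + \min(M,N)^2\big)^{-1}$, so that $\big((K+CL)NM + \min(N,M)^2\big)\,\varepsilon = \delta$. The displayed chain of inequalities preceding the statement then shows that, for a fixed index $p$, on the event
\[
\Big\{\, \mathcal{J}_{p}^{\alpha,\nu,\gamma}(\tilde{U}_p,\tilde{\Sigma}_p,\tilde{V}_p) < \big(KNM + \min(N,M)^2\big)\varepsilon \,\Big\}
\]
and provided $\tilde{p} > e^{-L\varepsilon}p^2$, one has $\mathcal{J}_{p}^{\alpha,\nu,\gamma}(\tilde{U}_p,\tilde{\Sigma}_{p,\tilde{p}},\tilde{V}_p) < \big((K+CL)NM + \min(N,M)^2\big)\varepsilon = \delta$. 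Hence this event is contained in $\{\mathcal{J}_{p}^{\alpha,\nu,\gamma}(\tilde{U}_p,\tilde{\Sigma}_{p,\tilde{p}},\tilde{V}_p) < \delta\}$ and, a fortiori, in $\{\min_{q}\mathcal{J}_{q}^{\alpha,\nu,\gamma}(\tilde{U}_q,\tilde{\Sigma}_{q,\tilde{p}},\tilde{V}_q) < \delta\}$ since the $q=p$ term already realizes the bound.

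Taking probabilities and using monotonicity of measure, I would then obtain, for every such $p$,
\[
\mathbb{P}\Big( \min_{q}\mathcal{J}_{q}^{\alpha,\nu,\gamma}(\tilde{U}_q,\tilde{\Sigma}_{q,\tilde{p}},\tilde{V}_q) < \delta \Big) \;\geq\; \mathbb{P}\Big( \mathcal{J}_{p}^{\alpha,\nu,\gamma}(\tilde{U}_p,\tilde{\Sigma}_p,\tilde{V}_p) < \big(KNM + \min(N,M)^2\big)\varepsilon \Big).
\]
Since $\gamma + 2\nu \leq (K-1)\varepsilon$ holds by hypothesis, I would invoke the Lemma preceding Theorem \ref{theoremhaha} with this value of $\varepsilon$, whose estimate \eqref{sum2} bounds the right-hand side below by $\min(N,M)\,p^{\max(N,M)}(C_3\varepsilon)^{\mu(N,M,p)}$. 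Specializing to $p = p_{N,M,\delta}$ as in \eqref{optimal2} yields exactly the asserted inequality.

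The only points needing a little care, which I would check before the two displays, are that $p = p_{N,M,\delta}$ is an admissible index for both steps: the pigeon-hole step requires $\tilde{p} \le p^2$ to be meaningful alongside $\tilde{p} > e^{-L\varepsilon}p^2$ (compatible since $e^{-L\varepsilon}<1$), and the budget $\big(KNM + \min(N,M)^2\big)\varepsilon$ used in those inequalities relies on $\alpha\,p(p-1)\,\varepsilon \le \min(N,M)^2\varepsilon$; both are valid for $N,M$ large by the asymptotics \eqref{optimal}--\eqref{optimalbound} for $p_{N,M,\delta}$. I do not anticipate a genuine obstacle: once the pigeon-hole inequality and Theorem \ref{theoremhaha} are in hand, the Corollary is bookkeeping, the only substantive estimate being the one already recorded, namely that the truncation costs at most $CNM\log(p^2/\tilde{p}) \le CLNM\varepsilon$ in the functional, which is precisely the gap between the budget in Theorem \ref{theoremhaha} and the budget here.
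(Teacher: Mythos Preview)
Your proposal is correct and follows exactly the route the paper indicates: the paper simply says the Corollary ``follows directly'' by combining the pigeon-hole truncation estimate with Theorem~\ref{theoremhaha}, and your argument makes this combination explicit via the event inclusion and monotonicity of measure, then specializes to $p=p_{N,M,\delta}$. Your additional checks on the admissibility of $p_{N,M,\delta}$ and the compatibility of $\tilde{p}>e^{-L\varepsilon}p^2$ with $\tilde{p}\le p^2$ are not spelled out in the paper but are harmless and in the same spirit.
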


\section{Multi-level analysis (MLA) of non-negative tri-factorizations} \label{sec3}

In this section, we introduce a multi-level analysis (MLA) framework based on the aforementioned tri-factorization.
We notice that, for a matrix $Y$, especially for those representing an image, there are features of different scales in $Y$ which usually represent different objects in the image.  We aim at extracting these features of different scales and represent them in a sparse low-rank approximation in terms of tensor products.  Therefore we introduce a MLA framework to NMF which helps us achieve a sparse representation of the features of multiple scales, ranging from the coarsest scale up to the finest scale in the image $Y$.
This MLA framework aims to identify the major components in the matrix $Y$ which represent structures at multiple scales/levels of the image
so that structures from large scales up to small scales in the image can be separately identified and sparsely represented.
Our MLA framework is partially motivated by the MRA in wavelet analysis, which is widely use to capture different resolution of a function or image as well as for compression purpose.
However, an essential difference of our MLA framework from the MRA lies in our hope
to respect the positivity of the basis for the function/matrix approximation, but still obtain a similar multi-resolution property as in MRA.

The most primitive idea of MRA is to successively approximate an $L^2$-function by dyadic shifts and dilations of a wavelet function $\psi$ (a.k.a. the mother wavelet), which results in multiple resolution of the $L^2$-function concerned.
More precisely, we recall, e.g. in \cite{wavelet}, that an MRA in wavelet analysis consists of a nested linear vector spaces, $\cdots \subset V_{1} \subset V_0 \subset V_{-1} \subset \cdots $, such that their union is dense in $L^2$, and that they satisfy self-similarity conditions in both time and scaling as well as a regularity condition requiring the integer shifts of a piecewise continuous scaling function $\varphi$ with compact support (a.k.a the father wavelet) shall form a frame for the subspace $V_0\subset L^2$.
In the case of integer shifts on $\mathbb{R}$, the above assumptions of nested linear vector spaces implies the following dilation equation, e.g. in \cite{wavelet}: there exists a finite sequence of coefficients $c_k$ with $|k|\leq N$ such that
$$ \varphi(x)=\sum_{k=-N}^{N} c_k \varphi(2x-k) \,. $$
The mother wavelet can then be defined as
$$ \psi(x):=\sum_{k=-N}^{N} (-1)^k c_{1-k} \,\varphi(2x-k)\,,$$
and with this definition, one can easily render that $ V_{l-1} =  V_{l} \bigoplus W_l$ for all $l$, where
$W_l \subset V_{l-1}$ denotes the closed subspace generated by the frame $\{\psi(2^{-l}x-k): k \in \mathbb{N}\}$.
Recursively, we can show that
$$ \{\psi_{l,k}(x)=\sqrt2^{(-l)}\psi(2^{-l}x-k):\;l,k\in\mathbb{Z}\}$$
shall form a complete orthonormal base in $L^2(\mathbb{R})$ and that $L^2(\mathbb{R})= \bigoplus_{l\in\mathbb{Z}}W_l$.
A similar result holds for higher dimension with a similar argument.

However one can directly infer that the mother wavelet $\psi$ has the property \cite{wavelet,int_wavelet} that
$$\int_{-\infty}^{\infty} \psi (x) \,dx= 0, $$ which directly implies that the function $\psi$ can never have the same sign on the whole space. Therefore the approximation of an $L^2$ function by
$$ f = \sum_{l,k\in \mathbb{Z}} c_{l,k} \, \psi_{l,k} \,,$$
though acquiring the multi-resolution property, fails to be a representation of $f$ by positive basis. This observation that $\psi$ does not have the same sign over the whole space is also true for higher dimension. Therefore it may be an undesirable feature if function $f$ is positive, and when we hope to approximate the function with positive basis.  This is the case when the function/matrix represents an image or a probability density function.   This motivates us for a non-negative version of a similar multi-level approximation of the function based on the NMF technique, which we name as the multi-level analysis (MLA), in hope that
each increasingly fine level of approximation of the function by positive basis shall represent an increase of resolution in some sense.

In what follows, we give a mathematical framework for the MLA in NMF.
For the sake of exposition, we introduce the following several operators which are very useful in the subsequent discussion. We first define an interpolation operator $\iota_s : M_{M \times N} \rightarrow M_{\frac{M}{r^s} \times \frac{N}{r^s}}$ as the following averaging operator:
\beqn
%\iota_s : M_{M \times N} &\rightarrow& M_{\frac{M}{r^s} \times \frac{N}{r^s}} \notag \\
 \iota_s (Y) &:=& \sum_{1 \leq i \leq M/r^s , 1 \leq j \leq N/r^s} \frac{1}{r^{2s}} \sum_{k,l \in Q_{I_i,J_j}} Y_{kl} e_i \otimes e_j
 \label{interpolation}
\eqn
where $Q_{I_i,J_j}$ contains the entries $i M/r^s \leq k < (i+1) M/r^s , j M/r^s \leq l < (j+1) M/r^s $.  We note that this interpolation operator gives an interpolation between a fine space $H_0 := M_{M \times N}$ to a coarse space $H_s := M_{\frac{M}{r^s} \times \frac{N}{r^s}}$, and the spaces $H_s$ actually forms a nested sequence of spaces, i.e. $H_s \subset H_{l}$ if $s > l$. One may actually define a more general nested sequence of spaces and
interpolation operators, but for the sake of simplicity, we shall only discuss this averaging operator.
Then we define
$
\mathcal{I}_{s,p}^{\alpha, \nu, \gamma} : M_{M \times N}^+ \rightarrow M_{M \times N}^+ \notag
$
by
\beqn
\mathcal{I}_{s,p}^{\alpha,  \nu,  \gamma} := \iota_s^T \circ \mathcal{I}_{p}^{\alpha, \gamma} \circ \iota_s \,.
\eqn
The approximation $\mathcal{I}_{s,p}^{\alpha,  \nu,  \gamma} (Y)$ represents the approximation of the $(s_{\max}-s)$-th level of the image $Y$ by NMF where $s_{\max} \leq [\log(\min(N,M))/\log(r)]$ and $[ \cdot ]$ denotes the floor function.
Similarly, we define
$
\mathcal{I}_{s,p,\tilde{p}}^{\alpha,  \nu, \gamma} : M_{M \times N}^+ \rightarrow M_{M \times N}^+ \notag
$
by
\beqn
\mathcal{I}_{s,p,\tilde{p}}^{\alpha, \nu,  \gamma} := \iota_s^T \circ \mathcal{I}_{p,\tilde{p}}^{\alpha,  \nu,  \gamma} \circ \iota_s \,,
\eqn
which serves as a truncated approximation of the $(s_{\max}-s)$-th level of $Y$.

Now we are ready to investigate and analyse the error of the approximation given by this MLA framework.
In fact, it is easy to see by combining the arguments in previous sections and the Poincare inequality that
\beqn
 || Y - \iota_s^T \circ \mathcal{I}_{p,\tilde{p}}^{\alpha,  \nu, \gamma} \circ \iota_s  (Y) ||_{F,2}^2 &\leq& r^{2s} || \iota_s ( Y) - \mathcal{I}_{p,\tilde{p}}^{\alpha,  \nu,  \gamma} \circ \iota_s  (Y) ||_{F,2}^2 + \sum_{I,J} ||\nabla_{\delta} Y_{IJ}||_{F,2}^2  \notag \\ &\leq& r^{2s} \mathcal{J}_{p}^{\alpha,  \nu, \gamma} (\tilde{U}_p,\tilde{\Sigma}_{p,\tilde{p}}, \tilde{V}_p )  + \sum_{I,J} ||\nabla_{\delta} Y_{IJ}||_{F,2}^2 \notag \\
  &\leq& r^{2s} \mathcal{J}_{p}^{\alpha,  \nu,  \gamma} (\tilde{U}_p,\tilde{\Sigma}_{p,}, \tilde{V}_p ) + r^{2s} \left(C r^{-2s}NM \log \left(\frac{p^2}{\tilde{p}}\right) \right) + \sum_{I,J} ||\nabla_{\delta} Y_{IJ}||_{F,2}^2 \notag
\eqn
where $\nabla_{\delta}$ is the difference gradient operator defined as $( \nabla_{\delta} Y) _{i,j} = \left(Y_{i+1,j}- Y_{i,j}, Y_{i,j+1}- Y_{i,j} \right)$, the matrix $Y_{IJ}$ are the $(I,J)$-th block of the $Y$ matrices, $[\tilde{U}_p,\tilde{\Sigma}_{p}, \tilde{V}_p]$ is an argument minimum of \eqref{functional} with $Y$ replaced by $\iota_s(Y)$ and $\tilde{\Sigma}_{p,\tilde{p}}$ is the truncation of $\tilde{\Sigma}_{p}$ up to $\tilde{p}$ as stated in the previous section.

Therefore if we can choose $[\tilde{U}_p,\tilde{\Sigma}_{p}, \tilde{V}_p]$ such that $\mathcal{J}_{p}^{\alpha,  \nu,  \gamma} (\tilde{U}_p,\tilde{\Sigma}_{p}, \tilde{V}_p ) < r^{-2s} \left(( K NM  + \min(N,M)^2 \right) \varepsilon $ and $ \tilde{p} > e^{ - L \varepsilon } p^2 $, then
\beqnx
 || Y -\mathcal{I}_{s,p,\tilde{p}}^{\alpha,  \nu, \gamma } (Y) ||_{F,2}^2 \leq \left( ( K+ C L ) NM  + \min(N,M)^2 \right) \varepsilon +\sum_{I,J} ||\nabla_{\delta} Y_{IJ}||_{F,2}^2 \,.
\eqnx
Let $p_{r^{-s} N,r^{-s} M,\delta}$ be defined as in \eqref{optimal}. Then we know from the discussions in the previous section
that the probability of the above event, denoted as $E_{p, \tilde{p},\delta}$, is bounded below by
\beqnx
\mathbb{P} (E_{p, \tilde{p},\delta}) \geq
r^{-s} \min(N,M)\, p_{r^{-s}N, r^{-s}M,\delta}^{r^{-s}\max(N,M)} \left( C_3 \varepsilon \right)^{ \mu(r^{-s}N, r^{-s}M , p_{r^{-s} N,r^{-s} M,\delta} )}
\quad \m{for} ~~\gamma + 2 \nu \leq (K-1) \, \varepsilon\,.
\eqnx

In general, we have no hope that either $ ||\nabla_{\delta} Y||_{F,2}^2 $ or $ \sum_{I,J} ||\nabla_{\delta} Y_{IJ}||_{F,2}^2 $ can be controlled, since we did not impose any regularity conditions for $Y$ in general.  However, if we further assume that $Y$ has some regularity, for instance $ \sum_{I,J} ||\nabla_{\delta} Y_{IJ}||_{F,2}^2 < K_0 MN \varepsilon$, then
\beqn
 || Y - \mathcal{I}_{s,p,\tilde{p}}^{\alpha,  \nu, \gamma} (Y) ||_{F,2}^2 &\leq& \left( ( K+  CL + K_0) NM  + \min(N,M)^2 \right) \varepsilon \,.
 \notag
\eqn
Combining all the previous arguments and theorems then yield the following results.
\begin{Theorem}
Let $\varepsilon := - r^{2s} \delta \left( ( K+  CL + K_0) NM  + \min(N,M)^2 \right)^{-1}$,
and for any small $\delta>0$, $E_{p, \tilde{p},\delta}$ be the event such that the following inequality holds:
\beqn
 || Y -\mathcal{I}_{s,p,\tilde{p}}^{\alpha, \nu,  \gamma} (Y) ||_{F,2}^2 \leq \left( ( K+ C L ) NM  + \min(N,M)^2 \right) \varepsilon +\sum_{I,J} ||\nabla_{\delta} Y_{IJ}||_{F,2}^2 \,, \notag
\eqn
then if $\tilde{p}$ is chosen such that $ \tilde{p} > e^{  L \varepsilon } p^2 $,
we have for any $s$ and $\gamma + 2 \nu \leq (K-1)\, \varepsilon$ that
\beqn
\mathbb{P} \left(\bigcup_{ p } E_{p, \tilde{p},\delta} \right) \geq
r^{-s} \min(N,M)\, p_{r^{-s}N, r^{-s}M,\delta}^{r^{-s}\max(N,M)} \left( C \varepsilon \right)^{ \mu\left(r^{-s}N, r^{-s}M , p_{r^{-s} N,r^{-s} M,\delta} \right)}
\eqn
where the function $\mu( \cdot , \cdot , \cdot )$
and $p_{r^{-s}N, r^{-s}M,\delta}$ are defined as in \eqref{imp_coeff} and \eqref{optimal2} respectively.
%Moreover, the following bound for the probability holds
For all $s$ and $p < r^{-s}\min(N,M)$, we have
\beqn
 \mathbb{P} \left(  E_{p, \tilde{p},\delta}  \right)
 \geq
\sum_{l = 1}^p  \left( l^{r^{-s}\max(N,M)} - (l-1)^{r^{-s}\max(N,M)}   \right) l ! \, r^{-s} \min(N,M) \, \left(C_3 \varepsilon \right)^{ \mu(r^{-s}N, r^{-s}M ,l) } \,
\eqn
whenever $ \tilde{p} > e^{  L \varepsilon } p^2 $ and $\gamma + 2 \nu \leq (K-1)\, \varepsilon$.
For a given small constant $\eta$, the growth of the summation above with respect to $p$ can be controlled by $\eta$ when $p > K_\eta \, p_{r^{-s} N ,r^{-s}M,\delta}$ for some $K_\eta$ depending only on $\eta$. Furthermore,
when the event $E_{p, \tilde{p},\delta} $ happens and the inequality $\sum_{I,J} ||\nabla_{\delta} Y_{IJ}||_{F,2}^2 < K_0 MN \varepsilon$ holds we have
\beqn
 || Y - \mathcal{I}_{s,p,\tilde{p}}^{\alpha,  \nu,  \gamma } (Y) ||_{F,2}^2 \leq \delta \,.
\eqn
\end{Theorem}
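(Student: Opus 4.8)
The plan is to reduce the statement to the one-level results of Section~\ref{sec2} --- Theorem~\ref{theoremhaha} and Lemma~\ref{lemma3} --- applied to the down-sampled matrix $\iota_s(Y)$, which lives in the coarse space $H_s = M_{r^{-s}M \times r^{-s}N}$, and then to reinsert the interpolation error through the Poincar\'e inequality. No new estimate is needed; the content is bookkeeping of the substitution $N \mapsto r^{-s}N$, $M \mapsto r^{-s}M$ and of the factor $r^{2s}$.

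First I would dispose of the deterministic part. Starting from the chain established just before the theorem --- the Poincar\'e/interpolation bound
\beqnx
|| Y - \iota_s^T \circ \mathcal{I}_{p,\tilde{p}}^{\alpha,\nu,\gamma} \circ \iota_s (Y) ||_{F,2}^2 \leq r^{2s}\, \mathcal{J}_{p}^{\alpha,\nu,\gamma}(\tilde{U}_p,\tilde{\Sigma}_{p,\tilde{p}},\tilde{V}_p) + \sum_{I,J} ||\nabla_{\delta} Y_{IJ}||_{F,2}^2
\eqnx
together with the truncation estimate of Section~\ref{sec2}, which on the matrix $\iota_s(Y)$ of size $r^{-s}M \times r^{-s}N$ costs $C r^{-2s} NM \log(p^2/\tilde p)$ and hence at most $CLNM\varepsilon$ after multiplication by $r^{2s}$ and use of $\tilde p > e^{-L\varepsilon}p^2$ --- one obtains, whenever $\mathcal{J}_{p}^{\alpha,\nu,\gamma}(\tilde{U}_p,\tilde{\Sigma}_{p},\tilde{V}_p) < r^{-2s}( K NM + \min(N,M)^2 )\varepsilon$, precisely the inequality defining $E_{p,\tilde p,\delta}$:
\beqnx
|| Y - \mathcal{I}_{s,p,\tilde{p}}^{\alpha,\nu,\gamma}(Y) ||_{F,2}^2 \leq \left( ( K + CL) NM + \min(N,M)^2 \right)\varepsilon + \sum_{I,J} ||\nabla_{\delta} Y_{IJ}||_{F,2}^2 .
\eqnx
On the event $E_{p,\tilde p,\delta}$, invoking the extra regularity hypothesis $\sum_{I,J} ||\nabla_{\delta} Y_{IJ}||_{F,2}^2 < K_0 MN\varepsilon$ and the stated value of $\varepsilon$ collapses the right-hand side to $\delta$, which is the last assertion of the theorem.

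Next I would derive the probability bounds. The key point is that $\iota_s(Y)$ is again a non-negative matrix, now of size $r^{-s}M \times r^{-s}N$, to which Lemma~\ref{lemma3} applies with $d = r^{-s}N$ (and with $d = r^{-s}M$ after exchanging rows and columns). Granting that (see the obstacle below), the event that some minimizing triple for $\iota_s(Y)$ satisfies $\mathcal{J}_p^{\alpha,\nu,\gamma}(\tilde U_p,\tilde\Sigma_p,\tilde V_p) < r^{-2s}( K NM + \min(N,M)^2 )\varepsilon$ --- which is contained in $E_{p,\tilde p,\delta}$ by the previous paragraph --- has, by Lemma~\ref{lemma3} with the rescaled dimensions, probability at least
\beqnx
\sum_{l = 1}^p \left( l^{r^{-s}\max(N,M)} - (l-1)^{r^{-s}\max(N,M)} \right) l!\, r^{-s}\min(N,M)\, (C_3 \varepsilon)^{\mu(r^{-s}N, r^{-s}M, l)} ,
\eqnx
which gives the single-$p$ bound; taking the union over $p$ and specializing to $p = p_{r^{-s}N,r^{-s}M,\delta}$ (cf. \eqref{optimal2}), exactly as in the proof of Theorem~\ref{theoremhaha}, yields the asserted lower bound for $\mathbb{P}(\bigcup_p E_{p,\tilde p,\delta})$. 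The growth-control clause is then obtained by rerunning, with $N,M$ replaced by $r^{-s}N, r^{-s}M$, the H\"older computation of the ratios $a_{l+1}/a_l$ from Section~\ref{sec2}; this produces the threshold $p > K_\eta\, p_{r^{-s}N,r^{-s}M,\delta}$ beyond which the increment of the sum is at most $\eta$ times its last term.

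The one genuinely non-mechanical step --- hence the main obstacle --- is justifying that Lemma~\ref{lemma3} may legitimately be applied to $\iota_s(Y)$. The lemmas of Section~\ref{sec2} are phrased for i.i.d.\ vectors whose density lies strictly between two positive constants, whereas the entries of $\iota_s(Y)$ are $r^{2s}$-block averages of the entries of $Y$ and so are neither the original entries nor, a priori, of the same law. One must check that averaging over blocks pushes the model forward to a law that is still absolutely continuous with density bounded away from $0$ and $\infty$ on the relevant cone of directions in $\mathbb{S}^{d-1}\cap[0,1]^d$; since $\iota_s$ is a fixed, nondegenerate linear map this is a direct change-of-variables / convolution computation, and it affects only the absolute constants $C_1,\dots,C_4$. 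Once this is in hand, the rest is substitution into $\mu(\cdot,\cdot,\cdot)$ (see \eqref{imp_coeff}) and $p_{r^{-s}N,r^{-s}M,\delta}$ and tracking of the factor $r^{2s}$, exactly as above.
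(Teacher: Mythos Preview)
Your proposal is correct and follows essentially the same route as the paper, which states only that the theorem follows by ``combining all the previous arguments and theorems'' --- namely the interpolation/Poincar\'e chain, the truncation estimate of Section~\ref{sec2} at scale $r^{-s}N\times r^{-s}M$, and then Theorem~\ref{theoremhaha} and the ratio analysis with the substitution $N\mapsto r^{-s}N$, $M\mapsto r^{-s}M$. You have in fact been more scrupulous than the paper: the obstacle you flag (that the block-averaged rows of $\iota_s(Y)$ need not inherit the i.i.d.\ bounded-density hypothesis of Lemma~\ref{lemma3}) is real and is silently passed over in the text, so your change-of-variables remark is a genuine addition rather than a deviation.
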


Now we can see from the above theorem that for a given threshold $\delta$ and $M=N$,
 if $Y$ has the regularity such that
 $ ||\nabla_{\delta} Y||_{F,2}^2 < \tilde{K}  \delta$ for some $\tilde{K} < 1$, then
 the lower bound of the probability of $|| Y - \mathcal{I}_{s,p,\tilde{p}}^{\alpha, \gamma}  (Y) ||_{F,2}^2 < \delta$ is higher than that of $|| Y -  \mathcal{I}_{p,\tilde{p}}^{\alpha,  \nu,   \gamma} (Y) ||_{F,2}^2 < \delta$ with an appropriately selected
 $\tilde{p}$.
Furthermore, for each $s$, the optimal choice of $p$ has the same order as $p_{r^{-s}N,r^{-s}M,\delta}$, which behaves
asymptotically like
\beqn
  p \sim r^{-s/2}\sqrt{ \frac{N}{ \log  N - 2 s \log r}}\,,
  \label{optimal111}
\eqn
with the memory complexity of $\mathcal{I}_{s,p,\tilde{p}}^{\alpha,  \nu,  \gamma}$ growing in the order $r^{-3 s/2}\sqrt{ \frac{N^3}{  \log  N - 2 s \log r }} $ as $N$ goes to infinity. This tells us that, by increasing $s$, the probability of fine approximation by NMF is increased as well as the memory complexity is decreased.
Moreover, from numerical experiments, we can observe that the resulting approximations $\mathcal{I}_{s,p,\tilde{p}}^{\alpha,  \nu, \gamma}$ from larger values of $s$ capture the coarser features of $Y$, then achieve finer and finer
features as $s$ decreases.

\section{Semi-smooth Newton method for non-negative factorizations} \label{sec4}
In this section, we propose and describe an efficient and cost-effective numerical algorithm
to realise the NMF of the image or big data $Y$ as we discussed
in the previous sections.
%We aim to utilize an efficient and cost-effective algorithm for this numerical process.

Instead of finding the optimal solution $[\tilde{U}_p, \Sigma_{p}, \tilde{V}_p]$ of
the functional \eqref{functional}, we shall propose to perform the following alternative two-stage NMF
to obtain an approximation of $\mathcal{I}_{p}^{\alpha, \nu, \gamma} (Y)$ in two stages:
%From numerical experiments, we notice that instead of finding the optimal $[\tilde{U}_p, \Sigma_{p}, \tilde{V}_p]$ inside the functional \eqref{functional}, it is enough to perform the following alternative NMF to obtain an approximation of $\mathcal{I}_{p}^{\alpha, \nu, \gamma} (Y)$ as follows:
\beqn
Y \approx A V^T \, , \q A^T \approx \Sigma^T U^T \,, \text{ then combine to get } Y \approx U \Sigma V^T \,.
\eqn
In each of the above two NMFs, we minimize the functional \eqref{mini} via a semi-smooth Newton method based on
primal-dual active sets \cite{semismooth2}, which will be derived below.
The semi-smooth Newton method is more advantageous than some classical methods \cite{clustering}
\cite{tri} and converges faster.
This two-stage process does not yield the optimal solution $[\tilde{U}_p, \Sigma_{p}, \tilde{V}_p]$ of the functional \eqref{functional}, but generates an sufficiently fine approximation of $\mathcal{I}_{p}^{\alpha, \nu, \gamma} (Y)$ as we shall
observe from our numerical experiments.
More importantly, this two-stage process is more user-friendly and less expensive computationally,
since the linearized systems of the functional \eqref{functional} involved
in the semi-smooth Newton iteration is much more convenient to evaluate numerically
than the systems encountered when one minimizes \eqref{functional} directly.
%Therefore numerically speaking, alternative factorization is more convenient to implement and less computationally expensive.

\subsection{Semi-smooth Newton method based on primal-dual active sets for NMF}

Before we present a two-stage NMF for an approximation of $\mathcal{I}_{p}^{\alpha, \nu, \gamma} (Y)$, we first discuss
some mathematical properties of the important non-convex minimisation problem \eqref{mini}.
The semi-smooth Newton method based on primal-dual active sets were proposed earlier
in \cite{semismooth2} to solve either convex or non-convex
non-smooth optimization problems effectively by combining the ideas of active sets and Newton-type update.
In this section, we formulate this method for solving the non-smooth non-convex optimization \eqref{mini}:
\begin{equation}
\min_{A\geq 0, P \geq 0} J(A,P) := ||Y-AP||_{F,2}^2+\alpha||A||_{F,1} +\nu||P||_{F,1} +\gamma||P P^T -I||_{F,1}.
\label{mini111}
\end{equation}

\subsubsection{Complementary Conditions}
We first recall two complementary conditions for the characterization of some constraints conditions from \cite{semismooth2}, which is crucial for the development of the algorithm in the subsequent analysis.
For this purpose, we will need the sub-differential of
the function $| \cdot | : \mathbb{R} \rightarrow \mathbb{R}$, which is
the set-valued signum function defined by
\beqn
\partial | \cdot | (x) =
\begin{cases}
1  & \text{ if } x > 0 \,,\\
[-1,1]  & \text{ if } x = 0 \,,\\
-1  & \text{ if } x < 0 \,.\\
\end{cases}
\eqn
We shall also often require the following complementarity condition \cite{semismooth2} which characterizes the set-valued sub-differential $\partial | \cdot | $ by
\beqn
\lambda = \frac{\lambda + c x}{ \max (1 , | \lambda + c x|)}
\eqn
for any given $c > 0$, based on the following equivalence.
\begin{Lemma} For any given constant $c>0$, it holds that
\label{theorem1}
\beqn
\lambda = \frac{\lambda + c x}{ \max (1 , | \lambda + c x|)} \Leftrightarrow
\lambda \in \partial | \cdot | (x)\,.
\eqn
%where $\partial$ is regarded as the sub-differential operator.
\end{Lemma}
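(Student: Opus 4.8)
The plan is to prove the equivalence by examining cases according to the sign of $x$, since both the sub-differential $\partial|\cdot|(x)$ and the projection-type formula $\lambda = (\lambda+cx)/\max(1,|\lambda+cx|)$ are defined piecewise. I would first observe that the right-hand side of the claimed identity is exactly the Euclidean projection of $\lambda + cx$ onto the interval $[-1,1]$: indeed, for $t\in\mathbb R$, $t/\max(1,|t|)$ equals $t$ when $|t|\le 1$ and equals $\mathrm{sgn}(t)$ when $|t|>1$, which is precisely $\mathrm{proj}_{[-1,1]}(t)$. Hence the equation to be characterized reads $\lambda = \mathrm{proj}_{[-1,1]}(\lambda + cx)$.

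Next I would treat the two implications. For the direction ``$\Leftarrow$'': suppose $\lambda\in\partial|\cdot|(x)$. If $x>0$ then $\lambda=1$, so $\lambda+cx=1+cx>1$, giving $\max(1,|\lambda+cx|)=1+cx$ and $(\lambda+cx)/\max(1,|\lambda+cx|)=1=\lambda$; the case $x<0$ is symmetric. If $x=0$ then $\lambda\in[-1,1]$, so $\lambda+cx=\lambda$ with $|\lambda|\le 1$, whence $\max(1,|\lambda|)=1$ and the right-hand side is $\lambda$. For the direction ``$\Rightarrow$'': assume $\lambda=\mathrm{proj}_{[-1,1]}(\lambda+cx)$. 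Since any projection lands in $[-1,1]$, necessarily $|\lambda|\le 1$. If $|\lambda+cx|\le 1$ the projection is the identity, so $\lambda=\lambda+cx$, forcing $cx=0$, i.e. $x=0$, and then $\lambda\in[-1,1]=\partial|\cdot|(0)$. If $\lambda+cx>1$ then the projection gives $\lambda=1$, so $cx=(\lambda+cx)-1>0$, hence $x>0$ and $\lambda=1\in\partial|\cdot|(x)$; the case $\lambda+cx<-1$ is symmetric and yields $x<0$, $\lambda=-1$. In every case $\lambda\in\partial|\cdot|(x)$, which completes the equivalence.

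I do not anticipate a genuine obstacle here; the only point requiring a little care is making explicit that $t\mapsto t/\max(1,|t|)$ coincides with the metric projection onto $[-1,1]$, so that the verification is exhaustive over the three regimes $\lambda+cx>1$, $|\lambda+cx|\le 1$, $\lambda+cx<-1$ and one never has to argue about a ``boundary'' value ambiguously. One could alternatively phrase the whole argument variationally — $\lambda\in\partial|\cdot|(x)$ iff $x\in N_{[-1,1]}(\lambda)$ (the normal cone), iff $\lambda=\mathrm{proj}_{[-1,1]}(\lambda+cx)$ by the standard characterization of projections onto convex sets — but for a self-contained short lemma the elementary case analysis above is cleaner and is what I would write out.
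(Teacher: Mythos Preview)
Your proof is correct and follows essentially the same case analysis as the paper's own proof, splitting according to whether $|\lambda+cx|\le 1$ or $|\lambda+cx|>1$ for one direction and according to the sign of $x$ for the other. Your added observation that $t\mapsto t/\max(1,|t|)$ is the projection onto $[-1,1]$ is a clean way to organize the cases, but the underlying argument is identical.
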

\begin{proof}
First we assume $\lambda = \frac{\lambda + c x}{ \max (1 , | \lambda + c x|)}$.
If $ | \lambda + c x| \leq 1$, then $\lambda = \lambda + c x$, which gives $x = 0$, hence $|\lambda| \leq 1$ and $\lambda \in \partial | \cdot | (x) $.
For $| \lambda + c x| > 1$, we know $\lambda = \frac{\lambda + c x}{ | \lambda + c x|} = \pm 1$.  If
$\lambda = 1$, then $ | 1 + c x| > 1$, which directly gives $x > 0$,
therefore $\lambda \in \partial | \cdot | (x) $. The case for $\lambda = -1$ is similar.

Now we assume that $\lambda \in \partial | \cdot | (x) $.
If $x=0$, then $|\lambda| \leq 1$,
therefore $\lambda = \frac{\lambda }{ \max (1 , | \lambda |)} = \frac{\lambda + c x}{ \max (1 , | \lambda + c x|)}  $.
Furthermore, if $x > 0$, then $\lambda =  1 $ and $|\lambda + c x| > 1$, therefore $\frac{\lambda + c x}{ \max (1 , | \lambda + c x|)}  = \frac{\lambda + c x}{ | \lambda + c x|}  = \lambda $.
The case for $x <0$ is similar.
\end{proof}
Note that in the above complementary condition, the choice of $c$ is arbitrary. However, in a practical implementation
using the complementary condition, $c$ is often chosen as a fixed constant that acts as a stabilisation parameter.

Now for any matrix $A$, we note that $||A||_{F,1} = \sum_{i,j} |A_{i,j}|$. Then the set-valued sub-differential function $ \partial || \cdot ||_{F,1} (A) $ is given by
\beqn
\left( \, \partial || \cdot ||_{F,1} [A] \, \right)_{i,j} =
\begin{cases}
1  & \text{ if } A_{i,j} > 0 \,,\\
[-1,1]  & \text{ if } A_{i,j} = 0 \,,\\
-1  & \text{ if } A_{i,j} < 0 \,.\\
\end{cases}
\eqn
Using the complementarity condition \eqref{theorem1} for a dual variable $\lambda$, we have
\beqn
\lambda_{i,j} = \frac{\lambda_{i,j} + c A_{i,j}}{ \max (1 , | \lambda_{i,j} + c A_{i,j}|)} \Leftrightarrow
\lambda \in \partial || \cdot ||_{F,1} (A)  \,.
\eqn
We may often write this simply as
$\lambda = \frac{\lambda + c A}{ \max (1 , | \lambda + c A|)} $, where the division, the maximum and the absolute value
are all taken point-wise.

Next we introduce a second complementary condition that is used to characterise an inequality constrain $x \geq 0$
\cite{semismooth2}.  We sketch the argument from \cite{semismooth2} to motivate
our desired complementary condition. For a functional $F : \mathbb{R}^N \rightarrow \mathbb{R}$,
consider the constrained optimization:
\begin{equation}
\min F(x) \quad  \text{ subject to } \quad  x \geq 0 \,. \label{optimality}
\end{equation}
We introduce its following equivalent augmented Lagrangian formulation with the same necessary optimality condition and
a dummy variable $z$ and a Lagrangian variable $\mu$:
\begin{equation}\l{eq:aug2}
\min F(x) + \langle \mu, x- z \rangle + \frac{c}{2} || x - z ||_2^2 \quad \text{ subject to } \quad  x = z \text{ and } z \geq 0 \,.
\end{equation}
This functional is clearly convex in $z$. Minimizing it over $z \geq 0$,
we obtain the following entry-wise necessary and sufficient conditions for $z$:
%with the help of the linearization $\langle \mu, \cdot \rangle + c \langle x- z, \cdot \rangle$ of the functional in $z$ :
\beqn
\begin{cases}
    \mu_i   + c  (x_i- z_i)  < 0   & \text{ if } z_i = 0 \, ,\\
    \mu_i   + c  (x_i- z_i)  = 0   & \text{ if } z_i > 0 \,
\end{cases}
\q \m{or} \q
\begin{cases}
    0 > \mu_i  + c  x_i     & \text{ if } z_i = 0 \, ,\\
    z_i = \frac{\mu_i   + c  x_i}{c}  & \text{ if } z_i > 0 \,,
\end{cases}
\eqn
%or equivalently
%\beqn
%\begin{cases}
%    0 > \mu_i  + c  x_i     & \text{ if } z_i = 0 \, ,\\
%    z_i = \frac{\mu_i   + c  x_i}{c}  & \text{ if } z_i > 0 \,.
%\end{cases}
%\eqn
which gives the following unique minimizer for the variable $z$:
\beqn
z_i = \max \Big( 0, \frac{\mu_i   + c  x_i}{c} \Big)\,.
\eqn
Or we will also write it simply as $z = \max ( 0, \frac{\mu   + c  x }{c} )$.
%where the operations are understood point-wisely.
Using this, we can directly compute
%putting this into the expression $\langle \mu, x- z \rangle + \frac{c}{2} || x - z ||_2^2$, we have by direct computation
\beqn
& & \langle \mu, x- z \rangle + \frac{c}{2} || x - z ||_2^2 \notag \\
&=& \frac{1}{c} \langle \mu,  \min( c x, - \mu ) \rangle + \frac{1}{2c} || \min ( c x, - \mu )  ||_2^2 \notag \\
&=& \frac{1}{2c}\left(  || \min ( c x, - \mu ) + \mu  ||_2^2 - || \mu ||^2_2 \right) \notag \\
&=& \frac{1}{2c}\left(  || \min ( \mu + c x, 0 ) ||_2^2 - || \mu ||^2_2 \right) \,.
\eqn
Substituting this expression into the functional in \eqref{eq:aug2},
we obtain its equivalent minimization:
\begin{equation}
\min F(x) +   \frac{1}{2c}\Big(  || \min ( \mu + c x, 0 ) ||_2^2 - || \mu ||^2_2 \Big)\,,
\end{equation}
whose necessary optimality conditions are given by the following set-valued equations:
\begin{equation}
\begin{cases}
 0 \in \partial F(x) + \min ( \mu + c x, 0 )  \partial \min ( \cdot , 0 ) \left( \min ( \mu + c x, 0 ) \right) \,,\\
 0 \in  - \mu + \min ( \mu + c x, 0 )  \partial \min ( \cdot , 0 ) \left( \min ( \mu + c x, 0 ) \right) \,.
\end{cases}
 \label{opt}
\end{equation}
Equivalently, by a point-wise comparison, we know
$\min ( \mu + c x, 0 )  \partial \min ( \cdot , 0 ) \left( \min ( \mu + c x, 0 ) \right)= 0$ if $\mu + c x \leq 0$.
Then we see from the above necessary optimality condition that
$\mu = 0$ and $\min ( \mu + c x, 0 ) = 0$,
therefore $\mu = \min ( \mu + c x, 0 ) $. On the other hand,
if $\mu + c x > 0$, we obtain that
$\min ( \mu + c x, 0 )  \partial \min ( \cdot , 0 ) \left( \min ( \mu + c x, 0 ) \right) = \min ( \mu + c x, 0 ) $.
This, along with the necessary optimality condition, yields that $\mu = \min ( \mu + c x, 0 )$.
Therefore, by combining the above two cases we arrive at an equivalent optimality condition for $\mu$,
$\mu = \min ( \mu + c x, 0 )$.  This leads us to the following necessary optimality conditions.
\begin{Theorem}
\label{theorem2}
The necessary optimality conditions for the minimization problem \eqref{optimality} are given by
\begin{equation}
 0 \in \partial F(x) + \mu
 \quad \text{ and } \quad  \mu = \min ( \mu + c x, 0 ) \,.
\end{equation}
%where $\partial$ is regarded as the sub-differential operator.
\end{Theorem}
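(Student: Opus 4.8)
The plan is to follow the reduction already laid out before the statement: turn the inequality constraint into an augmented Lagrangian with a slack variable, eliminate the slack in closed form, and then read off the first-order system of the resulting reduced functional. I would begin from the reformulation \eqref{eq:aug2}, taking for granted (as the standard augmented Lagrangian equivalence) that it has the same necessary optimality conditions as \eqref{optimality}; intuitively, the penalty $\langle\mu,x-z\rangle+\tfrac{c}{2}||x-z||_2^2$ vanishes on the feasible set $\{x=z\}$, so the two problems have the same minimizers, and $\mu$ plays the role of the multiplier for the constraint $x=z$, hence ultimately for $x\geq 0$.

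Next I would carry out the inner minimization in $z$. For fixed $(x,\mu)$ the objective in \eqref{eq:aug2} is a strictly convex quadratic in $z$ constrained only by $z\geq 0$, so its minimizer is the entrywise projection of the unconstrained optimum onto the nonnegative orthant, namely $z=\max\bigl(0,\tfrac{\mu+cx}{c}\bigr)$. Substituting this and using the identity for $\langle\mu,x-z\rangle+\tfrac{c}{2}||x-z||_2^2$ computed just above the statement collapses the problem to the minimization of $x\mapsto F(x)+\tfrac{1}{2c}\bigl(||\min(\mu+cx,0)||_2^2-||\mu||_2^2\bigr)$.

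Then I would differentiate. The point that keeps this clean is that, although $t\mapsto\min(t,0)$ is merely Lipschitz, $t\mapsto\min(t,0)^2$ is continuously differentiable with derivative $2\min(t,0)$; hence the penalty term is $C^1$ in both $x$ and $\mu$, and the only nonsmoothness is that of $F$. Writing stationarity in $x$ and in $\mu$ entrywise then gives precisely \eqref{opt}, i.e. $0\in\partial F(x)+\min(\mu+cx,0)$ together with $\min(\mu+cx,0)-\mu=0$. A pointwise case split on the sign of $\mu_i+cx_i$ finishes the argument: if $\mu_i+cx_i\leq 0$ the $\mu$-equation forces $\mu_i=\mu_i+cx_i$, hence $x_i=0$ and $\mu_i=\min(\mu_i+cx_i,0)$; if $\mu_i+cx_i>0$ then $\min(\mu_i+cx_i,0)=0$ and the $\mu$-equation again yields $\mu_i=0=\min(\mu_i+cx_i,0)$. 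In both cases $\mu=\min(\mu+cx,0)$, and substituting this back into the $x$-equation replaces $\min(\mu+cx,0)$ by $\mu$, giving $0\in\partial F(x)+\mu$.

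The only genuinely delicate step is the first one: justifying rigorously that passing to \eqref{eq:aug2} preserves the necessary optimality conditions, and not merely the set of minimizers, so that the $\mu$ produced at the end is a legitimate Lagrange multiplier for the constraint $x\geq 0$. Everything downstream — the closed-form $z$, the algebraic identity, the $C^1$ differentiation, and the two-case comparison — is routine bookkeeping.
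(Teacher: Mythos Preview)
Your proposal is correct and follows essentially the same route as the paper: pass to the augmented Lagrangian \eqref{eq:aug2}, eliminate $z$ in closed form, substitute to obtain the reduced functional, write stationarity in $x$ and $\mu$, and then do a pointwise sign split on $\mu_i+cx_i$. Your observation that $t\mapsto\min(t,0)^2$ is $C^1$ with derivative $2\min(t,0)$ is a clean simplification over the paper's set-valued formulation \eqref{opt}; once you have $\mu=\min(\mu+cx,0)$ directly from the $\mu$-stationarity, the subsequent case split you sketch is in fact redundant (though harmless), and you can substitute straight into the $x$-equation.
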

The condition $\mu = \min ( \mu + c x, 0 )$ for the dual variable $\mu$ is regarded as a complementary condition in \cite{semismooth2},  which serves as a characterization of the constraint $x \geq 0$.  This complementary condition may also be regarded as a project of the solution to the convex set as the epigraph defined by the constraint.

\subsubsection{Necessary optimality conditions for the optimization \eqref{mini111}}
%Combining Theorem\,\eqref{theorem2} with Lemma\,\eqref{theorem1},
%we arrive at the necessary optimality condition for optimization of \eqref{mini111}, which we intend to solve numerically using semi-smooth Newton method.
By directly applying Theorem\,\ref{theorem2} and calculating the sub-differentials involved,
we come to the necessary optimality conditions for the optimisation
\eqref{mini111} using the primal-dual variables $( A,P, \mu_A,\mu_P )$ for a given $c_1$:
\begin{equation*}
\begin{cases}
 0 \in \partial_A  J(A,P)  + \mu_A = 2 A P P^T - 2 Y P^T   + \mu_A + \alpha \partial || \cdot ||_{F,1} (A) \,, \\
 \mu_A = \min ( \mu_A + c_1 A, 0 ),\\
 0 \in \partial_P  J(A,P)  + \mu_P = - 2 A^T Y + 2 A^T A P + \mu_P + \nu \partial || \cdot ||_{F,1} (P) +
 \gamma \{ \partial || \cdot ||_{F,1} (P P^T - I) \}\left( P + T \circ P^T \circ T \right),\\
 \mu_P = \min ( \mu_P + c_1 P, 0 )\,
\end{cases}
\end{equation*}
where $T: M_{M \times N} \rightarrow M_{N \times M}$ is the transpose operator that maps $A$ to $A^T $.
Now, applying Lemma\,\ref{theorem1} to the above system and introducing two more variables $R, L$,
we obtain the following optimality conditions.
\begin{Theorem}
The necessary optimality conditions for the optimisation \eqref{mini111} can be given in terms of
the primal-dual variables $( A,P, R,L, \mu_A, \lambda_A, \mu_P, \lambda_P, \lambda_L )$ and
two constants $c_1,c_2$ by
%\begin{equation}
%\begin{cases}
% 0 &= 2 A P P^T -2  Y P^T   + \mu_A + \alpha \lambda_A  \\
% \lambda_A &= \frac{\lambda_A + c_1 A}{ \max (1 , | \lambda_A + c_1 A|)}  \\
% \mu_A &= \min ( \mu_A + c_1 A, 0 )  \\
% 0 &=  - 2 A^T Y + 2 A^T A P + \mu_P + \gamma \lambda_P \left( P \circ T + T \circ P^T \circ T \right) \\
% \lambda_P &= \frac{\lambda_P + c_2 (P P^T - I) }{ \max (1 , | \lambda_P + c_2 (P P^T - I) |)}  \\
% \mu_P &= \min ( \mu_P + c_1 P, 0 ) \,.
%\end{cases}
%\label{veryimp}
%\end{equation}
\begin{equation}
\begin{cases}
 0 &= 2 A P P^T - 2 Y P^T   + \mu_A + \alpha \lambda_A  \\
 \lambda_A &= \frac{\lambda_A + c_2 A}{ \max (1 , | \lambda_A + c_2 A|)}  \\
 \mu_A &= \min ( \mu_A + c_1 A, 0 )  \\
 0  &=  - 2 A^T Y + 2 A^T A P + \mu_P + \nu \lambda_P + \gamma \lambda_L R \\
 \lambda_P &= \frac{\lambda_P + c_2 P}{ \max (1 , | \lambda_P + c_2 P|)}  \\
 L  & = P P^T - I \\
 R  &= P \circ T + T \circ P^T \circ T \\
 \lambda_L &= \frac{\lambda_L + c_2 L }{ \max (1 , | \lambda_L + c_2 L |)}  \\
 \mu_P &= \min ( \mu_P + c_1 P, 0 ) \,.
\end{cases}
\label{veryimp2}
\end{equation}
%where $T: M_{M \times N} \rightarrow M_{N \times M} $ is the transpose operator.
\end{Theorem}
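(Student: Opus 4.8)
The plan is to derive \eqref{veryimp2} in three moves: reduce the joint constrained minimisation \eqref{mini111} to a pair of single-block first-order conditions by applying Theorem \ref{theorem2} to each of $A$ and $P$; compute the sub-differentials of $J$ in each block, using a chain rule for the composite term $\gamma\|PP^T-I\|_{F,1}$; and finally replace each set-valued membership $\lambda\in\partial\|\cdot\|_{F,1}(\cdot)$ by the single-valued fixed-point equation of Lemma \ref{theorem1}.

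First I would note that a minimiser $(A,P)$ of $J$ over $\{A\geq 0,\ P\geq 0\}$ is in particular a minimiser of $J(\,\cdot\,,P)$ over $\{A\geq 0\}$ and of $J(A,\,\cdot\,)$ over $\{P\geq 0\}$, so each block is an instance of \eqref{optimality}. Theorem \ref{theorem2} then produces multipliers $\mu_A,\mu_P$, for a fixed $c_1>0$, with $0\in\partial_A J(A,P)+\mu_A$, $\mu_A=\min(\mu_A+c_1A,0)$, and $0\in\partial_P J(A,P)+\mu_P$, $\mu_P=\min(\mu_P+c_1P,0)$. I would then spell out the sub-differentials: the $C^1$ term contributes $\partial_A\|Y-AP\|_{F,2}^2=2APP^T-2YP^T$ and $\partial_P\|Y-AP\|_{F,2}^2=2A^TAP-2A^TY$, while $\alpha\|A\|_{F,1}$ and $\nu\|P\|_{F,1}$ contribute $\alpha\,\partial\|\cdot\|_{F,1}(A)$ and $\nu\,\partial\|\cdot\|_{F,1}(P)$. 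The delicate contribution is $\partial_P\big(\gamma\|PP^T-I\|_{F,1}\big)$: setting $L:=PP^T-I$, the inner map $P\mapsto L$ has derivative $H\mapsto HP^T+PH^T$, whose adjoint sends a matrix $\Lambda$ to $\Lambda P+\Lambda^TP$; denoting this transposition-assembled linear operator by $R=P\circ T+T\circ P^T\circ T$, the chain rule gives that this contribution equals $\gamma\,\lambda_LR$ for some $\lambda_L\in\partial\|\cdot\|_{F,1}(L)$. Assembling these facts recovers precisely the preliminary inclusion system displayed just above the statement, now with $L$ and $R$ named.

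The last step is a reformulation. I would introduce $\lambda_A\in\partial\|\cdot\|_{F,1}(A)$, $\lambda_P\in\partial\|\cdot\|_{F,1}(P)$ and the above $\lambda_L\in\partial\|\cdot\|_{F,1}(L)$ as additional dual variables, turning the two gradient inclusions into the equalities $0=2APP^T-2YP^T+\mu_A+\alpha\lambda_A$ and $0=-2A^TY+2A^TAP+\mu_P+\nu\lambda_P+\gamma\lambda_LR$. Then, since $\|\cdot\|_{F,1}$ decouples over matrix entries and its sub-differential is the Cartesian product of the scalar sets $\partial|\cdot|$, the scalar equivalence of Lemma \ref{theorem1} lifts entrywise with any fixed $c_2>0$, so each membership $\lambda_\bullet\in\partial\|\cdot\|_{F,1}(\bullet)$ becomes $\lambda_\bullet=(\lambda_\bullet+c_2\bullet)/\max(1,|\lambda_\bullet+c_2\bullet|)$. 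Together with $L=PP^T-I$ and $R=P\circ T+T\circ P^T\circ T$, these nine relations are exactly \eqref{veryimp2}.

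I expect the main subtlety to lie in the non-smooth, non-convex structure. The summand $\gamma\|PP^T-I\|_{F,1}$ is not convex in $P$, so all differentiation must be taken in the Clarke sense; but each summand of $J$ is Clarke regular — the first term is $C^1$, $\|A\|_{F,1}$ and $\|P\|_{F,1}$ are convex, and $\|PP^T-I\|_{F,1}$ is the composition of the convex norm with the smooth map $P\mapsto PP^T-I$ — so the Clarke sum and chain rules apply with equality and \eqref{veryimp2} is an exact set of first-order (stationarity) conditions. Since $J$ is non-convex, however, these conditions are necessary for a (local) minimiser but not sufficient. The remaining points — that the block-wise use of Theorem \ref{theorem2} is valid at a joint minimiser, and that $R$ is exactly the adjoint of $H\mapsto HP^T+PH^T$ once the transpose operator $T$ is unwound — are routine.
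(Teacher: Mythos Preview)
Your proposal is correct and follows essentially the same route as the paper: apply Theorem~\ref{theorem2} block-wise to obtain the inclusions with multipliers $\mu_A,\mu_P$, compute the sub-differentials (including the chain rule for $\gamma\|PP^T-I\|_{F,1}$ that produces the operator $R$ and the auxiliary $L$), and then invoke Lemma~\ref{theorem1} entrywise to rewrite each $\lambda_\bullet\in\partial\|\cdot\|_{F,1}(\bullet)$ as the fixed-point identity. The paper's own argument is only a two-line sketch to this effect; your version is more careful, in particular your explicit Clarke-regularity justification for the non-convex composite term and your identification of $R$ as the adjoint of $H\mapsto HP^T+PH^T$, which the paper leaves implicit in its $T$-notation.
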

%This is the starting point of the algorithm for solving the variational problem \eqref{mini111}.

\subsubsection{Semi-smooth Newton strategy}

We derived the necessary optimality conditions for solving the optimization problem \eqref{mini111}
in the previous subsection. We shall now develop a semi-smooth Newton method for solving these optimality systems,
which can be readily shown to be Newton differentiable \cite{semismooth2}.
To further develop our algorithm, we separate the variables $( A,P, R,L, \mu_A, \lambda_A, \mu_P, \lambda_P, \lambda_L )$  into three sets, i.e., $( A, \mu_A, \lambda_A )$, $( P,  \mu_P , \lambda_P )$  and $( L, R, \lambda_L)$,
and solve for each set of variables independently.
%and perform a directional search. This is computationally advantageous to separate the variables and solve independently by considering the variables from the other sides as constants thanks to the following reasons.
Clearly, the separated systems are easier for us to perform active set techniques and greatly reduce
the computational costs, and more importantly, each separated nonlinear system consists of much fewer variables,
and is therefore much more stable when performing semi-smooth Newton iterations.
With these motivations, we separate \eqref{veryimp2} into three sets of equations:

\ss
\noindent (1) For a fixed $P$, solve the system for $( A, \mu_A, \lambda_A )$:
\begin{eqnarray}
\begin{cases}
 0 &= 2 A P P^T - 2 Y P^T   + \mu_A + \alpha \lambda_A\,,  \\
 \lambda_A &= \frac{\lambda_A + c_2 A}{ \max (1 , | \lambda_A + c_2 A|)}\,,  \\
 \mu_A &= \min ( \mu_A + c_1 A, 0 ) \,.
\end{cases}
\label{veryimpsep1}
\end{eqnarray}
(2) For the fixed $A, L, R, \lambda_L$, solve the system for $( P, \mu_P, \lambda_P )$:
\begin{eqnarray}
\begin{cases}
 0  &=  - 2 A^T Y + 2 A^T A P + \mu_P + \nu \lambda_P  + \gamma \lambda_L  R \\
 \lambda_P &= \frac{\lambda_P + c_2 P}{ \max (1 , | \lambda_P + c_2 P|)}  \\
 \mu_P &= \min ( \mu_P + c_1 P, 0 ) \,.
\end{cases}
\label{veryimpsep2}
\end{eqnarray}
(3) For a fixed $P$, solve the system for $( L, R, \lambda_L)$:
\begin{eqnarray}
\begin{cases}
 L  & = P P^T - I \\
 R  &= P \circ T + T \circ P^T \circ T \\
 \lambda_L &= \frac{\lambda_L + c_2 L }{ \max (1 , | \lambda_P + c_2 L |)} \,.
\end{cases}
\label{veryimpsep3}
\end{eqnarray}
Now we introduce the following active and inactive sets:
\beqn
& \mathcal{A}_{A,1} = \{ (i,j)  \, : \,   (\mu_A)_{i,j} + c_1 A_{i,j} > 0  \}\,,
\quad
& \mathcal{I}_{A,1} = \{ (i,j)  \, : \,   (\mu_A)_{i,j} + c_1 A_{i,j} \leq 0  \}\,, \notag \\
&\mathcal{A}_{A,2} = \{ (i,j)  \, : \,   | (\lambda_A)_{i,j} + c_2 A_{i,j}| \leq 1  \}\,,
\quad
&\mathcal{I}_{A,2}  = \{ (i,j)  \, : \,  | (\lambda_A)_{i,j} + c_2 A_{i,j}| > 1 \}\,, \notag \\
&\mathcal{A}_{P,1} = \{ (i,j)  \, : \,   (\mu_P)_{i,j} + c_1 P_{i,j} > 0  \}\,,
\quad
&\mathcal{I}_{P,1} = \{ (i,j)  \, : \,   (\mu_P)_{i,j} + c_1 P_{i,j} \leq 0  \}\,, \notag \\
&\mathcal{A}_{P,2} = \{ (i,j)  \, : \,   | (\lambda_P)_{i,j} + c_2 P_{i,j}| \leq 1  \}\,,
\quad
&\mathcal{I}_{P,2}  = \{ (i,j)  \, : \,  | (\lambda_P)_{i,j} + c_2 P_{i,j}| > 1 \}\,, \notag \\
&\mathcal{A}_{L} = \{ (i,j)  \, : \,   | (\lambda_L)_{i,j} + c_2 L_{i,j} | \leq 1  \}\,,
\quad
&\mathcal{I}_{L} = \{ (i,j)  \, : \,  | (\lambda_L)_{i,j} + c_2 L_{i,j} | > 1 \} \,, \notag
\eqn
then we can further reduce the previous 3 systems into the following much simpler ones
thanks to direct substitutions and point-wise comparisons of the complementary conditions:
%following observations.
%In fact, by following the direct point-wise comparison to derive the complementary conditions, we can directly see that, for the active sets, on $\mathcal{A}_{A,1} \bigcup \mathcal{A}_{A,2}$, we shall have $A= 0$, whereas on $\mathcal{A}_{P}$, we have $P = 0$ and on $\mathcal{A}_{L}$ we have $L= 0$.
%Moreover, on the inactive sets, we can conclude that,
%on $\mathcal{I}_{A,1}$ we have $\mu_A= 0$, on $\mathcal{I}_{A,2}$ we have $\lambda_A = \frac{\lambda_A + c_1 A}{ | \lambda_A + c_1 A| }$ , whereas on $\mathcal{I}_{L}$ we have $\lambda_L = \frac{\lambda_L + c_1 L}{ | \lambda_L + c_1 L| }$, and on $\mathcal{I}_{P}$ we have $\mu_P= 0$. Substituting all the above conclusions into \eqref{veryimpsep2} and \eqref{veryimpsep2}. \\

\noindent (1) For a fixed $P$, we have $A= 0$ on $\mathcal{A}_{A,1} \bigcup \mathcal{A}_{A,2}$,
while $( A, \lambda_A )$ on $\mathcal{I}_{A,1} \bigcap \mathcal{I}_{A,2}$ satisfies
\begin{eqnarray}
\begin{cases}
 0 &= 2 A P P^T - 2 Y P^T  + \alpha \lambda_A \,, \\
 0 &= \lambda_A | \lambda_A + c_2 A|- (\lambda_A + c_2 A) \,.
\end{cases}
\label{veryimpsep1a}
\end{eqnarray}
(2) For the fixed $A, L, R, \lambda_L$, we have $P= 0$ on $\mathcal{A}_{P,1} \bigcup \mathcal{A}_{P,2}$,
while $(P, \lambda_P)$ on $\mathcal{I}_{P}$ ssatisfies
\begin{eqnarray}
\begin{cases}
 0  &=  - 2 A^T Y + 2 A^T A P + \nu \lambda_P  + \gamma \lambda_L  R\,, \\
 0 &= \lambda_P | \lambda_P + c_2 P|- (\lambda_P + c_2 P) \,.
\end{cases}
\label{veryimpsep2a}
\end{eqnarray}
(3) For a fixed $P$, we have $L= 0$ on $\mathcal{A}_{L}$, while $( L, R, \lambda_L)$
on $\mathcal{I}_{L}$ satisfies
\begin{eqnarray}
\begin{cases}
 L  & = P P^T - I\,, \\
 R  &= P \circ T + T \circ P^T \circ T\,, \\
 0 &= \lambda_L | \lambda_L + c_2 L|- (\lambda_L + c_2 L) \,.
\end{cases}
\label{veryimpsep3a}
\end{eqnarray}

For the nonlinear constraints with $\lambda_A, \lambda_P$ and $\lambda_L$,
we propose a semi-smooth Newton-step update as in \cite{semismooth} to solve the corresponding equations.
One might suggest the explicit Uzawa iteration \cite{semismooth2} instead,
but it is only conditionally stable and converges slowly.
We shall give only a sketch of the derivation of the semi-smooth Newton update,
following the general principle in \cite{semismooth}. We first consider the system \eqref{veryimpsep1a}.
Assume that $(A, \lambda_A)$ are perturbed to $(A^h, \lambda_A^h)$ such that
the increment is of order $O(h)$ and satisfies the second equation in \eqref{veryimpsep1a}.
Then we can derive
\beqn
 \lambda_A^h| \lambda_A + c_2 A|  + \lambda_A \left( \frac{\lambda_A + c_2 A}{| \lambda_A + c_2 A|} (\lambda_A^h + c_2 A^h - \lambda_A - c_2 A) \right) - (\lambda_A^h + c_2 A^h)  = O(h^2)\,,
\notag
\eqn
which gives the following Newton update from $(A, \lambda_A)$  to $(A^+, \lambda_A^+)$:
\beqn
\lambda_A^+ | \lambda_A + c_2 A|  + \lambda_A \frac{\lambda_A + c_2 A}{| \lambda_A + c_2 A|} \left(  \lambda_A^+ + c_2 A^+  \right)
= \lambda_A | \lambda_A + c_2 A| + (\lambda_A^+ + c_2 A^+)\,. \notag
\eqn
Now following \cite{semismooth2}, we suggest the following Newton update involving
damping and regularization:
\beqn
\lambda_A^+ | \lambda_A + c_2 A|  + \frac{\lambda_A + c_2 A}{| \lambda_A + c_2 A|} \left(  \lambda_A^+ + c_2 A^+  \right) \frac{\theta \lambda_A }{\max(1,|\lambda_A|)}
=| \lambda_A + c_2 A| \frac{\theta \lambda_A }{\max(1,|\lambda_A|)} + (\lambda_A^+ + c_2 A^+) \notag
\eqn
where $\theta$ is a stability parameter and the regularizer $\lambda_A / \max(1,|\lambda_A|)$ is set to automatically
restrict $\lambda_A$ to be in $[-1,1]$. Following \cite{semismooth}, we set $\theta \, | \lambda_A + c_2 A| / \left( | \lambda_A + c_2 A| -1 + \theta \frac{\lambda_A (\lambda_A + c_2 A) }{\max(1,|\lambda_A|) | \lambda_A + c_2 A|} \right) =1$, which gives $\theta \leq 1$ to simplify the iteration and leads to the following update after direct substitution:
\beqn
0 = \lambda_A^+ - c_2\frac{1 - a_A b_A }{ d_A - 1} A^+ + a_A \notag
\eqn
where $a_A = \frac{\lambda_A }{\max(1,|\lambda_A|)}$, $b_A = \frac{ \lambda_A + c_2 A}{| \lambda_A + c_2 A|}$ and $d_A = | \lambda_A + c_2 A|$, which is used as the semi-smooth update for the first system
\begin{eqnarray}
\begin{cases}
 0 &= 2 A^+ P P^T - 2 Y P^T  + \alpha \lambda_A^+  \\
 0 &= \lambda_A^+ -  \frac{c_2 }{ d_A - 1} \left( I - a_A b_A^T \right) A^+ + a_A\,.
\end{cases}
\end{eqnarray}
We can linearize the constraints for the variables $\lambda_P$ and $\lambda_L$ similarly.

We may solve the third system \eqref{veryimpsep3a}
for the other two variables $(L, R)$, but it is actually not an easy job.
Although the second equation in \eqref{veryimpsep3a} is linear,
it is computationally expensive as the transpose operator $T$ is involved.
We therefore derive a semi-smooth Newton update for $R$ from $L$ and $P$ instead of a direct substitution.
Assume $(L,R,P)$ are perturbed to $(L^h,R^h,P^h)$ such that the increment is of order $O(h)$ and satisfies $L  = P P^T - I$, we then have
\beqn
(L^h - L) = R^h (P^h - P) + O(h^2) \,, \notag
\eqn
which suggests the following update for $R$:
\beqn
 R^+ (P^+ - P) &= (L^+ - L) \,. \notag
\eqn
Combining this update with the aforementioned strategy for $\lambda_L$, we obtain the following semi-smooth Newton update from $(L,R,P)$ to $(L^+,R^+,P^+)$ for the third system \eqref{veryimpsep3a}:
\begin{eqnarray}
\begin{cases}
 L^+  & = P^+ (P^+)^T - I\,,\\
 R^+ (P^+ - P) &= (L^+ - L)\,, \\
 \lambda_L^+ &=   \frac{c_2 }{ d_L - 1} \left( I - a_L b_L^T \right) L^+ - a_L
\end{cases}
\end{eqnarray}
where $a_L$, $b_L$ and $d_L$ are given by
$a_L = \frac{\lambda_L }{\max(1,|\lambda_L|)}$, $b_L = \frac{ \lambda_L + c_2 L}{| \lambda_L + c_2 L|}$ and $d_L = | \lambda_L + c_2 L|$.
%
%Using the numerical values of the variables in the previous iteration, we can predict the locations of the active sets, and therefore greatly reduce the computational cost of the algorithm.

\subsubsection{Numerical algorithms}

Combining all the techniques and results from the previous subsections, we are ready to propose
the semi-smooth Newton method based on primal-dual active sets
for solving the optimality system \eqref{veryimp2} to tackle the minimization problem \eqref{mini111}.

\ss
{\bf Semi-smooth Newton Algorithm 1}.
Given two constants $c_1, c_2$; initialize $( A^0,P^0, \mu_A^0, \lambda_A^0, \mu_P^0, \lambda_P^0, \lambda_L^0 )$.
%\begin{center}\textbf{Semi-smooth Newton Algorithm 1} \end{center}
%\item[\textbf{Step 1}]
%\i Fix two parameters $c_1, c_2$. Initialize $( A^0,P^0, \mu_A^0, \lambda_A^0, \mu_P^0, \lambda_P^0, \lambda_L^0 )$.
%\item[\textbf{Step 2}]

For $k = 0, 1, ..., K$,  do the following steps\,:
%\bn
\bn
\i   %\item[\textbf{Step 2.1.1 }]
Compute
%$ \mu_A^{(k)}$ as
%\beqn
$
\mu_A^{(k)} := - 2 A^{(k)} P^{(k)} {P^{(k)}}^T + 2 Y (P^{(k)})^T  - \alpha \lambda_A^{(k)}\,.
$
%\eqn

\i   % \item[\textbf{Step 2.1.2 }]
Set the active and inactive sets  $\mathcal{A}_{A,i}^k $ and $\mathcal{I}_{A,i}^k$ for $i = 1,2$\,:
\beqn
& \mathcal{A}_{A,1}^{(k)} = \{ (i,j)  \, : \,   (\mu_A)_{i,j}^{(k)} + c_1 A_{i,j}^{(k)} > 0  \}\,,
\quad
& \mathcal{I}_{A,1}^{(k)} = \{ (i,j)  \, : \,   (\mu_A)_{i,j}^{(k)} + c_1 A_{i,j}^{(k)} \leq 0  \}\,, \notag \\
&\mathcal{A}_{A,2}^{(k)} = \{ (i,j)  \, : \,   | (\lambda_A)_{i,j}^{(k)} + c_2 A_{i,j}^{(k)}| \leq 1  \}\,,
\quad
&\mathcal{I}_{A,2}^{(k)}  = \{ (i,j)  \, : \,  | (\lambda_A)_{i,j}^{(k)} + c_2 A_{i,j}^{(k)}| > 1 \} \,. \notag
\eqn

\i %\item[\textbf{Step 2.1.3 }]
Compute $ a_A^{(k)}, b_A^{(k)}, d_A^{(k)} $\,:
\beqn
a_A^{(k)} := \frac{\lambda_A^{(k)} }{\max(1,|\lambda_A^{(k)}|)} \, , \quad b_A^{(k)} := \frac{ \lambda_A^{(k)} + c_2 A^{(k)}}{| \lambda_A^{(k)} + c_2 A^{(k)}|},  \quad d_A^{(k)} := | \lambda_A^{(k)} + c_2 A^{(k)}| \,. \notag
\eqn

\i %\item[\textbf{Step 2.1.4 }]
Set $A^{(k+1)}:= 0$ on $\mathcal{A}^{(k)}_{A,1} \bigcup \mathcal{A}^{(k)}_{A,2}$;
~solve the system for $( A^{(k+1)}, \lambda_A^{(k+1)} )$  on $\mathcal{I}^{(k)}_{A,1} \bigcap \mathcal{I}^{(k)}_{A,2}$\,:
\begin{eqnarray}
\begin{cases}
0 &= 2 A^{(k+1)} P^{(k)} {P^{(k)}}^T - 2 Y (P^{(k)})^T  + \alpha \lambda_A^{(k+1)}\,,  \\
0 &= \lambda_A^{(k+1)} - \frac{ c_2 }{ d_A^{(k)} - 1}  \left(I - a_A^{(k)} [b_A^{(k)}]^T \right)  A^{(k+1)} + a_A^{(k)}\,.
\end{cases}
\notag
\end{eqnarray}

%%%%%%%%%%%%%%

\i %\item[\textbf{Step 2.2.1 }]
Compute
%$ \mu_P^{(k)}$ as
%\beqn
$
\mu_P^{(k)} :=   2 (A^{(k+1)})^T Y - 2(A^{(k+1)})^T A^{(k+1)} P^{(k)} - \nu  \lambda_P^{(k)} - \gamma \lambda_L^{(k)}  R^{(k)} \,.
$

\i %\item[\textbf{Step 2.2.2 }]
Set the active and inactive sets $\mathcal{A}_{P,i}^k $ and $\mathcal{I}_{P,i}^k$ for $i = 1,2$\,:
\beqn
& \mathcal{A}_{P,1}^{(k)} = \{ (i,j)  \, : \,   (\mu_P)_{i,j}^{(k)} + c_1 P_{i,j}^{(k)} > 0  \}\,,
\quad
& \mathcal{I}_{P,1}^{(k)} = \{ (i,j)  \, : \,   (\mu_P)_{i,j}^{(k)} + c_1 P_{i,j}^{(k)} \leq 0  \}\,, \notag \\
&\mathcal{A}_{P,2}^{(k)} = \{ (i,j)  \, : \,   | (\lambda_P)_{i,j}^{(k)} + c_2 P_{i,j}^{(k)}| \leq 1  \}\,,
\quad
&\mathcal{I}_{P,2}^{(k)}  = \{ (i,j)  \, : \,  | (\lambda_P)_{i,j}^{(k)} + c_2 P_{i,j}^{(k)}| > 1 \} \,. \notag
\eqn

\i %\item[\textbf{Step 2.2.3 }]
Compute $ a_P^{(k)}, b_P^{(k)}, d_P^{(k)} $\,:
\beqn
a_P^{(k)} := \frac{\lambda_P^{(k)} }{\max(1,|\lambda_P^{(k)}|)} \, , \quad b_P^{(k)} := \frac{ \lambda_P^{(k)} + c_2 P^{(k)}}{| \lambda_P^{(k)} + c_2 P^{(k)}|},  \quad d_P^{(k)} := | \lambda_P^{(k)} + c_2 P^{(k)}| \,. \notag
\eqn

\i %\item[\textbf{Step 2.2.4 }]
Set $P^{(k+1)} := 0$ on $\mathcal{A}^{(k)}_{P,1} \bigcup \mathcal{A}^{(k)}_{P,2}$\,;
~solve the system for $( P^{(k+1)}, \lambda_P^{(k+1)} )$ on $\mathcal{I}^{(k)}_{P,1} \bigcap \mathcal{I}^{(k)}_{P,2}$\,:
\begin{eqnarray}
\begin{cases}
 0  &=  - 2 (A^{(k+1)})^T Y + 2(A^{(k+1)})^T A^{(k+1)} P^{(k+1)} + \nu \lambda_P^{(k+1)} + \gamma \lambda_L^{(k)}  R^{(k)} \\
 0 &= \lambda_P^{(k+1)} - \frac{ c_2 }{ d_P^{(k)} - 1}  \left(I - a_P^{(k)} [b_P^{(k)}]^T \right)  P^{(k+1)} + a_P^{(k)}\,.
 \end{cases}
 \notag
\end{eqnarray}

%%%%%%%%%%%%%%%%%%%%%%%%%%%%%%%

\i %\item[\textbf{Step 2.3.1 }]
Set the active and inactive sets $\mathcal{A}_{L}^{(k)}$ and $ \mathcal{I}_{L}^{(k)} $\,:
\beqn
&\mathcal{A}_{L}^{(k)} = \{ (i,j)  \, : \,   | (\lambda_L)_{i,j}^{(k)} + c_2 L_{i,j}^{(k)} | \leq 1  \}\,,
\quad
&\mathcal{I}_{L}^{(k)} = \{ (i,j)  \, : \,  | (\lambda_L)_{i,j}^{(k)} + c_2 L_{i,j}^{(k)} | > 1 \} \,. \notag
\eqn

\i %\item[\textbf{Step 2.3.2 }]
Compute $ a_L^{(k)}, b_L^{(k)}, d_L^{(k)} $\,:
\beqn
a_L^{(k)} := \frac{\lambda_L^{(k)} }{\max(1,|\lambda_L^{(k)}|)} \, , \quad b_L^{(k)} := \frac{ \lambda_L^{(k)} + c_2 L^{(k)}}{| \lambda_L^{(k)} + c_2 L^{(k)}|}, \quad d_L^{(k)} := | \lambda_L^{(k)} + c_2 L^{(k)}| \,.  \notag
\eqn

\i %\item[\textbf{Step 2.3.3}]
Set $L^{(k+1)}= 0$ on $\mathcal{A}^{(k)}_{L}$\,;
~evaluate $( L^{(k+1)}, R^{(k+1)}, \lambda_L^{(k+1)})$ on $\mathcal{I}^{(k)}_{L}$\,:
\begin{eqnarray}
\begin{cases}
 L^{(k+1)}  & = P^{(k+1)} (P^{(k+1)})^T - I\,, \\
 R^{(k+1)} (P^{(k+1)} - P^{(k)}) &= \left(L^{(k+1)} - L^{(k)} \right) \,,\\
 \lambda_L^{(k+1)} &=   \frac{ c_2 }{ d_L^{(k)} - 1}  \left(I - a_L^{(k)} [b_L^{(k)}]^T \right)  L^{(k+1)} - a_L^{(k)}\,.
\end{cases}
 \notag
\end{eqnarray}
\en

A natural choice of the stopping criterion is based on the changes of the active sets: if the active sets for two consecutive iterations are the same, we may stop the iteration \cite{semismooth2}.
As the iteration goes on, $A,P,L$ become more and more sparse, and the sizes of the linear systems involved
drop drastically, so the inversions of the linear systems are more stable and
less expensive computationally.

Finally, a few remarks are in order for effective implementations of the algorithm\,:
\bn
    \item
        With the enforcement of the constraints $A,P \geq 0$ by the dual variables $\mu_A, \mu_P$, the algorithm ensures naturally
        $A^{(k)} , P^{(k)} \geq 0$ for all $k$ if the initial guesses $A^{(0)}$ and $P^{(0)}$ are set to be non-negative.
        Thus the algorithm can be simplified by setting the dual variables $\lambda_A^{(k)}$ and $\lambda_P^{(k)}$ to be
        $ \lambda_A^{(k)} = \lambda_P^{(k)}=1$ and drop the active/inactive sets $\mathcal{A}_{A,2}^{(k)} $, $\mathcal{I}_{A,2}^{(k)}$, $\mathcal{A}_{P,2}^{(k)} $ and $\mathcal{I}_{P,2}^{(k)}$.
    \item
        In order to further simplify the algorithm, we may normalize the row vectors of $P$ after \textbf{Step 8} so that $PP^T$ has unitary diagonal entries. If this normalisation is added, then $L^{(k)} \geq 0$ for all $k$.  In this case
        $\lambda_L^{(k)}$ can be simply set to be $\lambda_L^{(k)}=1$ while
        $\mathcal{A}_{L}^{(k)}$ and $\mathcal{I}_{L}^{(k)}$ can be dropped.
    \item
        In the development of our algorithm above, we assume $Y\geq 0$ entry-wise, therefore it is natural to enforce
        the constraint $A \geq 0$.  This non-negativity condition for $A$ is however infeasible and shall be dropped if $Y$ is not non-negative entry-wise. In this case, nonetheless, we can still utilize the above algorithm for a non-negative factorization with the following minor modification: drop the dual variable $\mu_A$ and the active/inactive sets $\mathcal{A}_{A,1}^{(k)} $ and $\mathcal{I}_{A,1}^{(k)}$.
\en

\subsection{Non-negative matrix factorization of an image}

With \textbf{Semi-smooth Newton Algorithm 1} to minimize the functional \eqref{mini111}, we are ready to propose
an algorithm
to approximate $\mathcal{I}_{p}^{\alpha, \nu, \gamma} (Y)$ in \eqref{approximation} and $\mathcal{I}_{p,\tilde{p}}^{\alpha, \nu, \gamma} (Y)$ in \eqref{approximation2} for the NMF of an image $Y$\,.

\ss
\textbf{Non-negative Matrix Factorization Algorithm 2}.
Specify $5$ parameters $\alpha$, $\nu$, $\gamma$, $p$, $\tilde{p}$.
\bn
%\item[\textbf{Step 1}]
%Fix $5$ parameters $\alpha$, $\nu$, $\gamma$, $p$, $\tilde{p}$.
\i %\item[\textbf{Step 2}]
Apply \textbf{Semi-smooth Newton Algorithm 1} to find
a minimizer $ [A_{0}, V_{0}]$ of the problem\,:
\begin{equation*}
\min_{A\geq 0, V\geq 0}||Y-AV^T||_{F,2}^2+\alpha||A||_{F,1} +\nu||V||_{F,1} +\gamma||V^T V-I||_{F,1}.
\end{equation*}

\i %\item[\textbf{Step 3}]
Apply \textbf{Semi-smooth Newton Algorithm 1} to find
a minimizer $ [\Sigma_{0}, U_{0}]$ of the problem\,:
\begin{equation*}
\min_{\Sigma \geq 0, U \geq 0}||A_{0}^T-\Sigma^T U^T||_{F,2}^2+\alpha||\Sigma||_{F,1} +\nu||U||_{F,1} +\gamma||U^T U-I||_{F,1}.
\end{equation*}
%using \textbf{Algorithm 1}
%to obtain a minimizer $ [\Sigma_{0}, U_{0}]$.
\i %\item[\textbf{Step 4}]
Form
$
\mathcal{I}_{p}^{\alpha, \nu,  \gamma} (Y) : = U_{0}  \Sigma_{0}  V_{0}^T \,
$
from $[U_{0} , \Sigma_{0} , V_{0}]$\,.
\i %\item[\textbf{Step 5}]
Sort the entries of $\Sigma_0$ from the largest to the smallest as $\sigma_{i_1 j_1} \geq \sigma_{i_2 j_2} \geq ..\geq \sigma_{i_{p^2} j_{p^2}}$.

\i %\item[\textbf{Step 6}]
Compute $\tilde{\sigma}_l := \sigma_{i_l j_l} e_l \otimes e_l $, then form
$
\Sigma_{0, \tilde{p}} := \sum_{l=1}^{\tilde{p}} \tilde{\sigma}_l \,.
$
\i %\item[\textbf{Step 7}]
Form the factorisation
$
\mathcal{I}_{p,\tilde{p}}^{\alpha, \nu,  \gamma} (Y) := U_{0}  \Sigma_{0, \tilde{p}}  V_{0}^T  \,.
$
\en

\subsection{Multi-level analysis algorithm based on NMF}

Based on the results from a NMF, we can propose a multi-level analysis algorithm.

\ss
{\bf Multi-level Analysis Algorithm 3}. Specify a scaling parameter $r$ and a constant
$s_{max}$ such that

$ s_{max}< \log N / \log r$;  ~set parameters $\alpha$, $\nu$, $\gamma$ and 2 arrays of parameters
$[p(1),... , p(s_{max})]$, $ [\tilde{p}(1),... , \tilde{p}(s_{max})]$.

For $s =1, 2, ..., s_{max}$, do the following steps\,:
%\bn
%\item[\textbf{Step 1}]
%Fix a scaling parameter $r$ and $s_{max}$ such that $ s_{max}< \log N / \log r$.
%\item[\textbf{Step 2}]
%Set three parameters $\alpha$, $\nu$, $\gamma$ and two arrays of parameters $[p(1),... , p(s_{max})]$, $ [\tilde{p}(1),... , \tilde{p}(s_{max})] $.
%
%\item[\textbf{Step 3}]
%For $s =1, ..., s_{max}$,
\bn
\i %\item[\textbf{Step 3.1}]
Compute $\iota_s (Y)$ as in \eqref{interpolation}.

\i %\item[\textbf{Step 3.2}]
Calculate $ \mathcal{I}_{p(s),\tilde{p}(s)}^{\alpha, \nu, \gamma} [ \iota_s (Y) ]$ by
Non-negative Matrix Factorization Algorithm 2.

\i %\item[\textbf{Step 3.3}]
Calculate
$\mathcal{I}_{s,p(s),\tilde{p}(s)}^{\alpha,\nu,  \gamma} (Y) := \iota_s^T \circ \mathcal{I}_{p(s),\tilde{p}(s)}^{\alpha, \nu,  \gamma} \circ \iota_s (Y) $.
\en

\section{Applications to photo images, EIT and DOT images} \label{sec5}
In this section we shall apply both the NMF and the MLA framework of a NMF suggested in
Section\,\ref{sec4} to some photo images and several EIT and DOT images reconstructed by
some direct sampling methods.
We shall investigate two applications, the first one being an MLA for photo images using NMF,
and the second one being an NMF over the images from an inversion algorithm
for a broad class of coefficient determination inverse problems.
In the first application, we aim at capturing features of different scales in an image and obtain a sparse low-rank representation of these features; while in the second application, we hope to identify the principal components in the image, which correspond to the signals coming from the inhomogeneous coefficients to be determined in the corresponding inverse problems, and remove artifacts and noise from the images.

\subsection{Applications to photo images}
We shall now perform an MLA using NMF for several grey-scaled images $Y$.
In view of the fact that an image can be represented by a positive function, and so are the major structures/objects inside these images,
we are naturally motivated to use the NMF to identify the principal components of the image corresponding to these major objects in the figure, and obtain a sparse representation of these objects and structures.
MLA is employed to obtain these corresponding principal components representing structures/objects at multiple scales/levels of the image, so that structures of large scales and small scales in the image can be separately identified and sparsely represented.  We shall also aim to obtain a sparse representation which is robust to noise during transmission of data through channels.
But we would like to emphasize that
we are neither aiming at reconstructing the image in full entity from all the NMF components in terms of tensor products, nor hoping to obtain a very high compression ratio of memory complexity to defeat any well-developed compression techniques, e.g. wavelet/curvelet compression, JPEG etc, since they are surely better candidates for compressions.
Our major purpose is instead to identify and keep structures in the images in a robust manner.

In the subsequent $4$ examples, we shall utilize the Multi-level Analysis Algorithm 3
to approximate $\mathcal{I}_{s,p(s),\tilde{p}(s)}^{\alpha, \nu, \gamma} (Y)$, in which the Non-negative Matrix Factorization Algorithm 2 is used to calculate $ \mathcal{I}_{p(s),\tilde{p}(s)}^{\alpha, \nu, \gamma} [ \iota_s (Y) ]$ and
the Semi-smooth Newton Algorithm 1 is used to minimize \eqref{mini111} for the NMF.
In all the following examples, the parameters in \textbf{Algorithm 3} are set to $$r=2, \quad \alpha=0.2, \quad \nu = 0.02, \quad \gamma = 0.02, $$
whereas $s_{max}$ is set differently in each example.
Considering the theoretical asymptotic order for an optimal choice of $p$ as in \eqref{optimal111},
the array of parameters $p(s)$ is set to
$$p(s)= \left[ K_1 \sqrt{ \frac{\max(N,M)}{ \max(1 , \log  \max(N,M) - 2 s \log r) }} r^{-s/2} \right] $$
in all our examples, where $[\cdot]$ denotes the round-off function and $K_1$ is a given constant.
We observe from numerical experiments that this asymptotic formula \eqref{optimal111} is, on one hand, necessary for good approximation of the desirable structures we hope to identify, and on the other hand, grows fairly slowly as the value $s_{max}-s$ grows and henceforth is a practical choice and very desirable for feature identifications and sparse representation.
To ensure that the fidelity of the most important features in the image can be kept after dropping the less important components from the $\widetilde{\Sigma}_{p,\tilde{p}} $, the parameter $\tilde{p}(s)$ is chosen by a threshold based on the $L^1$-norm of $\widetilde{\Sigma_{p}}$, i.e. as the first integer such that
\beqnx
\sum_{l=1}^{\tilde{p}(s)} \sigma_{i_l j_l} > K_2 \sum_{l=1}^{{p(s)}^2} \sigma_{i_l j_l} \,,
\eqnx
where $K_2$ is a threshold which is smaller than $1$.
In all the following examples, $K_1$ and $K_2$ are always chosen as $K_1 = 3.5$ and $K_2=0.95$.
A quantization process $\mathbb{Q}$ is performed on all the three matrices $[\tilde{U}_{p}, \tilde{\Sigma}_{p, \tilde{p}},\tilde{V}_{p}]$ which we get from \textbf{Algorithm 2} as $\mathbb{Q} (A_{ij}) := \left[\frac{A_{ij}}{0.01}\right]$ for any matrix $(A_{ij})$.
This is to minimize the number of possible choices of values in the matrix entries in order to embrace a possibility for an efficient entropy coding post-processing after the NMF process and minimize memory complexity.
The parameters $c_1,c_2$ in \textbf{Algorithm 1} are always set to $1$.

For the sake of comparisons between feature extraction, sparsity of representation and robustness against noise in the transmission channel, we shall also compare the performance of NMF with the ones by
the SVD and the JPEG compression process.  For any given image $Y$, the SVD with the level parameter $s$, $I_{SVD,s}$, is taken directly as
\beqn
I_{SVD,s} := \iota_S^T (U \Sigma V^T) \,, \quad \text{ where } \iota_s (Y) = U \Sigma V^T \,.
\eqn
Again, the same quantization process $\mathbb{Q}$ is performed on the three matrices $[U, \Sigma, V]$ as
described above to embrace a possibility for efficient entropy coding.
Meanwhile, for the JPEG compression format, we follow the standard routine as in \cite{jpg}.
Namely we first perform a discrete cosine transform (DCT) on $8 \times 8$ pixel-blocks to give the DCT coefficients $(D_{ij})$ on each block, then perform the standard JPEG quantization process $C_{ij} = \left[\frac{D_{ij}}{{(Q_{50})}_{ij}}\right]$ with the given standard JPEG quantization matrix $Q_{50}$ (with quality $Q=50$) \cite{jpg}:
\beqnx
Q_{50} :=
\begin{bmatrix}
16& 11& 10& 16& 24& 40& 51& 61 \\
12& 12& 14& 19& 26& 58& 60& 55 \\
14& 13& 16& 24& 40& 57& 69& 56 \\
14& 17& 22& 29& 51& 87& 80& 62 \\
18& 22& 37& 56& 68& 109& 103& 77\\
24& 35& 55& 64& 81& 104& 113& 92\\
49& 64& 78& 87& 103& 121& 120& 101\\
72& 92& 95& 98& 112& 100& 103& 99
\end{bmatrix}
\eqnx
A level parameter $s$ is introduced to define the image $I_{JPG,s}$ as the reconstruction of the JPEG from
only the first $2^{3-s}$ Fourier coefficients in each $8 \times 8$ pixel-blocks for $s = 0,1,2,3$.
Note that, with this definition, only $4$ levels are available for JPEG.

In order to test the robustness of the algorithms for feature preservation during the transmission process of data through channel,
multiplication noise is added to simulate the scenario of data transmission through a noisy cable for each of the aforementioned algorithms, i.e. NMF, SVD and JPEG.  For the NMF process, multiplicative noise is added to the three matrices $[\tilde{U}_{p}, \tilde{\Sigma}_{p, \tilde{p}}, \tilde{V}_{p} ]$ after quantization as
\beqn
 (\tilde{U}_{p}^\zeta)_{ij} = (\tilde{U}_{p})_{ij} (1 + \sigma \zeta_{ij} ) \,,  \quad ( \tilde{\Sigma}_{p, \tilde{p}}^\zeta )_{ij} = ( \tilde{\Sigma}_{p, \tilde{p}} )_{ij} (1 + \sigma \zeta_{ij} ) \,, \quad  (\tilde{V}_{p})_{ij}^\zeta = (\tilde{V}_{p})_{ij} (1 + \sigma \zeta_{ij} )\,,
\eqn
where $\mathcal{I}_{p(s),\tilde{p}(s)}^{\alpha, \nu, \gamma} [ \iota_s (Y) ] :=  \tilde{U}_{p}  \tilde{\Sigma}_{p, \tilde{p}}  \tilde{V}_{p}^T\,$,
$ \, \mathcal{I}_{s,p(s),\tilde{p}(s)}^{\alpha,\nu,  \gamma} (Y) := \iota_s^T \circ \mathcal{I}_{p(s),\tilde{p}(s)}^{\alpha, \nu,  \gamma} \circ \iota_s (Y) $, $\, \sigma$ is the noise level and $\zeta$ is uniformly distributed between $[-1,1]$.
Noisy reconstruction from the NMF is then given by
\beqn
 \left[\mathcal{I}_{s,p(s),\tilde{p}(s)}^{\alpha,\nu,  \gamma} \right]^{\zeta} (Y) := \iota_s^T \tilde{U}^\zeta_{p}  \tilde{\Sigma}^\zeta_{p, \tilde{p}}  {(\tilde{V}^\zeta_{p})}^T \,.
\eqn
Similarly, for the SVD process, multiplicative noise is added in $[U, \Sigma, V]$ after quantization such that
\beqn
 U_{ij}^\zeta = U_{ij} (1 + \sigma \zeta_{ij} ) \,,  \quad \Sigma^\zeta_{ij} = \Sigma_{ij} (1 + \sigma \zeta_{ij} ) \,, \quad  V_{ij}^\zeta = V_{ij} (1 + \sigma \zeta_{ij} )\,,
\eqn
where $I_{SVD,s} := \iota_s^T (U \Sigma V^T)$ and $\iota_s (Y) := U \Sigma V^T$.
The noisy reconstruction $I_{SVD,s}^\zeta$ is then taken as
\beqn
I_{SVD,s}^\zeta := \iota_S^T (U^\zeta \Sigma^\zeta (V^\zeta)^T).
\eqn
For the JPEG process, multiplicative noise is added in DCT coefficients on each $8\times 8$ pixel block after quantization:
\beqn
C_{ij}^\zeta = C_{ij}(1 + \sigma \zeta_{ij} ) \,,
\eqn
and the noisy reconstruction $I_{JPG,s}^\zeta$ comes as the de-quantization of $C^\zeta$ by multiplication by $Q_{50}$ followed by an inverse DCT.
In all our numerical examples, we always set the noise level to be $\sigma = 25 \%$

The relative error of the reconstruction image $I_{\text{reconst}}$ from each reconstruction method
is quantified in the following manner on the quotient space of $L^2$ after taking an affine equivalence:
\beqnx
 \varepsilon(I_{\text{reconst}}) := \frac{\min_{a,b \in \mathbb{R}} || a I_{\text{reconst}} + b - Y||_{L^2} }{|| Y||_{L^2}}
\eqnx
This measurement of error is adopted because all the reconstructed images are shown such that the color scale gives only the relative contrast of the gray scale, and therefore an affine equivalence is taken for an appropriate measure of relative error.
For each image, we shall also measure the memory complexity ratio of a given method, which is given as the ratio between the memory size of the data after performing the corresponding method and that of the original data.
We would like to remark that the memory complexities for all the three methods (including JPEG) in our examples are computed based on its size before entropy coding; meanwhile, a same entropy coding technique can be applied to all the three methods considering the fact that all of them have undergone a quantization process.

\textbf{Example 1}. In this example, we set $Y$ as the grey-scale image presented in Figure \ref{fig:NMF1}. The parameter $s_{max}$ is chosen as $ s_{max} = [\log(\min(N,M))/\log(r)- 3]$.
%whereas all the other parameters are set as previously described.
The resulting images from MLA without noise are shown in Figure \ref{fig:NMF1a} whereas reconstructions with $25 \%$ noise are given in Figure \ref{fig:NMF1a2}.  The memory complexity ratios for the $(s_{max}-s)$-th level of the three methods and their respective relative $L^2$ errors with and without noise are shown as follows:
\beqnx
\begin{matrix}
s_{max}-s &:& 1 & 2 & 3 &4 & 5 &6 \\
p &:& 20  &  24   & 24  &  28  &  34  &  42\\
\tilde{p} &:&  142 &  177 &  152  & 153  & 195  & 332 \\
\text{memory complexity ratio of NMF} &:&  0.0017 &   0.0033  &  0.0061 &   0.0116  &  0.0271  &  0.0573 \\
\text{memory complexity ratio of SVD} &:&  0.0015  &  0.0036  &  0.0072  &  0.0168 &   0.0409  &  0.1011  \\
\text{memory complexity ratio of JPEG} &:&  \text{NA}  &    \text{NA}  &  0.0154 &   0.0497  &  0.0982 &   0.1048 \\
\text{Relative $L^2$ error in NMF (with $0\%$ noise)} &:&   0.2723  &  0.2567  &  0.2350 &   0.1878 &   0.1630 &   0.1561 \\
\text{Relative $L^2$ error in SVD (with $0\%$ noise)} &:&   0.2733  &  0.2584  &  0.2342  &  0.1875  &  0.1591 &   0.1594  \\
\text{Relative $L^2$ error in JPEG (with $0\%$ noise)} &:&  \text{NA}  &    \text{NA}  &  0.1855  &  0.0974   & 0.0689  &  0.0535 \\
\text{Relative $L^2$ error in NMF (with $25\%$ noise)} &:&     0.2768 &   0.2631  &  0.2462 &   0.2029 &   0.1770  &  0.1689  \\
\text{Relative $L^2$ error in SVD (with $25\%$ noise)} &:&   0.2755  &  0.2629 &   0.2456  &  0.2029  &  0.1754  &  0.1704 \\
\text{Relative $L^2$ error in JPEG (with $25\%$ noise)} &:&   \text{NA}  &  \text{NA}  & 0.1941 &   0.1089  &  0.0711 &   0.0673
\end{matrix}
\eqnx
We can see from Figure \ref{fig:NMF1a} and  \ref{fig:NMF1a2} that
in the absence of noise, although it is true that the NMF does not outperform SVD and JPEG of the same level,
many reasonable details of different scales can already be captured in different levels of NMF, starting from the coarser image of the horse, then finer details and afterwards the clear black-and-white strips on the horse.
In each level, JPEG gives the best image of the three, however, it also needs a relatively high memory complexity in the same level. Meanwhile the NMF provides a representation of a relatively low memory complexity of the same layer. It is especially interesting to note that a memory complexity ratio of about $0.01$ (before entropy coding) at level $4$ can already give us many details of the horse.
With the presence of noise, we can see that although the relative $L^2$ errors of both NMF and SVD are more or less the same,
many coarser layers of SVD are not free from the contamination of noise in the form of vertical and horizontal strips in the background, and that the NMF gives a better shape of the horse.  The NMF layers are affected by noise, but most of the nice details of the horse can still be kept.
The JPEG stays the most robust against the noise, nonetheless, considering the fact that NMF of the same layer usually requires less than half of the memory as JPEG, the performance of NMF is already quite reasonable.

\begin{center}
\begin{figurehere}
\hfill{}\includegraphics[clip,width=0.35\textwidth]{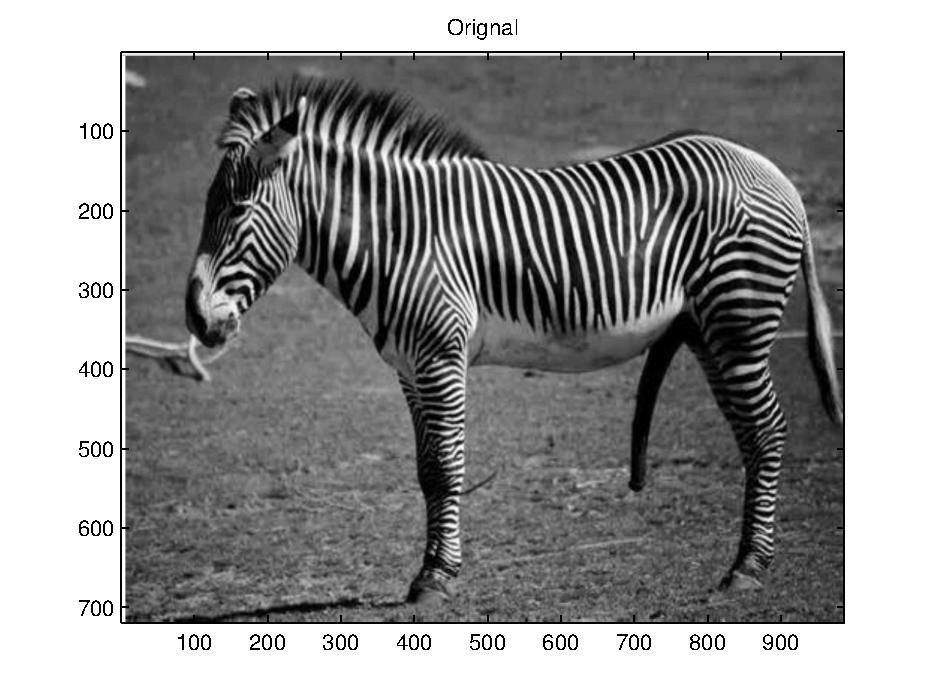}\hfill{}
 \caption{\label{fig:NMF1} Original image in Example 1}
\end{figurehere}
\end{center}

\begin{figurehere}
\hfill{}\\
\hskip -5cm
\includegraphics[clip,width=0.35\textwidth]{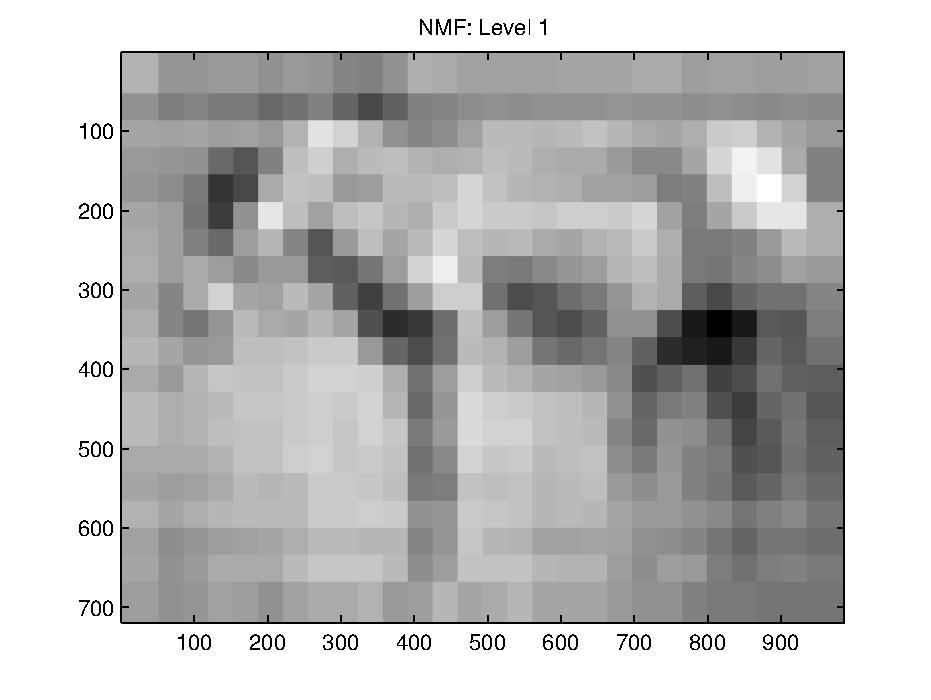}
\hskip -0.5cm
\includegraphics[clip,width=0.35\textwidth]{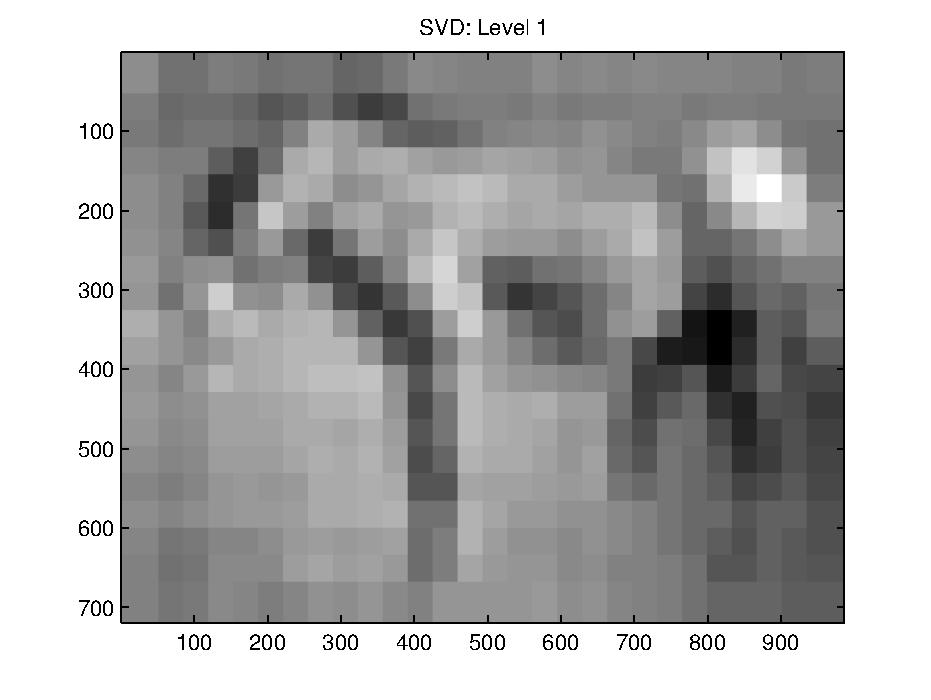}
\hskip -0.5cm
\includegraphics[clip,width=0.35\textwidth]{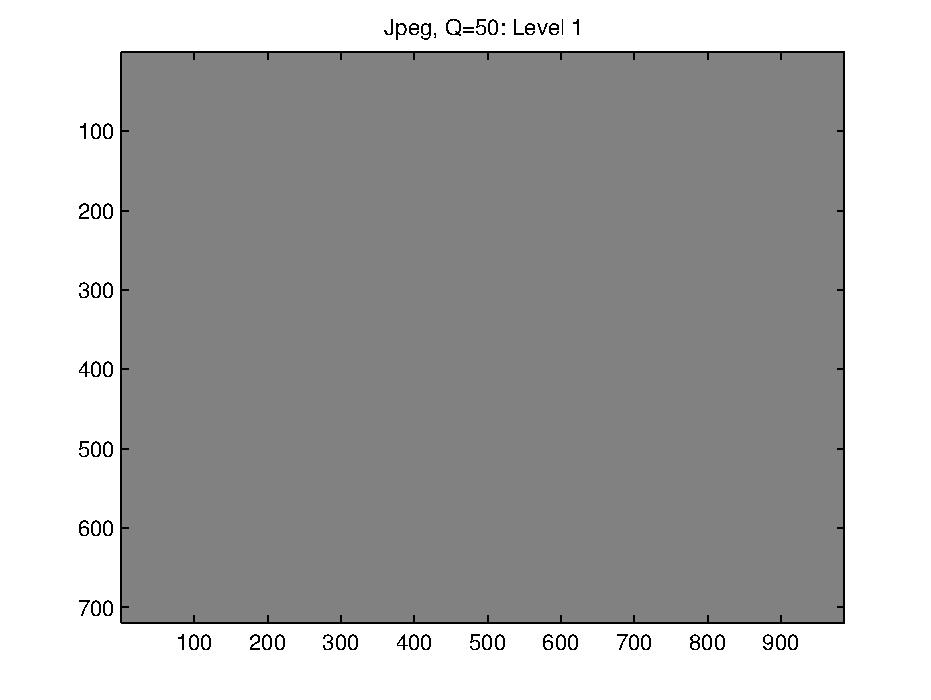}\\
\hskip -2cm
\includegraphics[clip,width=0.35\textwidth]{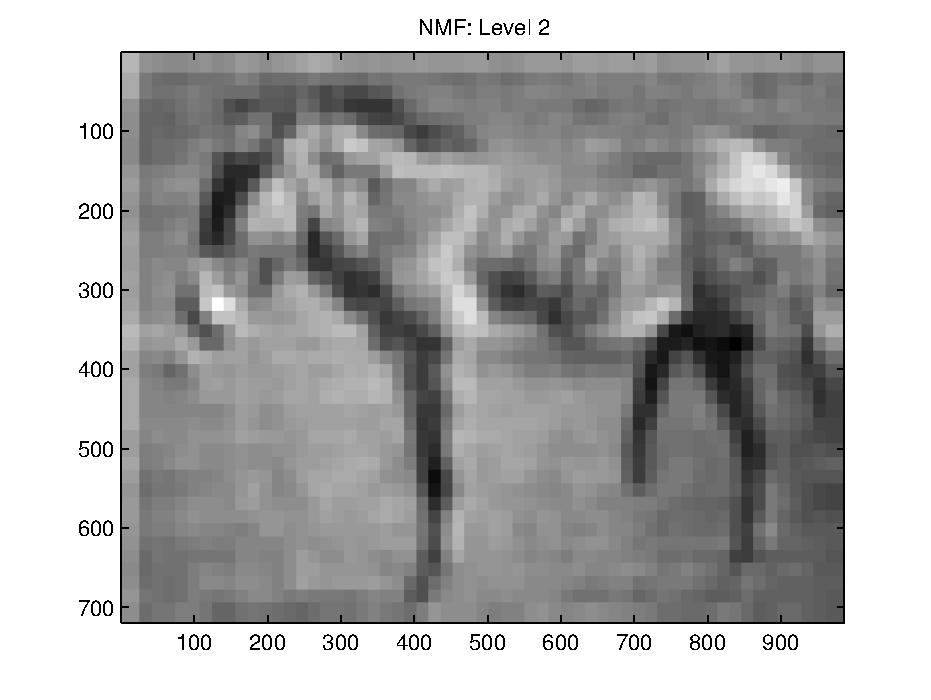}
\hskip -0.5cm
\includegraphics[clip,width=0.35\textwidth]{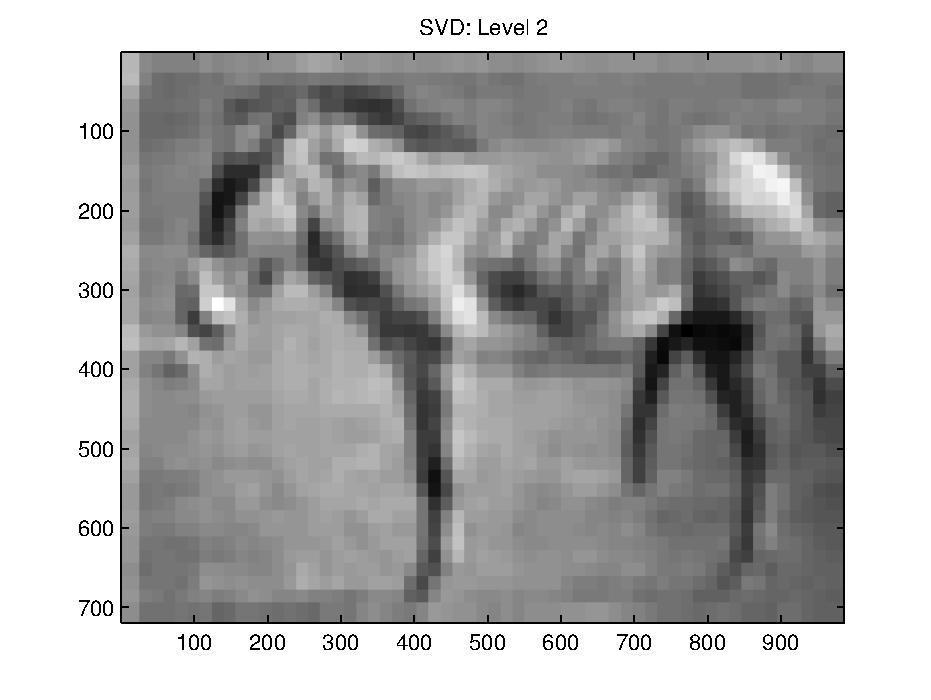}
\hskip -0.5cm
\includegraphics[clip,width=0.35\textwidth]{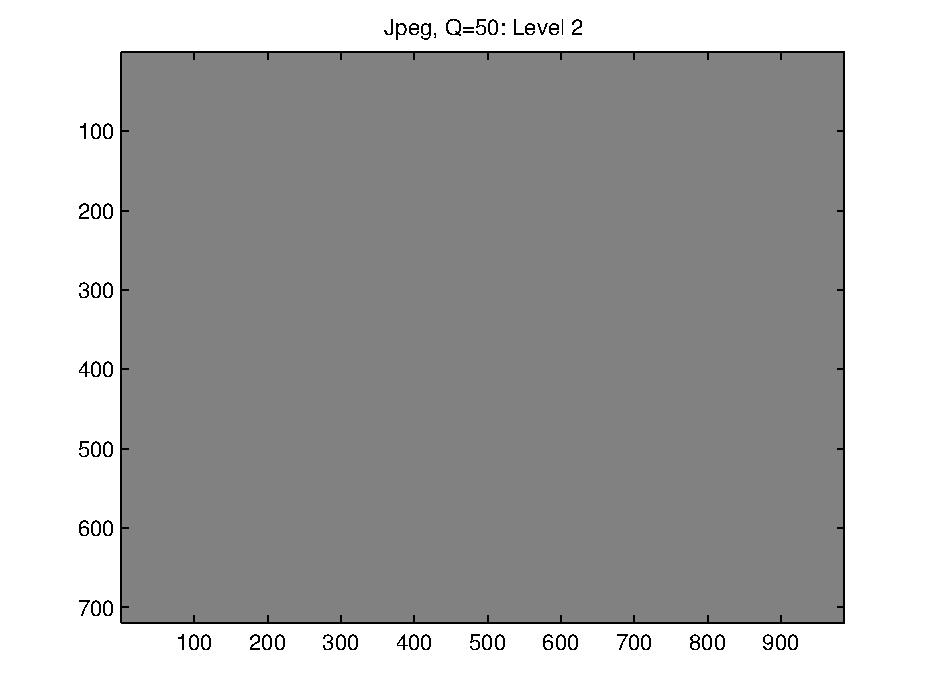} \\
\hskip -2cm
\includegraphics[clip,width=0.35\textwidth]{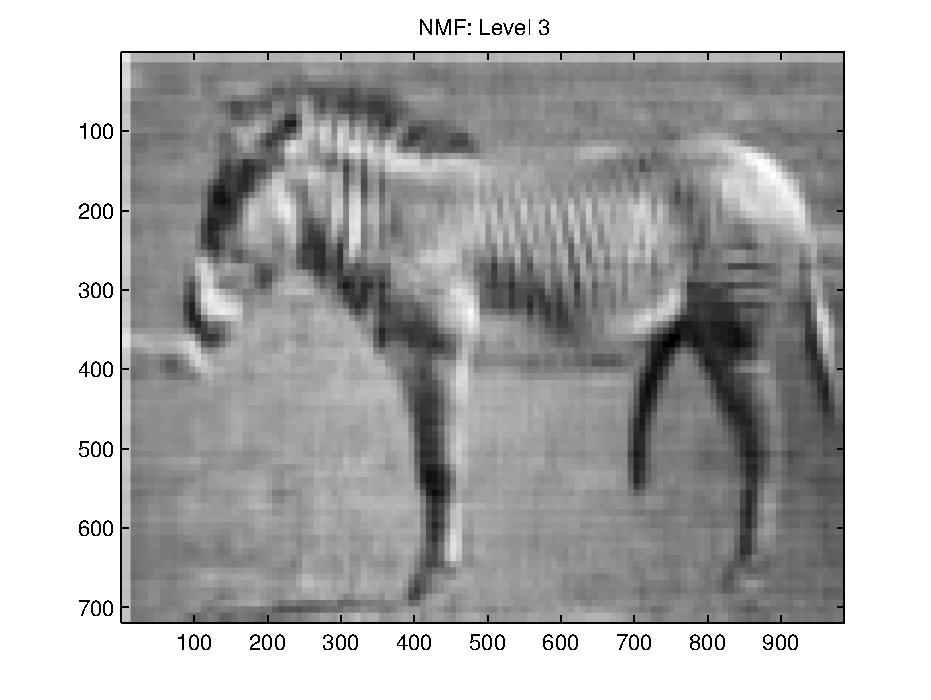}
\hskip -0.5cm
\includegraphics[clip,width=0.35\textwidth]{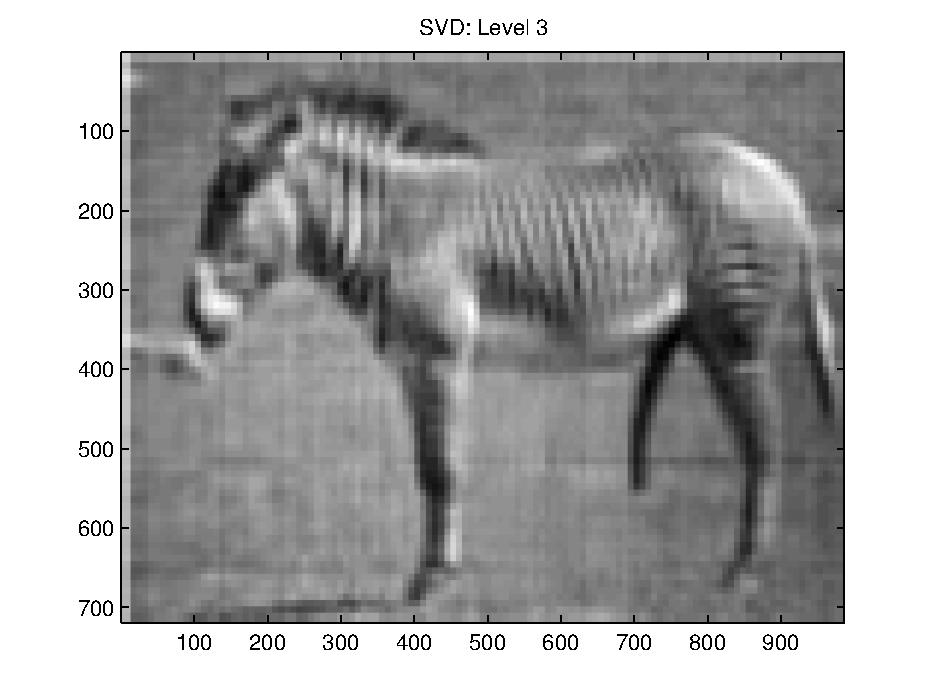}
\hskip -0.5cm
\includegraphics[clip,width=0.35\textwidth]{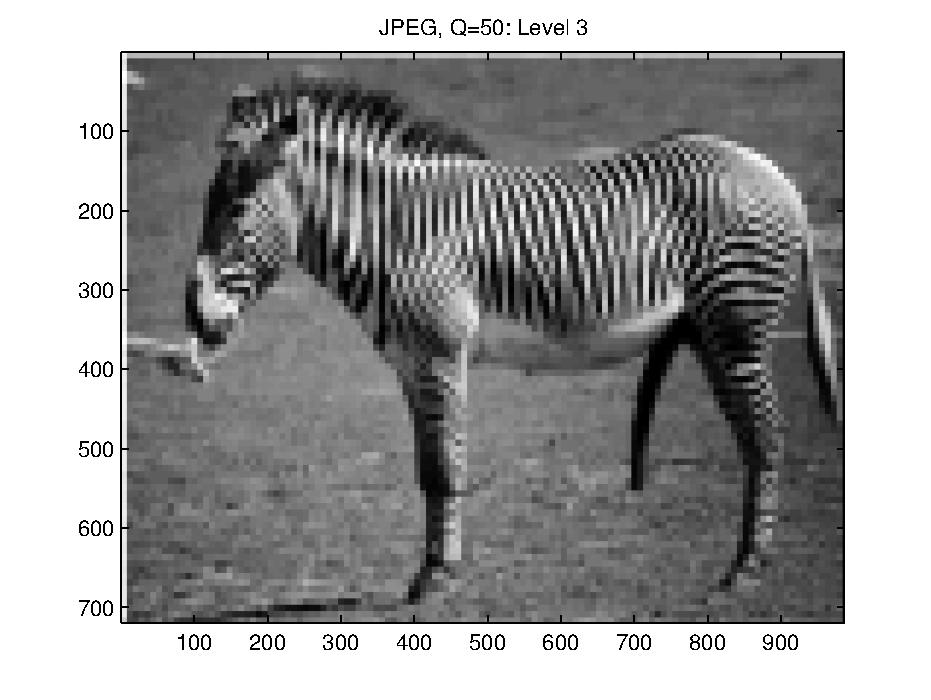} \\
\hskip -2cm
\includegraphics[clip,width=0.35\textwidth]{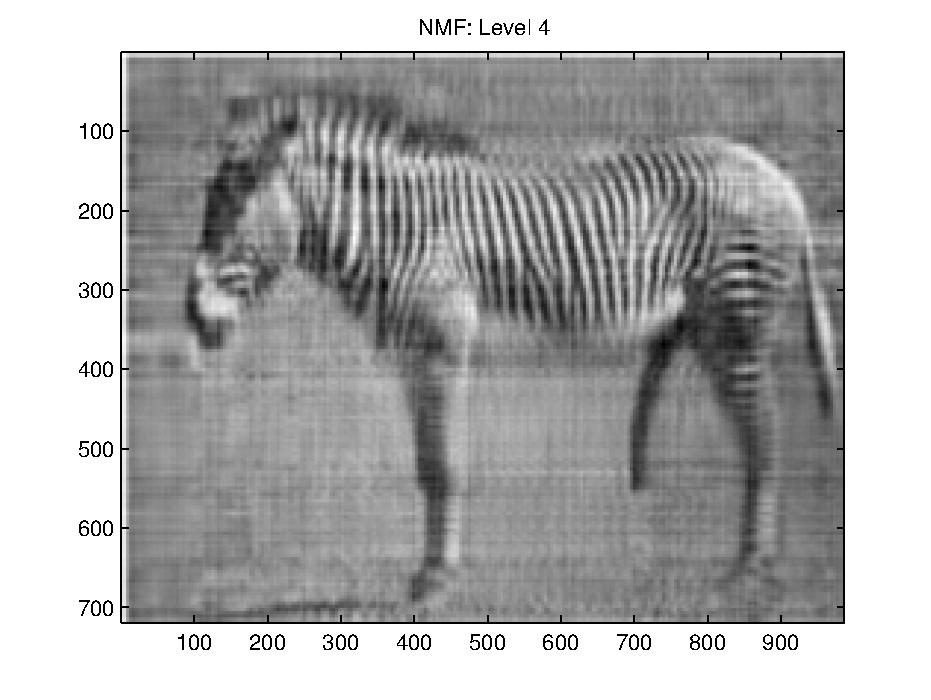}
\hskip -0.5cm
\includegraphics[clip,width=0.35\textwidth]{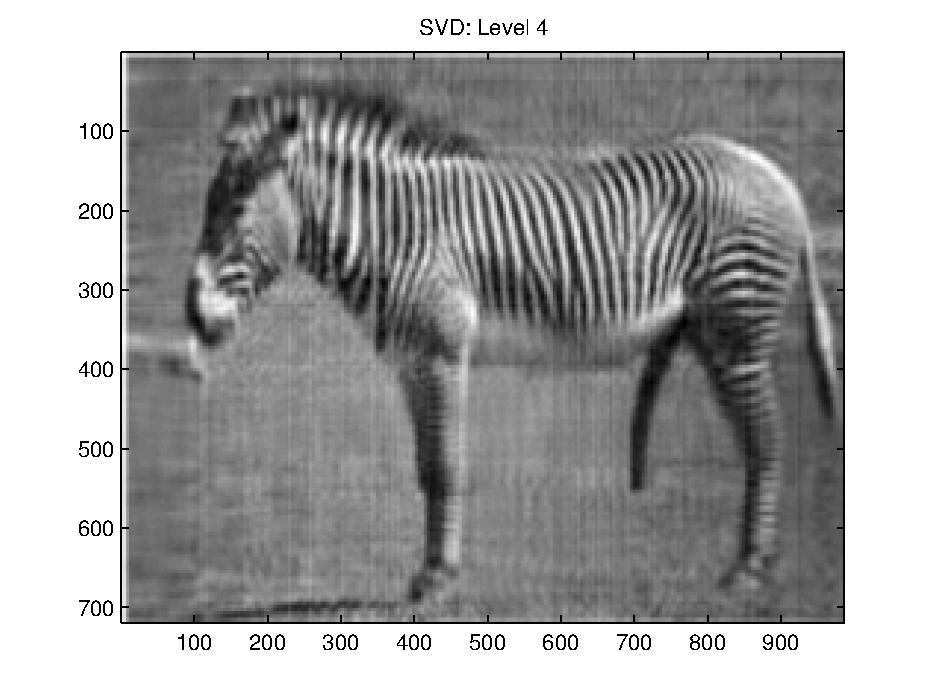}
\hskip -0.5cm
\includegraphics[clip,width=0.35\textwidth]{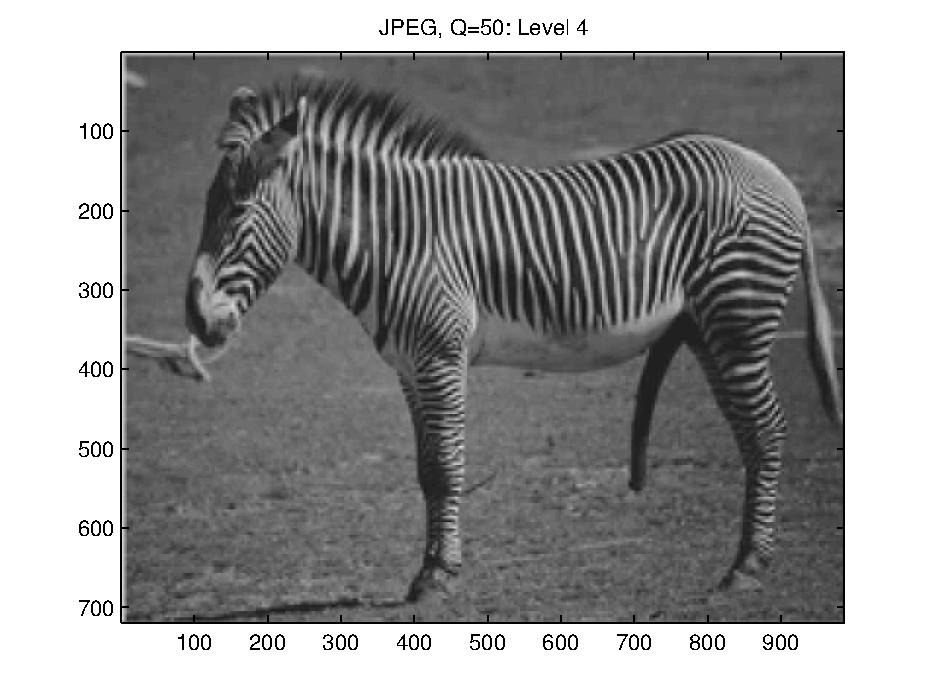} \\
\hskip -2cm
\includegraphics[clip,width=0.35\textwidth]{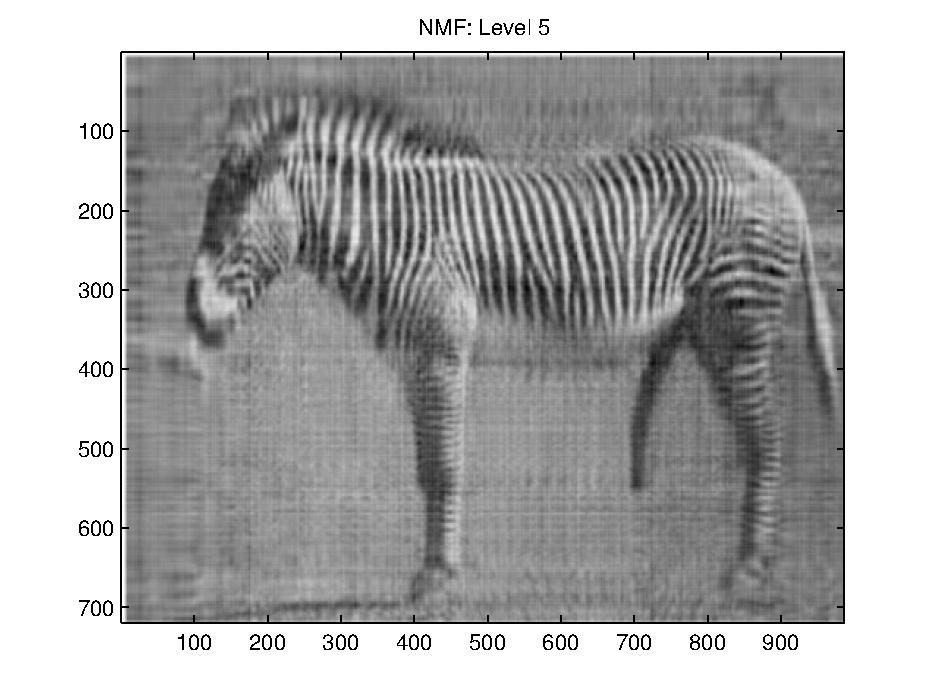}
\hskip -0.5cm
\includegraphics[clip,width=0.35\textwidth]{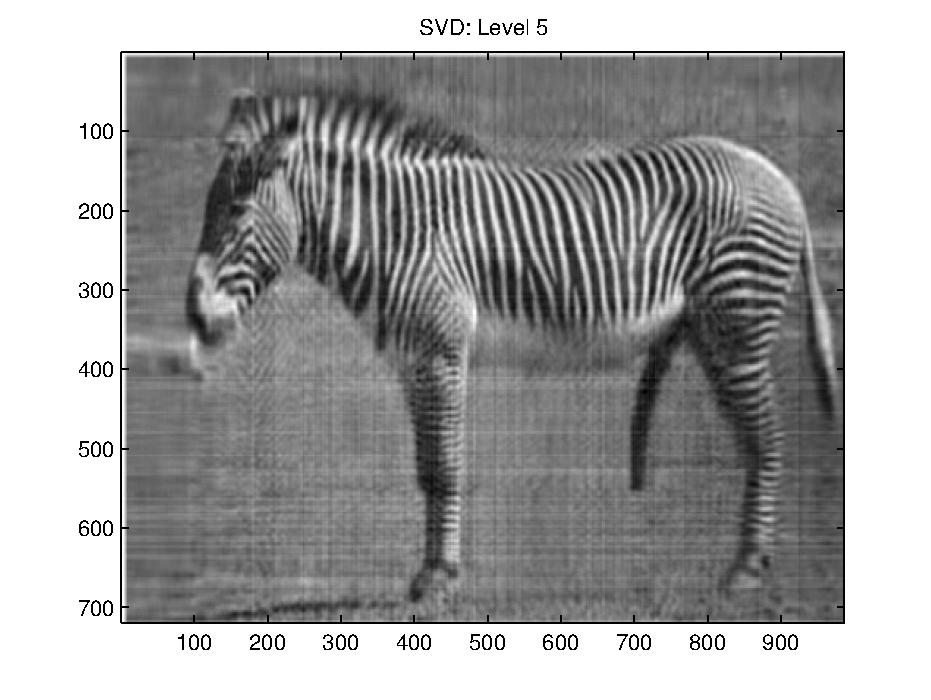}
\hskip -0.5cm
\includegraphics[clip,width=0.35\textwidth]{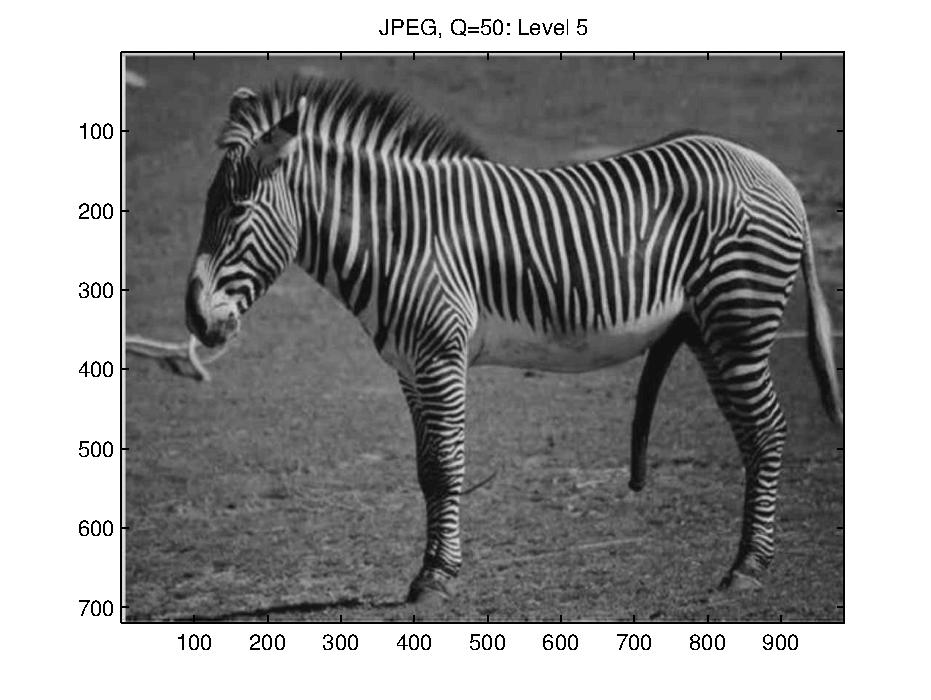} \\
\hskip -2cm
\includegraphics[clip,width=0.35\textwidth]{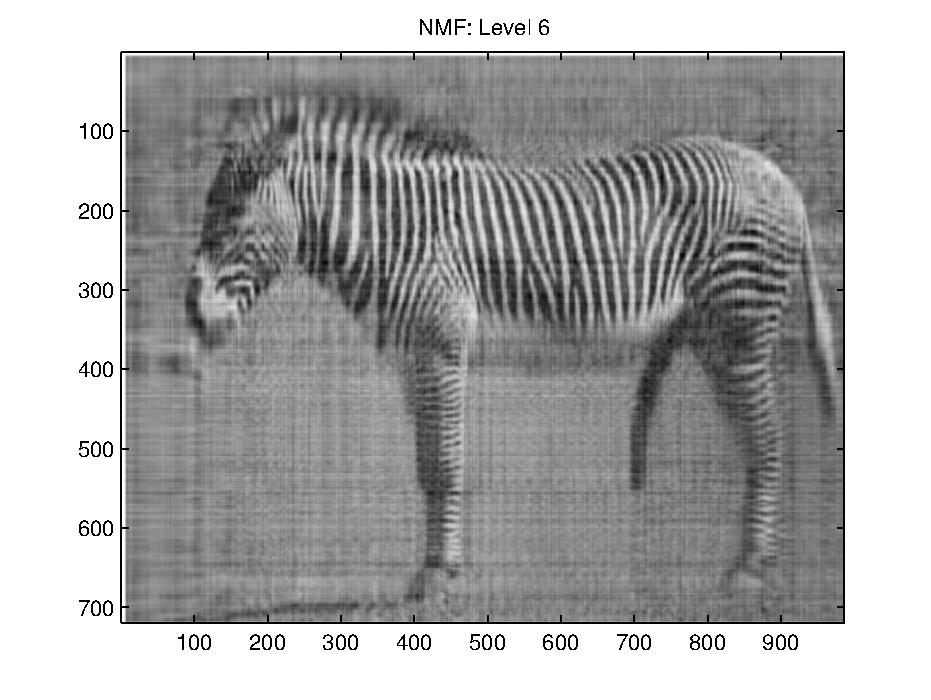}
\hskip -0.5cm
\includegraphics[clip,width=0.35\textwidth]{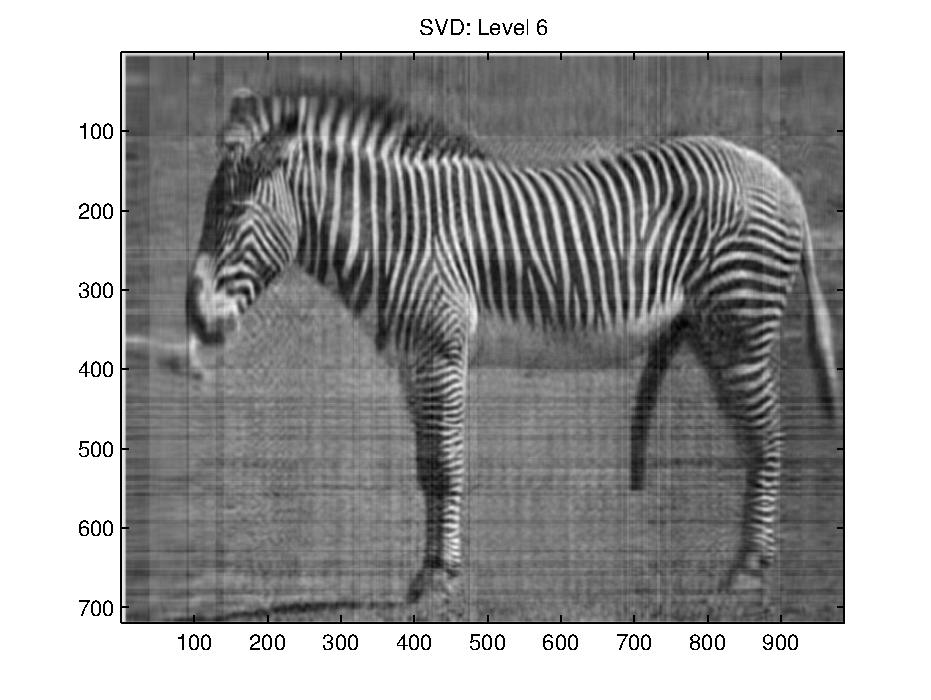}
\hskip -0.5cm
\includegraphics[clip,width=0.35\textwidth]{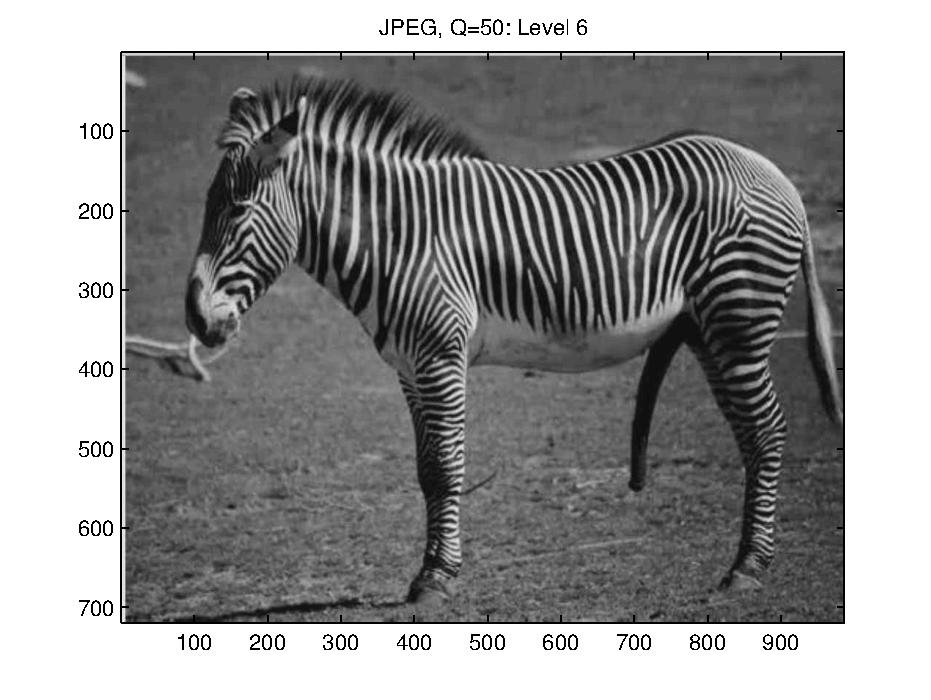}
 \caption{\label{fig:NMF1a} MLA for the image in Example 1 using NMF without noise }
\end{figurehere}

\begin{figurehere}
\hfill{}\\
\hskip -5cm
\includegraphics[clip,width=0.35\textwidth]{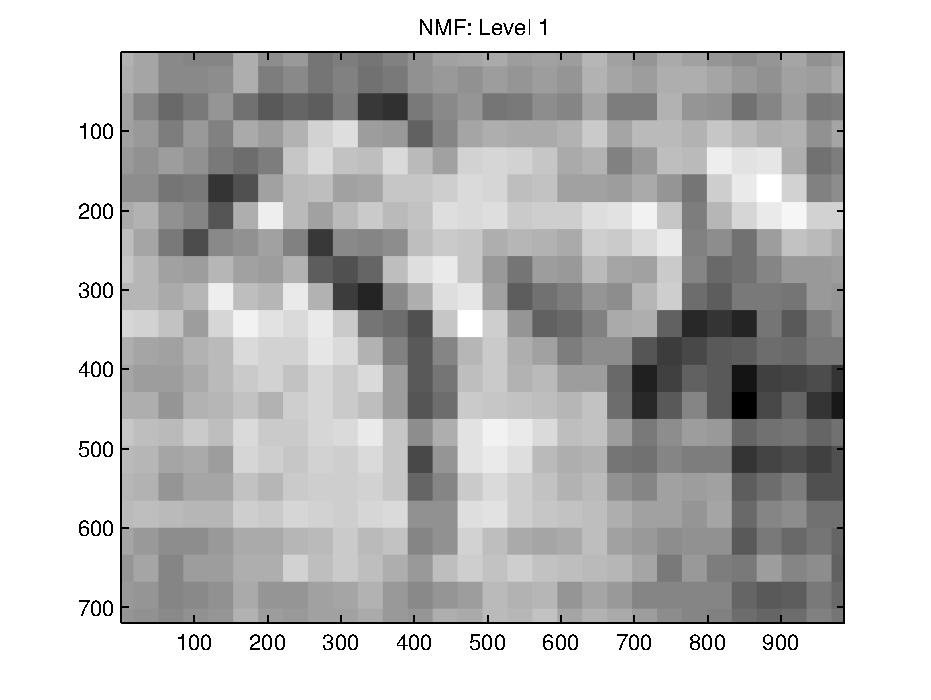}
\hskip -0.5cm
\includegraphics[clip,width=0.35\textwidth]{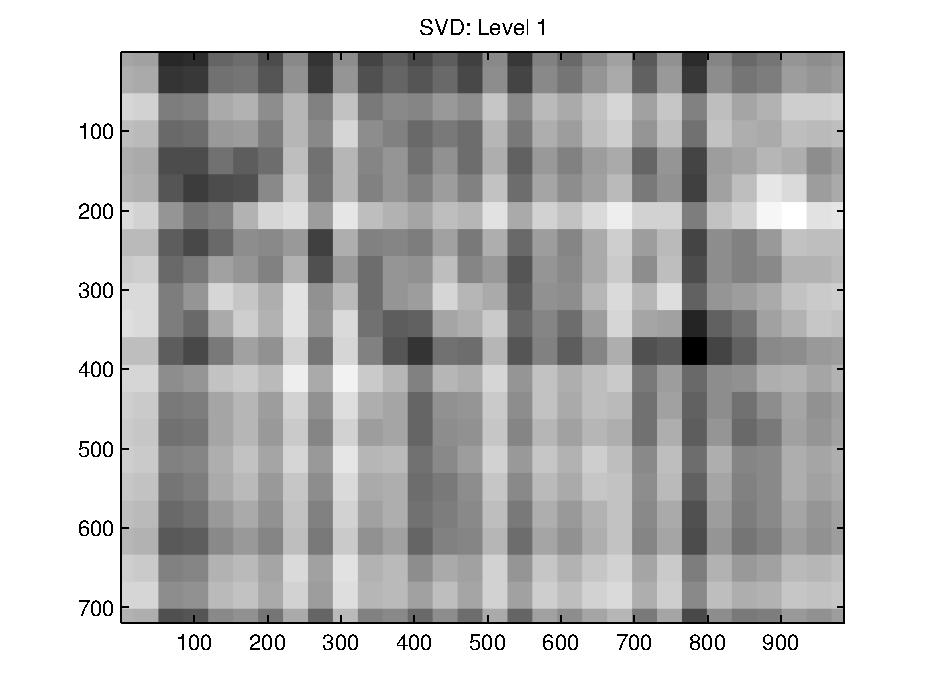}
\hskip -0.5cm
\includegraphics[clip,width=0.35\textwidth]{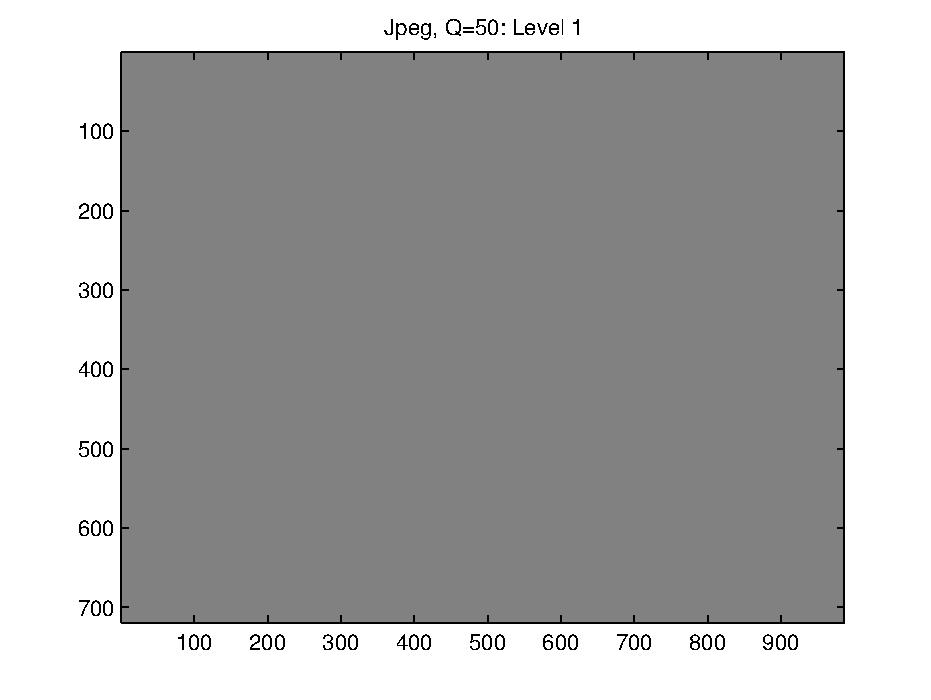}\\
\hskip -2cm
\includegraphics[clip,width=0.35\textwidth]{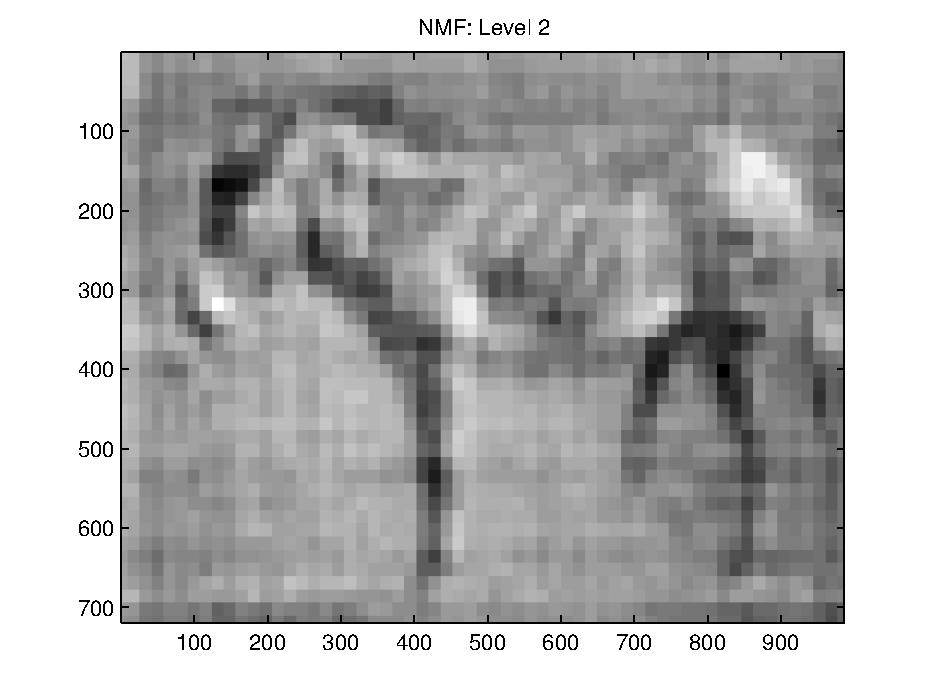}
\hskip -0.5cm
\includegraphics[clip,width=0.35\textwidth]{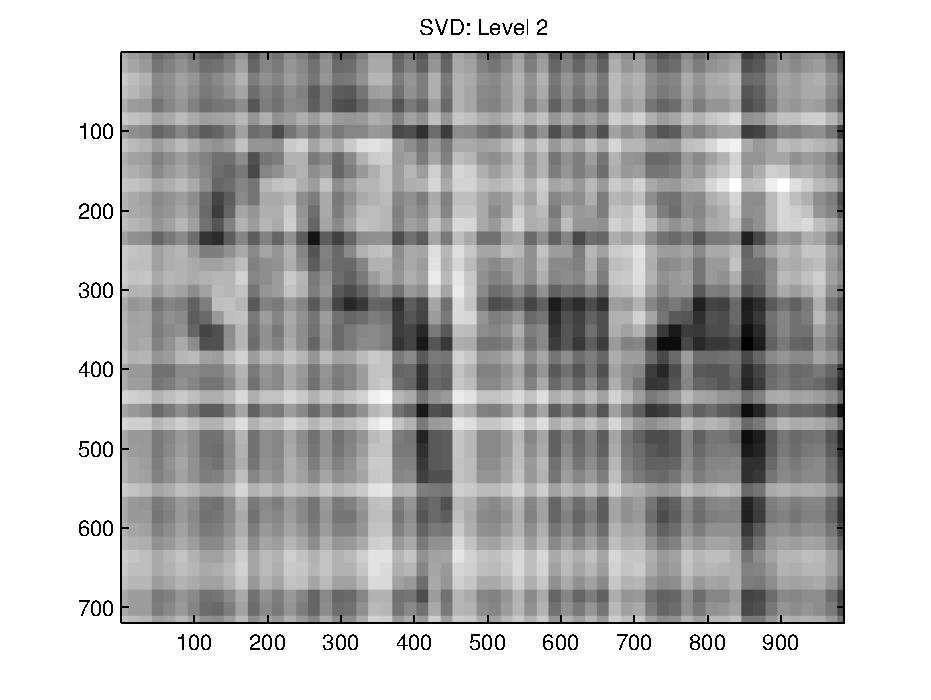}
\hskip -0.5cm
\includegraphics[clip,width=0.35\textwidth]{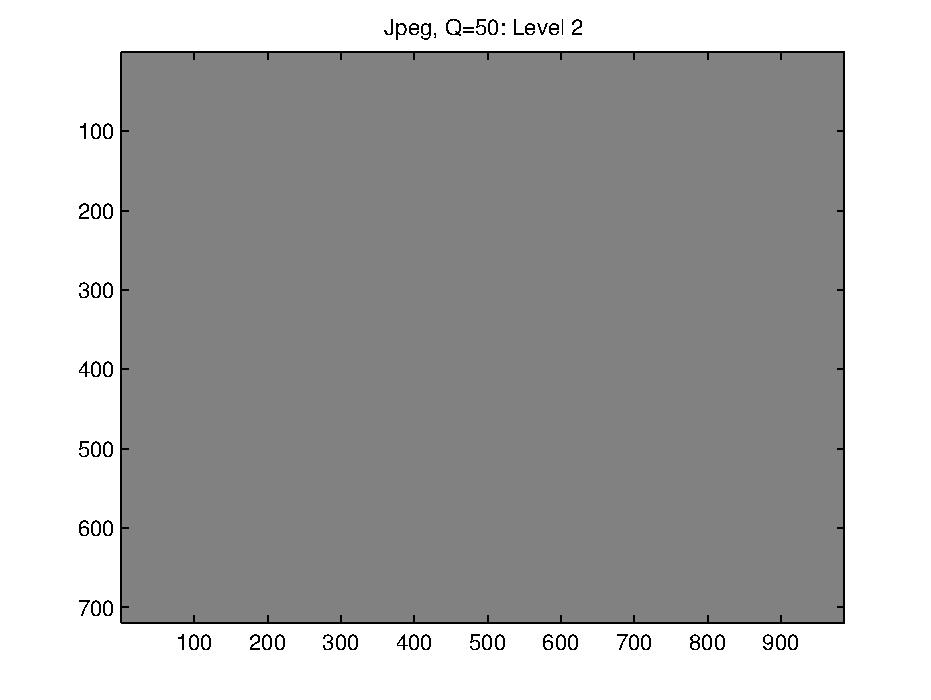} \\
\hskip -2cm
\includegraphics[clip,width=0.35\textwidth]{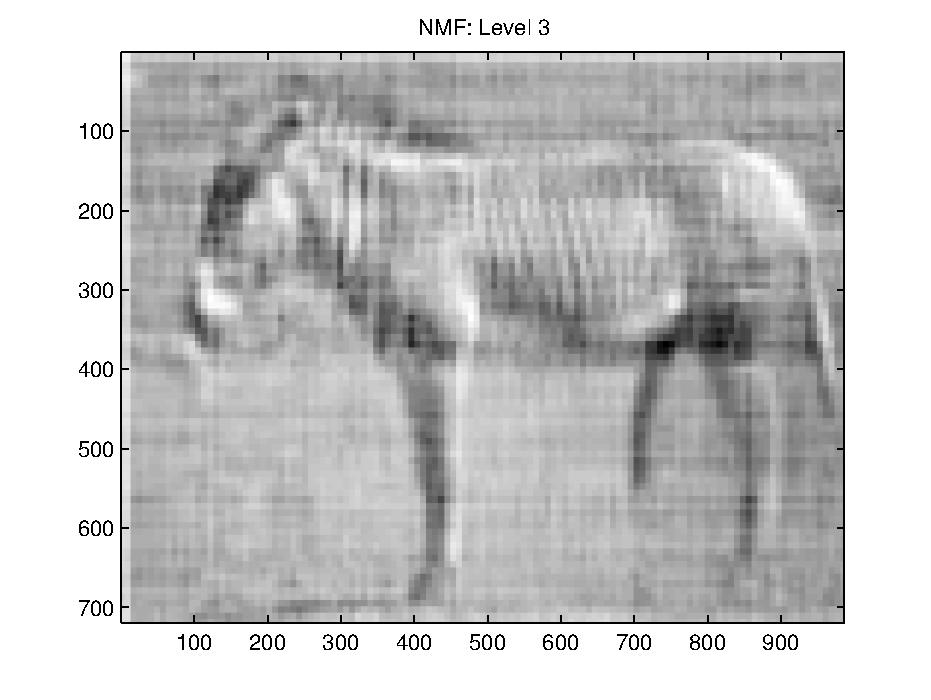}
\hskip -0.5cm
\includegraphics[clip,width=0.35\textwidth]{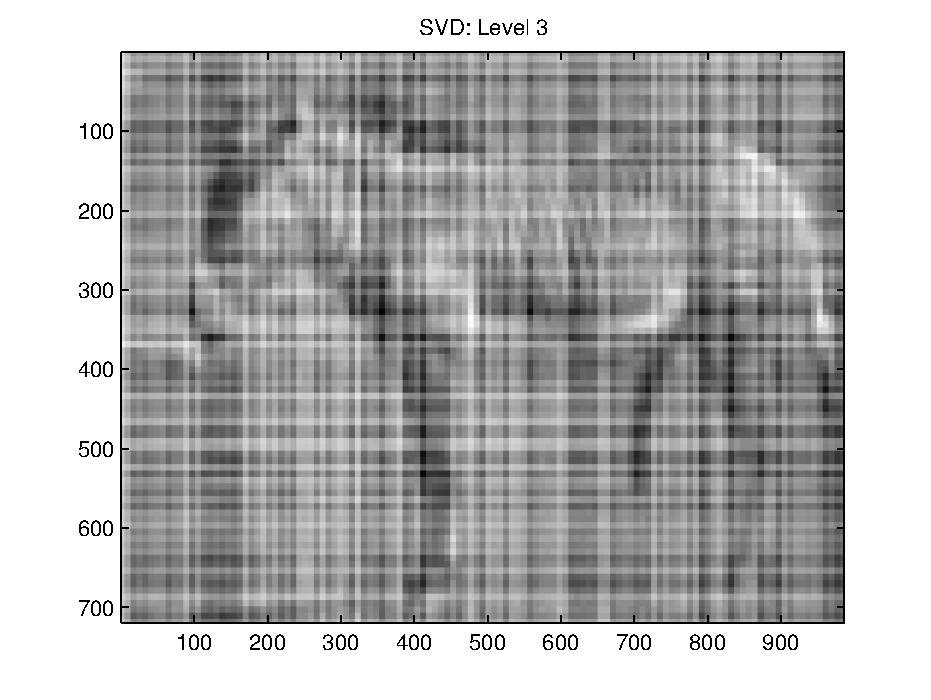}
\hskip -0.5cm
\includegraphics[clip,width=0.35\textwidth]{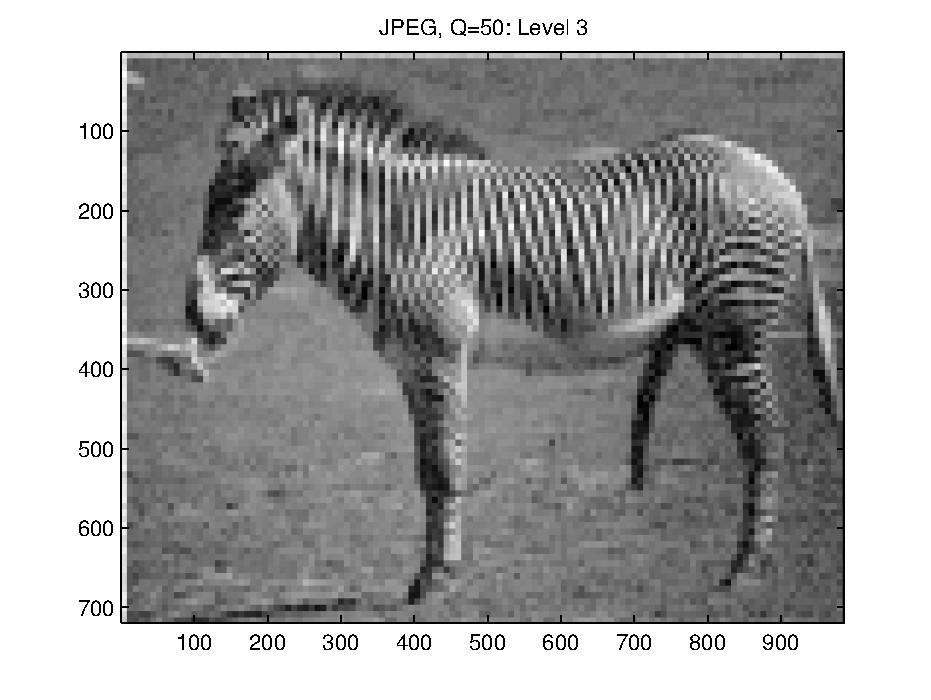} \\
\hskip -2cm
\includegraphics[clip,width=0.35\textwidth]{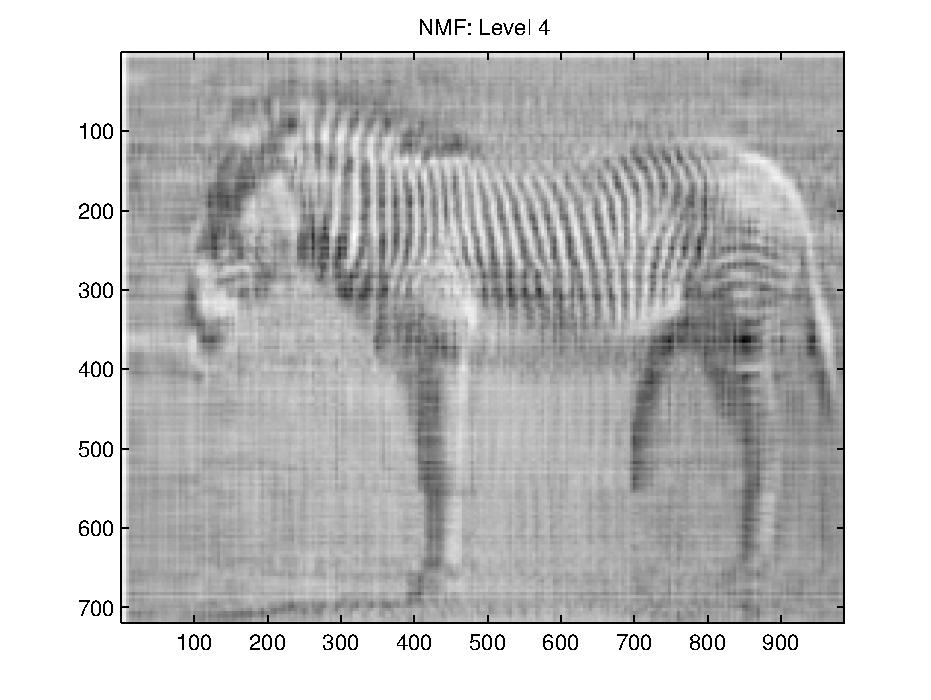}
\hskip -0.5cm
\includegraphics[clip,width=0.35\textwidth]{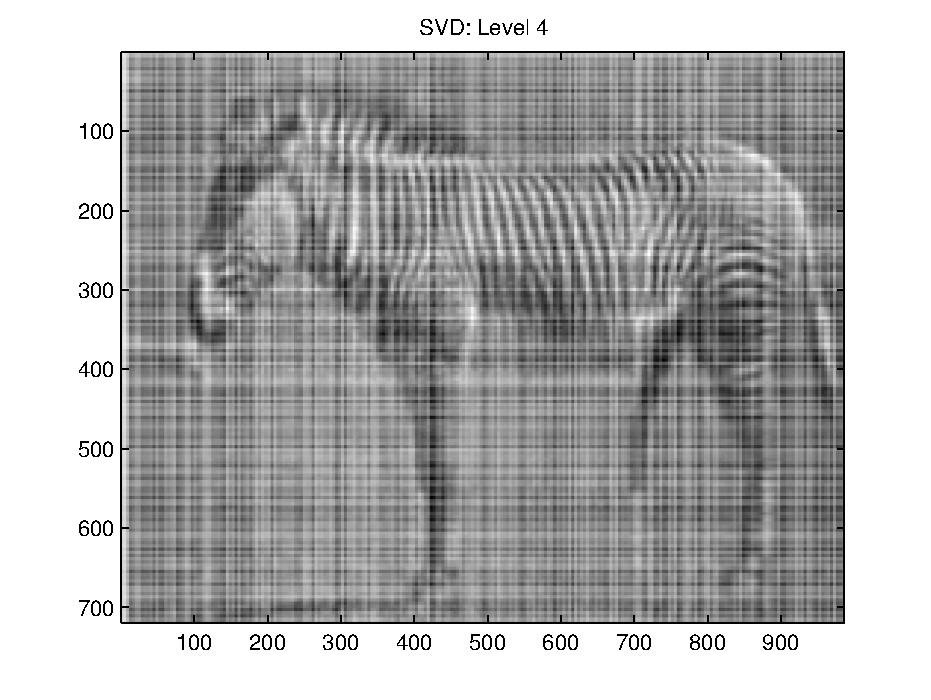}
\hskip -0.5cm
\includegraphics[clip,width=0.35\textwidth]{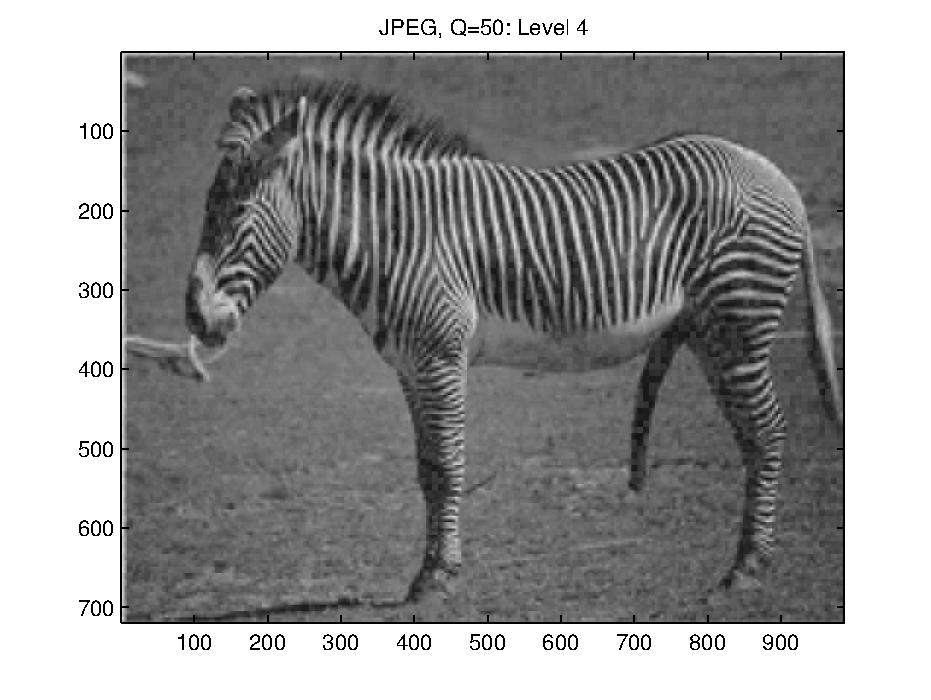} \\
\hskip -2cm
\includegraphics[clip,width=0.35\textwidth]{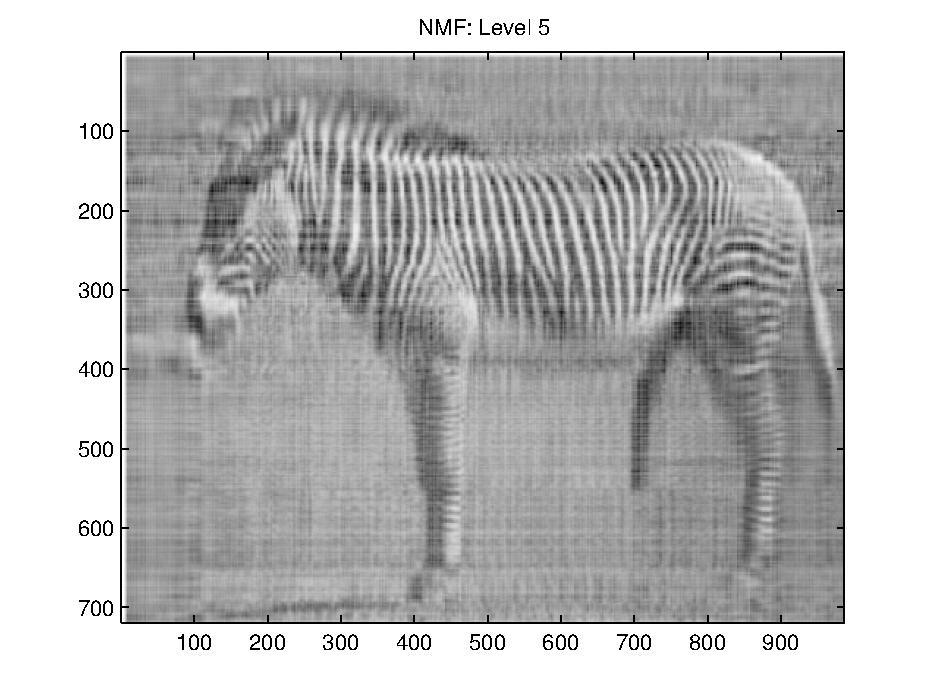}
\hskip -0.5cm
\includegraphics[clip,width=0.35\textwidth]{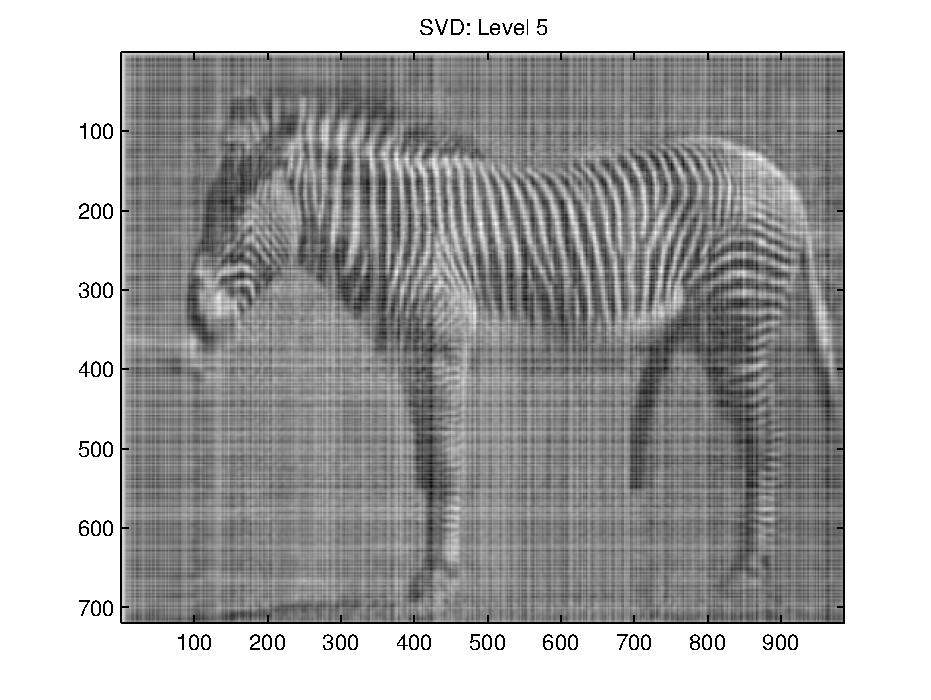}
\hskip -0.5cm
\includegraphics[clip,width=0.35\textwidth]{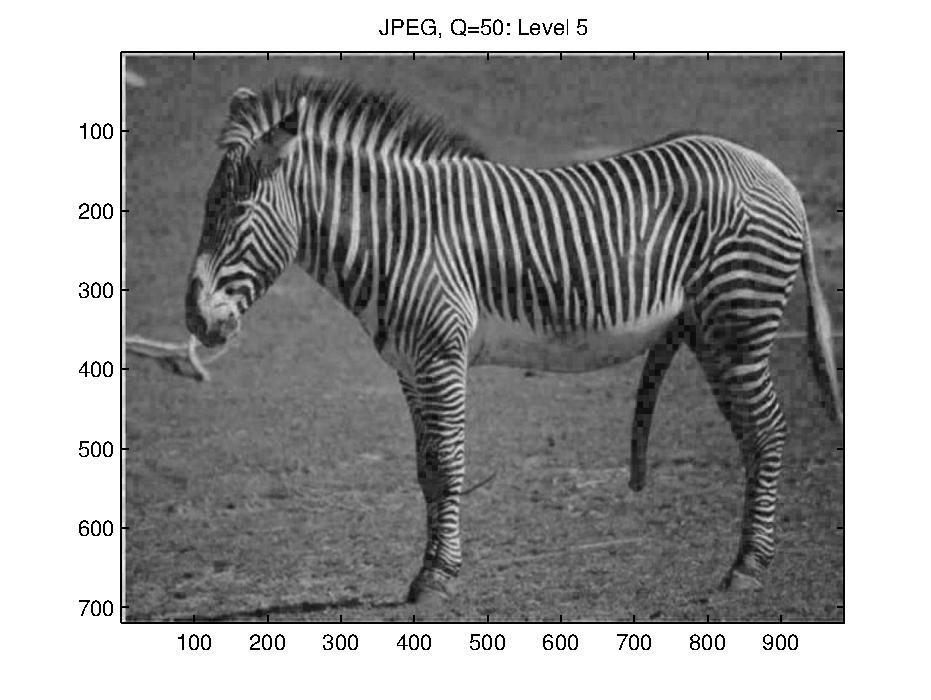} \\
\hskip -2cm
\includegraphics[clip,width=0.35\textwidth]{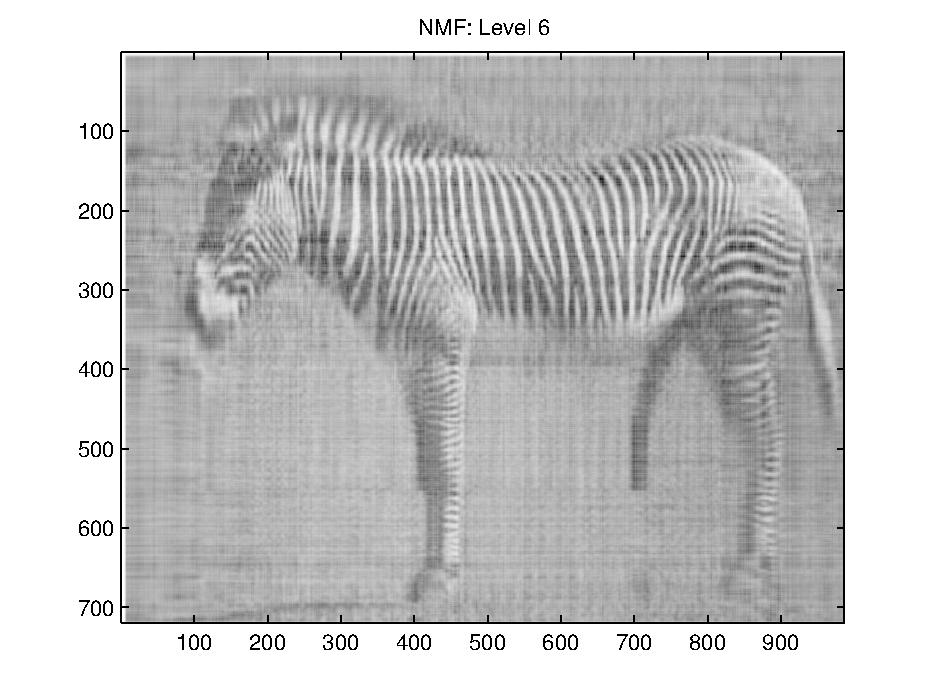}
\hskip -0.5cm
\includegraphics[clip,width=0.35\textwidth]{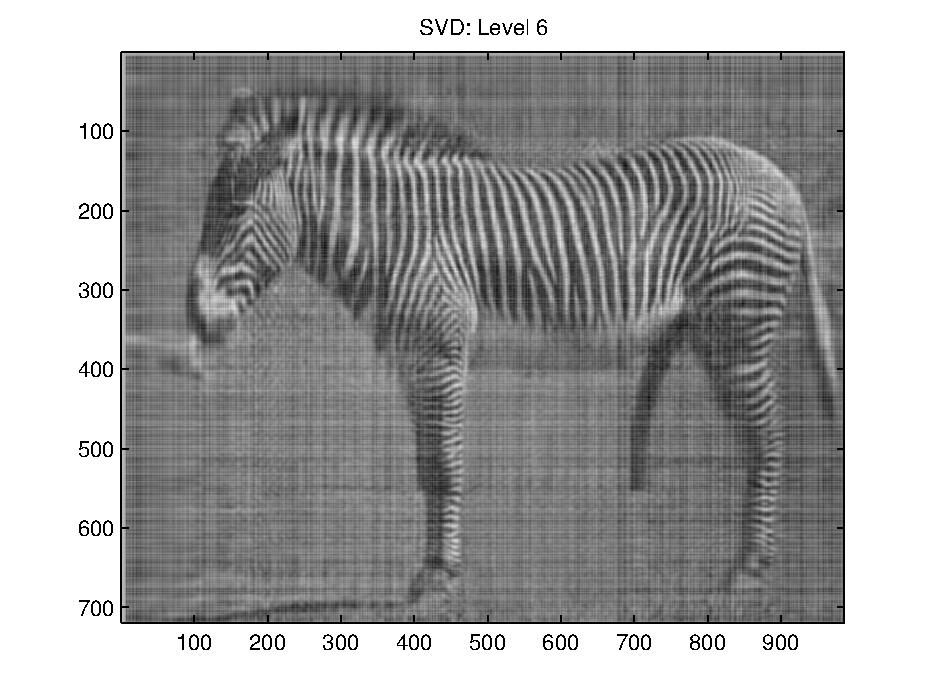}
\hskip -0.5cm
\includegraphics[clip,width=0.35\textwidth]{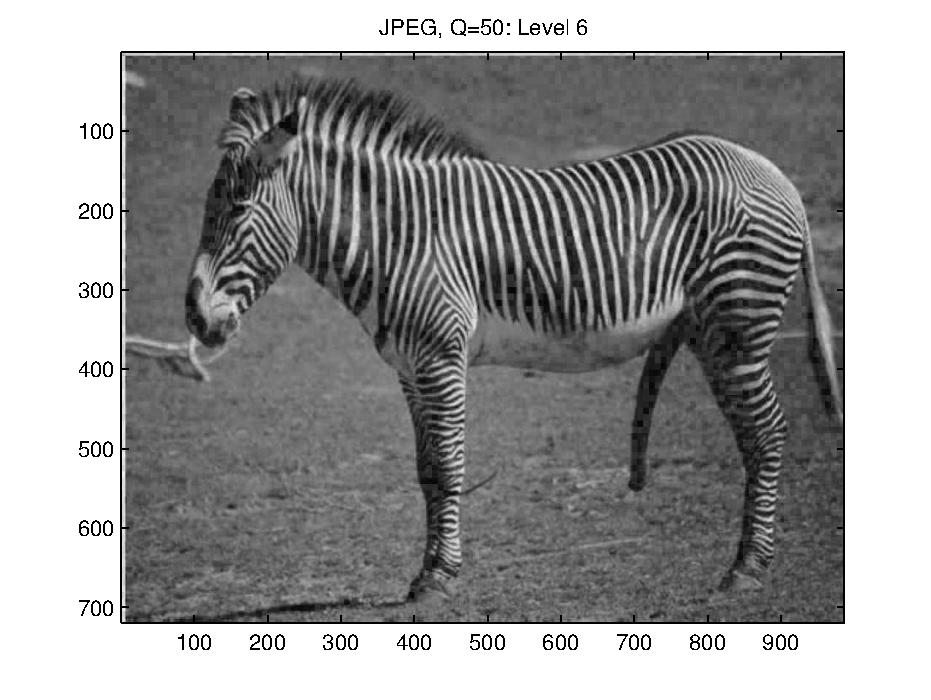}
 \caption{\label{fig:NMF1a2} MLA for the image in Example 1 using NMF with $25\%$ noise }
\end{figurehere}

\textbf{Example 2}.
In this example, we set $Y$ as the image presented in Figure \ref{fig:NMF2}. The parameters are the same as in the previous example.  The resulting images are shown in Figure \ref{fig:NMF2a}.  The memory complexity ratios for the $(s_{max}-s)$-th level of the three methods and their respective relative $L^2$ errors with and without noise are shown as follows:
\beqnx
\begin{matrix}
s_{max}-s &:& 1 & 2 & 3 &4 & 5  \\
p &:& 22  &  21  &  23  &  28  &  34  \\
\tilde{p} &:&  143  & 113  & 118  & 146  & 208  \\
\text{memory complexity ratio of NMF} &:&   0.0047  &  0.0075  &  0.0140  &  0.0309   & 0.0711 \\
\text{memory complexity ratio of SVD}&:&  0.0053  &  0.0103  &  0.0225   & 0.0548  &  0.1330  \\
\text{memory complexity ratio of JPEG} &:&  \text{NA}  &  0.0155  &  0.0364 &    0.0619  &  0.0716 \\
\text{Relative $L^2$ error in NMF (with $0\%$ noise)} &:&   0.2945 &   0.2749  &  0.2503 &   0.1966 &   0.1693 \\
\text{Relative $L^2$ error in SVD (with $0\%$ noise)} &:&   0.3024  &  0.2770  &  0.2553  &  0.2075 &   0.1717   \\
\text{Relative $L^2$ error in JPEG (with $0\%$ noise)} &:&  \text{NA}  &   0.2487  &  0.1827  &  0.0884 &   0.0663  \\
\text{Relative $L^2$ error in NMF (with $25\%$ noise)} &:&  0.3104  &  0.2842 &   0.2614  &  0.2225 &   0.1899 \\
\text{Relative $L^2$ error in SVD (with $25\%$ noise)} &:&  0.3040  &  0.2867 &   0.2536   & 0.2298  &  0.2024\\
\text{Relative $L^2$ error in JPEG (with $25\%$ noise)} &:&   \text{NA}  &   0.2664  &  0.2025   & 0.1250  &  0.1082
\end{matrix}
\eqnx
From Figures \ref{fig:NMF2a} and \ref{fig:NMF2a2}, finer and finer details are reasonably captured and  present as the level number of the NMF layers increases, while a reasonably low compression ratio is attained. This time the memory complexity of JPEG becomes comparable to NMF.
In each level, JPEG still gives the best image of the three on the same layer, however, we notice that with the same level of memory complexity, some of the NMF images can provide a finer layer of detail than the other two methods.
With the presence of noise, we can see that although the relative $L^2$ errors of NMF actually outperform the ones
of the SVD in some layers, the figures of all the three methods seem to be seriously contaminated. However, to our surprise, it seems that the figures of NMF seem more robust to keep the background clean, while the figures of the SVD
are contaminated by random strips whereas the JPEG by random squares. In the coarsest levels, the SVD does not give a shape of a table, however, the NMF images still give a recognizable shape of a table.
Moreover, the most detail of the table in the finer levels is still reasonably kept by the NMF in the presence of noise.

\begin{center}
\begin{figurehere}
\hfill{}\includegraphics[clip,width=0.35\textwidth]{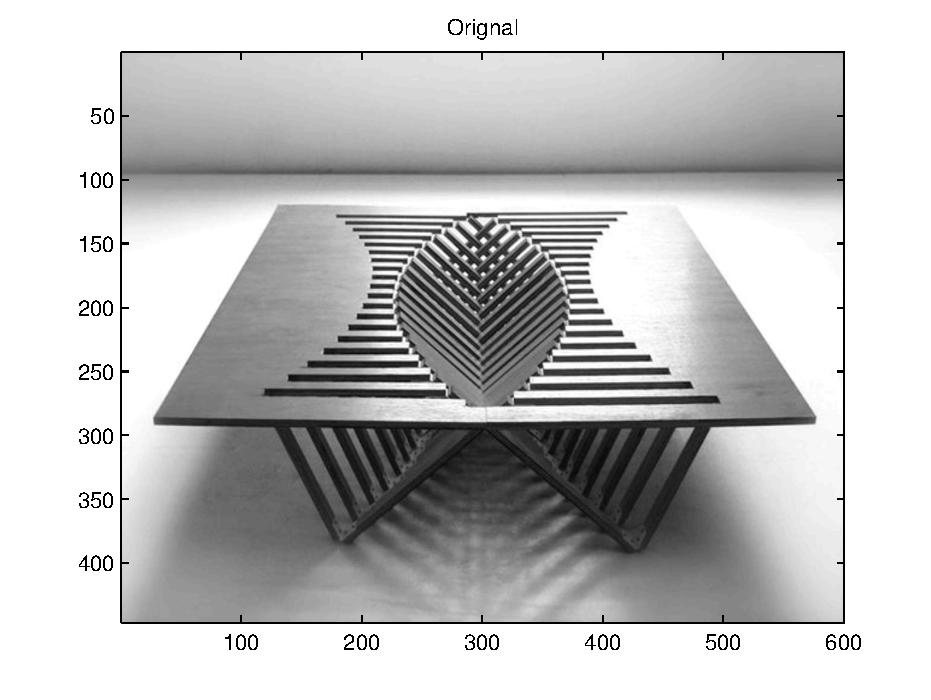}\hfill{}
 \caption{\label{fig:NMF2} Original image in Example 2}
\end{figurehere}
\end{center}

\begin{figurehere}
\hfill{}\\
\hskip -5cm
\includegraphics[clip,width=0.35\textwidth]{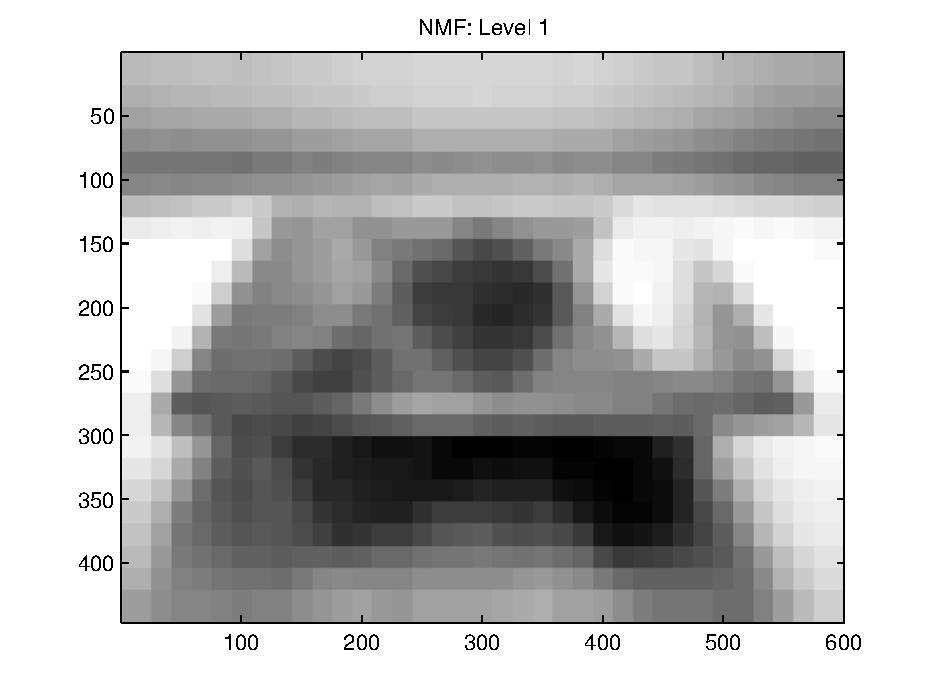}
\hskip -0.5cm
\includegraphics[clip,width=0.35\textwidth]{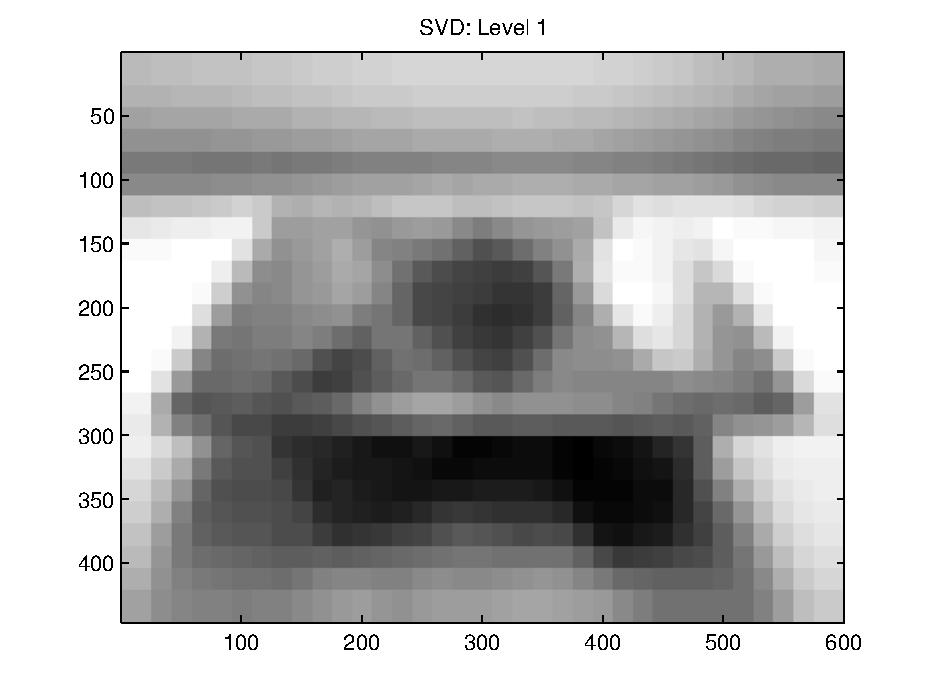}
\hskip -0.5cm
\includegraphics[clip,width=0.35\textwidth]{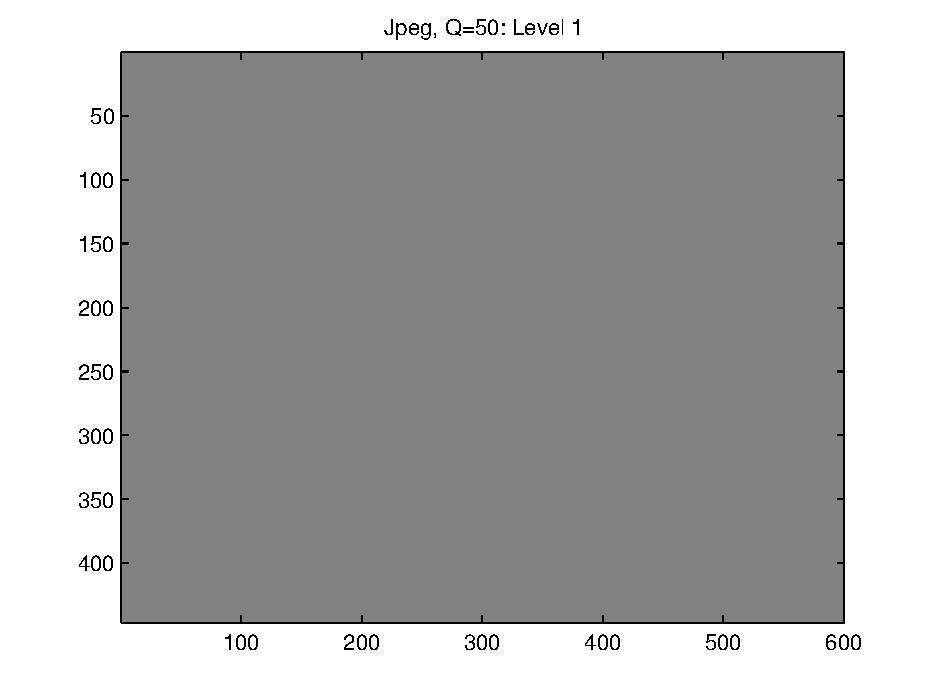}\\
\hskip -2cm
\includegraphics[clip,width=0.35\textwidth]{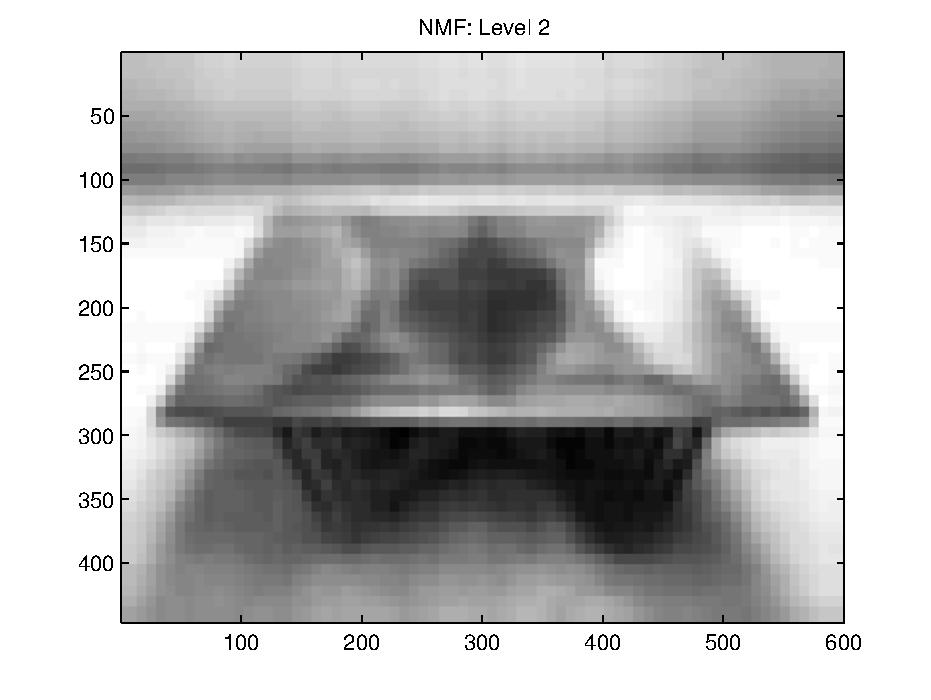}
\hskip -0.5cm
\includegraphics[clip,width=0.35\textwidth]{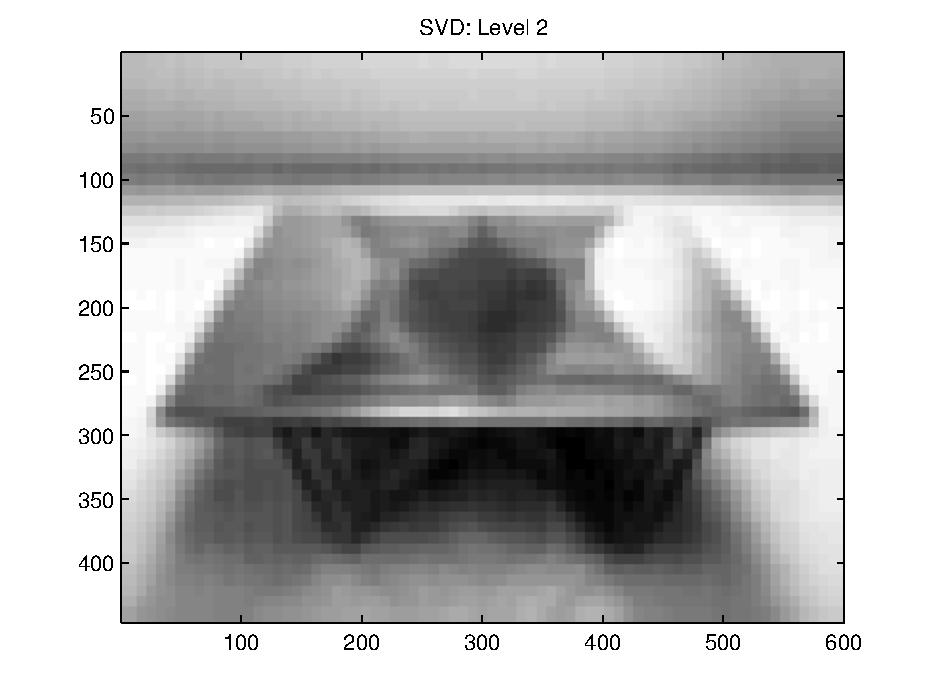}
\hskip -0.5cm
\includegraphics[clip,width=0.35\textwidth]{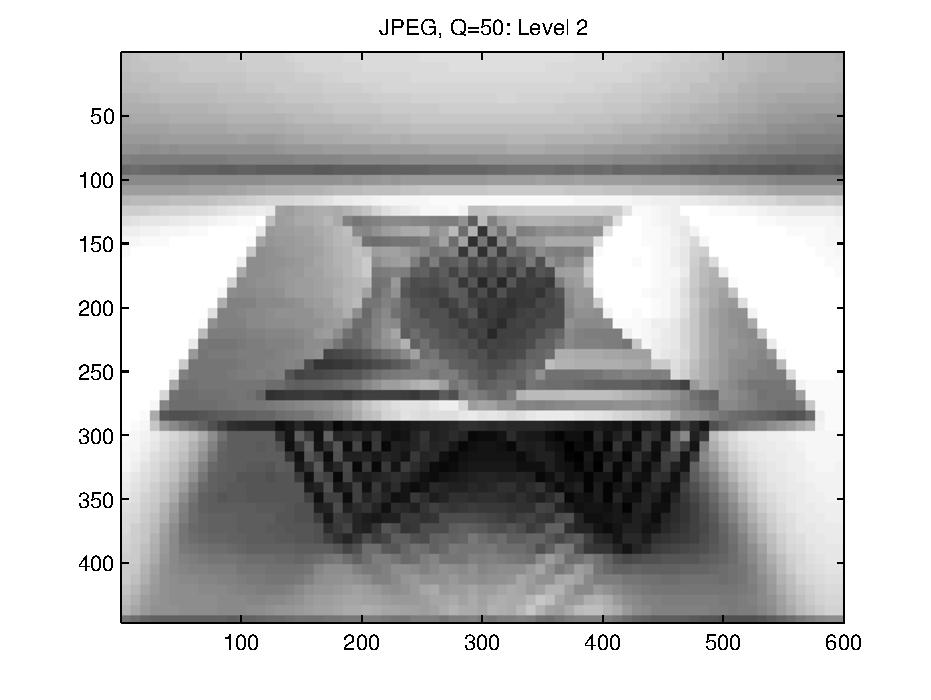} \\
\hskip -2cm
\includegraphics[clip,width=0.35\textwidth]{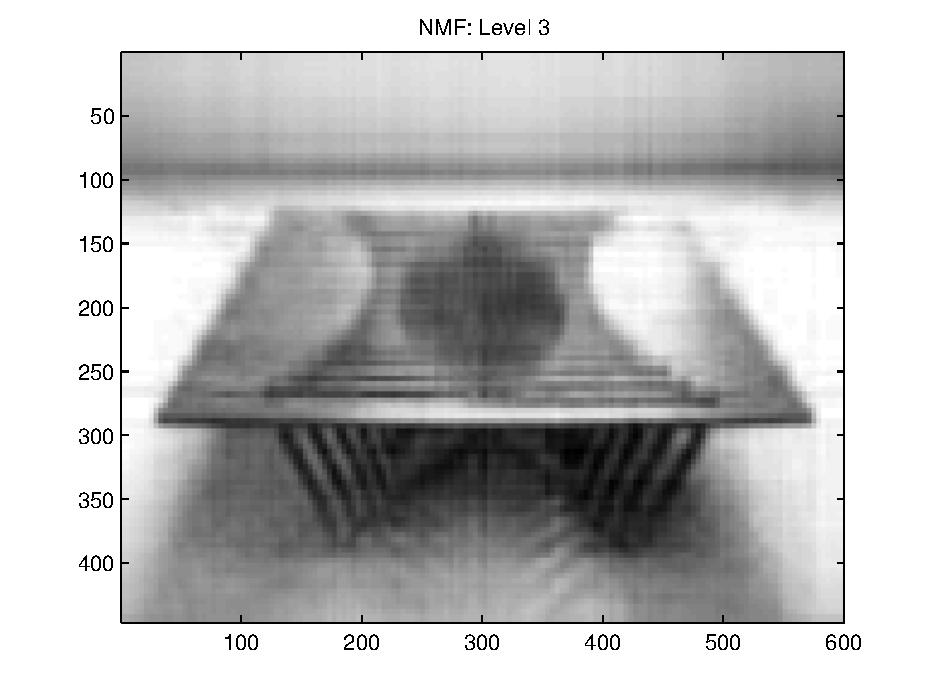}
\hskip -0.5cm
\includegraphics[clip,width=0.35\textwidth]{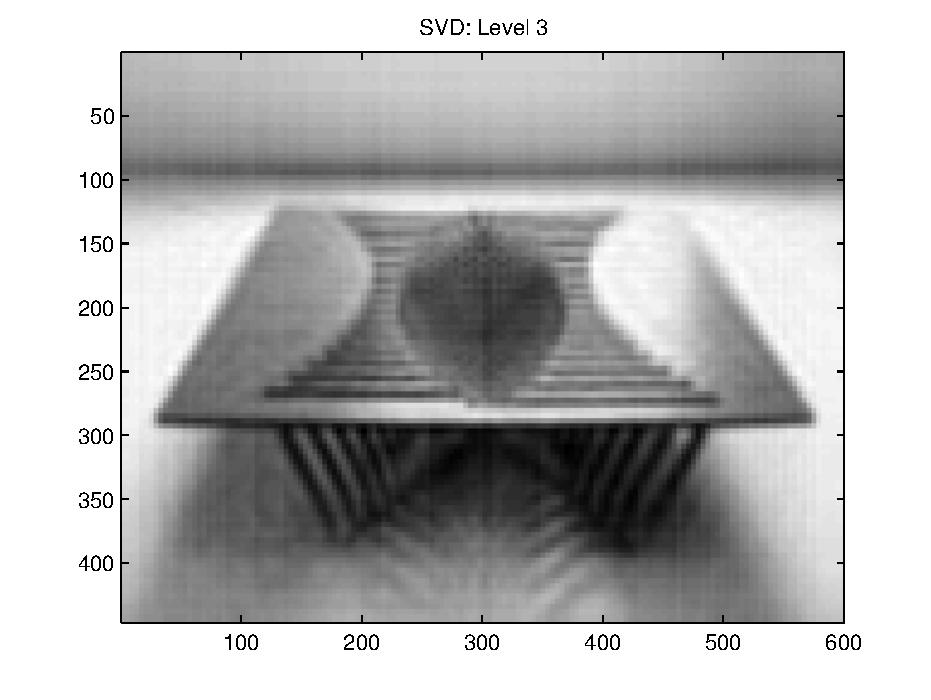}
\hskip -0.5cm
\includegraphics[clip,width=0.35\textwidth]{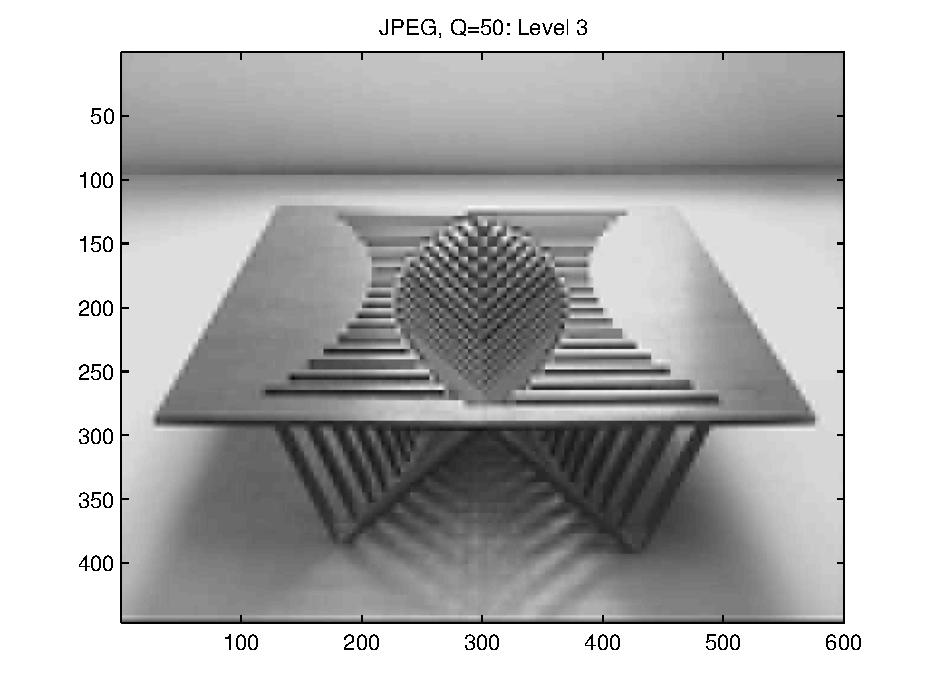} \\
\hskip -2cm
\includegraphics[clip,width=0.35\textwidth]{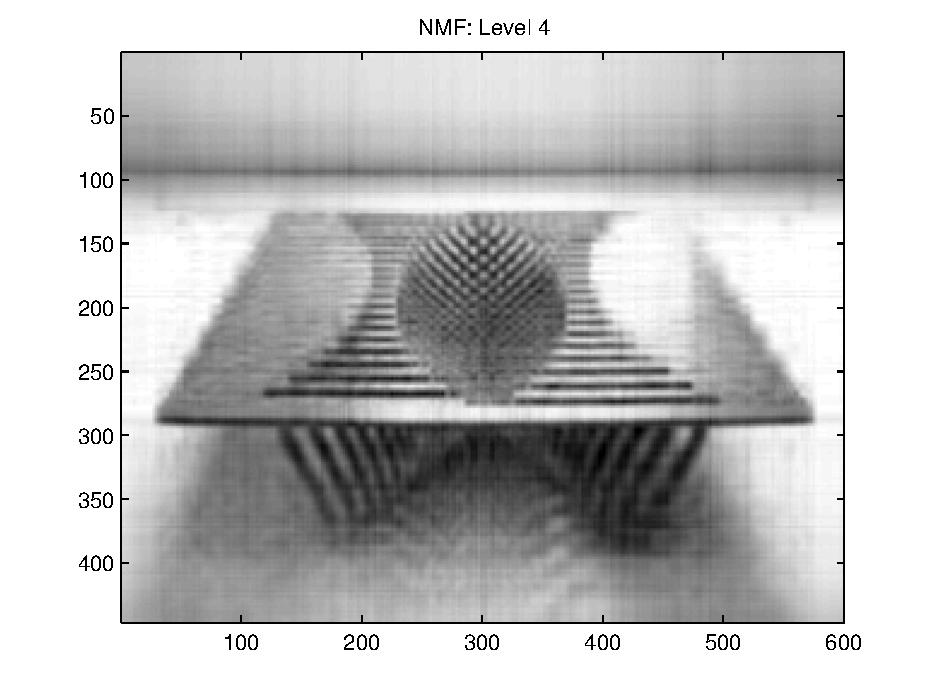}
\hskip -0.5cm
\includegraphics[clip,width=0.35\textwidth]{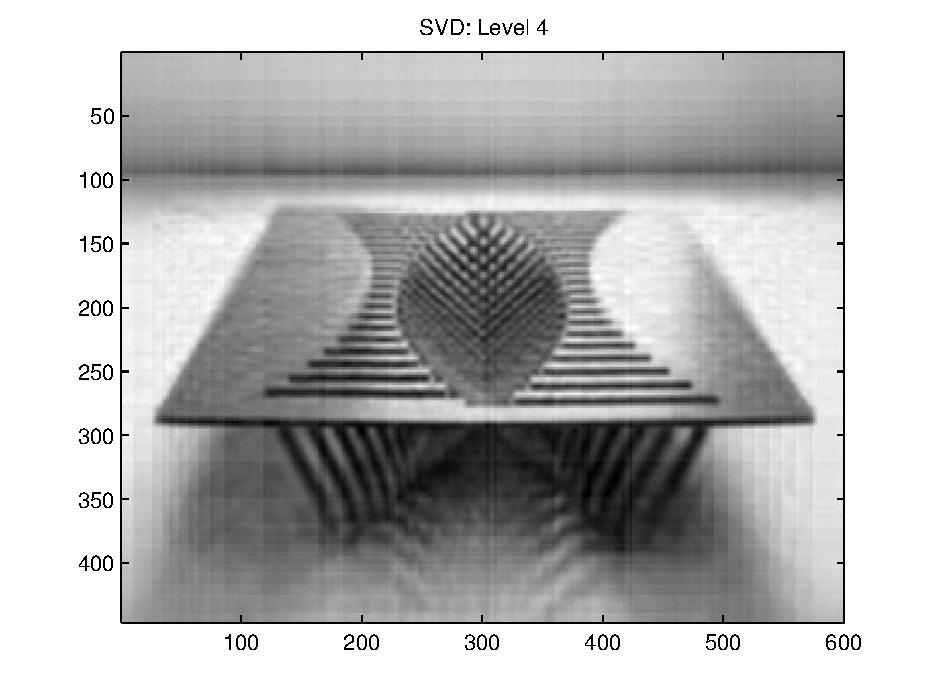}
\hskip -0.5cm
\includegraphics[clip,width=0.35\textwidth]{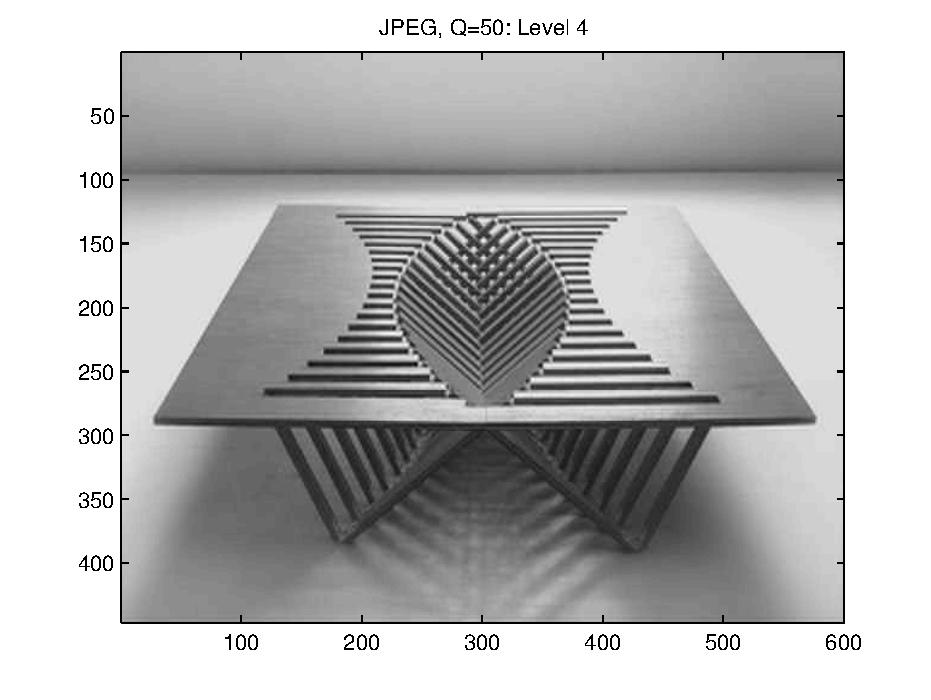} \\
\hskip -2cm
\includegraphics[clip,width=0.35\textwidth]{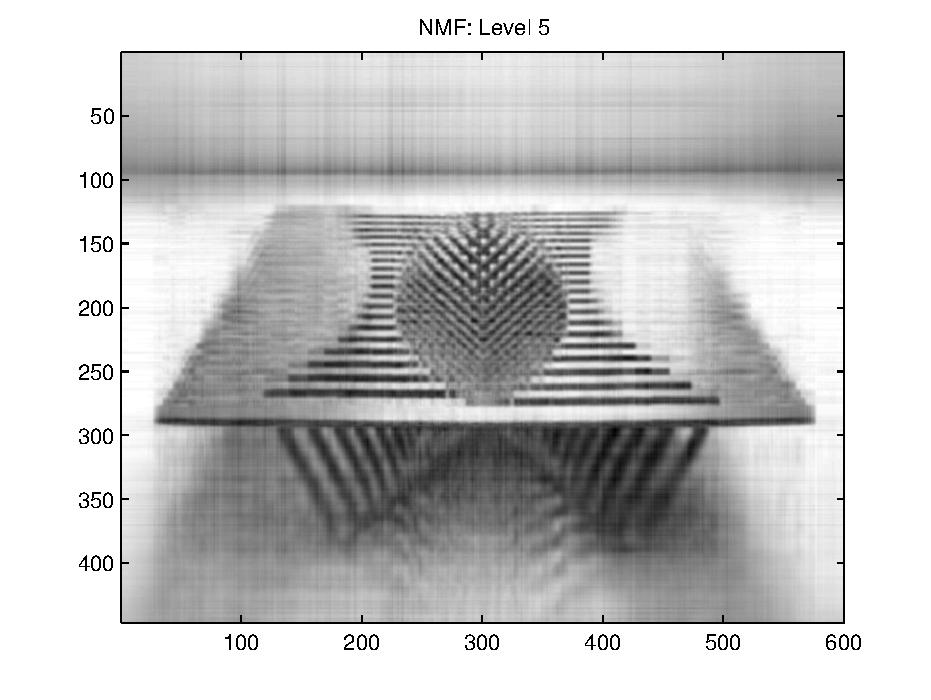}
\hskip -0.5cm
\includegraphics[clip,width=0.35\textwidth]{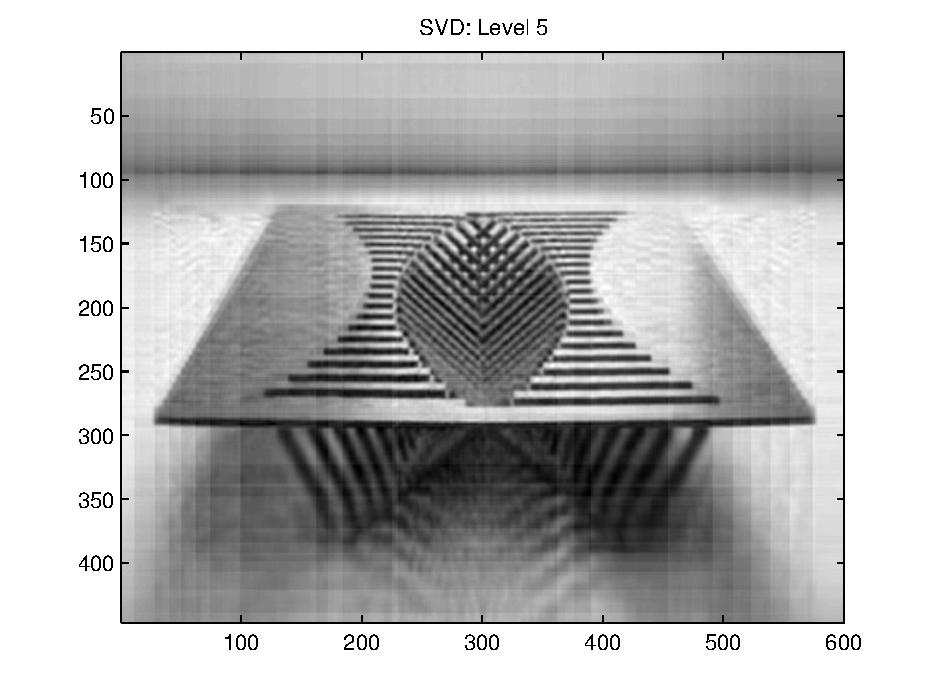}
\hskip -0.5cm
\includegraphics[clip,width=0.35\textwidth]{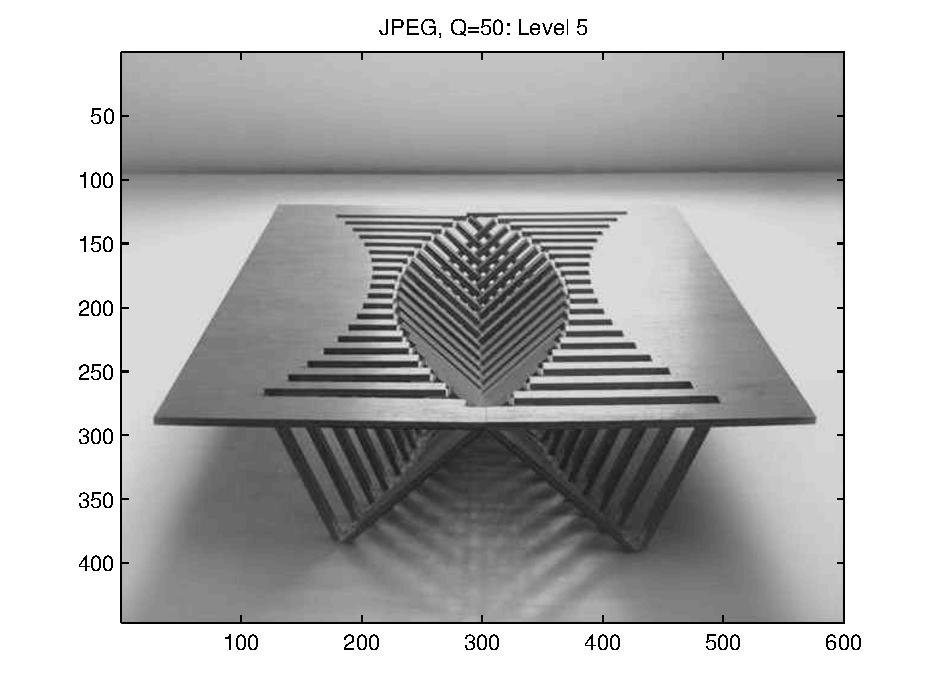}
 \caption{\label{fig:NMF2a} MLA for the image in Example 2 using NMF without noise }
\end{figurehere}

\begin{figurehere}
\hfill{}\\
\hskip -5cm
\includegraphics[clip,width=0.35\textwidth]{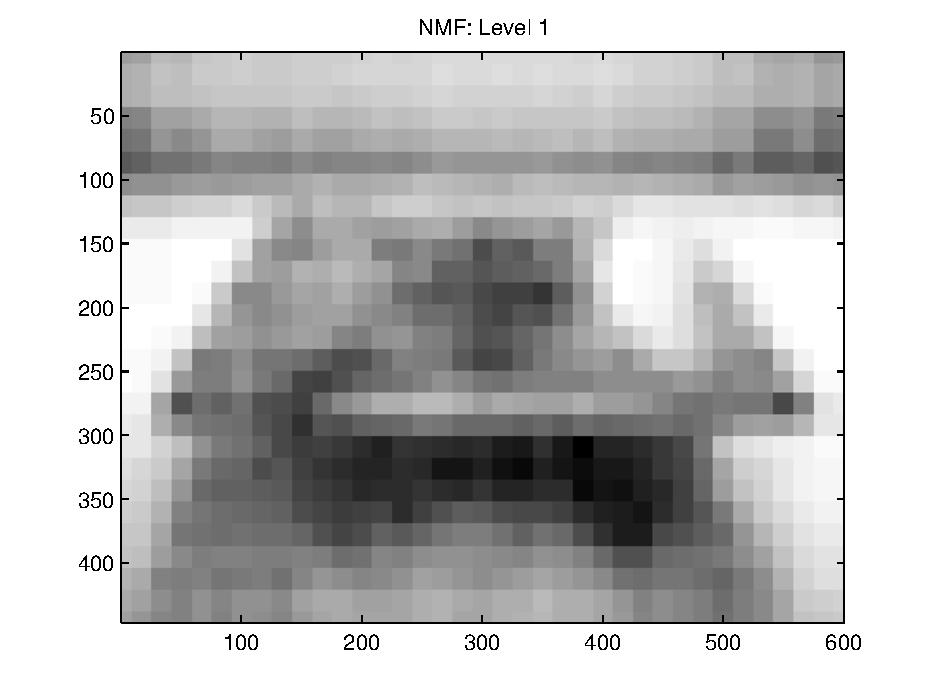}
\hskip -0.5cm
\includegraphics[clip,width=0.35\textwidth]{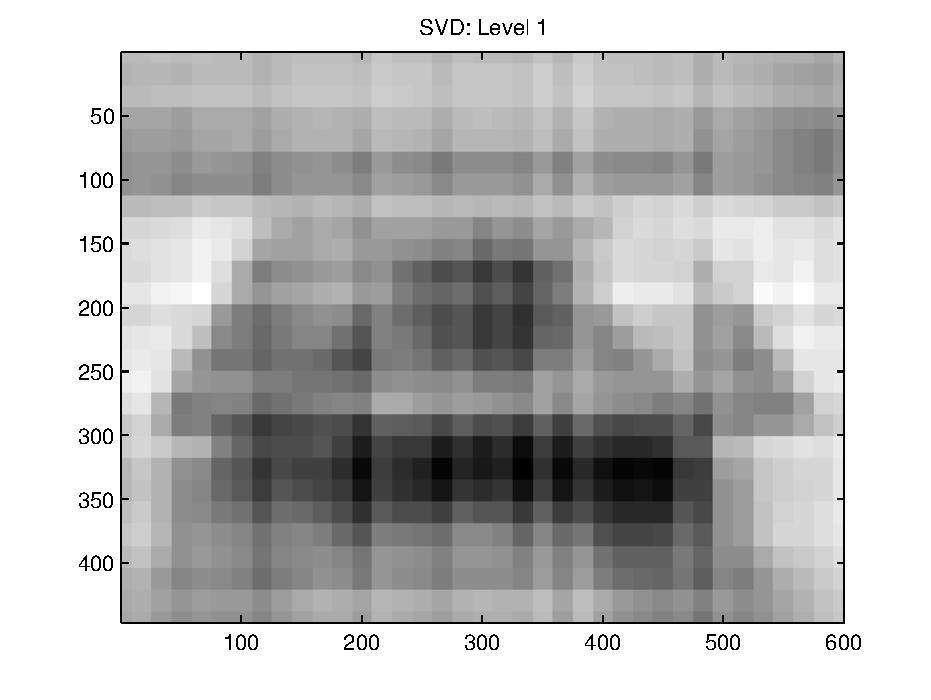}
\hskip -0.5cm
\includegraphics[clip,width=0.35\textwidth]{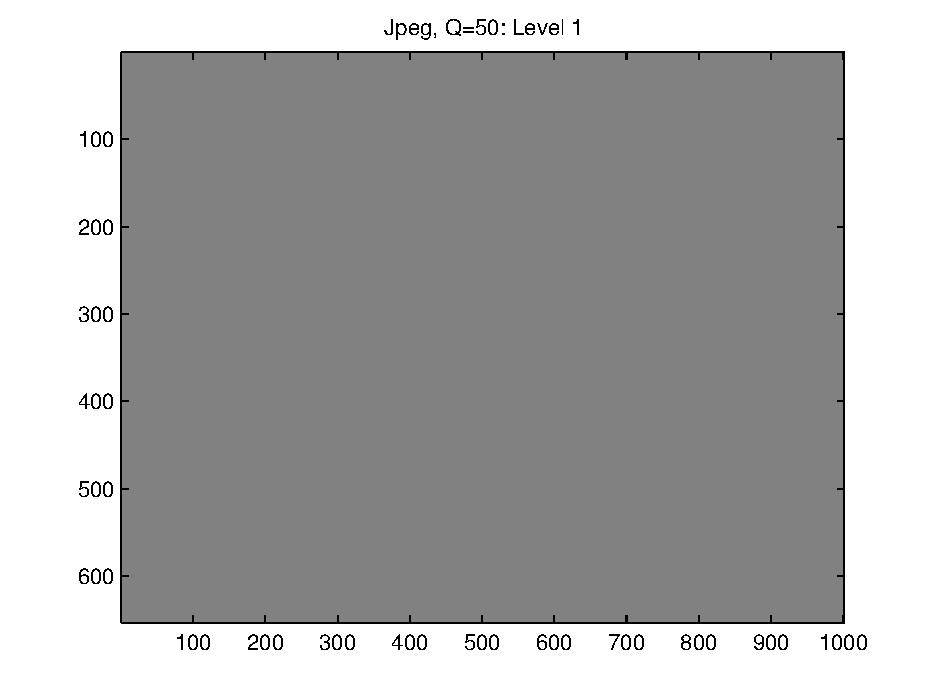}\\
\hskip -2cm
\includegraphics[clip,width=0.35\textwidth]{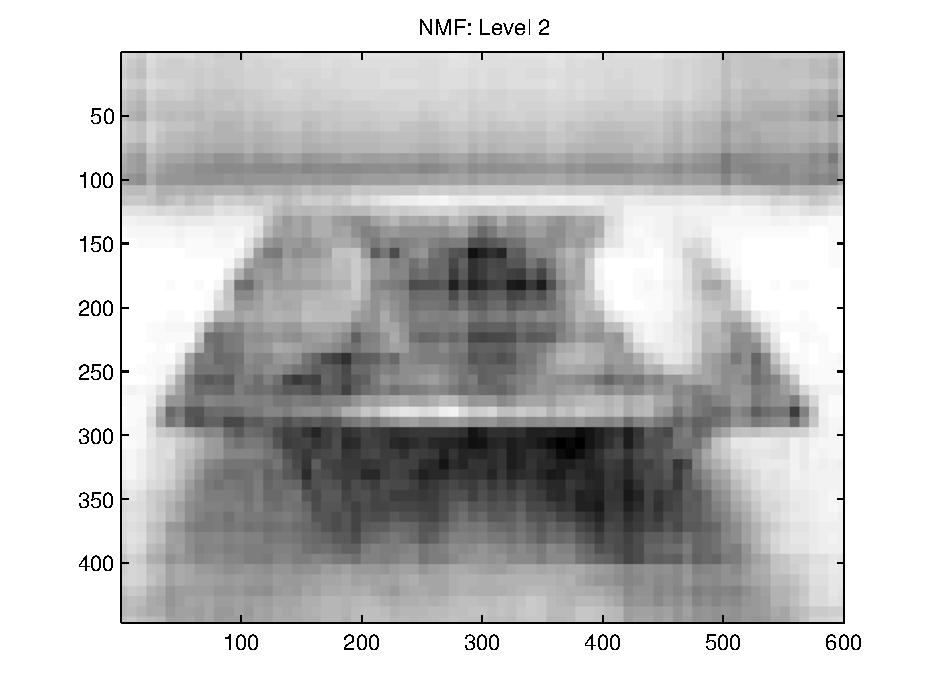}
\hskip -0.5cm
\includegraphics[clip,width=0.35\textwidth]{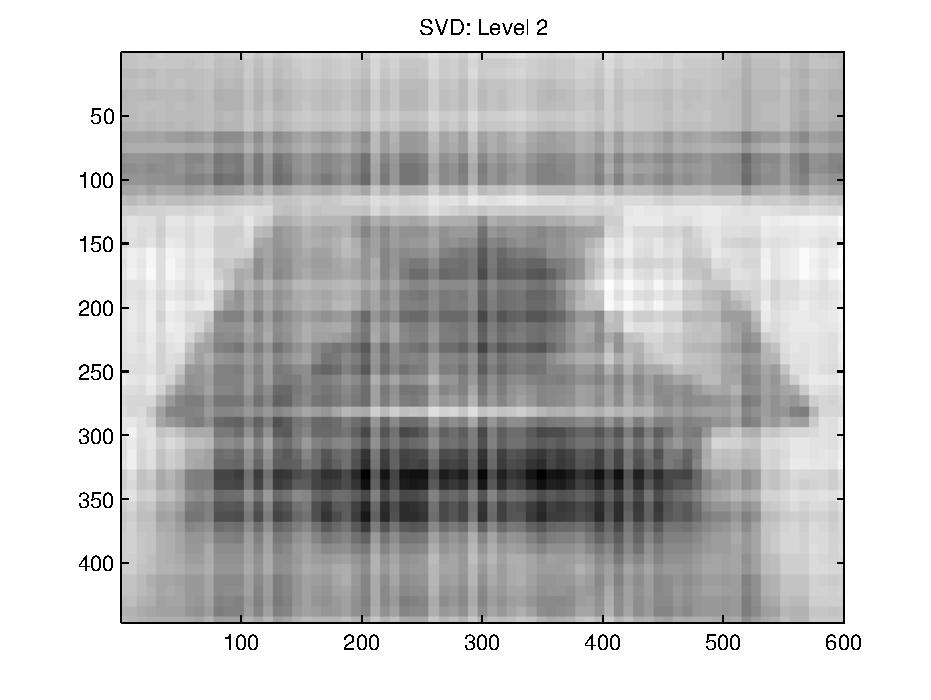}
\hskip -0.5cm
\includegraphics[clip,width=0.35\textwidth]{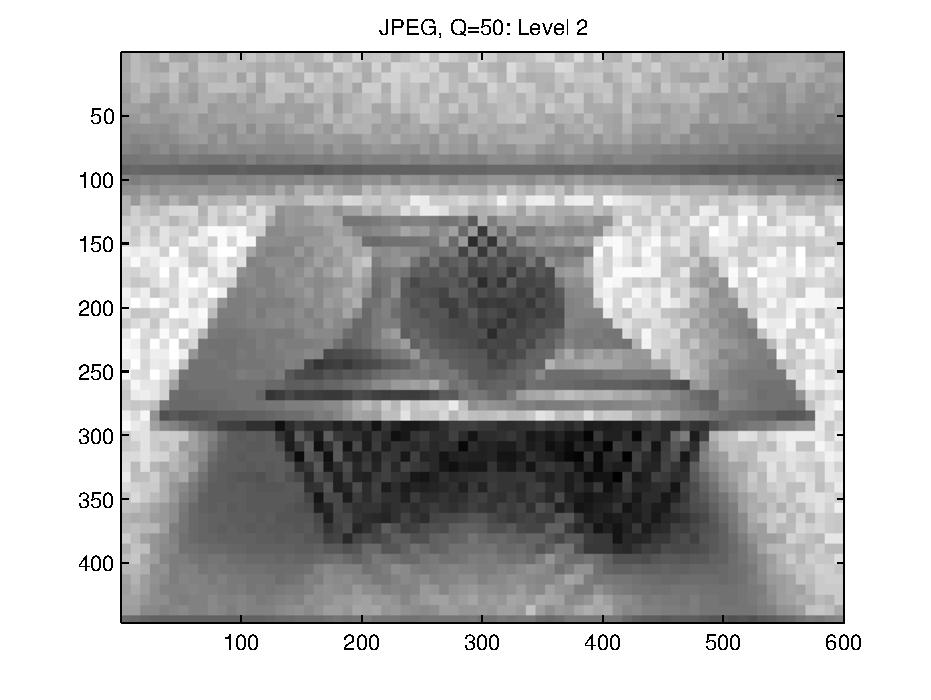} \\
\hskip -2cm
\includegraphics[clip,width=0.35\textwidth]{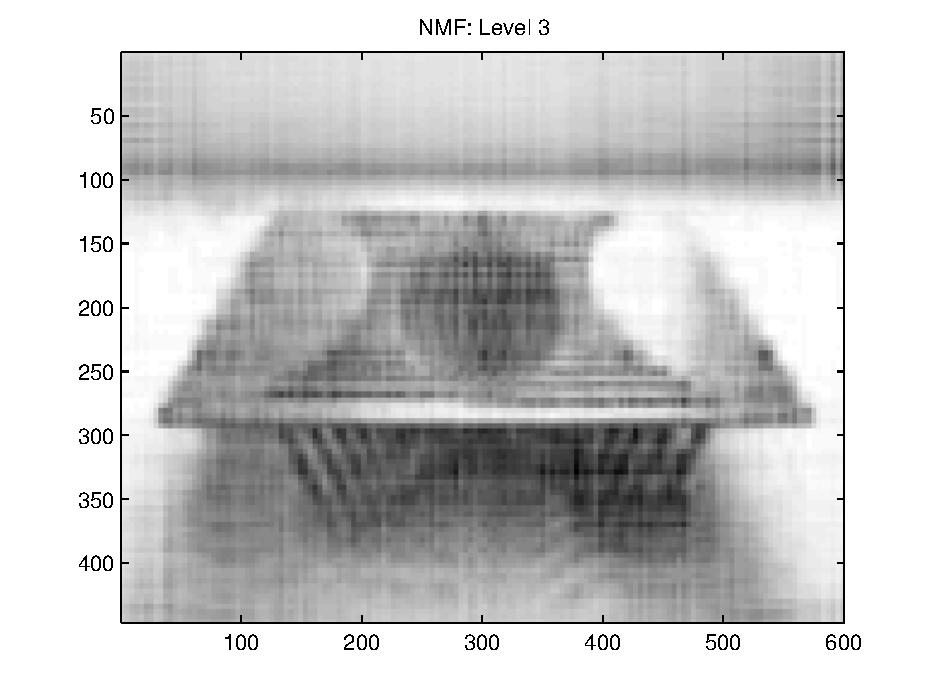}
\hskip -0.5cm
\includegraphics[clip,width=0.35\textwidth]{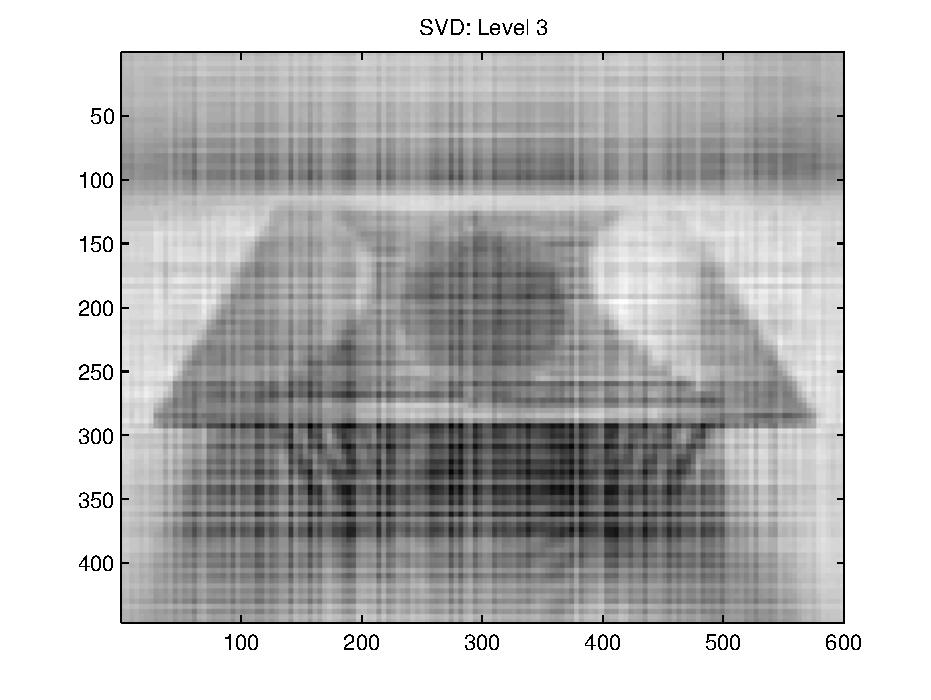}
\hskip -0.5cm
\includegraphics[clip,width=0.35\textwidth]{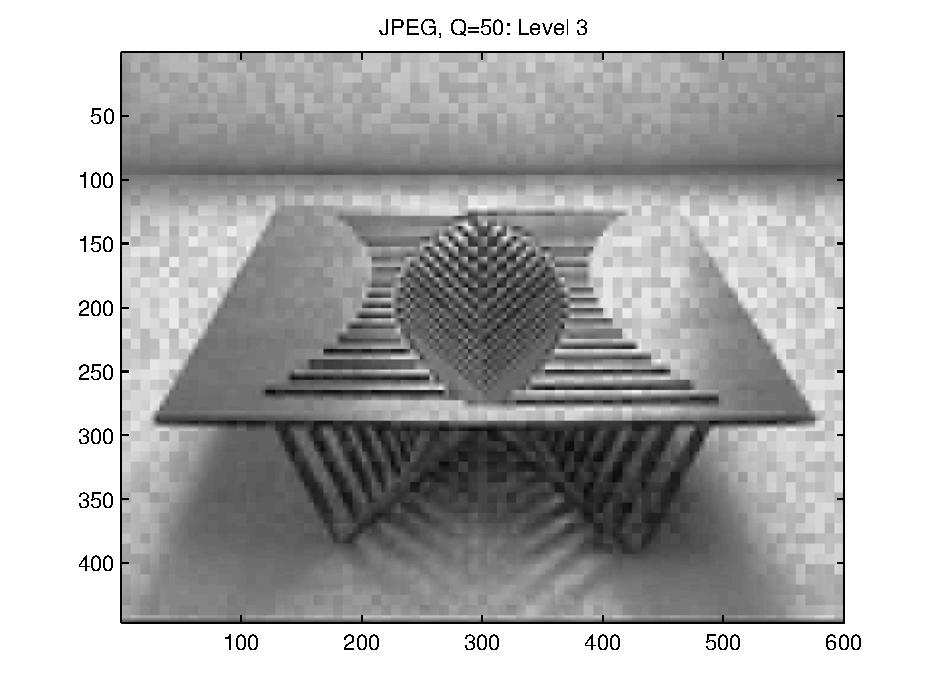} \\
\hskip -2cm
\includegraphics[clip,width=0.35\textwidth]{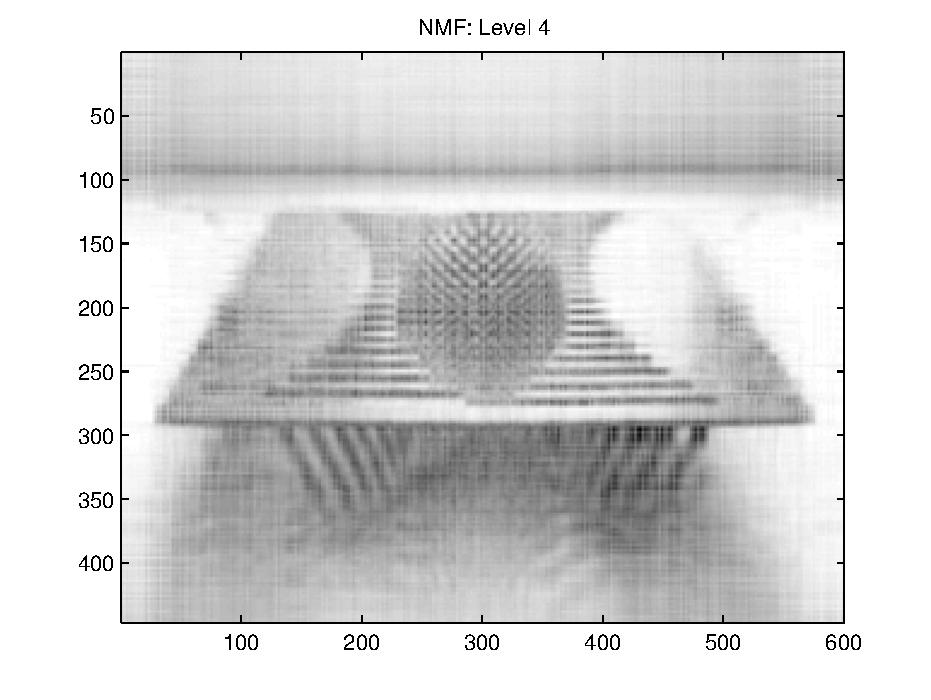}
\hskip -0.5cm
\includegraphics[clip,width=0.35\textwidth]{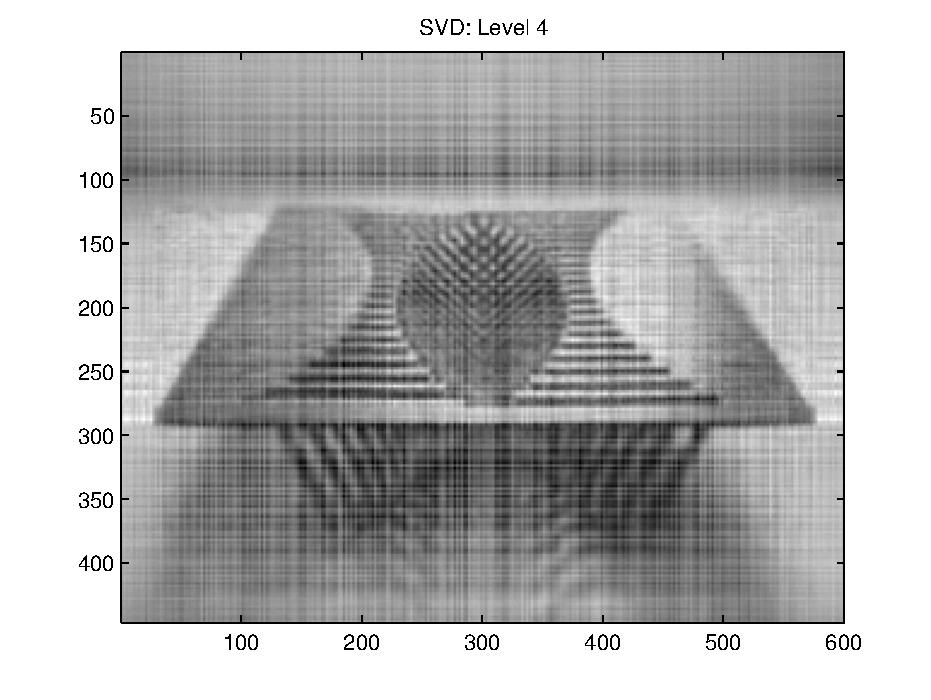}
\hskip -0.5cm
\includegraphics[clip,width=0.35\textwidth]{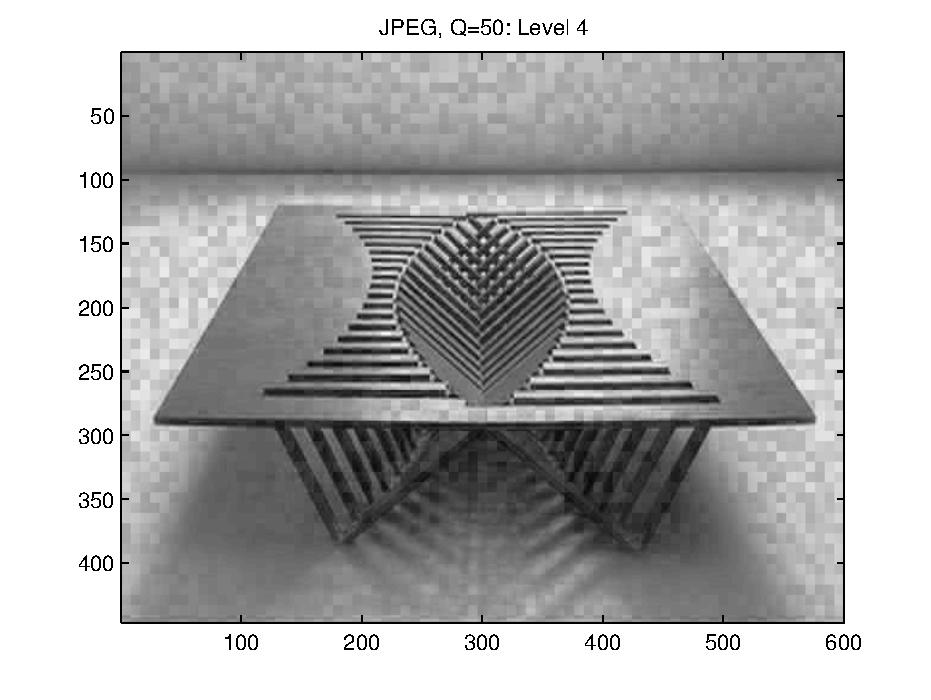} \\
\hskip -2cm
\includegraphics[clip,width=0.35\textwidth]{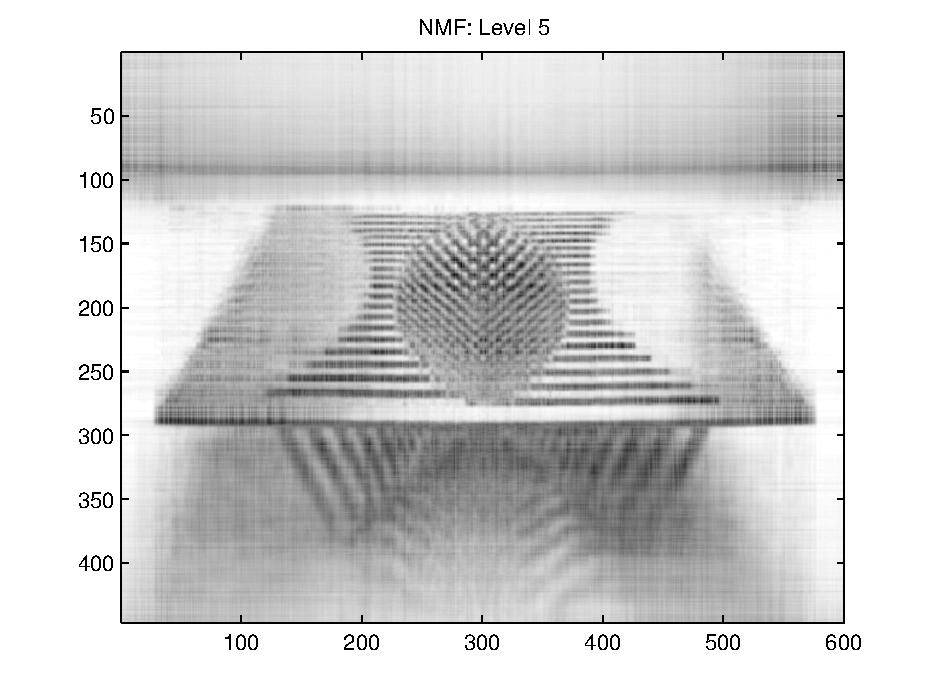}
\hskip -0.5cm
\includegraphics[clip,width=0.35\textwidth]{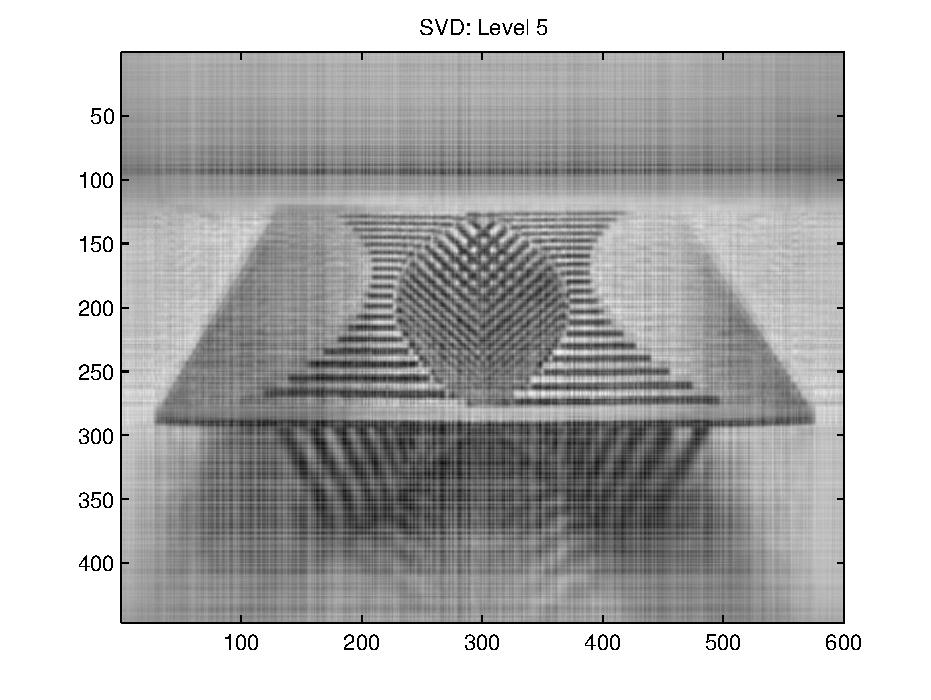}
\hskip -0.5cm
\includegraphics[clip,width=0.35\textwidth]{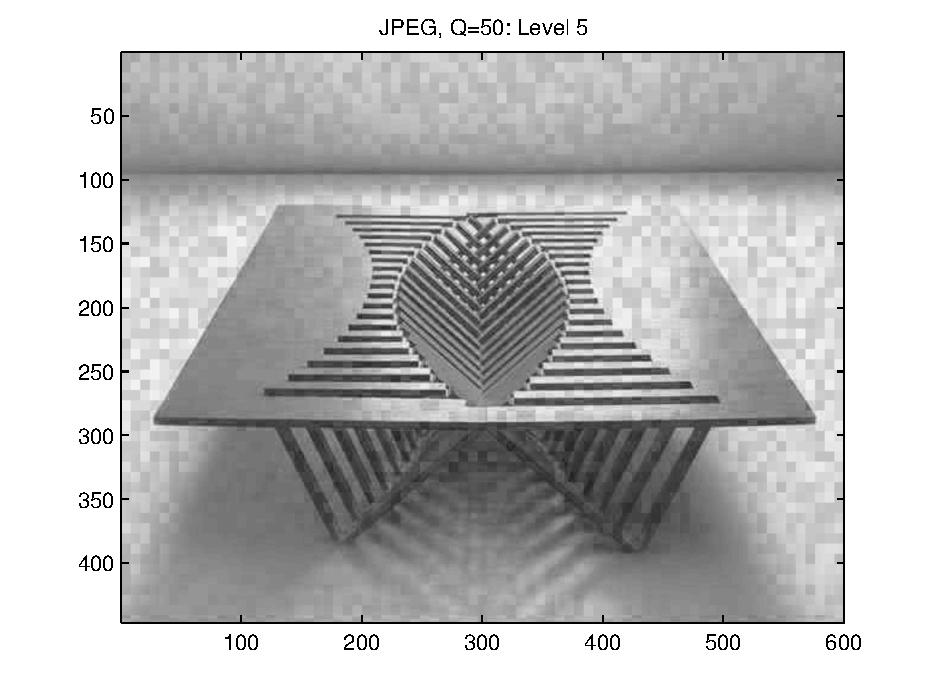}
 \caption{\label{fig:NMF2a2} MLA for the image in Example 2 using NMF with $25\%$ noise }
\end{figurehere}

\textbf{Example 3}. In this example, we use the same set of parameters as for the previous two examples
except that we set $s_{max} = [\log(\min(N,M))/\log(r)- 4]$ instead.
 $Y$ is set as the image in Figure \ref{fig:NMF3}. The resulting images are shown in Figures \ref{fig:NMF3a} and \ref{fig:NMF3a2}.  The memory complexity ratios for the $(s_{max}-s)$-th level of the three methods and their respective relative $L^2$ errors with and without noise are shown as follows:
\beqnx
\begin{matrix}
s_{max}-s &:& 1 & 2 & 3 &4 & 5  \\
p &:& 24 &   24   & 28  &  34 &   43\\
\tilde{p} &:& 134  & 129  &  89  & 120  & 187\\
\text{memory complexity ratio of NMF} &:&    0.0032  &  0.0056  &  0.0118  &  0.0267  &  0.0595 \\
\text{memory complexity ratio of SVD} &:&    0.0042  &  0.0085  &  0.0199 &   0.0483  &  0.1223 \\
\text{memory complexity ratio of JPEG} &:&  \text{NA}  &     0.0155   & 0.0480 &   0.1383 &   0.2005 \\
\text{Relative $L^2$ error in NMF (with $0\%$ noise)} &:&     0.4060 &   0.3814  &  0.3588  &  0.3391  &  0.3113 \\
\text{Relative $L^2$ error in SVD (with $0\%$ noise)} &:&    0.4075  &  0.3818  &  0.3633   & 0.3386 &   0.3112\\
\text{Relative $L^2$ error in JPEG (with $0\%$ noise)} &:&   \text{NA}  &  0.3542  &  0.3062  &  0.2120  &  0.1502 \\
\text{Relative $L^2$ error in NMF (with $25\%$ noise)} &:&  0.4164  &  0.3901  &  0.3760 &   0.3505 &   0.3310 \\
\text{Relative $L^2$ error in SVD (with $25\%$ noise)} &:&   0.4112 &   0.3912  &  0.3745  &  0.3499  &  0.3346 \\
\text{Relative $L^2$ error in JPEG (with $25\%$ noise)} &:&   \text{NA} &  0.3573  &  0.3058   & 0.2163 &   0.1605
\end{matrix}
\eqnx
We can see from Figure \ref{fig:NMF3a} that
in the absence of noise, although JPEG again performs the best among the three on the same layer, it requires usually about $4$ times of the memory than NMF due to the complexity of the figure. If we pick a memory complexity ratio of around $5$ to $6$ percent, then we can choose an NMF of the $5$-th level, while we can only choose a level $3$ among the JPEG images which provides much less finer details of the building.
With the presence of noise, the relative $L^2$ errors of the JPEG is the least among the three as shown in the above table.  Nonetheless in Figure \ref{fig:NMF3a2}, we actually notice that the several NMF layers do not seem quite different from the ones without noise, whereas the SVD and the JPEG images are obviously contaminated respectively by straight strips and random squares.

\begin{center}
\begin{figurehere}
\hfill{}\includegraphics[clip,width=0.35\textwidth]{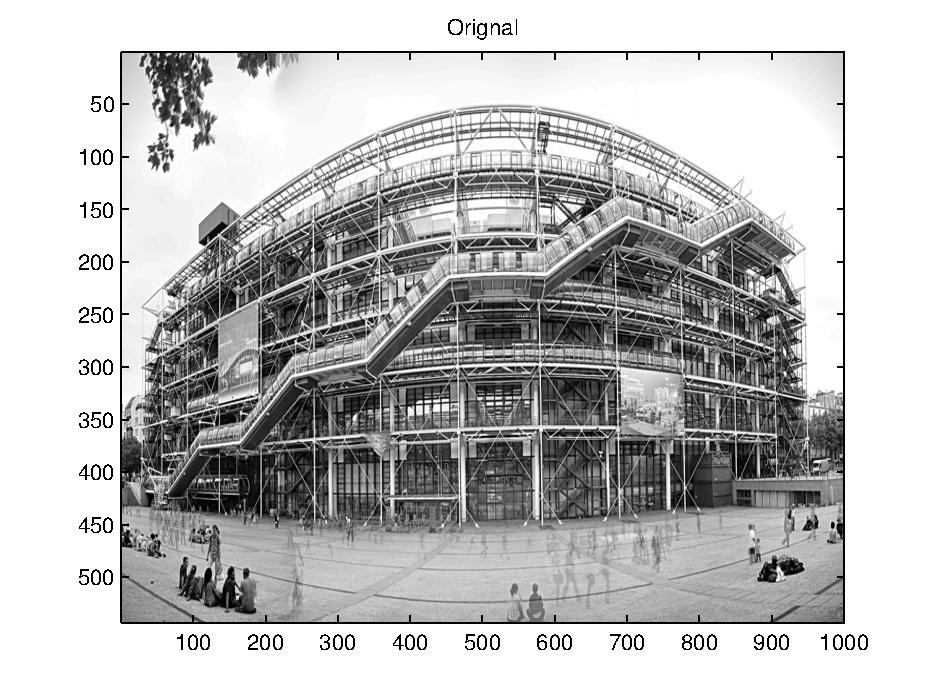}\hfill{}
 \caption{\label{fig:NMF3} Original image in Example 3}
\end{figurehere}
\end{center}

\begin{figurehere}
\hfill{}\\
\hskip -5cm
\includegraphics[clip,width=0.35\textwidth]{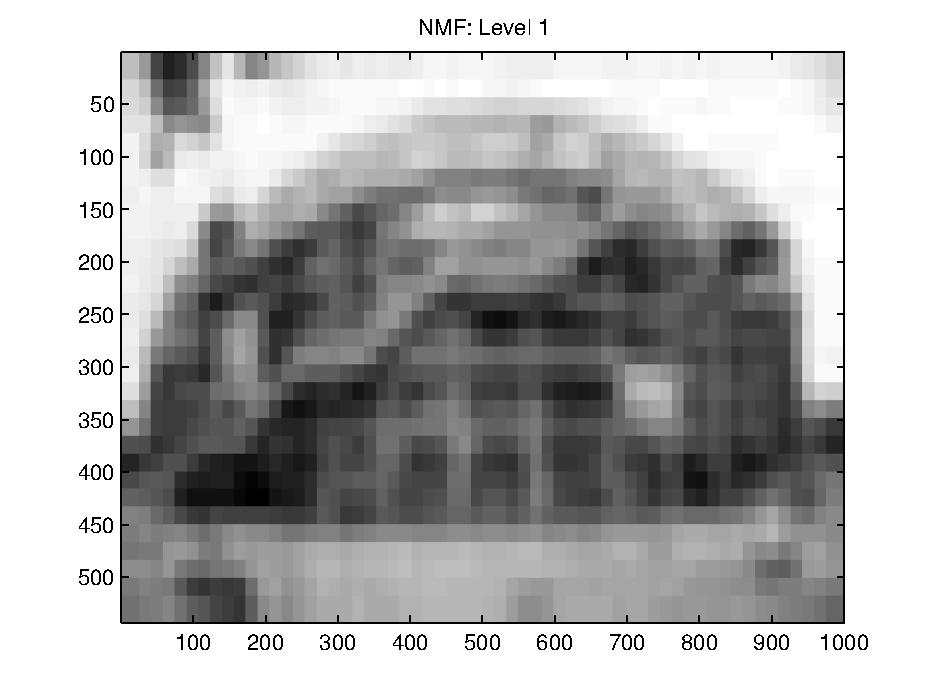}
\hskip -0.5cm
\includegraphics[clip,width=0.35\textwidth]{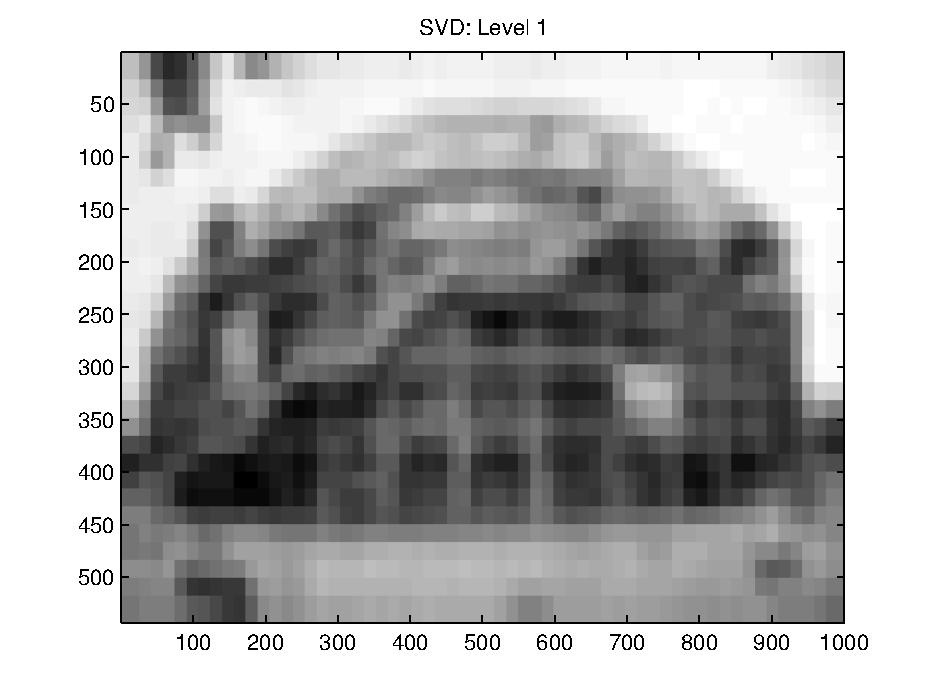}
\hskip -0.5cm
\includegraphics[clip,width=0.35\textwidth]{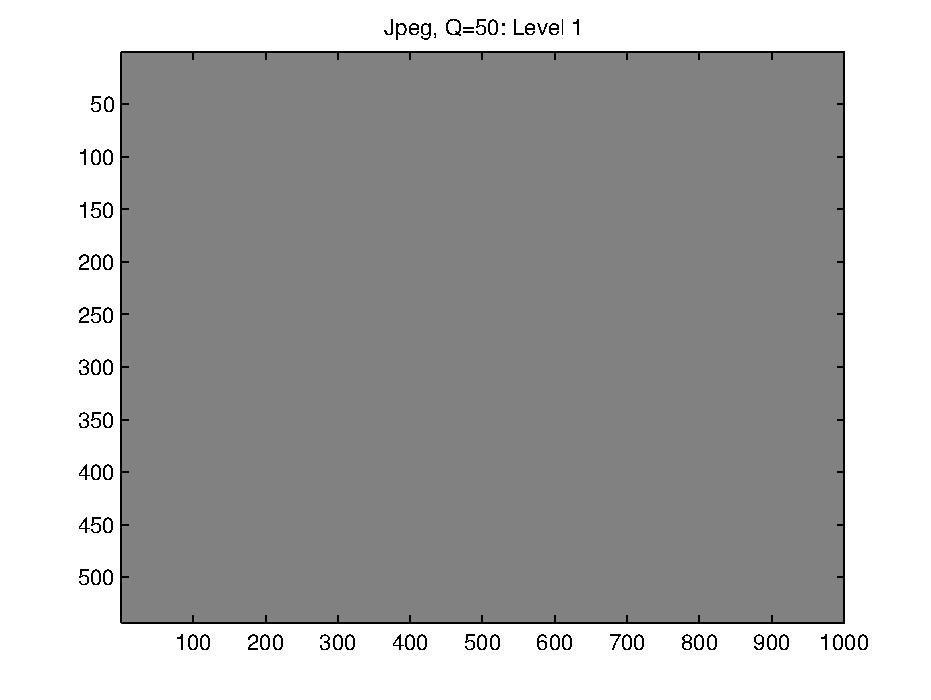}\\
\hskip -2cm
\includegraphics[clip,width=0.35\textwidth]{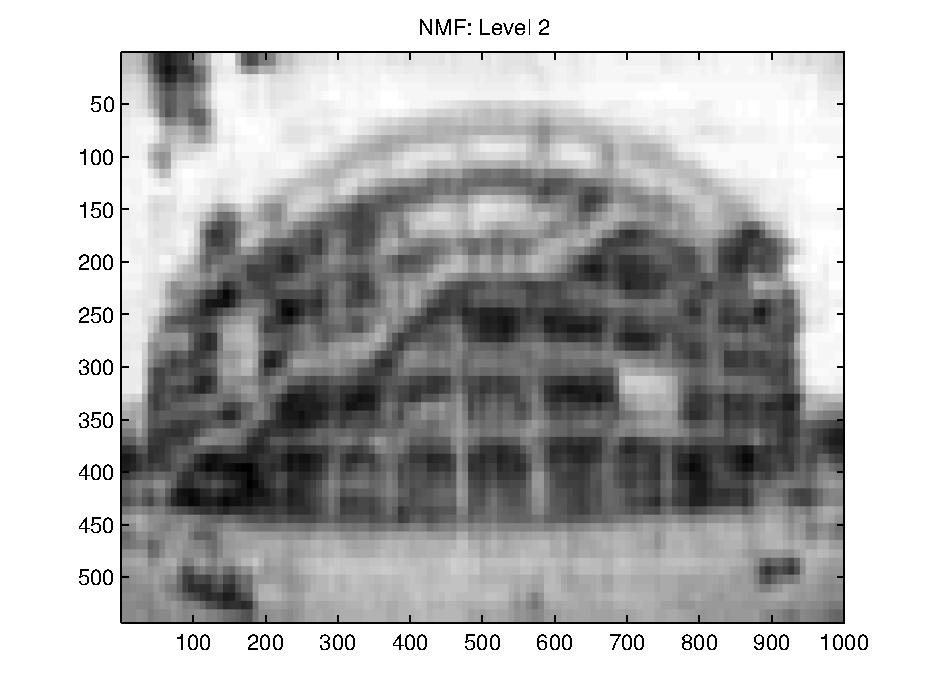}
\hskip -0.5cm
\includegraphics[clip,width=0.35\textwidth]{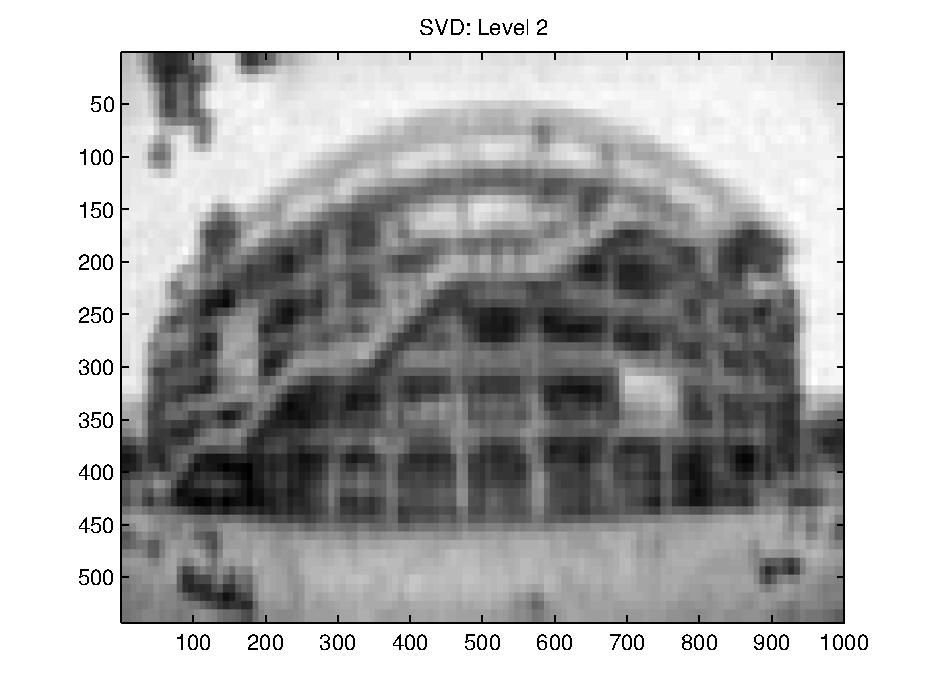}
\hskip -0.5cm
\includegraphics[clip,width=0.35\textwidth]{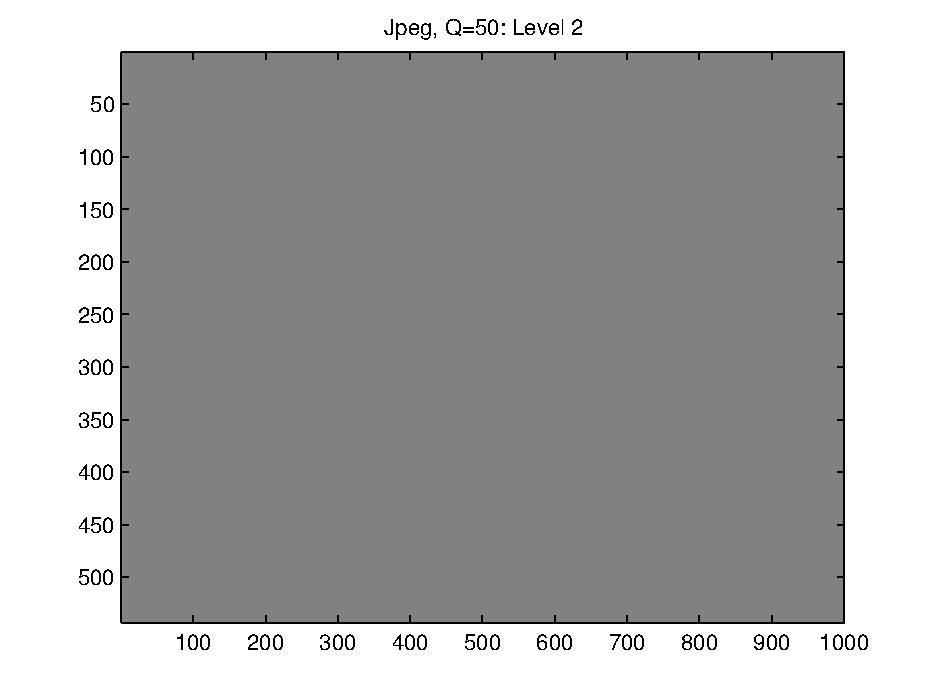} \\
\hskip -2cm
\includegraphics[clip,width=0.35\textwidth]{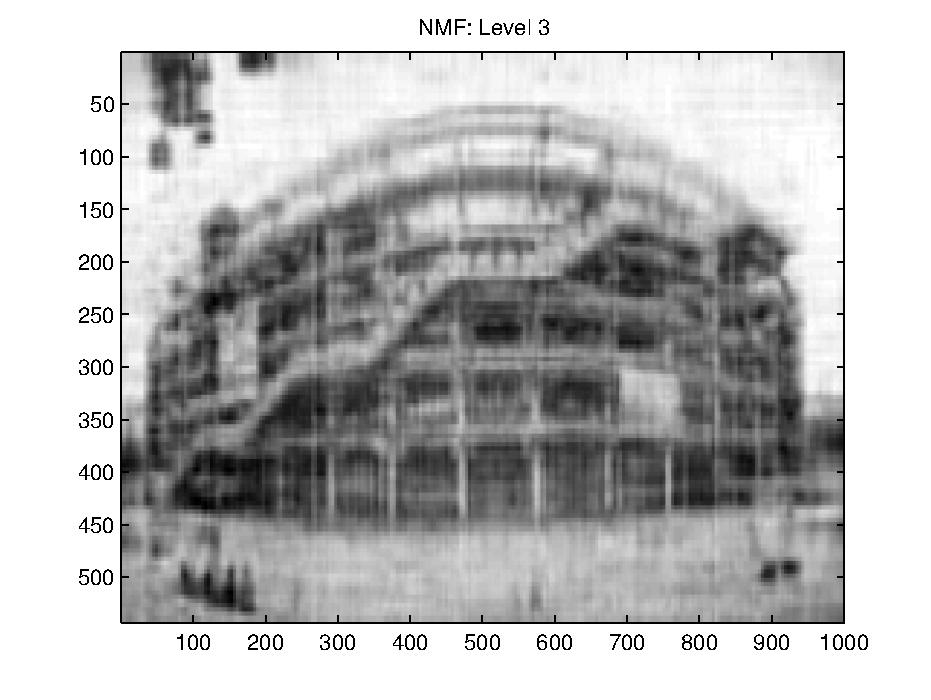}
\hskip -0.5cm
\includegraphics[clip,width=0.35\textwidth]{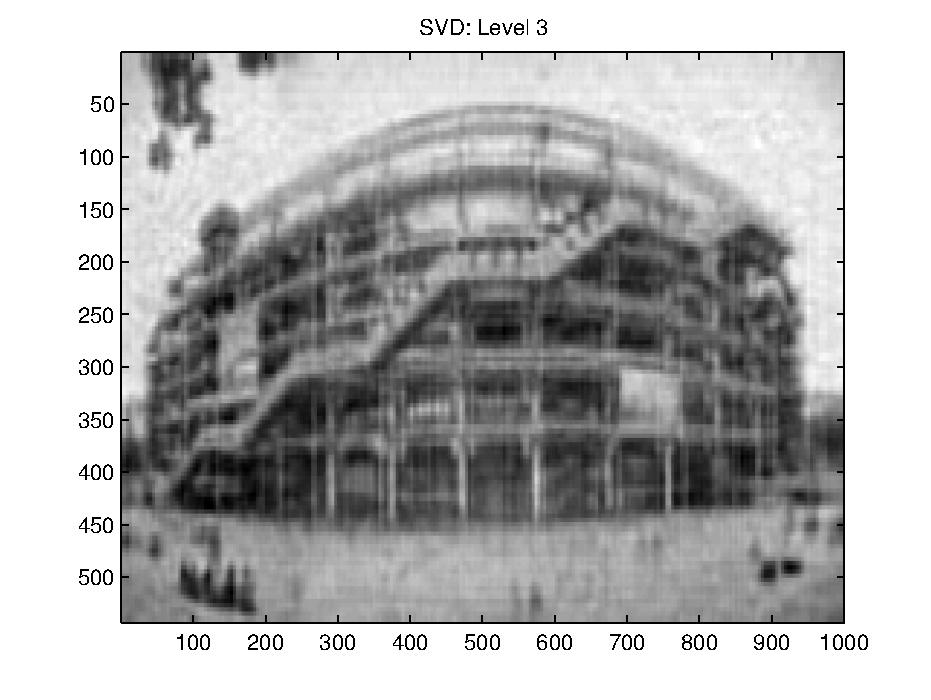}
\hskip -0.5cm
\includegraphics[clip,width=0.35\textwidth]{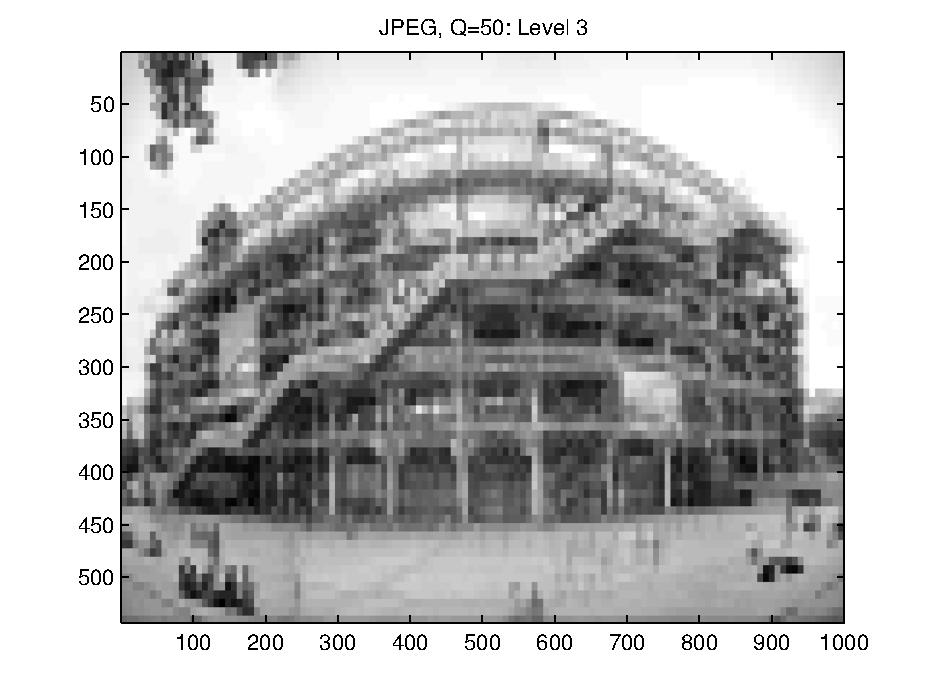} \\
\hskip -2cm
\includegraphics[clip,width=0.35\textwidth]{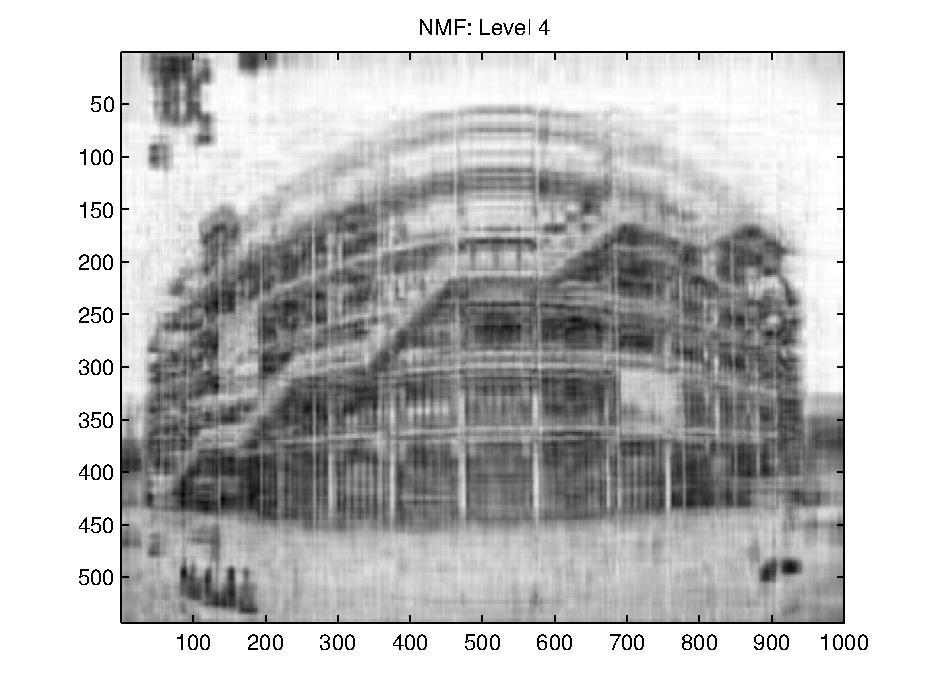}
\hskip -0.5cm
\includegraphics[clip,width=0.35\textwidth]{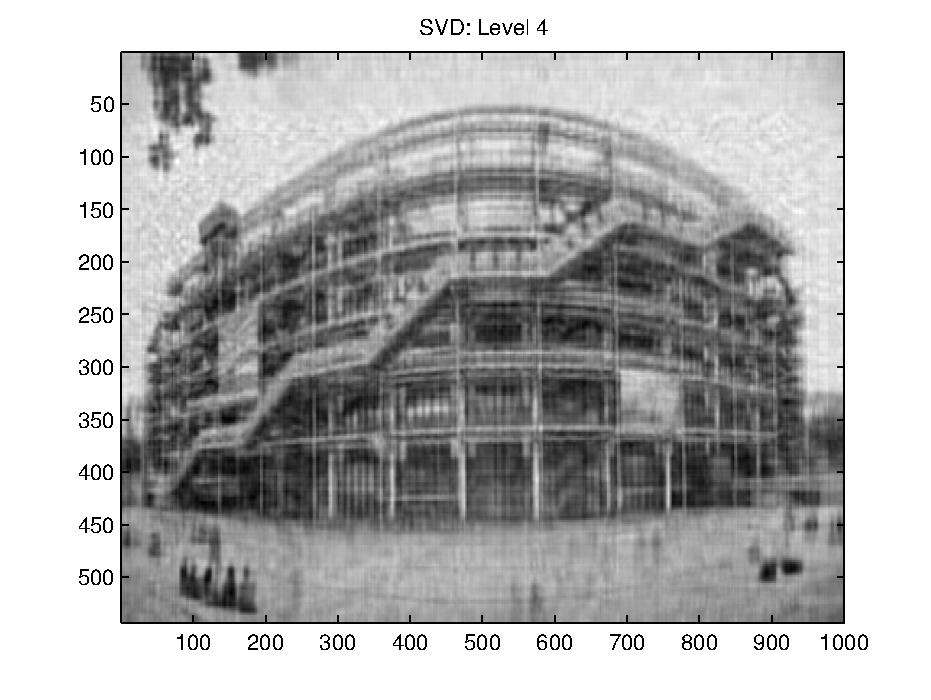}
\hskip -0.5cm
\includegraphics[clip,width=0.35\textwidth]{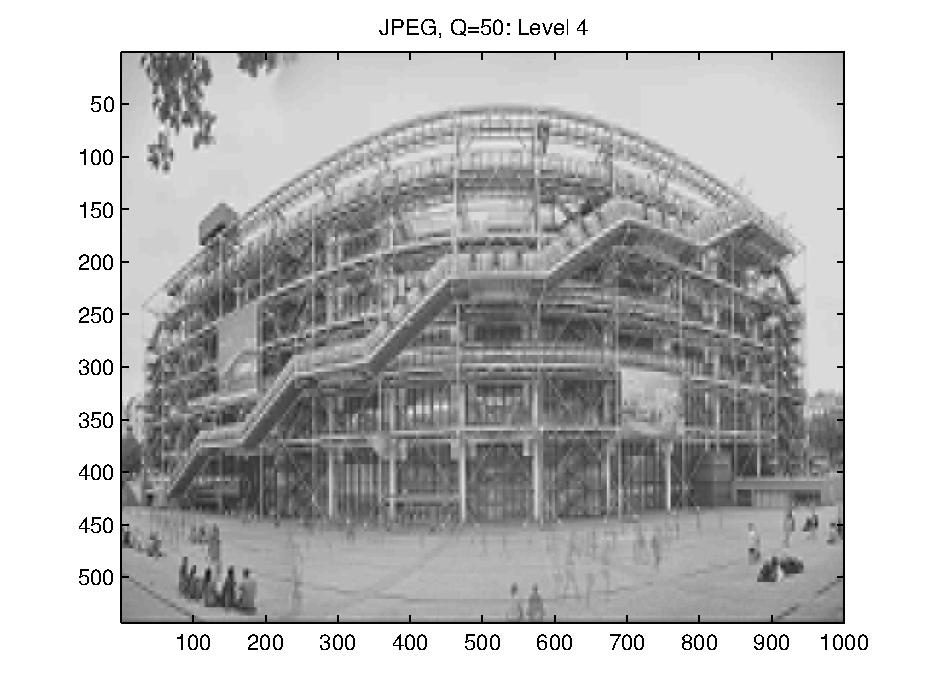} \\
\hskip -2cm
\includegraphics[clip,width=0.35\textwidth]{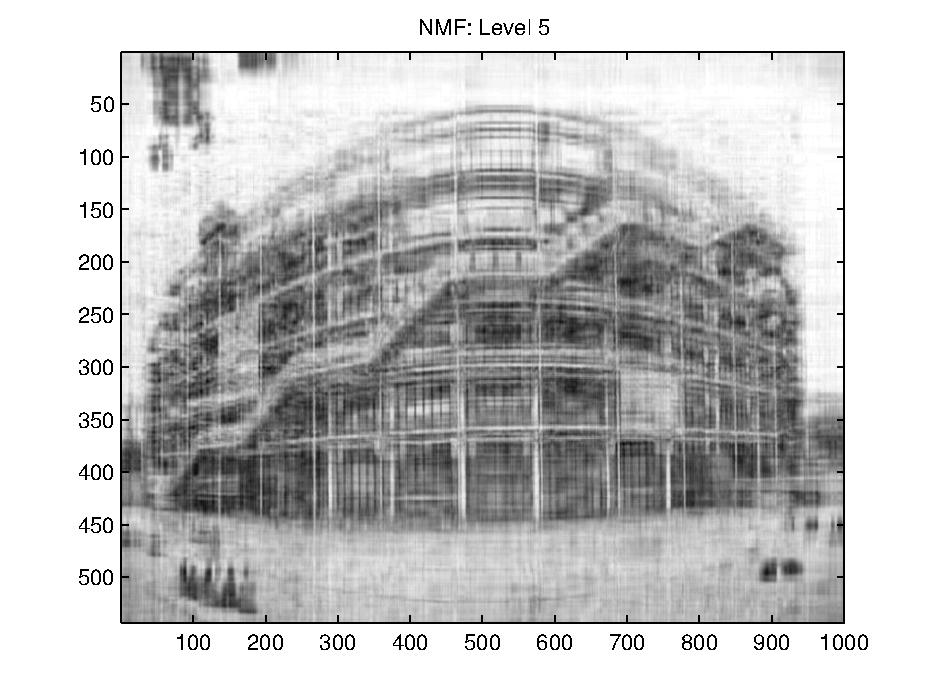}
\hskip -0.5cm
\includegraphics[clip,width=0.35\textwidth]{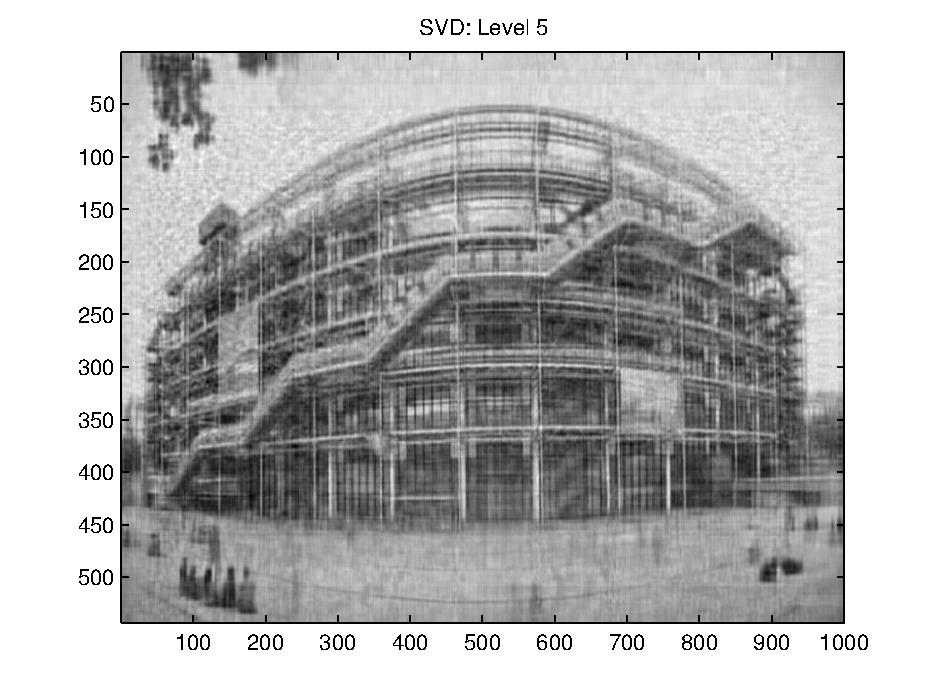}
\hskip -0.5cm
\includegraphics[clip,width=0.35\textwidth]{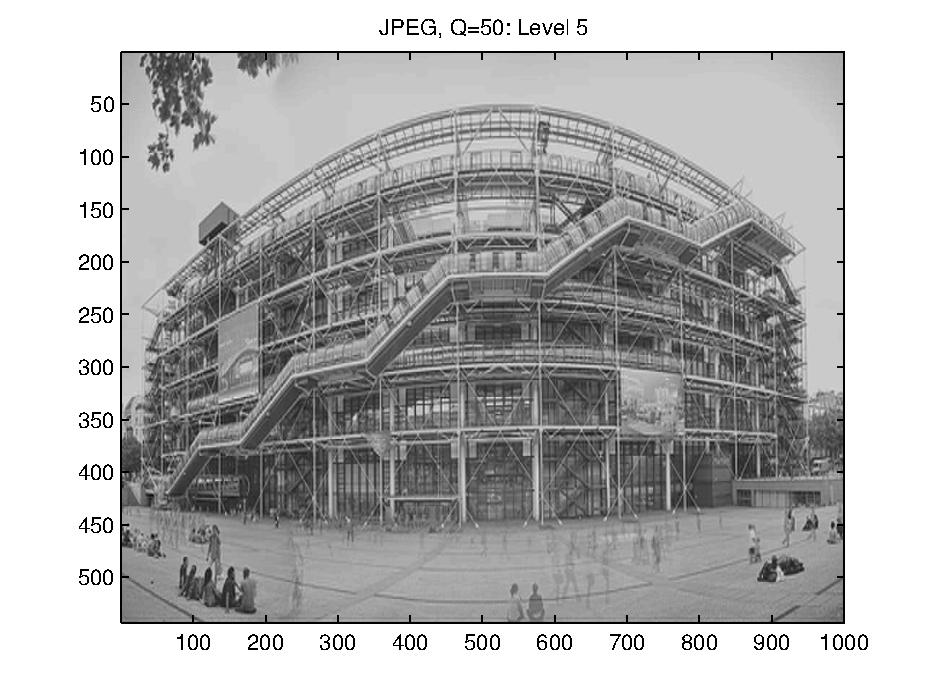}
 \caption{\label{fig:NMF3a} MLA for the image in Example 3 using NMF without noise }
\end{figurehere}

\begin{figurehere}
\hfill{}\\
\hskip -5cm
\includegraphics[clip,width=0.35\textwidth]{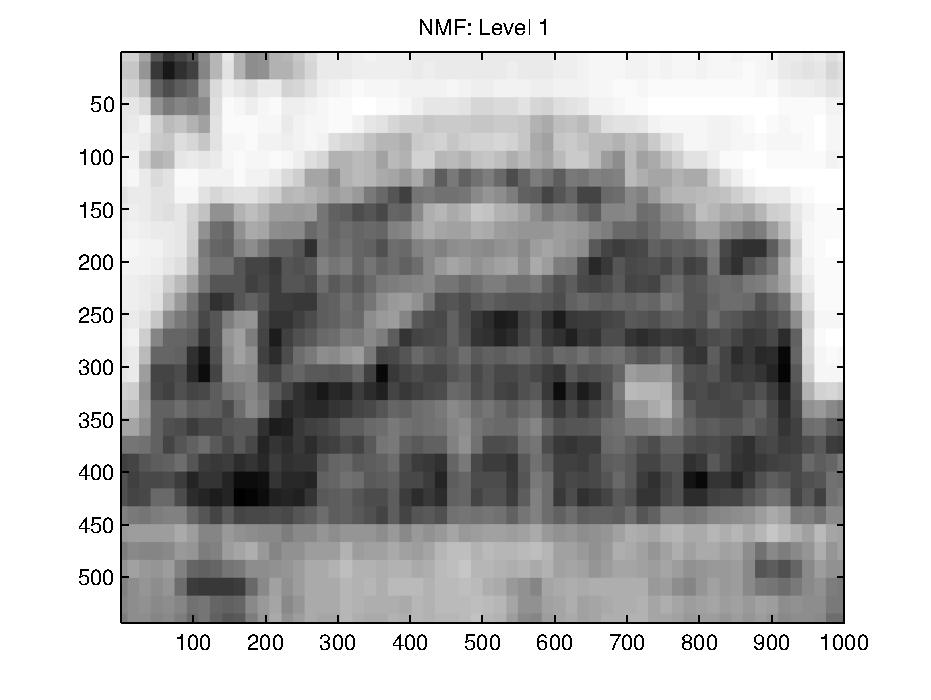}
\hskip -0.5cm
\includegraphics[clip,width=0.35\textwidth]{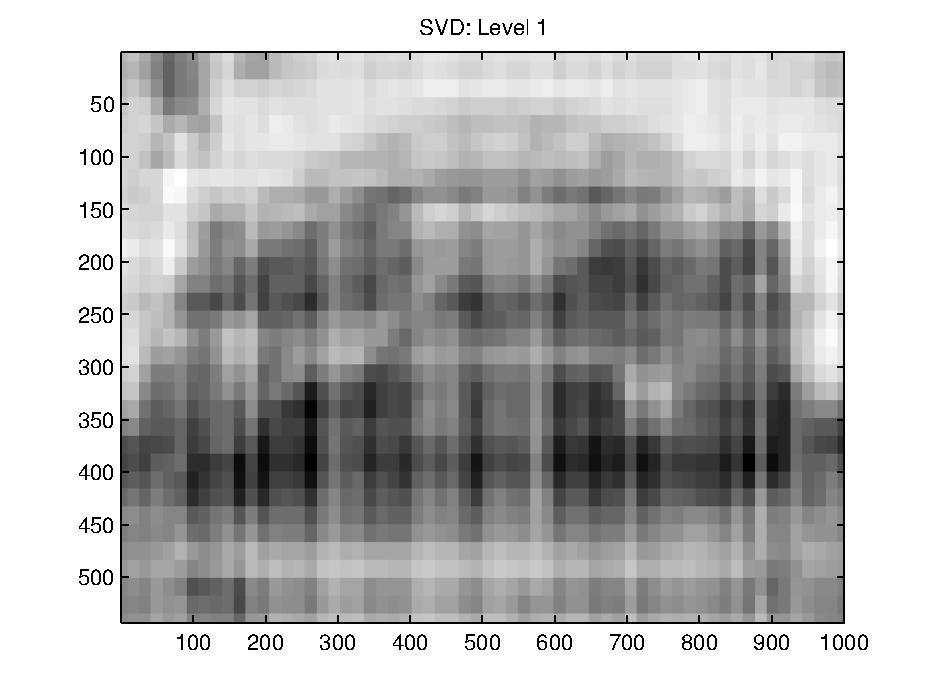}
\hskip -0.5cm
\includegraphics[clip,width=0.35\textwidth]{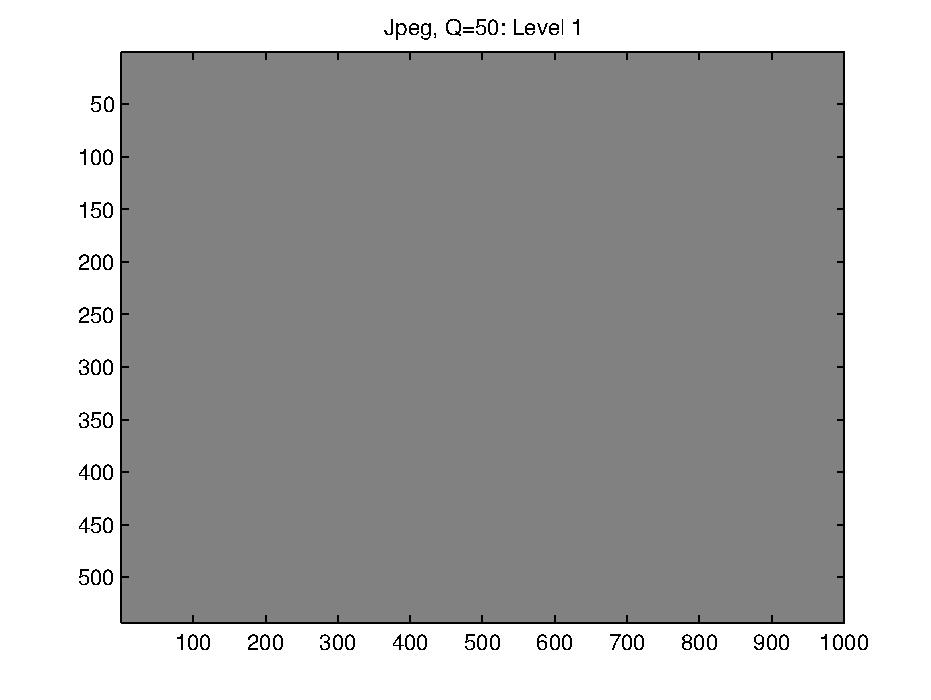}\\
\hskip -2cm
\includegraphics[clip,width=0.35\textwidth]{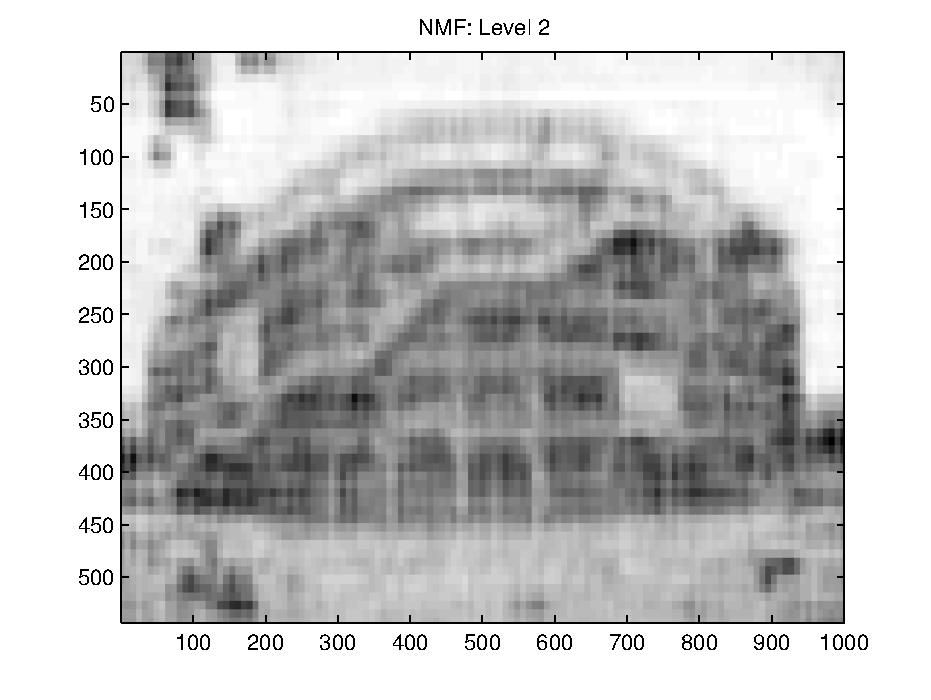}
\hskip -0.5cm
\includegraphics[clip,width=0.35\textwidth]{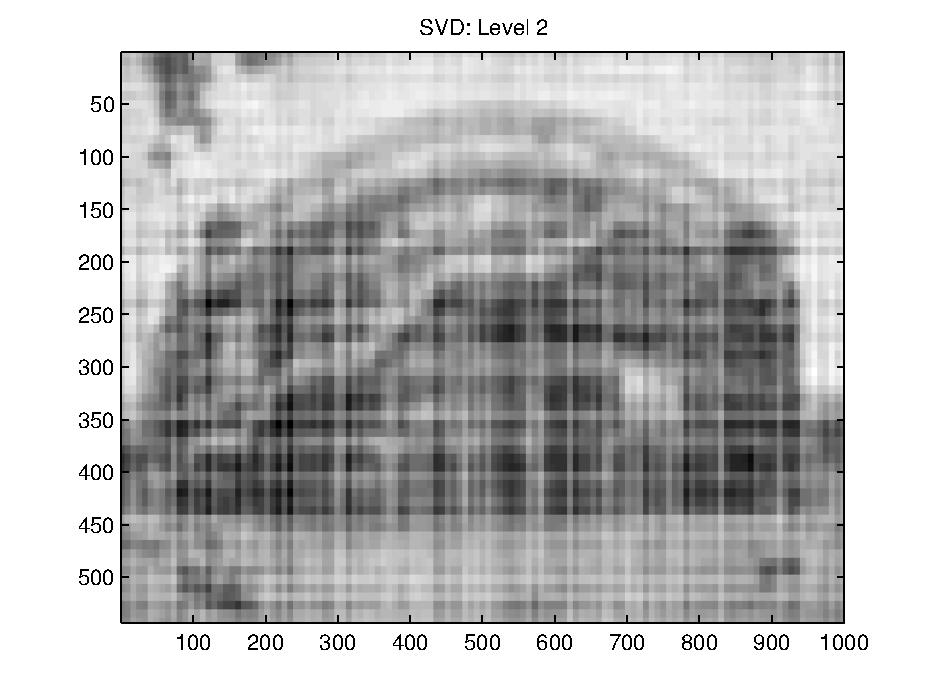}
\hskip -0.5cm
\includegraphics[clip,width=0.35\textwidth]{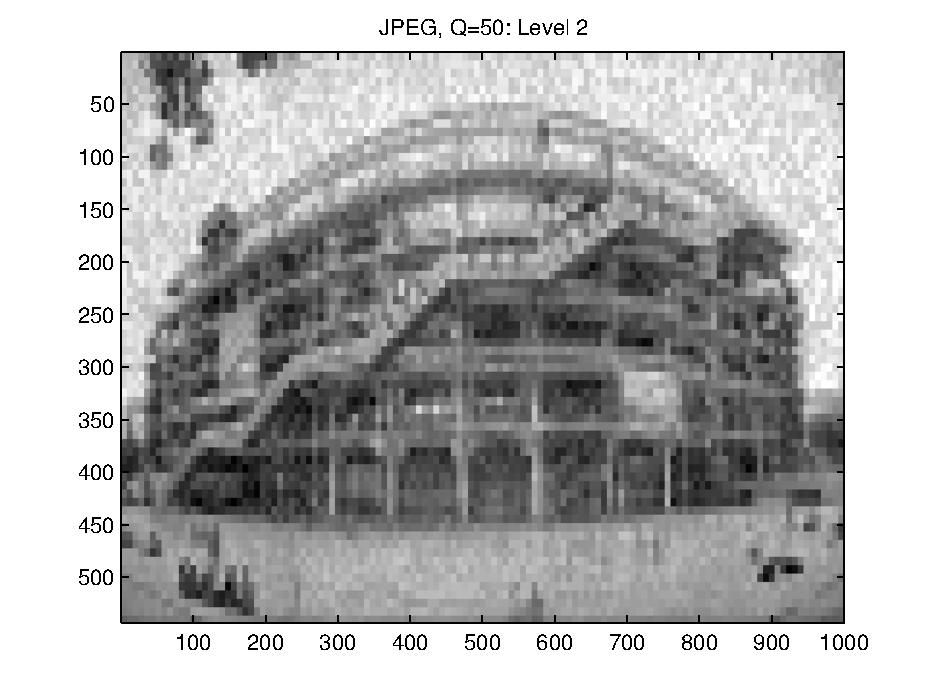} \\
\hskip -2cm
\includegraphics[clip,width=0.35\textwidth]{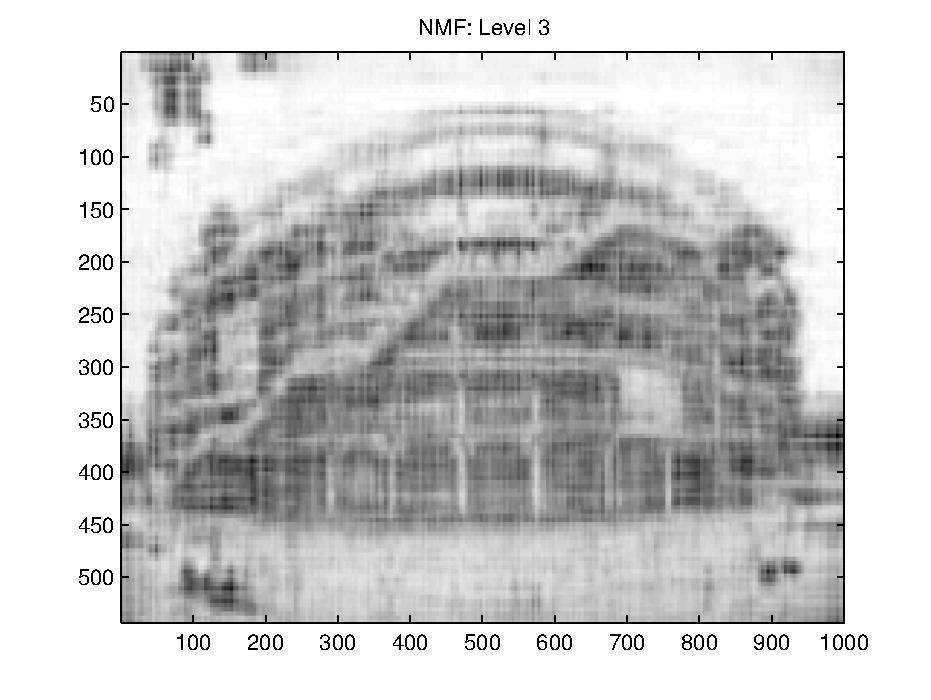}
\hskip -0.5cm
\includegraphics[clip,width=0.35\textwidth]{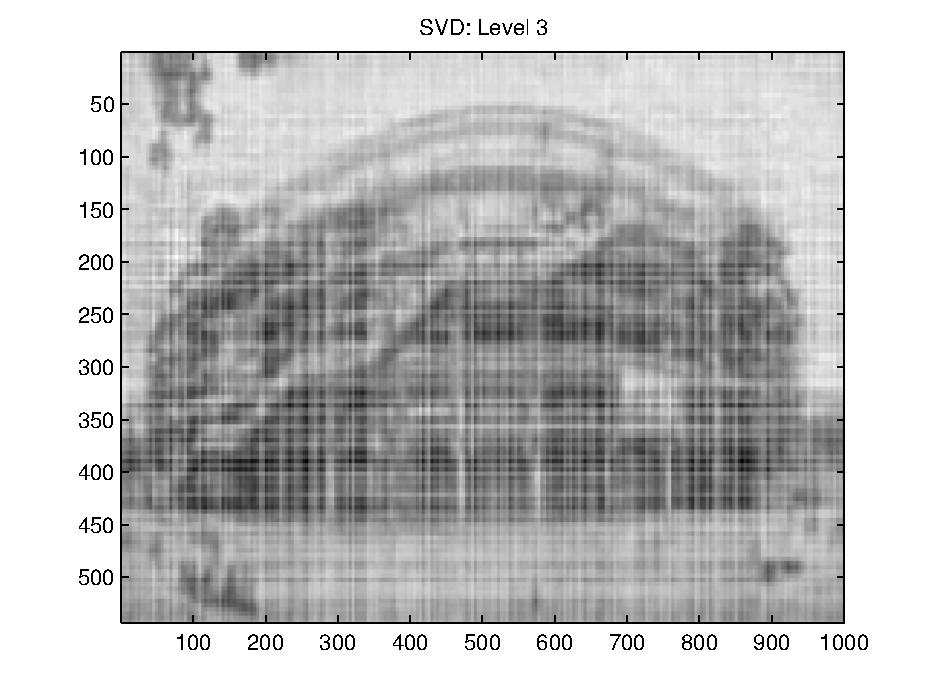}
\hskip -0.5cm
\includegraphics[clip,width=0.35\textwidth]{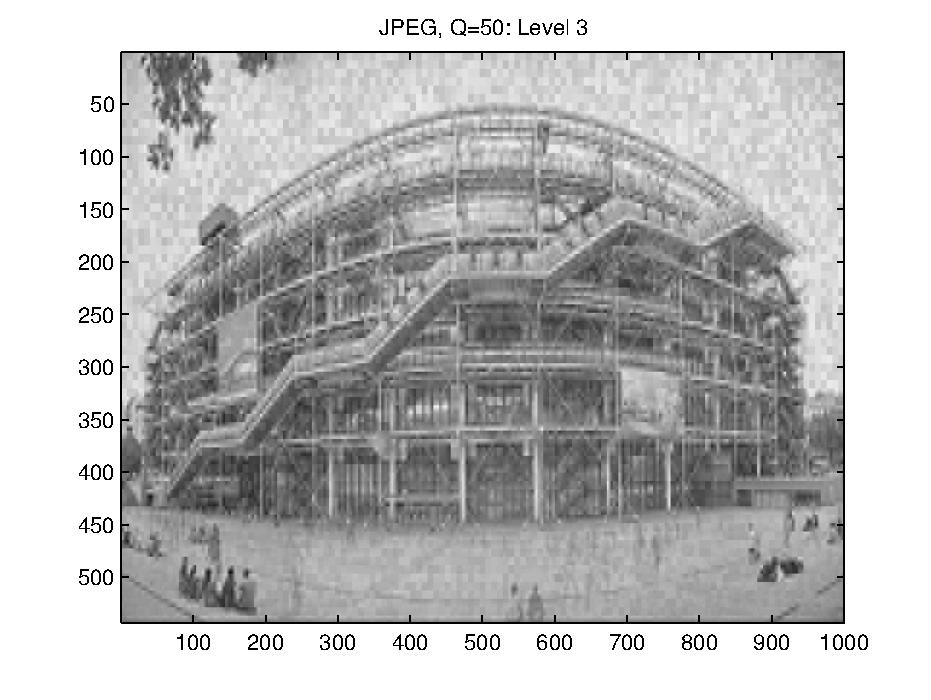} \\
\hskip -2cm
\includegraphics[clip,width=0.35\textwidth]{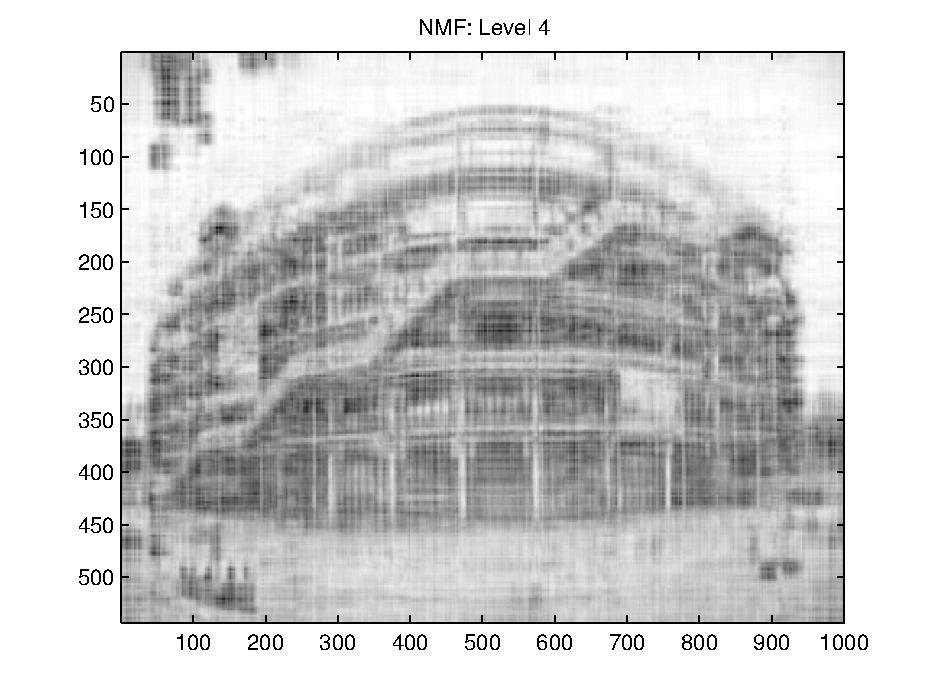}
\hskip -0.5cm
\includegraphics[clip,width=0.35\textwidth]{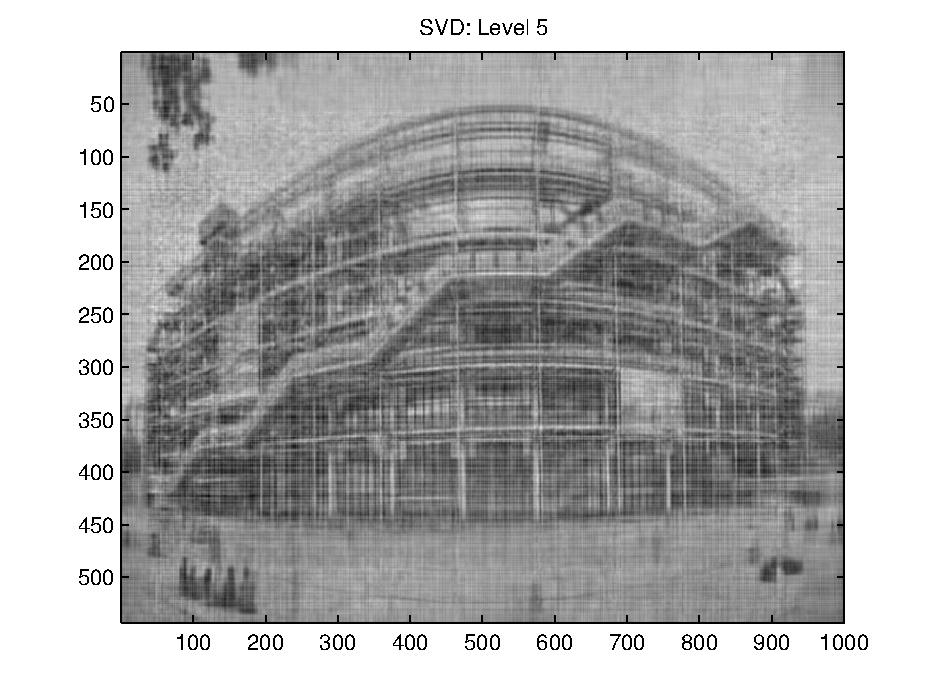}
\hskip -0.5cm
\includegraphics[clip,width=0.35\textwidth]{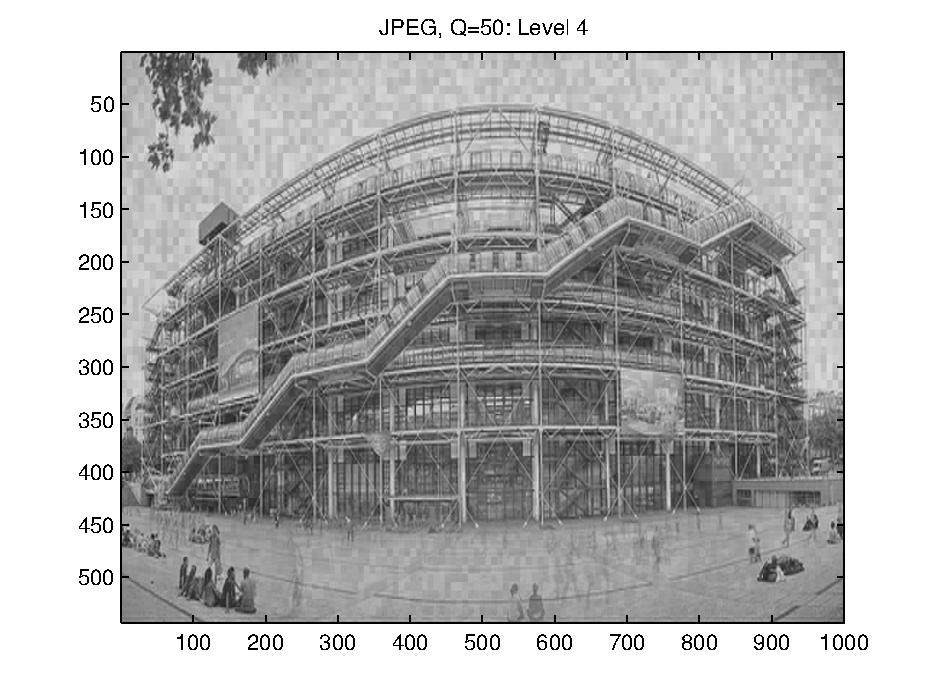} \\
\hskip -2cm
\includegraphics[clip,width=0.35\textwidth]{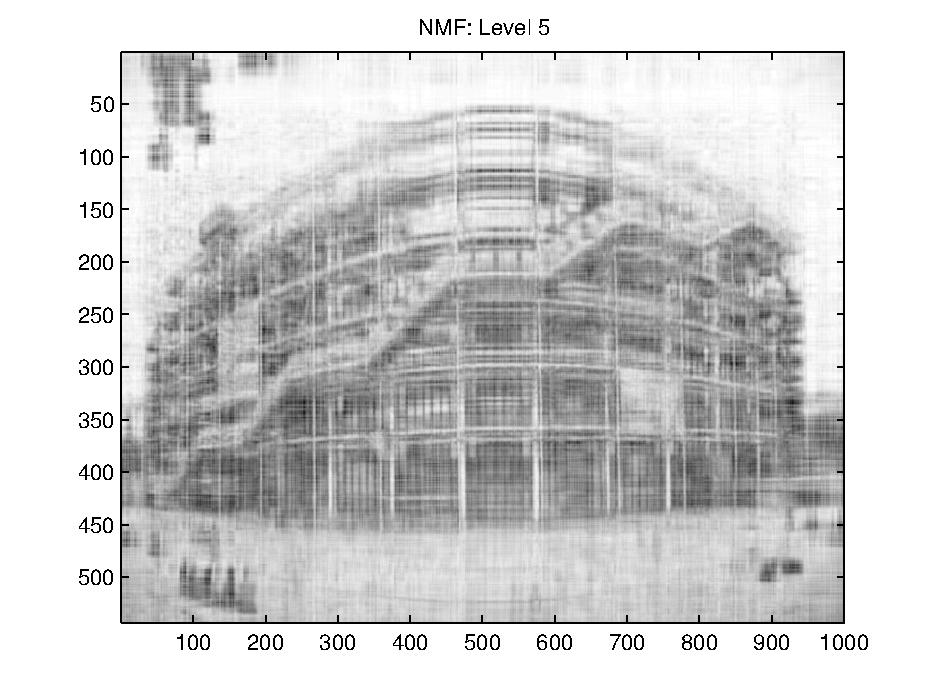}
\hskip -0.5cm
\includegraphics[clip,width=0.35\textwidth]{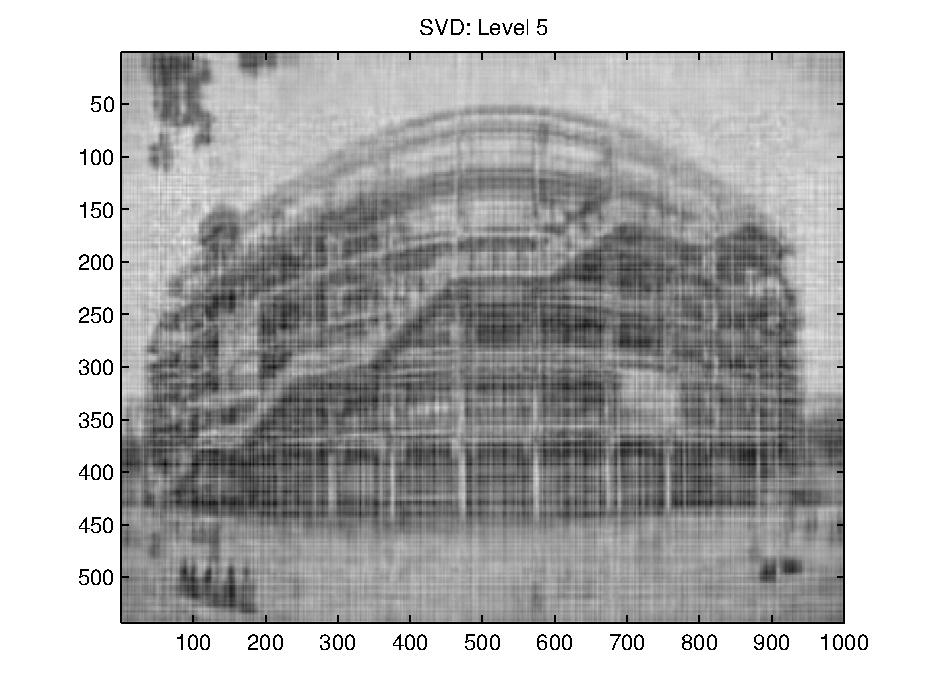}
\hskip -0.5cm
\includegraphics[clip,width=0.35\textwidth]{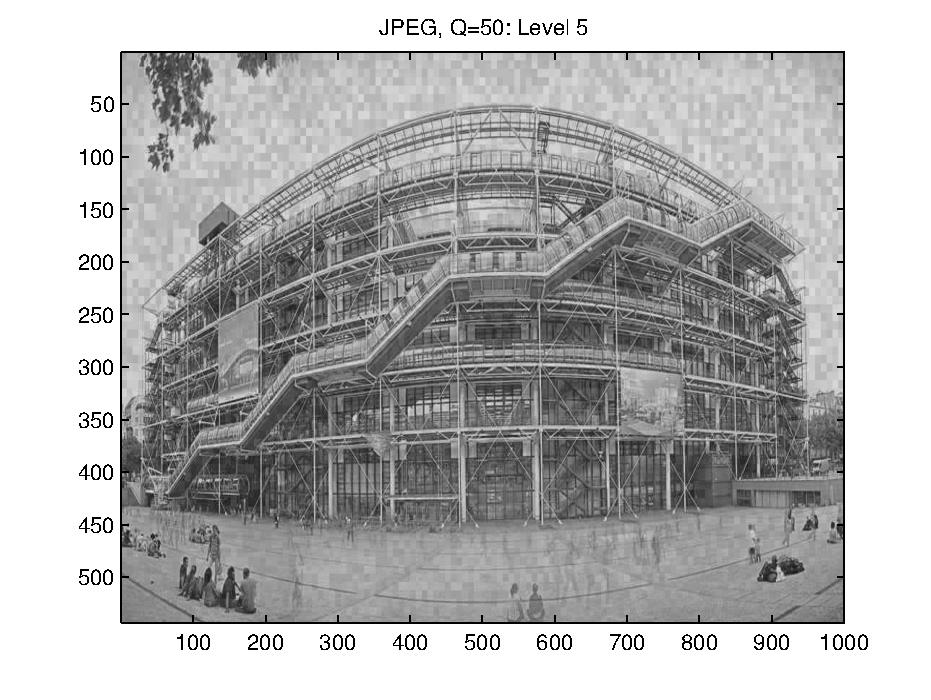}
 \caption{\label{fig:NMF3a2} MLA for the image in Example 3 using NMF with $25\%$ noise }
\end{figurehere}

\textbf{Example 4}.  In this last imaging example, the parameters are the same as in \textbf{Example 3}. $Y$ is set as the image in Figure \ref{fig:NMF4}, and the resulting images are shown in Figure \ref{fig:NMF4a}.  The memory complexity ratios for the $(s_{max}-s)$-th level of the three methods and their respective relative $L^2$ errors with and without noise are shown as follows:
\beqnx
\begin{matrix}
s_{max}-s &:& 1 & 2 & 3 &4 & 5  \\
p &:&   24  &  24 &   28  &  34  &  43 \\
\tilde{p} &:&  173  &  74 &   52&    34 &   73\\
\text{memory complexity ratio of NMF} &:& 0.0033  &  0.0059   & 0.0116  &  0.0283 &   0.0600 \\
\text{memory complexity ratio of SVD}&:&   0.0038  &  0.0076  &  0.0177 &   0.0430 &   0.1089  \\
\text{memory complexity ratio of JPEG} &:&  \text{NA}  &   0.0156   & 0.0297 &   0.0609 &   0.0753 \\
\text{Relative $L^2$ error in NMF (with $0\%$ noise)} &:&  0.4766   & 0.4283   & 0.3673 &   0.3109 &   0.2808 \\
\text{Relative $L^2$ error in SVD (with $0\%$ noise)} &:&    0.4763  &  0.4239  &  0.3638  &  0.3133 &   0.2813  \\
\text{Relative $L^2$ error in JPEG (with $0\%$ noise)} &:&  \text{NA}  &   0.3994  &  0.2753  &  0.1472   & 0.1079  \\
\text{Relative $L^2$ error in NMF (with $25\%$ noise)} &:&   0.4813 &   0.4344  &  0.3769   & 0.3311 &   0.3011  \\
\text{Relative $L^2$ error in SVD (with $25\%$ noise)} &:&   0.4832  &  0.4362 &   0.3791 &   0.3311 &   0.3038 \\
\text{Relative $L^2$ error in JPEG (with $25\%$ noise)} &:&   \text{NA}  &  0.4221  &  0.3172 &   0.2203  &  0.1967
\end{matrix}
\eqnx

From this table we can see that, on the same layer, SVD always needs about a double of the memory than the NMF to just
have a similar performance.
Again, from Figure \ref{fig:NMF4a}, we infer that JPEG outperforms the other two methods at the same layer in the absence of noise.
Nevertheless, if we choose a same memory complexity ratio e.g., $1.5$ percent, we can actually get a $3$rd layer of
the NMF but only a $2$nd layer of JPEG, and the relative error of the smaller-sized $3$rd layer of
NMF is actually smaller than the larger-sized $2$nd layer of JPEG.
Moreover, as we can see from Figures \ref{fig:NMF4a} and \ref{fig:NMF4a2}, when the layers increase and finer
details reveal, a level $4$ of NMF is enough to read the Chinese characters which requires less than $0.03$ percent of memory complexity.
With the presence of noise, the relative error of the $4$th layer of NMF where the Chinese characters are recognizable becomes comparable with the $3$rd layer of JPEG, while their memory complexity is the same. Many of the NMF figures have less errors than the SVD figures on the same layers while the memory complexities of SVD are actually larger.
Again, in Figure \ref{fig:NMF4a2}, the SVD and the JPEG images are obviously contaminated respectively by straight strips and random squares, whereas the noise contamination in the NMF layers seem less obvious.

\begin{center}
\begin{figurehere}
\hfill{}\includegraphics[clip,width=0.35\textwidth]{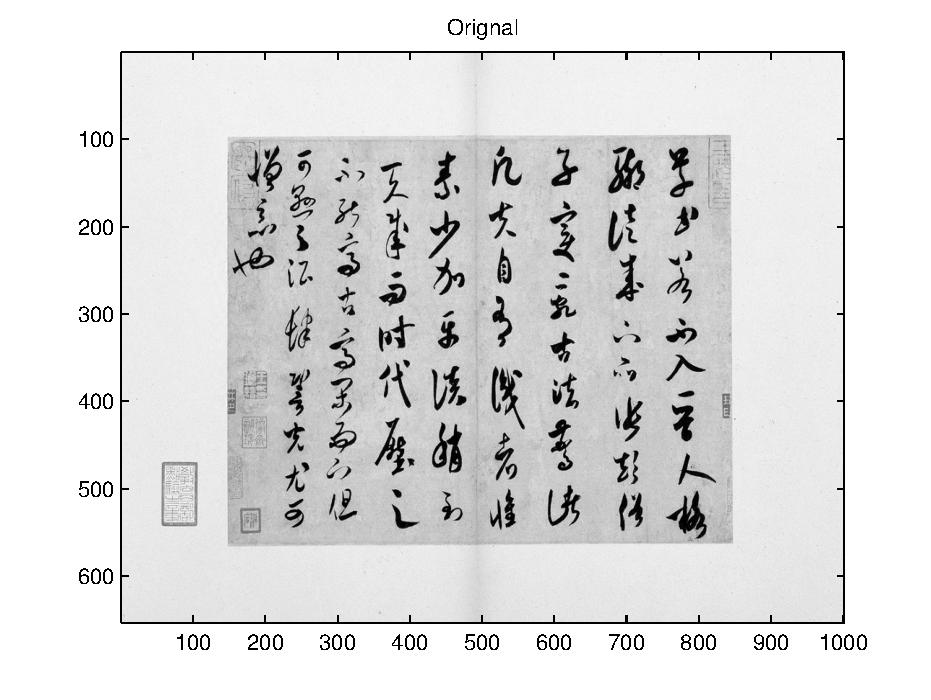}\hfill{}
 \caption{\label{fig:NMF4} Original image in Example 4}
\end{figurehere}
\end{center}

\begin{figurehere}
\hfill{}\\
\hskip -5cm
\includegraphics[clip,width=0.35\textwidth]{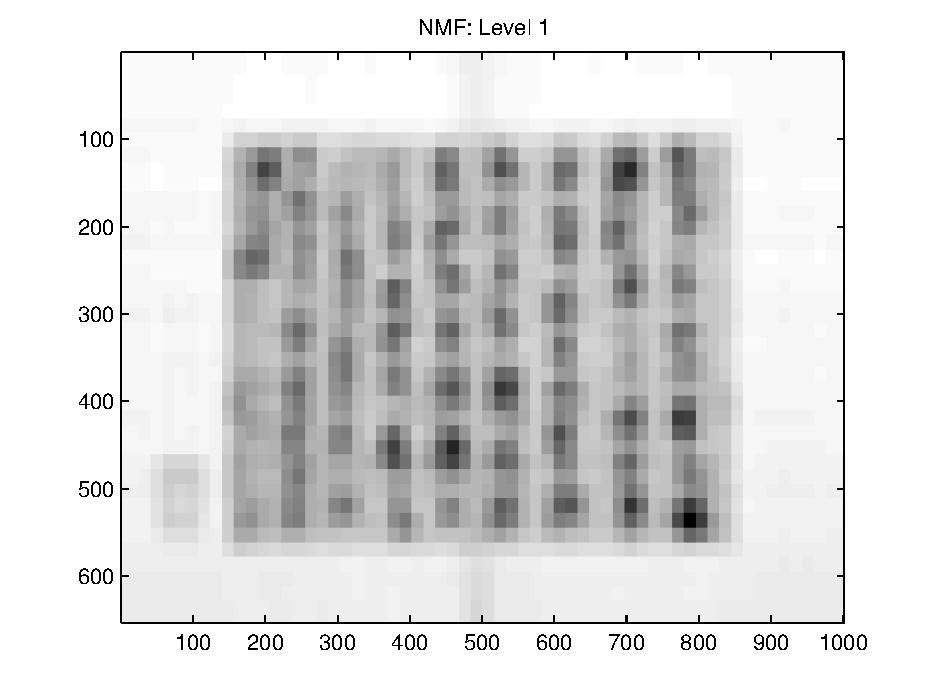}
\hskip -0.5cm
\includegraphics[clip,width=0.35\textwidth]{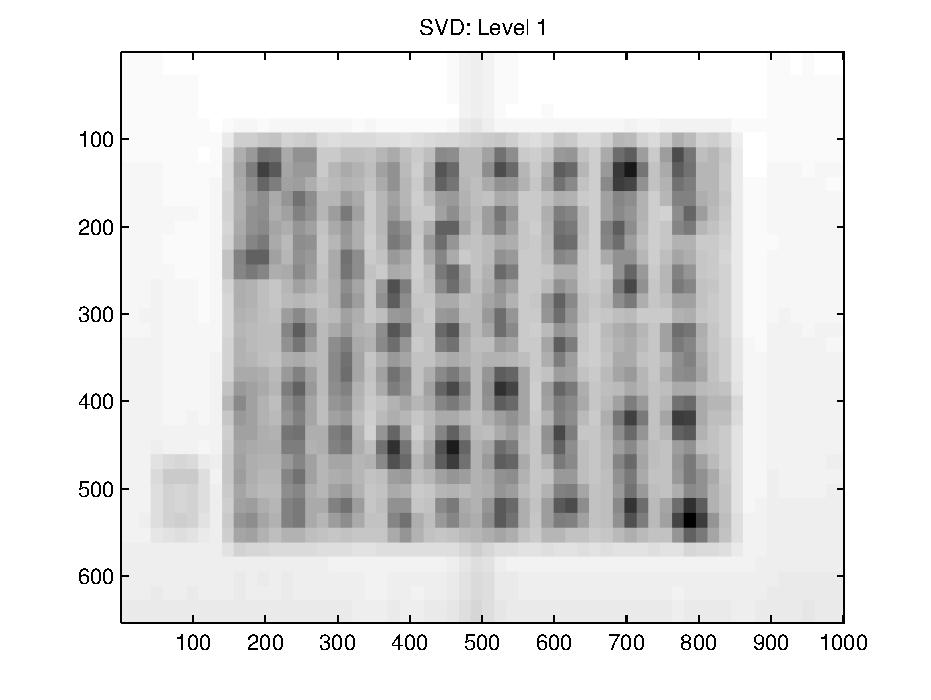}
\hskip -0.5cm
\includegraphics[clip,width=0.35\textwidth]{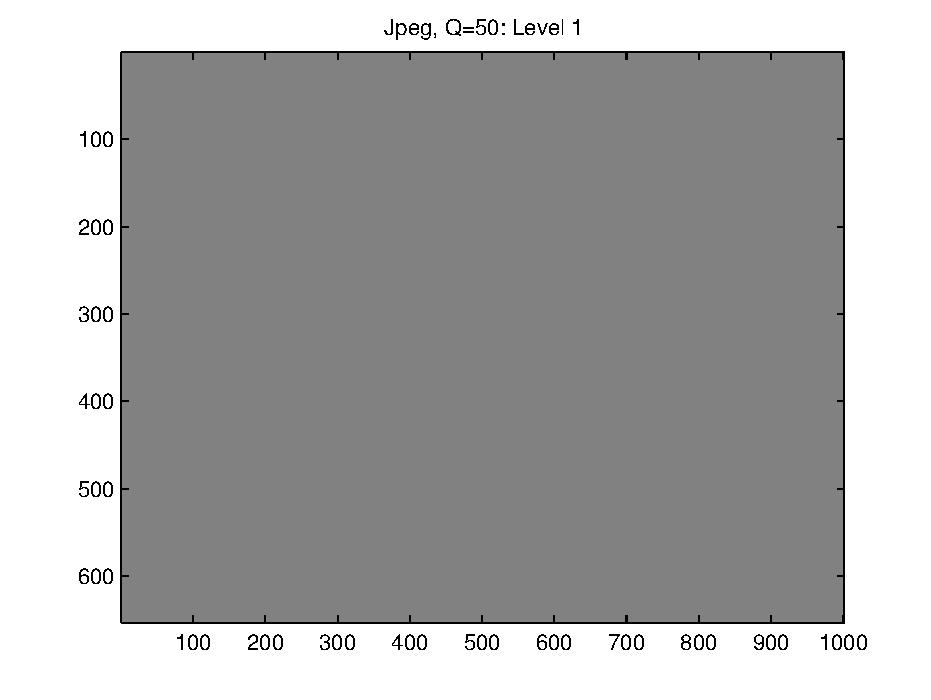}\\
\hskip -2cm
\includegraphics[clip,width=0.35\textwidth]{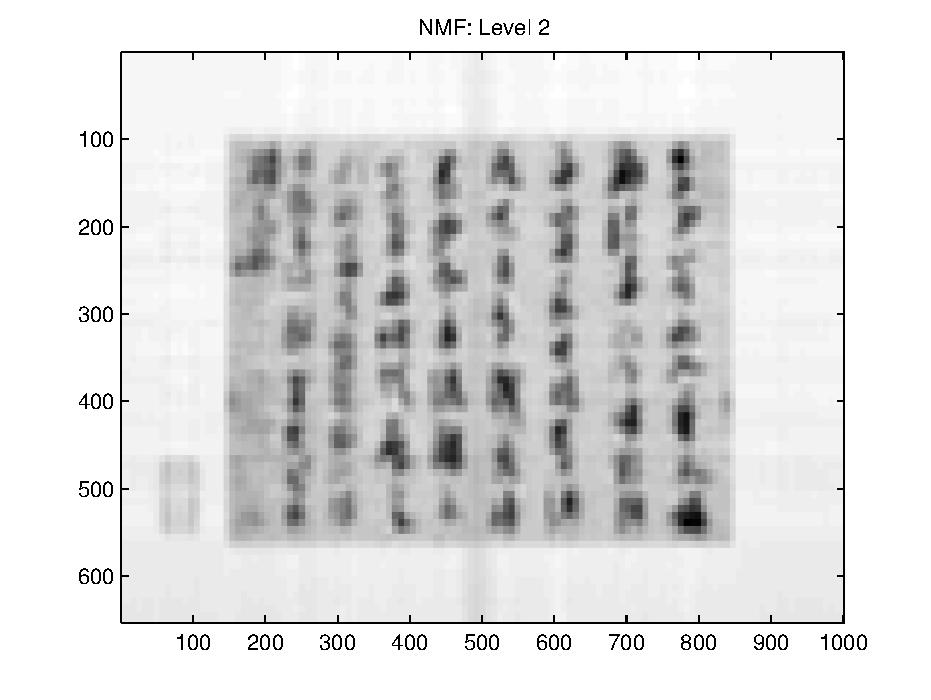}
\hskip -0.5cm
\includegraphics[clip,width=0.35\textwidth]{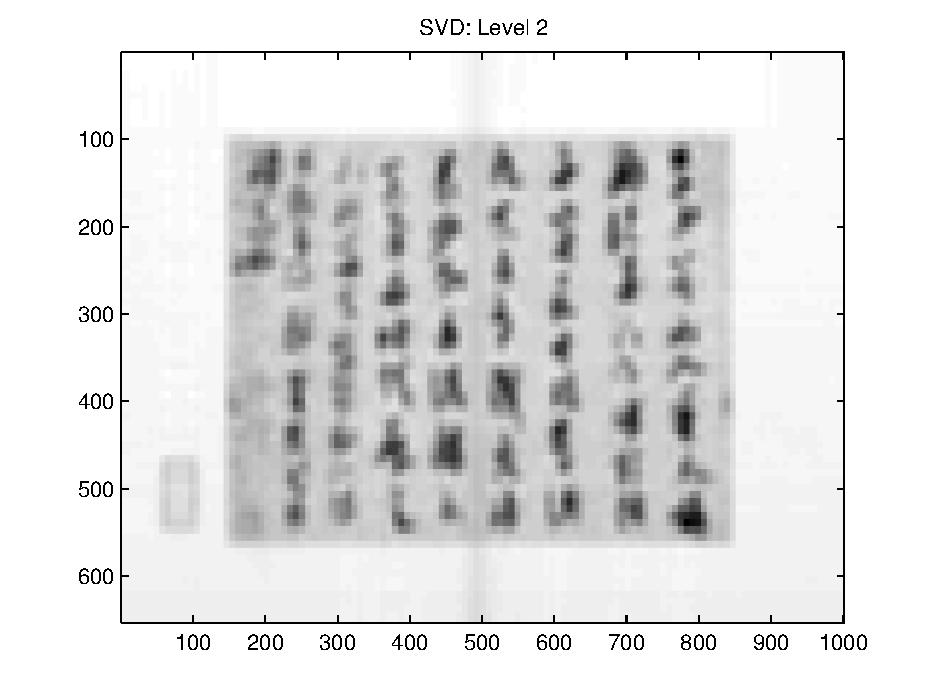}
\hskip -0.5cm
\includegraphics[clip,width=0.35\textwidth]{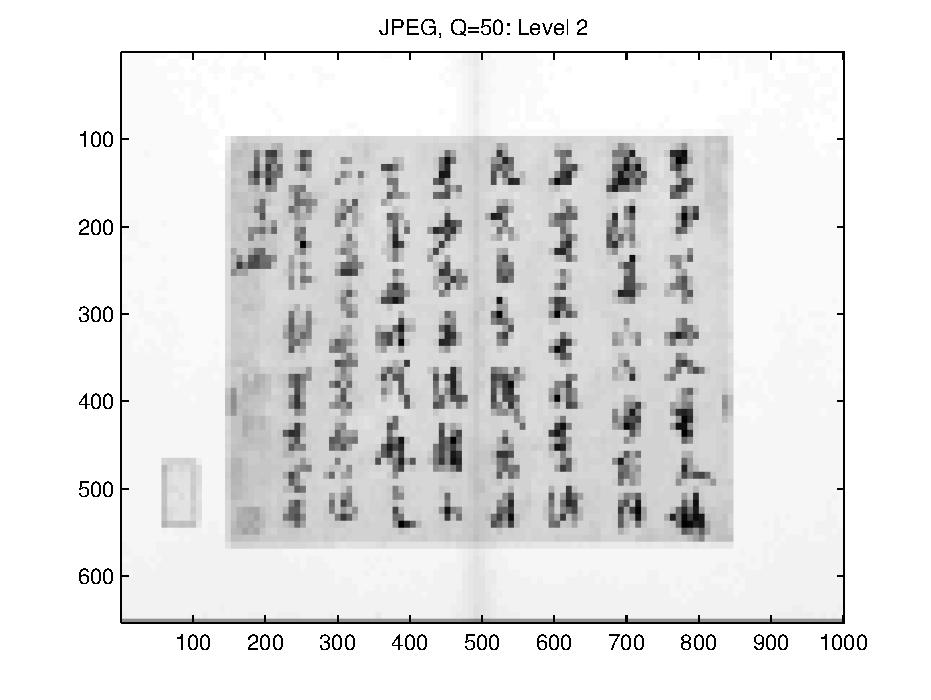} \\
\hskip -2cm
\includegraphics[clip,width=0.35\textwidth]{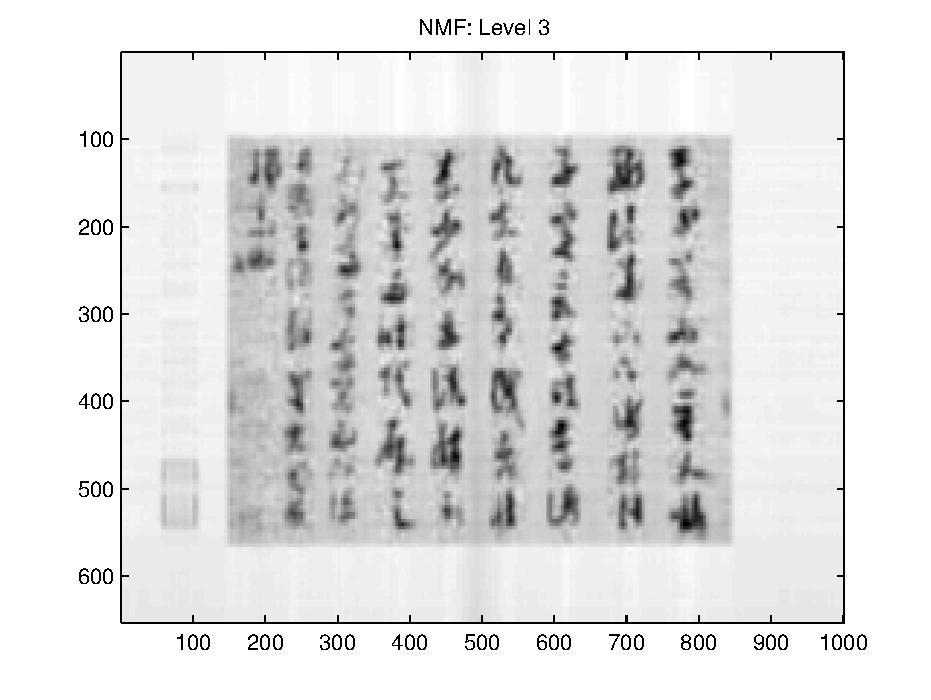}
\hskip -0.5cm
\includegraphics[clip,width=0.35\textwidth]{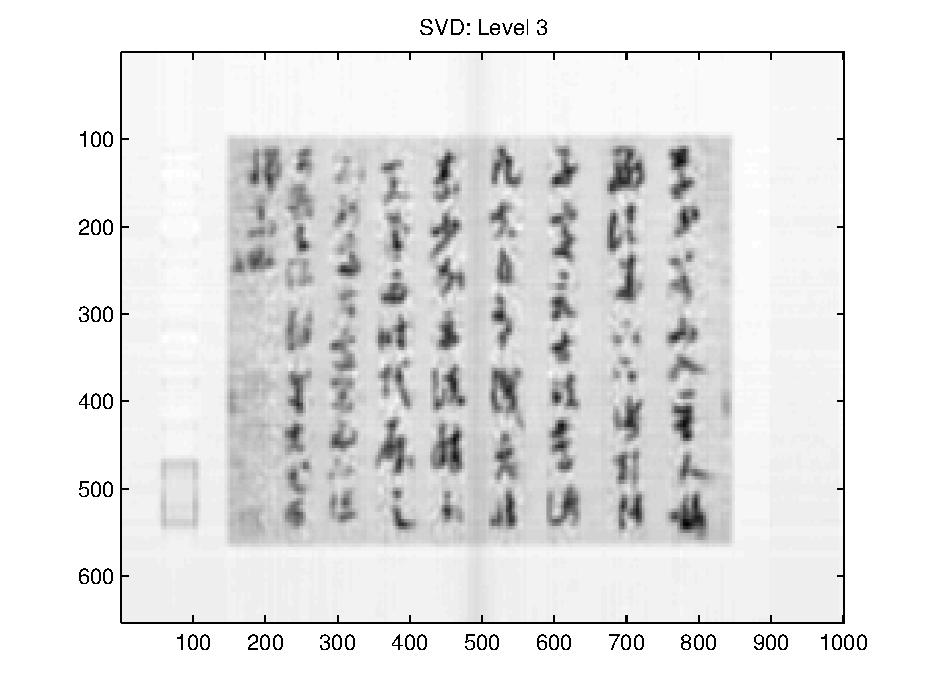}
\hskip -0.5cm
\includegraphics[clip,width=0.35\textwidth]{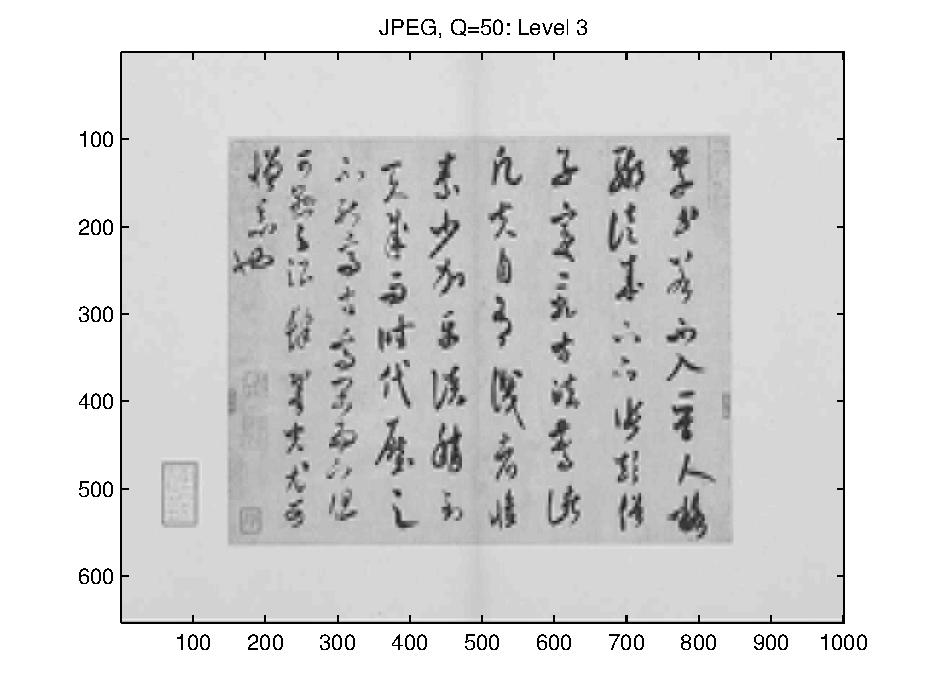} \\
\hskip -2cm
\includegraphics[clip,width=0.35\textwidth]{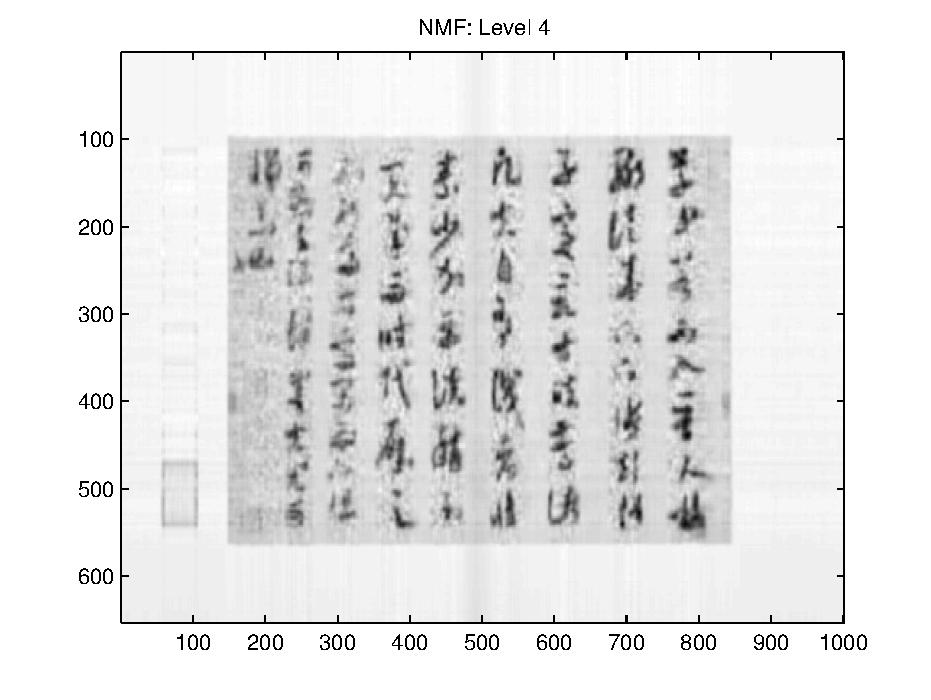}
\hskip -0.5cm
\includegraphics[clip,width=0.35\textwidth]{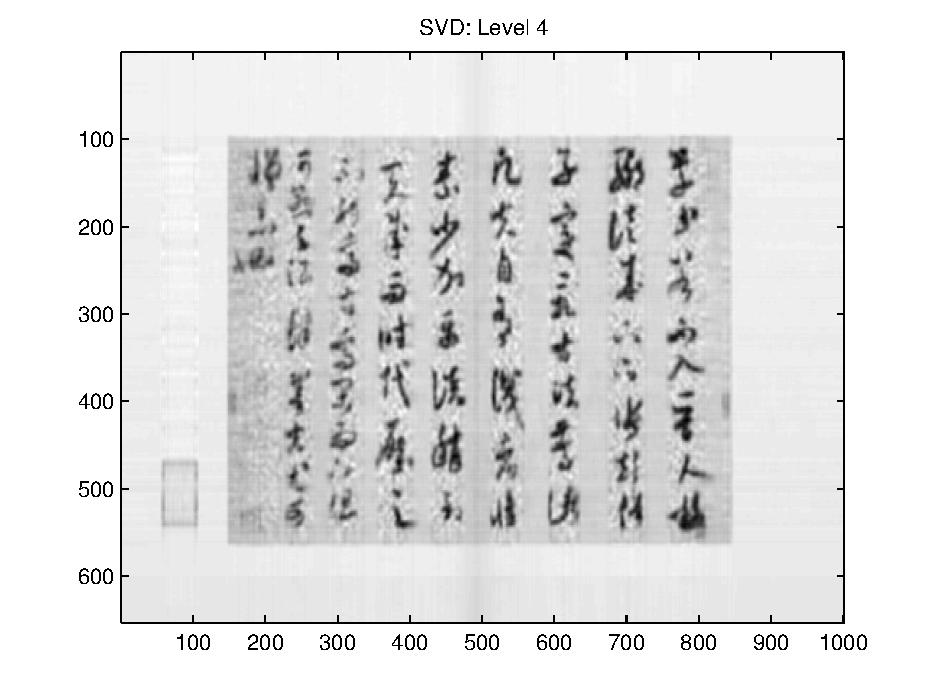}
\hskip -0.5cm
\includegraphics[clip,width=0.35\textwidth]{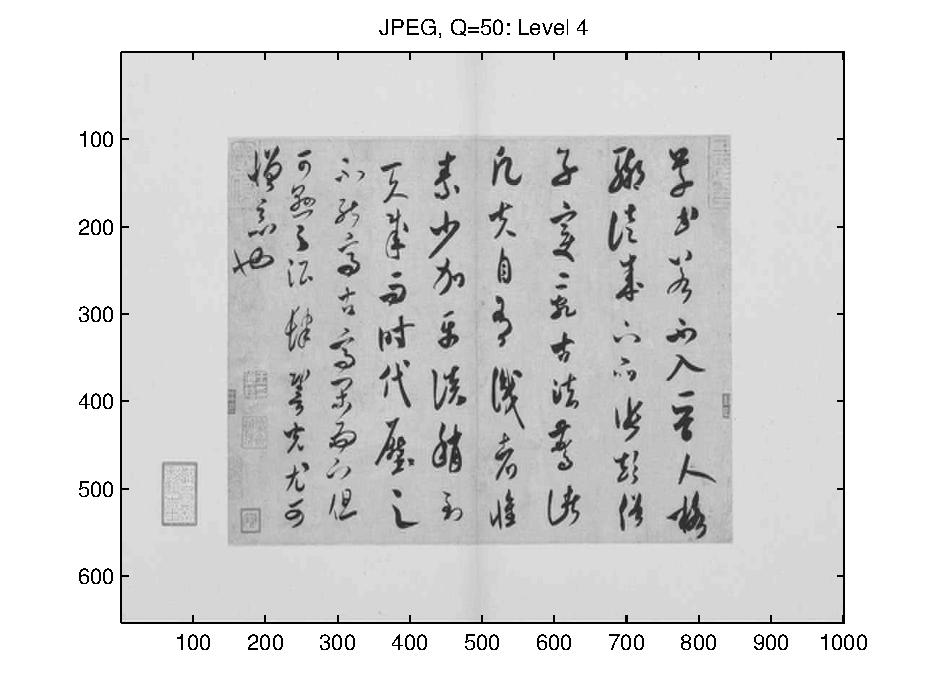} \\
\hskip -2cm
\includegraphics[clip,width=0.35\textwidth]{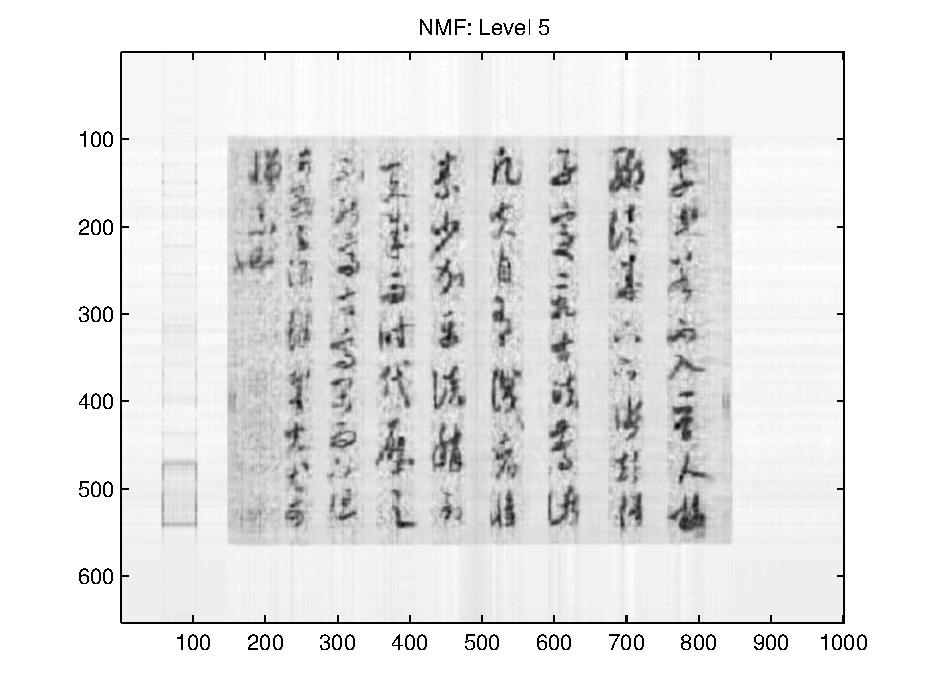}
\hskip -0.5cm
\includegraphics[clip,width=0.35\textwidth]{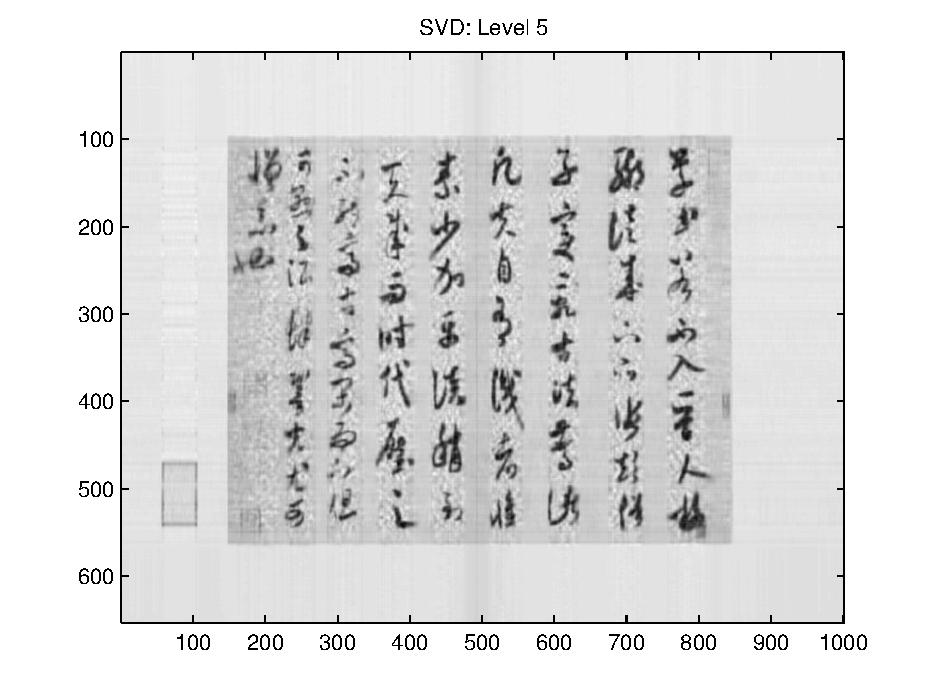}
\hskip -0.5cm
\includegraphics[clip,width=0.35\textwidth]{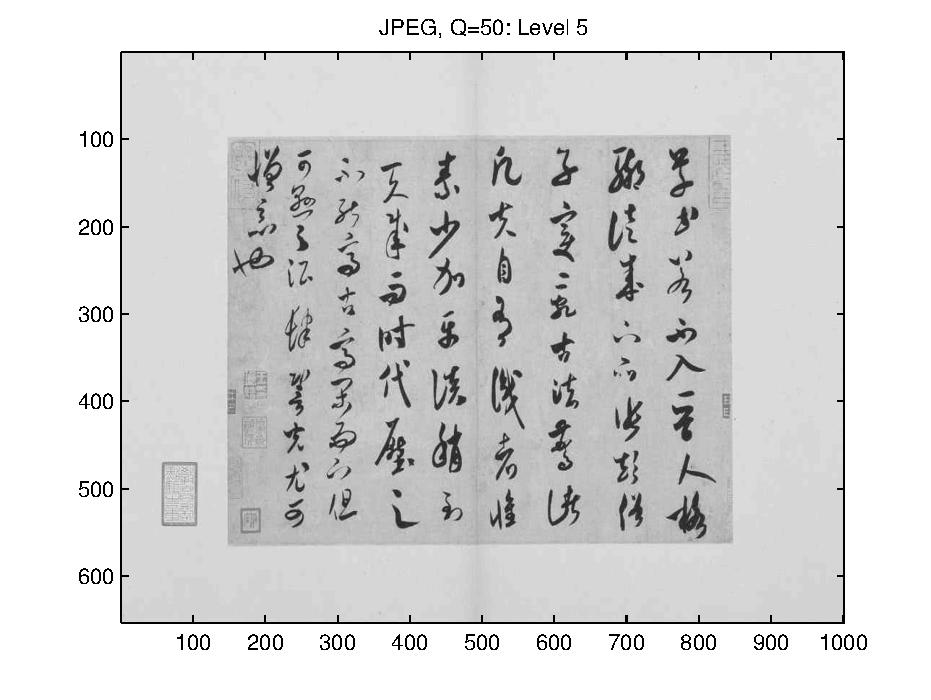}
 \caption{\label{fig:NMF4a} MLA for the image in Example 4 using NMF without noise }
\end{figurehere}

\begin{figurehere}
\hfill{}\\
\hskip -5cm
\includegraphics[clip,width=0.35\textwidth]{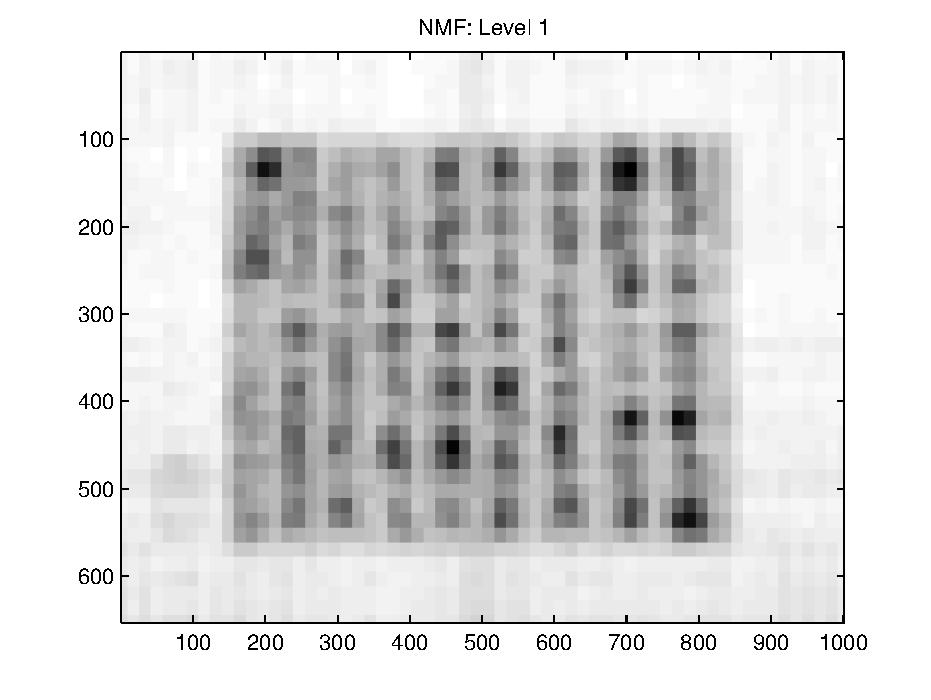}
\hskip -0.5cm
\includegraphics[clip,width=0.35\textwidth]{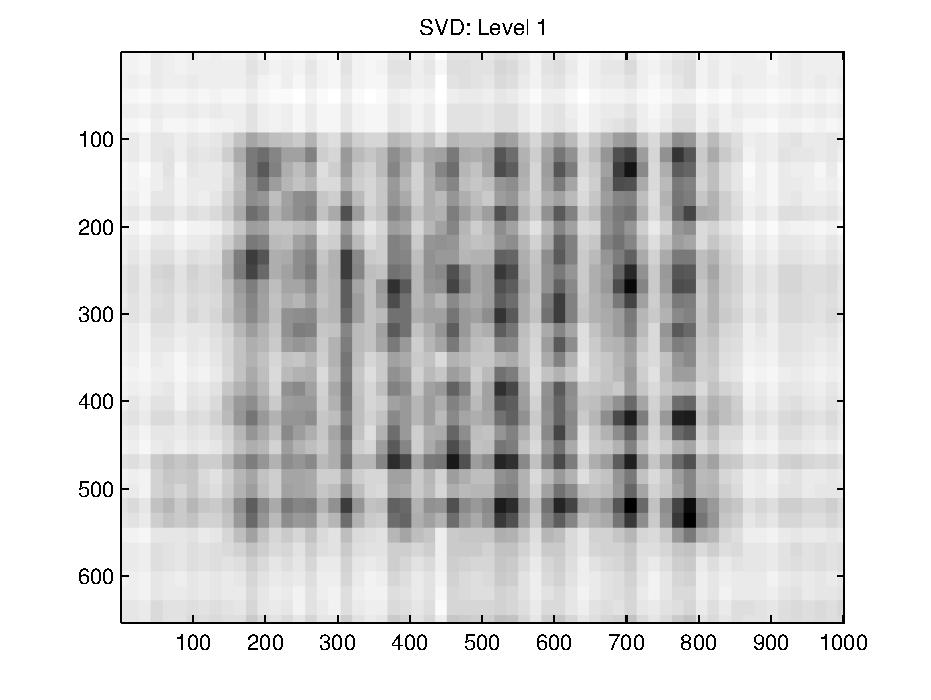}
\hskip -0.5cm
\includegraphics[clip,width=0.35\textwidth]{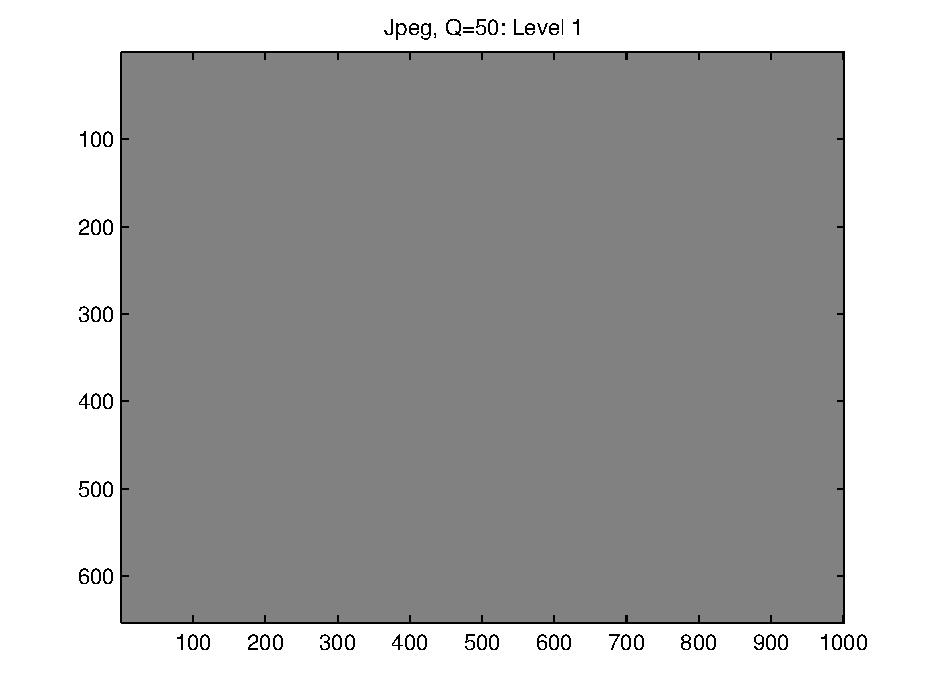}\\
\hskip -2cm
\includegraphics[clip,width=0.35\textwidth]{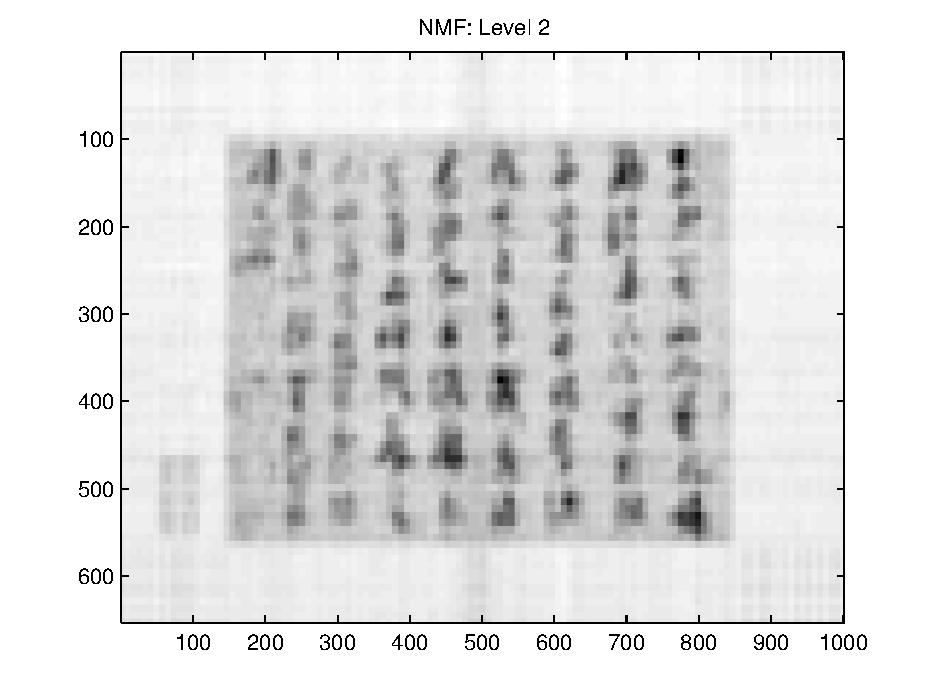}
\hskip -0.5cm
\includegraphics[clip,width=0.35\textwidth]{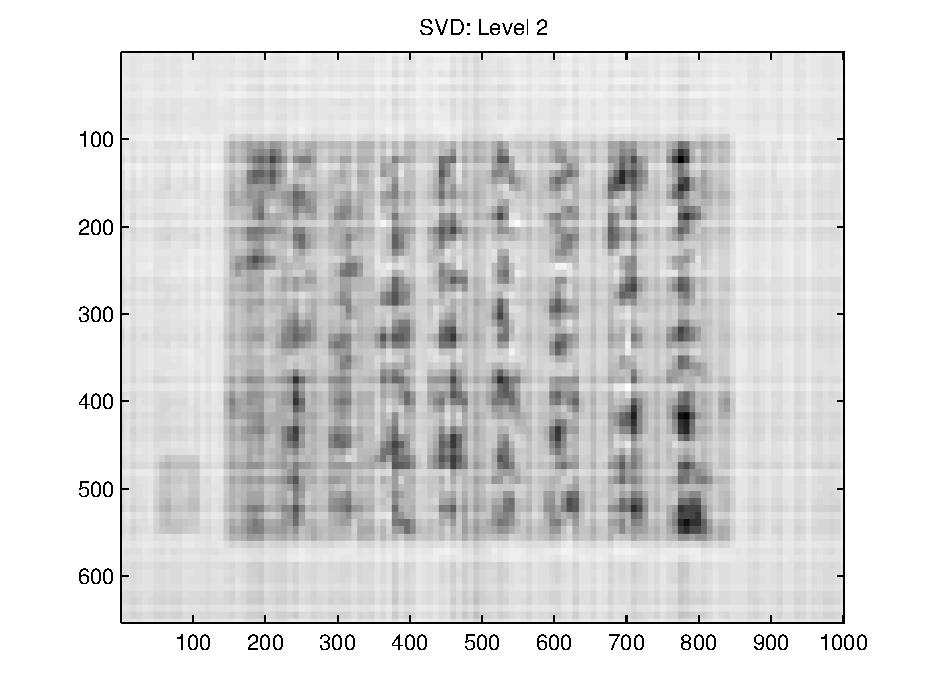}
\hskip -0.5cm
\includegraphics[clip,width=0.35\textwidth]{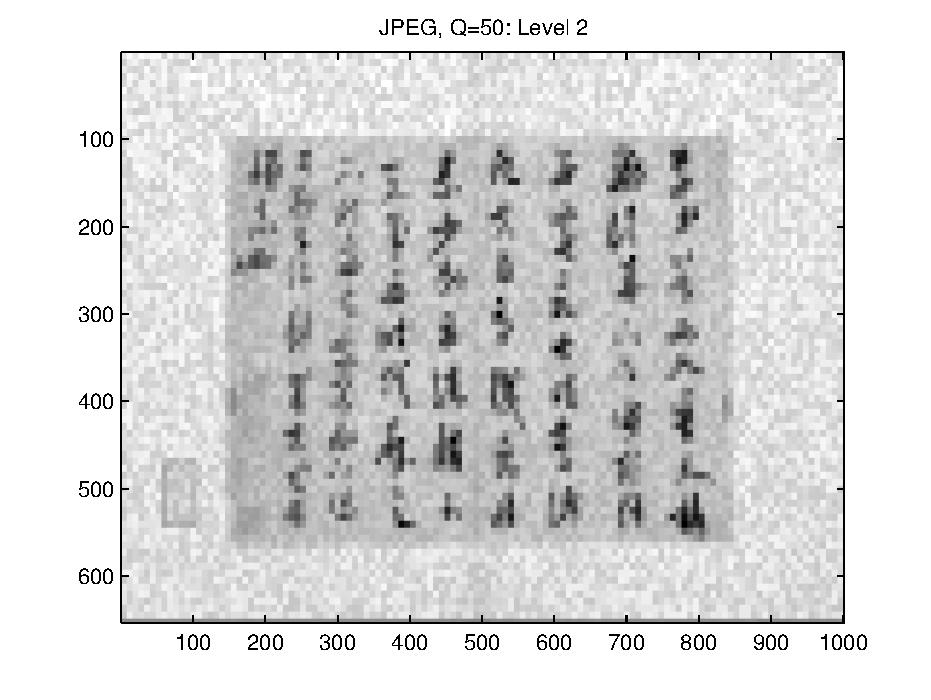} \\
\hskip -2cm
\includegraphics[clip,width=0.35\textwidth]{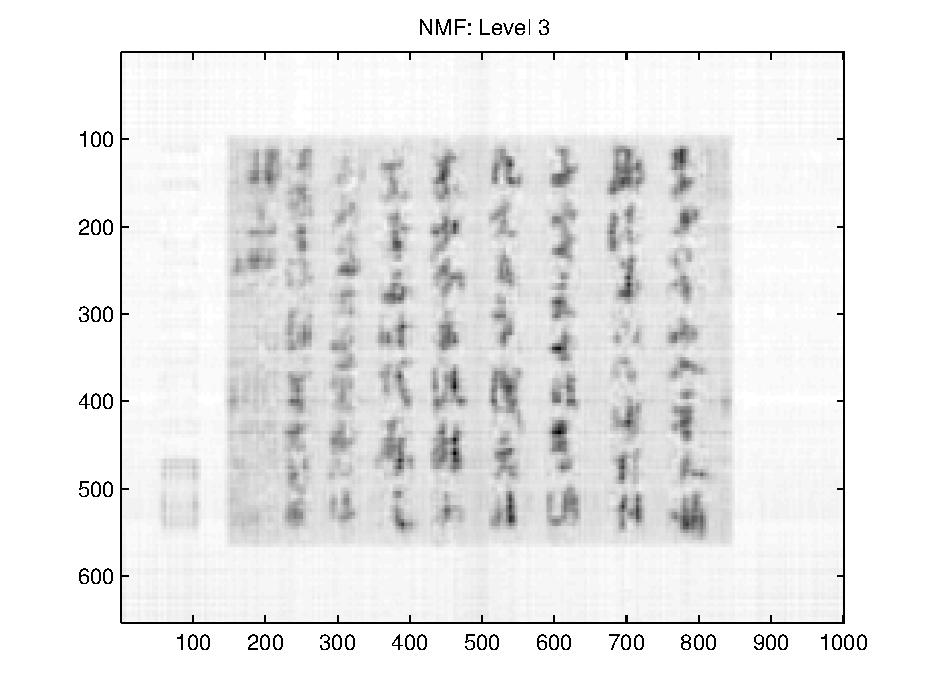}
\hskip -0.5cm
\includegraphics[clip,width=0.35\textwidth]{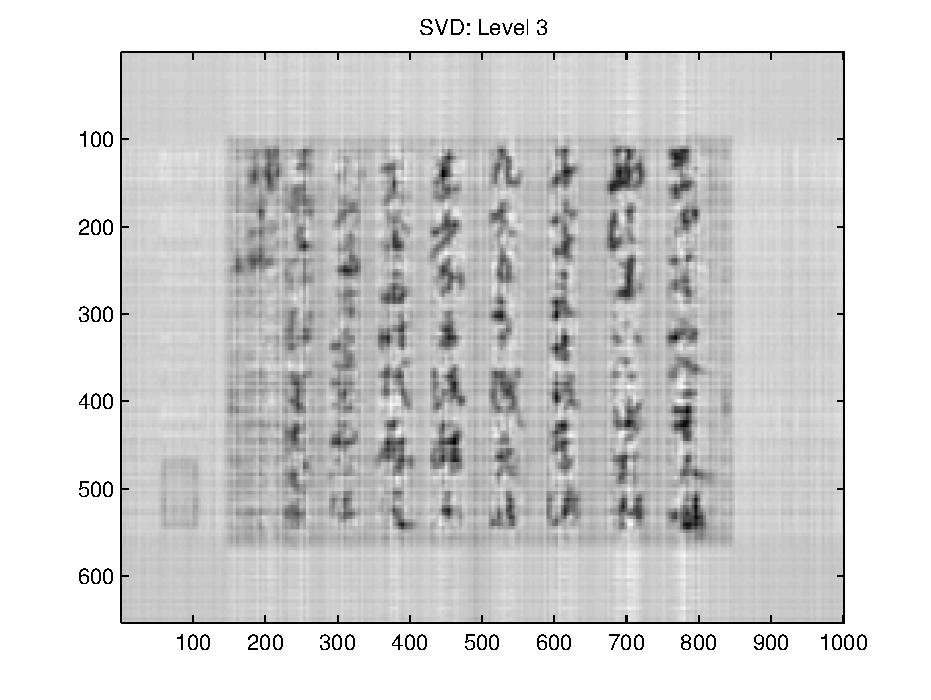}
\hskip -0.5cm
\includegraphics[clip,width=0.35\textwidth]{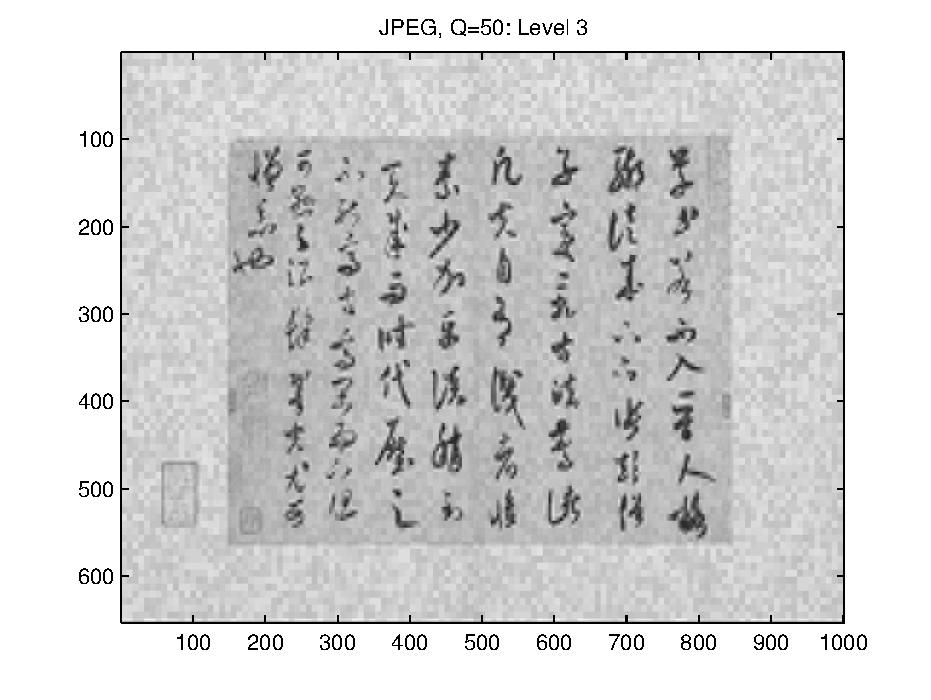} \\
\hskip -2cm
\includegraphics[clip,width=0.35\textwidth]{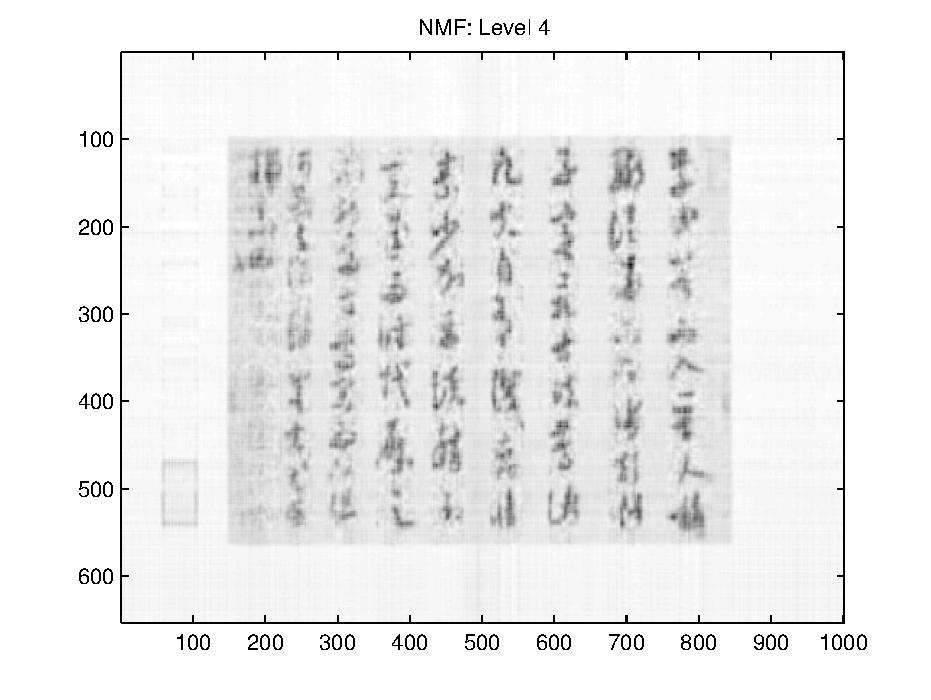}
\hskip -0.5cm
\includegraphics[clip,width=0.35\textwidth]{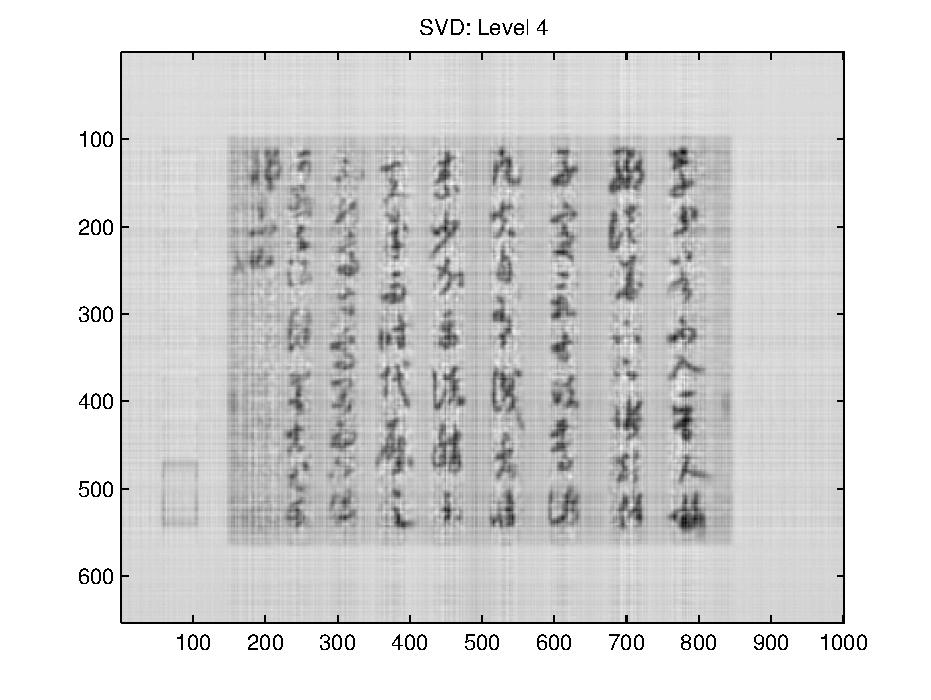}
\hskip -0.5cm
\includegraphics[clip,width=0.35\textwidth]{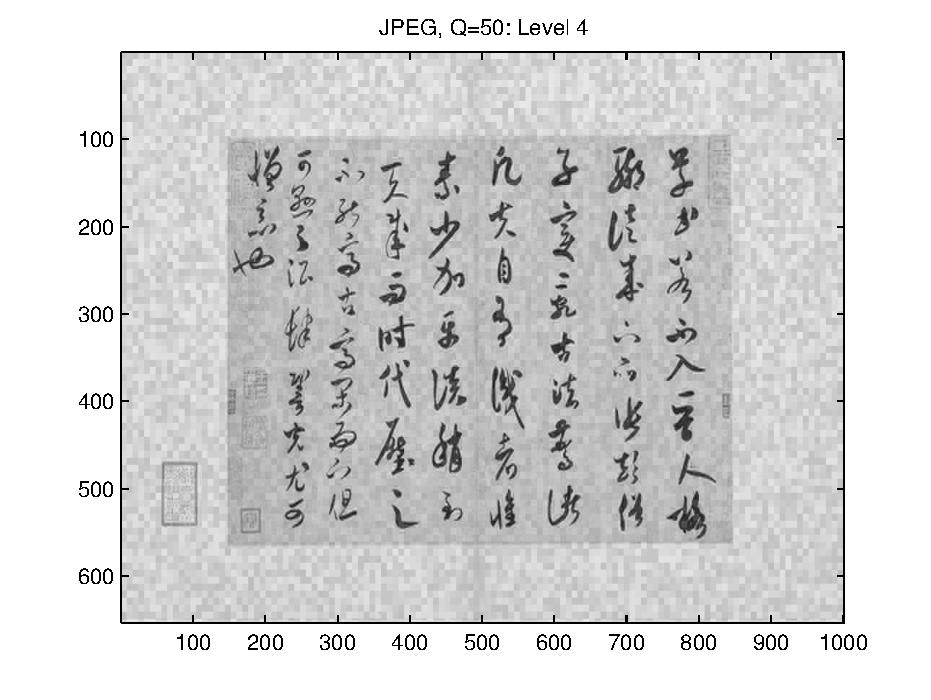} \\
\hskip -2cm
\includegraphics[clip,width=0.35\textwidth]{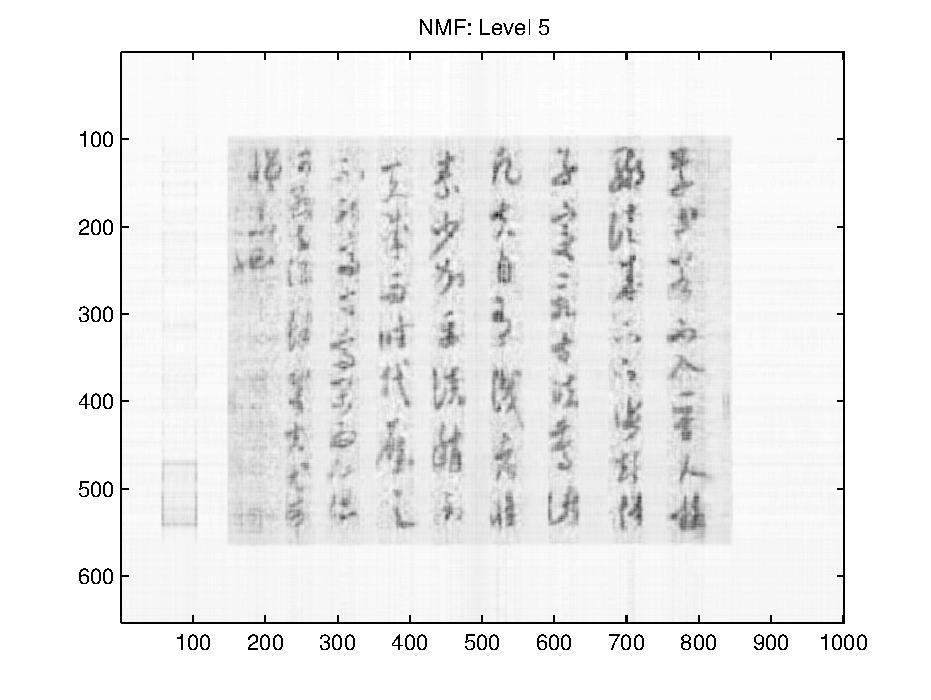}
\hskip -0.5cm
\includegraphics[clip,width=0.35\textwidth]{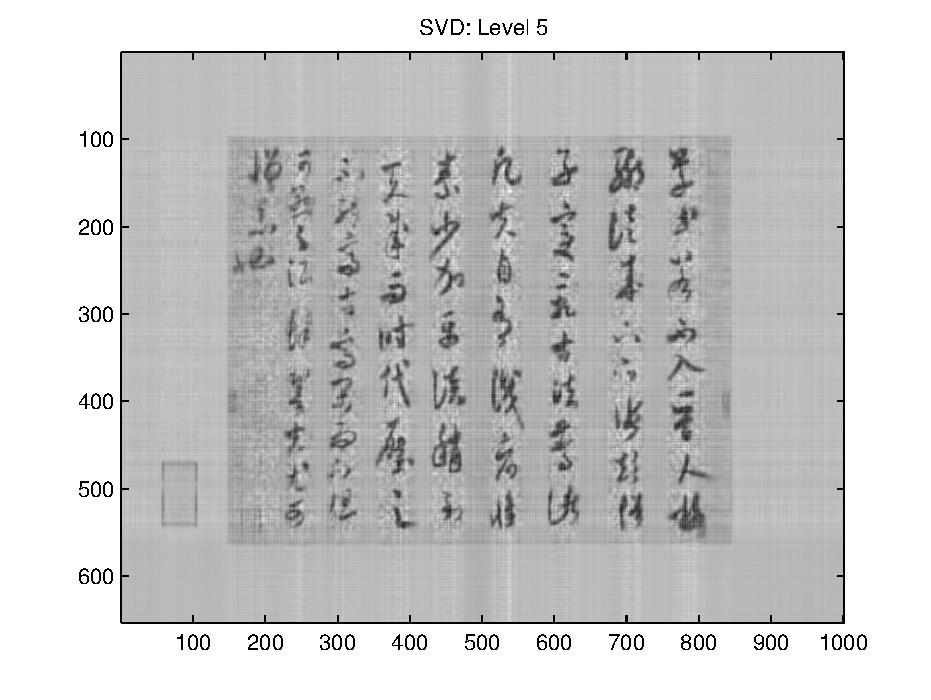}
\hskip -0.5cm
\includegraphics[clip,width=0.35\textwidth]{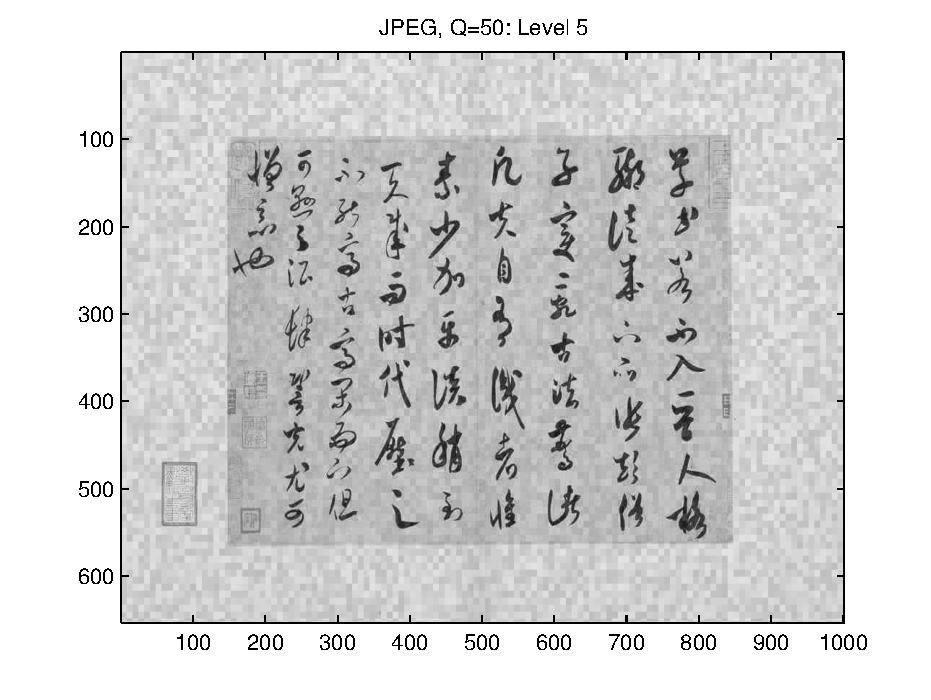}
 \caption{\label{fig:NMF4a2} MLA for the image in Example 4 using NMF with $25\%$ noise }
\end{figurehere}

\subsection{Images reconstructed by DSMs}
In this subsection, we shall present the application of NMF to the images reconstructed by some recently developed
inversion algorithms, namely the direct sampling methods (DSMs). The DSMs are a family of simple and efficient inversion methods which aim at providing a good estimate of the locations of inhomogeneities inside a homogeneous background representing various physical media from a single or a small number of boundary data in both full and limited aperture cases. They were studied in \cite{lizou13}  \cite{pott10} using far-field data and in \cite{IJZ} using near-field data for locating inhomogeneities in inverse acoustic medium scattering, and was later extended to various other coefficient determination inverse problems, such as the electrical impendence tomography (EIT) \cite{EIT}, the diffusive optical tomography (DOT) \cite{DOT} and the electromagnetic inverse scattering problem \cite{Ito13}. In each of the aforementioned tomographies, a family of probing functions is introduced and an indicator function is defined as a duality product between the observed data and the probing function under an appropriate choice of Sobolev scale.
The index function, which we shall denote as a general image $Y$, represents the likelihood of whether a given sampling point sits inside an inhomogeneous inclusion.
%namely, for any sampling domain $\Omega$ and any $x \in \Omega$, $Y(x)$ is of large magnitude if $x$ sits inside an inclusion, and is otherwise small if $x$ is outside any of the inclusions.
The evaluation of the index function is very inexpensive and works with quite limited measurement data,
and the images obtained from the index functions are proven to be effective in locating
abnormalities, especially robust against noise in the data.
%For further information of DSMs, the readers may consult \cite{DOT,EIT,Ito13,IJZ,lizou13,pott10}.

However, from our numerical experiments in the aforementioned references, we notice that, in exchange for the robustness of the DSM method and the cost-effectiveness of its evaluation, the DSM images usually contain some minor artifacts.
These artefacts mainly come from the fact that the DSM image is actually the result of applying a kernel
on a function with its support sitting inside the inclusions that we aim to locate.
%
%i.e. \beqnx Y(x) = \int K(x,y) \eta(y) \, dy \,  \quad \text{ for }x \in \Omega \,, \eqnx
%where the kernel $K(x,y)$ represents a duality product between the probing function centered at $x$ and the fundamental solution centered at $y$ of the corresponding forward problem, e.g. as in (2.3) in \cite{EIT} and (2.9) in \cite{DOT}. If we idealistically assumed that $K(x,y)$ approximately equals to the Dirac mass $\delta_x(y)$ in the weak sense, then the support of $Y$ and that of $\eta$ would almost have coincided, and the image $Y$ should have given a very good estimate of the inclusions in this case.  We notice however that in many practical situations in \cite{DOT,EIT,Ito13,IJZ,lizou13,pott10}, although the kernel $K(x,y)$ attains maximum at $x=y$ with a sharply peaked Gaussian like distribution, some region of the non-diagonal part of the function $K(\cdot,\cdot)$ is actually quite far from zero and hence contribute quite a bit to the DSM image $Y$.
%
The DSM images we obtain are therefore usually quite diffusive and may consist of shadows and tails coming from the non-diagonal part of the kernel.

Henceforth, a DSM image $Y$ shall consist of $3$ parts: the first part coming from the signals of the inhomogeneous inclusions, the second as the contamination of the image by the non-diagonal part of the kernel, and the third part coming from noise in the measurement data. In view of the fact that both the DSM image that we obtain and a likelihood function are both positive, we shall therefore apply the NMF to the DSM images
%for a classification of the aforementioned three parts,
in the hope of identifying the principal components of the image corresponding to the signal from the inhomogeneous inclusions.
As a remark, we would like to emphasize again that we are not aiming to reconstruct the original DSM image from all the components (in terms of tensor products) that we obtain from NMF, but only to look for principal components of the image containing signals from inhomogeneous inclusions and aim at reconstructing the inclusions themselves.

In what follows, we shall apply the NMF to DSM images from two tomographies, namely the DOT and EIT.
DOT is a popular non-invasive imaging technique that measures the optical properties of a medium and creates images which show the distribution of absorption coefficient inside the body. It is very useful for medical imaging, e.g. breast cancer imaging, brain functional imaging, stroke detection, muscle functional studies, photodynamic therapy, and radiation therapy monitoring; see \cite{DOT}.
In our subsequent discussion, we consider the numerical experiments of the DOT using DSM as in Section 6 of \cite{DOT}, and the same numerical setting described therein. The medium coefficient inside all the inhomogeneous inclusions are set as $\mu = 50$.
The images generated from the scattered potential using the DSM algorithm described in that work are then put into \textbf{Algorithm 2} for NMF, with parameters set to $\alpha=0.2, \nu = 0, \gamma =0.02, p=5, \tilde{p} =3$ and $c_1=c_2=1$ in all the following examples.

\textbf{Example 5}. In this example, we consider the case of two circular inclusions of radius $0.065$, which are respectively centered at $(-0.5, 0.25)$ and $(0.25, 0.15)$; see Figure \ref{fig:DOT1} (top).
The squared reconstructed images from the index $\widetilde{I}^2$ described in \cite{DOT} is presented in Figure \ref{fig:DOT1} (second).  The three images $\sigma_{i_l j_l} \, (\tilde{u}_p)_{i_l} \otimes (\tilde{v}_p)_{j_l}$, for $l = 1,2,3$ after NMF obtained in \textbf{Algorithm 2} are shown in Figure \ref{fig:DOT1} (third to fifth).
The generalized eigenvalues are respectively given as $\{\sigma_{i_l j_l} \}_{l=1}^3 = \{ 32.5522, 21.1686, 12.8299 \}$ in this example.
The squared image of the final approximation to $\mathcal{I}_{p,\tilde{p}}^{\alpha, \nu, \gamma} (\widetilde{I})$ after normalization is given in Figure \ref{fig:DOT1} (last). From the figure, we can see that with an appropriate cutoff, e.g. a $50\%$ cutoff, both the sizes and locations of inhomogeneities obtained from the image are reasonable accurate.

\textbf{Example 6}. This example tests a medium with $4$ circular inclusions of radius $0.065$ with their corresponding positions: $(-0.5, 0.3)$, $(-0.3,-0.1)$, $(0.3, 0.1)$ and $(0.5, 0.3)$; see Figure \ref{fig:DOT2} (top). Figure \ref{fig:DOT2} (second) shows the squared reconstructed images from the index $\widetilde{I}^2$ described in \cite{DOT}.  Components $\sigma_{i_l j_l} \, (\tilde{u}_p)_{i_l} \otimes (\tilde{v}_p)_{j_l}$, for $l = 1,2,3$ after NMF are shown in Figure \ref{fig:DOT2} (third to fifth).
The generalized eigenvalues are respectively given as $\{\sigma_{i_l j_l} \}_{l=1}^3 = \{ 22.2455, 16.7153, 8.9511 \}$ in this example.
Figure \ref{fig:DOT2} (last) gives the squared image of the final approximation to $\mathcal{I}_{p,\tilde{p}}^{\alpha, \nu ,\gamma} (\widetilde{I})$ after normalization.
The principal components of the image coming from signals from the inclusions can be well obtained,
with an observation
that the first two components decomposed from NMF actually represent the inhomogeneous inclusions inside the original medium,

\begin{figurehere}
\vskip -0.0cm
\hfill{}\includegraphics[clip,width=0.6\textwidth]{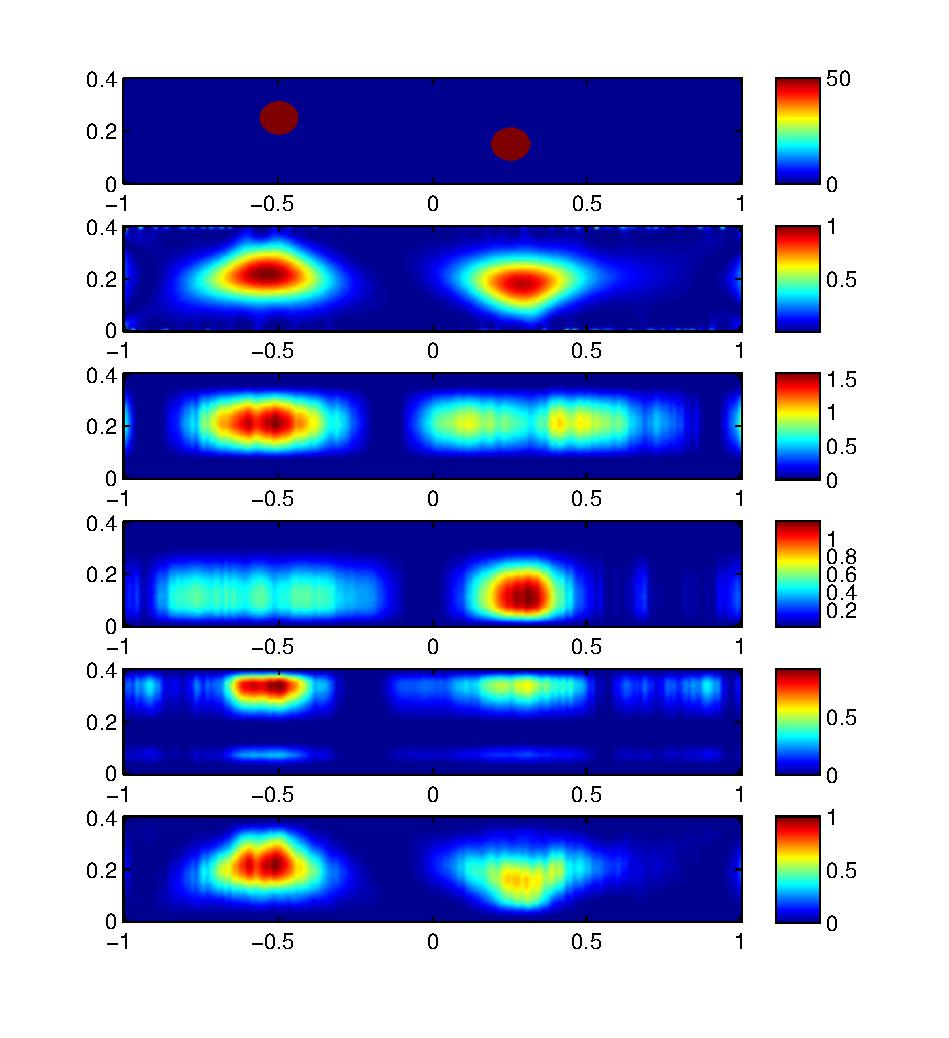}\hfill{}
\vskip -1cm
 \caption{\label{fig:DOT1}\small{NMF decomposition of DSM image from DOT in Example 5, with $\{\sigma_{i_l j_l} \}_{l=1}^3 = \{ 32.5522, 21.1686, 12.8299 \}$}}
\end{figurehere}

\begin{figurehere}
\vskip -0.5cm
\hfill{}\includegraphics[clip,width=0.6\textwidth]{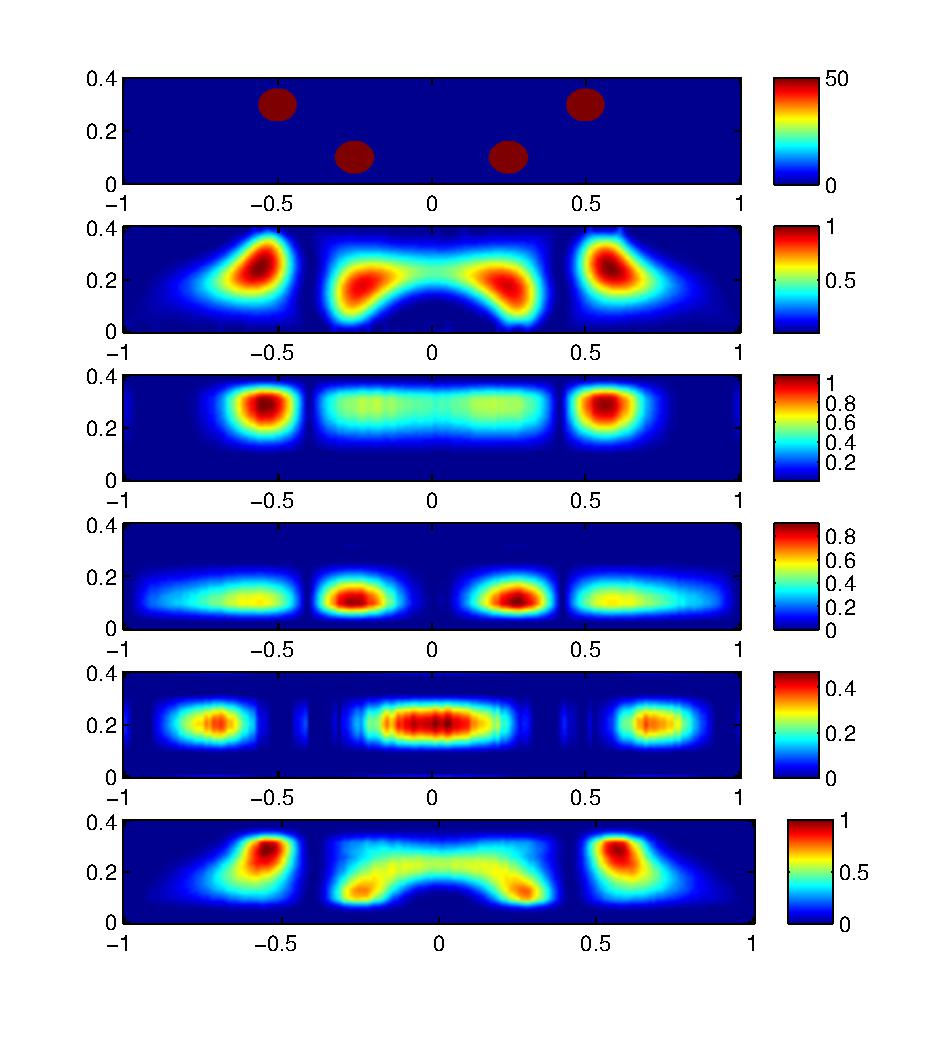}\hfill{}
\vskip -1cm
 \caption{\label{fig:DOT2}\small{NMF decomposition of DSM image from DOT in Example 6, with $\{\sigma_{i_l j_l} \}_{l=1}^3 = \{ 22.2455, 16.7153, 8.9511 \}$}}
\end{figurehere}

\ss
Next, we shall apply the NMF to the DSM images from EIT, which is an effective noninvasive evaluation method that creates images
of the electrical conductivity of an inhomogeneous medium by applying currents at a number of electrodes
on the boundary and measuring the corresponding voltages. It has found applications in many areas, such as oil and geophysical prospection, medical
imaging, physiological measurement, early diagnosis of breast cancer, monitoring of pulmonary functions and detection of leaks from buried pipes, etc; see ref. in \cite{EIT}.
In what follows, we consider the same numerical setting as in the numerical experiments of EIT for a circular domain using DSM described in Section 6 in \cite{EIT}. The physical coefficient of the inhomogeneous inclusions are all set to $\sigma = 5$.
The images generated from the scattered potential field using the DSM algorithm are then put into \textbf{Algorithm 2} for NMF, with parameters set to $\alpha=0.2, \nu = 0 , \gamma =0.02, p=5, \tilde{p} =3$ and $c_1=c_2=1$ in all the following examples.

\textbf{Example 7}. We now investigate an example with $2$ inclusions of size $0.1\times0.1$ respectively at the positions $(-0.44,0.36)$ and $(0.36,-0.44)$; see Figure \ref{fig:EITnew1} (a). The squared reconstructed images from the indices $I^2$ after normalization as described in \cite{EIT} is presented in Figure \ref{fig:EITnew1} (b).  The components $\sigma_{i_l j_l} \, (\tilde{u}_p)_{i_l} \otimes (\tilde{v}_p)_{j_l}$, for $l = 1,2,3$ obtained from NMF using \textbf{Algorithm 2} over the image $\widetilde{I}$ are shown in Figure \ref{fig:EITnew1} (c-e).
The generalized eigenvalues are respectively given as $\{\sigma_{i_l j_l} \}_{l=1}^3 = \{ 2.3712, 2.3548, 2.2904 \}$ in this example.
The squared image of the approximation to $\mathcal{I}_{p,\tilde{p}}^{\alpha, \nu, \gamma}(I)$ after normalization is in Figure \ref{fig:EITnew1} (f). The components of inhomogeneous inclusions sitting inside the original medium are decomposed into different components from the NMF.

\begin{figurehere}
\hfill{}\includegraphics[clip,width=0.3\textwidth]{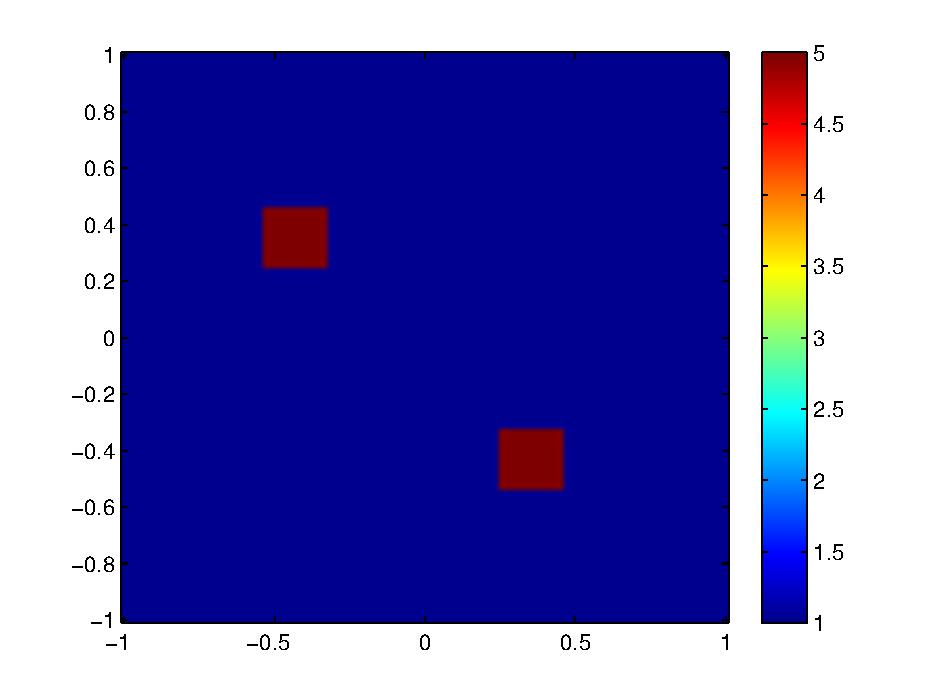}\hfill{}
\hfill{}\includegraphics[clip,width=0.3\textwidth]{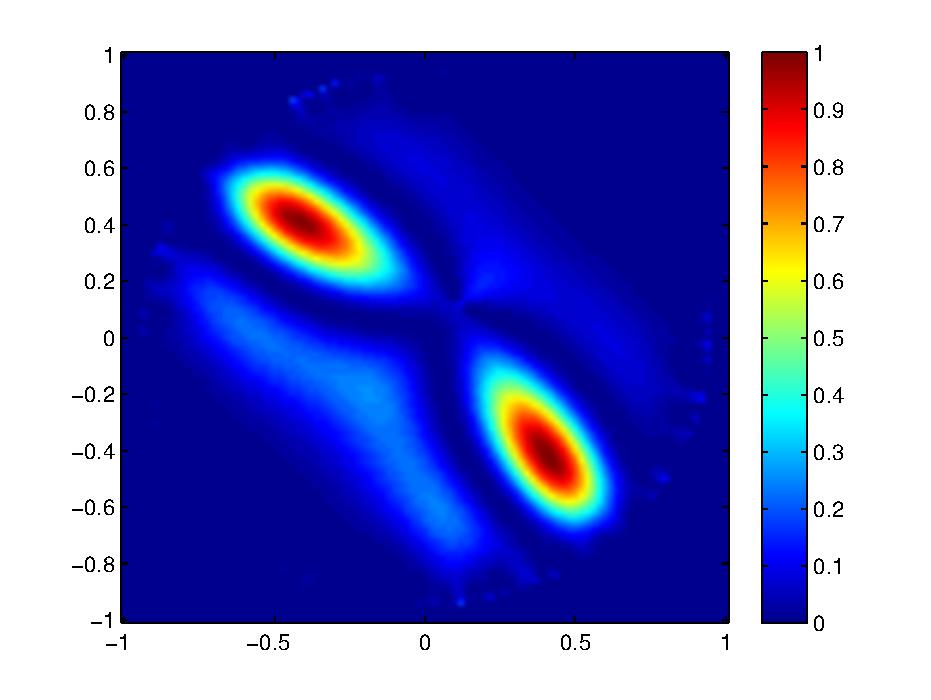}\hfill{}
\hfill{}\includegraphics[clip,width=0.3\textwidth]{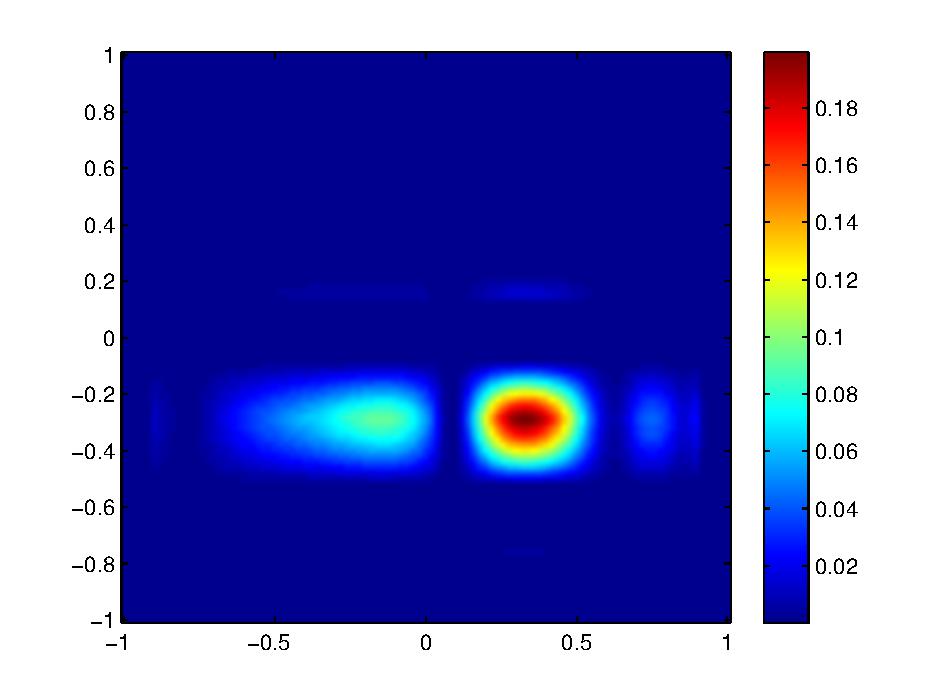}\hfill{}

 \hfill{}(a)\hfill{} \hfill{}(b)\hfill{} \hfill{}(c)\hfill{}

\hfill{}\includegraphics[clip,width=0.3\textwidth]{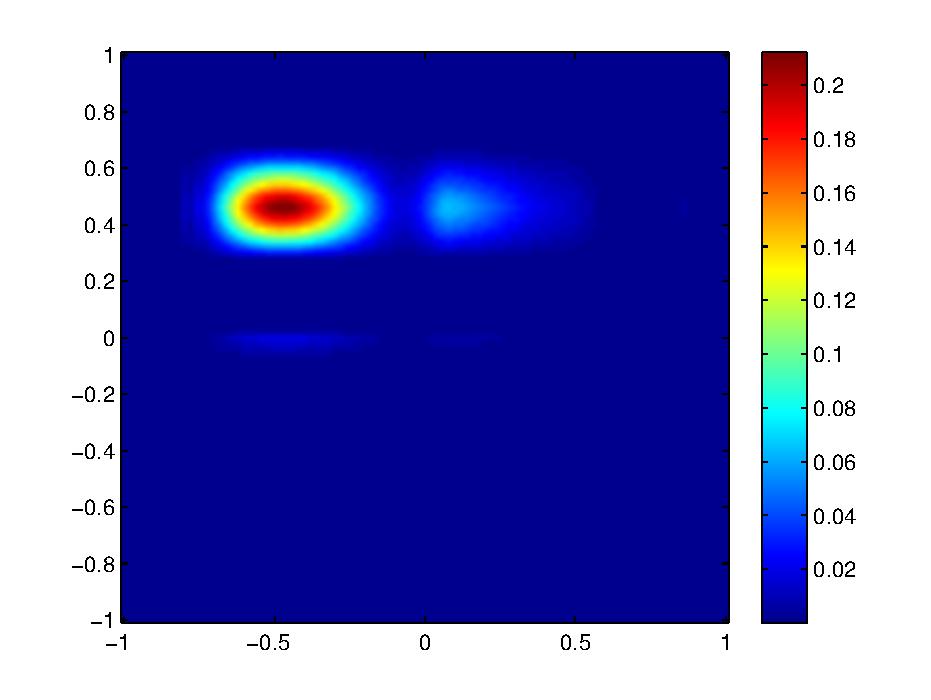}\hfill{}
\hfill{}\includegraphics[clip,width=0.3\textwidth]{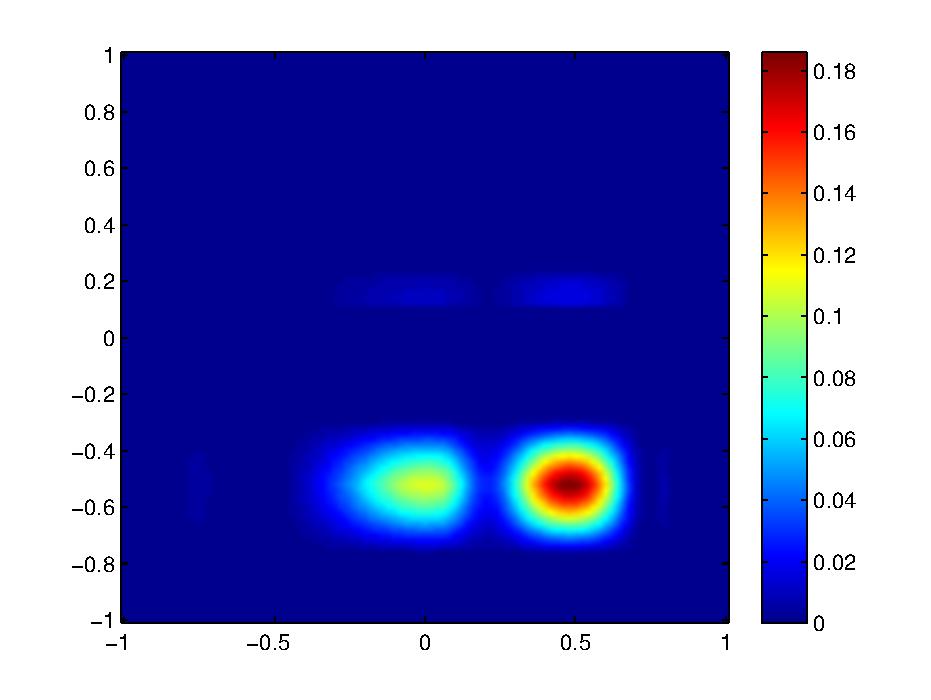}\hfill{}
\hfill{}\includegraphics[clip,width=0.3\textwidth]{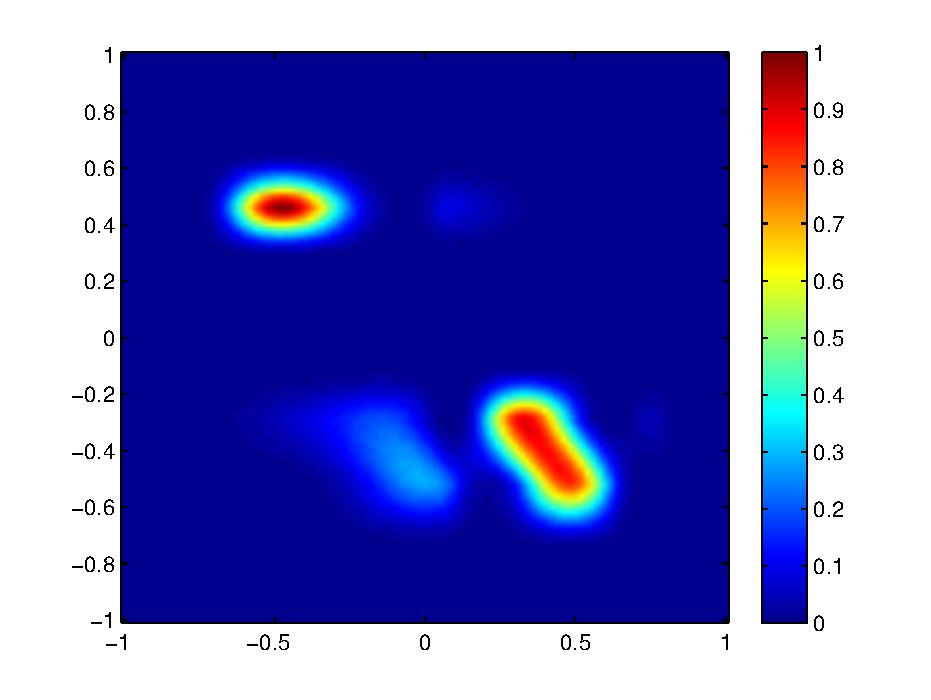}\hfill{}

 \hfill{}(d)\hfill{} \hfill{}(e)\hfill{} \hfill{}(f)\hfill{}

 \caption{\label{fig:EITnew1}\small{NMF decomposition of the DSM images from EIT in Example 7, with $\{\sigma_{i_l j_l} \}_{l=1}^3 = \{ 2.3712, 2.3548, 2.2904 \}$}}
 \end{figurehere}

\ss
\textbf{Example 8}. In this example, we consider the case of $4$ inclusions with same size as in Example $7$ sitting inside the sampling region, which are placed at positions of $(0.36,0.36)$, $(0.36,-0.44)$, $(-0.44,0.36)$ and $(-0.44,-0.44)$; see Figure \ref{fig:EITnew2} (a). The squared reconstructed images from the indices $\widetilde{I}^2$ after normalization is shown in Figure \ref{fig:EITnew2} (b).  Figure \ref{fig:EITnew2} (c-e) presents the images of $\sigma_{i_l j_l} \, (\tilde{u}_p)_{i_l} \otimes (\tilde{v}_p)_{j_l}$, for $l = 1,2,3$ after NMF over the image $I$.
The generalized eigenvalues are respectively given as $\{\sigma_{i_l j_l} \}_{l=1}^3 = \{ 5.9647, 4.2460, 3.8970 \}$ in this example.
The squared image of the approximation to $\mathcal{I}_{p,\tilde{p}}^{\alpha, \nu,\gamma}(I)$ after normalization is in Figure \ref{fig:EITnew2} (f).
We can see that we can obtain fairly nicely the principal components of the image coming from signals from the inclusions.

\begin{figurehere}
\hfill{}\includegraphics[clip,width=0.3\textwidth]{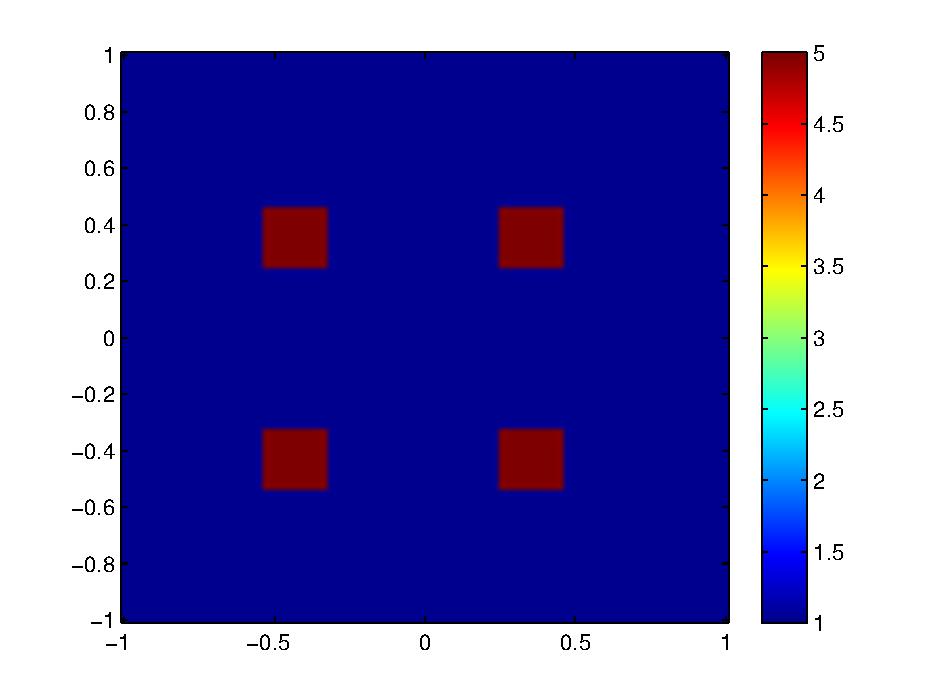}\hfill{}
\hfill{}\includegraphics[clip,width=0.3\textwidth]{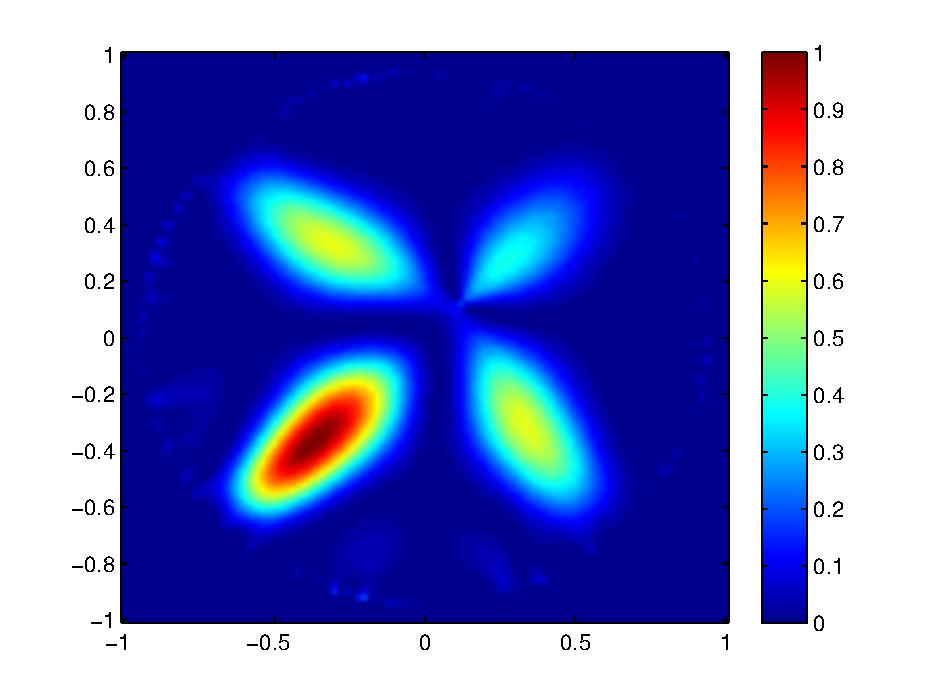}\hfill{}
\hfill{}\includegraphics[clip,width=0.3\textwidth]{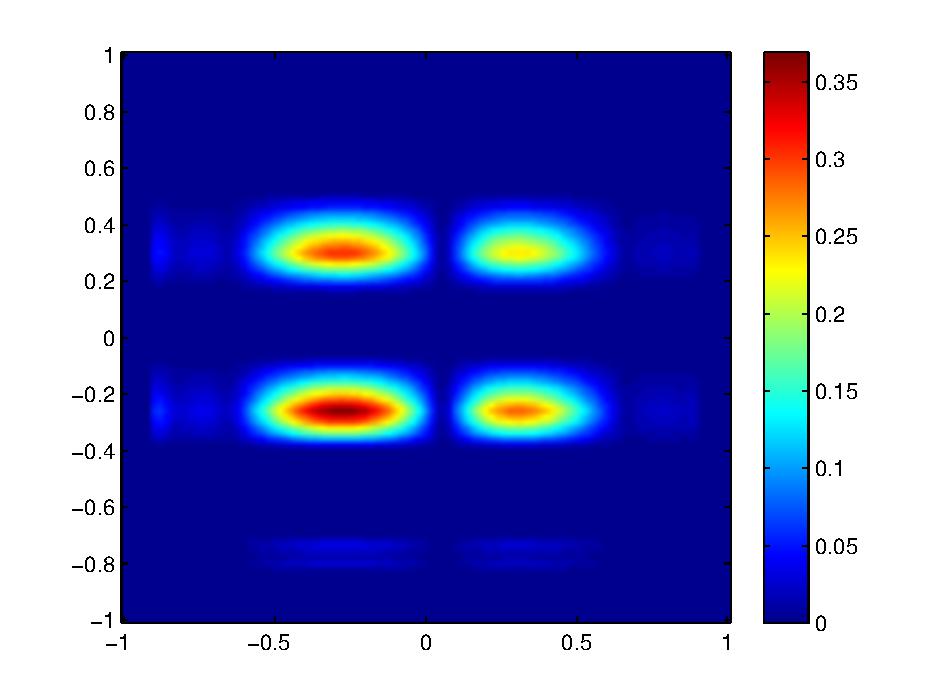}\hfill{}

 \hfill{}(a)\hfill{} \hfill{}(b)\hfill{} \hfill{}(c)\hfill{}

\hfill{}\includegraphics[clip,width=0.3\textwidth]{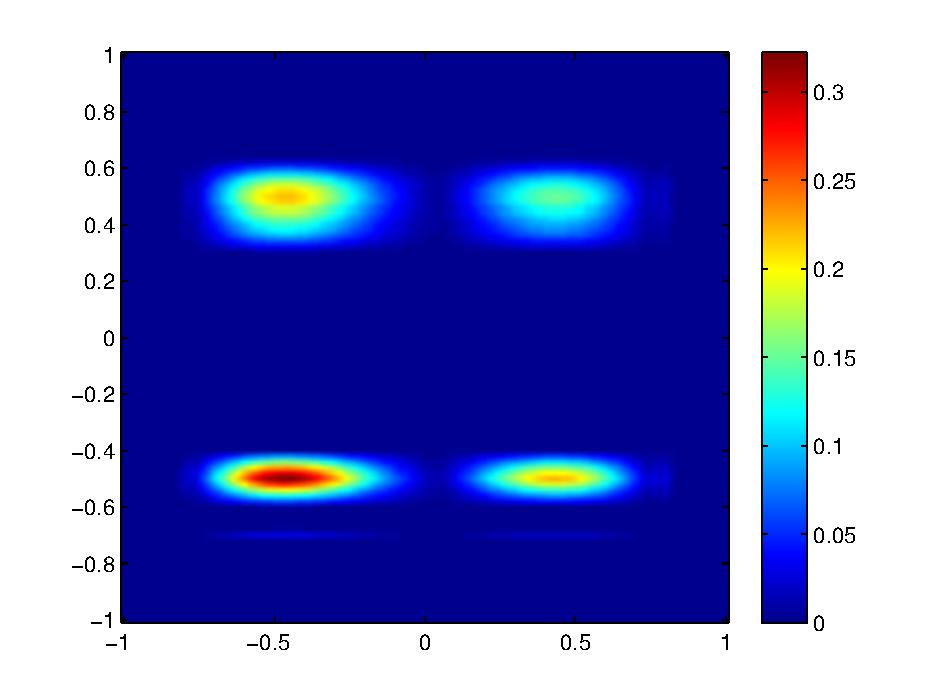}\hfill{}
\hfill{}\includegraphics[clip,width=0.3\textwidth]{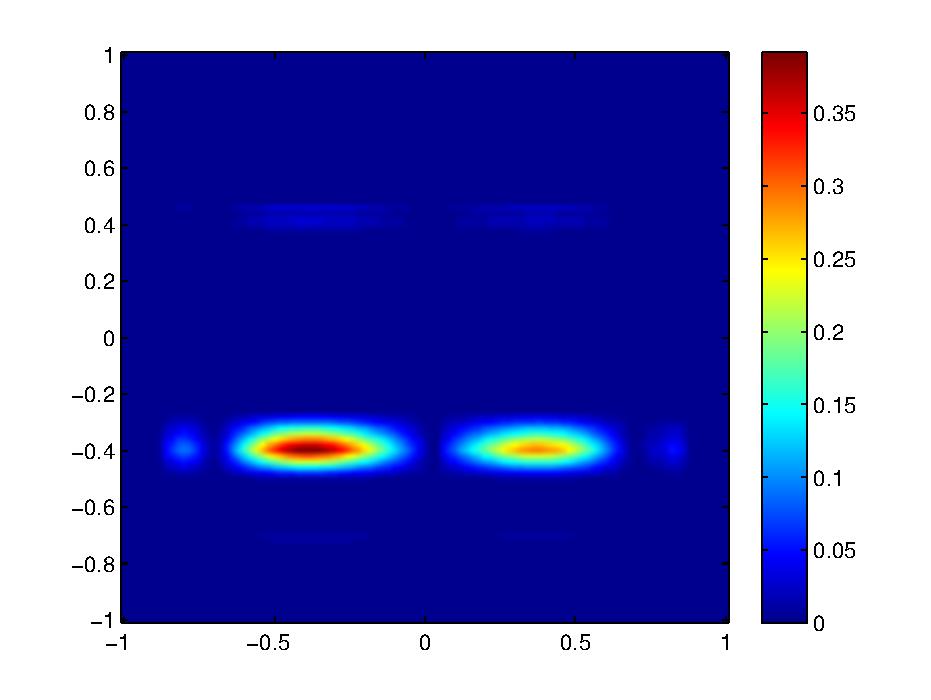}\hfill{}
\hfill{}\includegraphics[clip,width=0.3\textwidth]{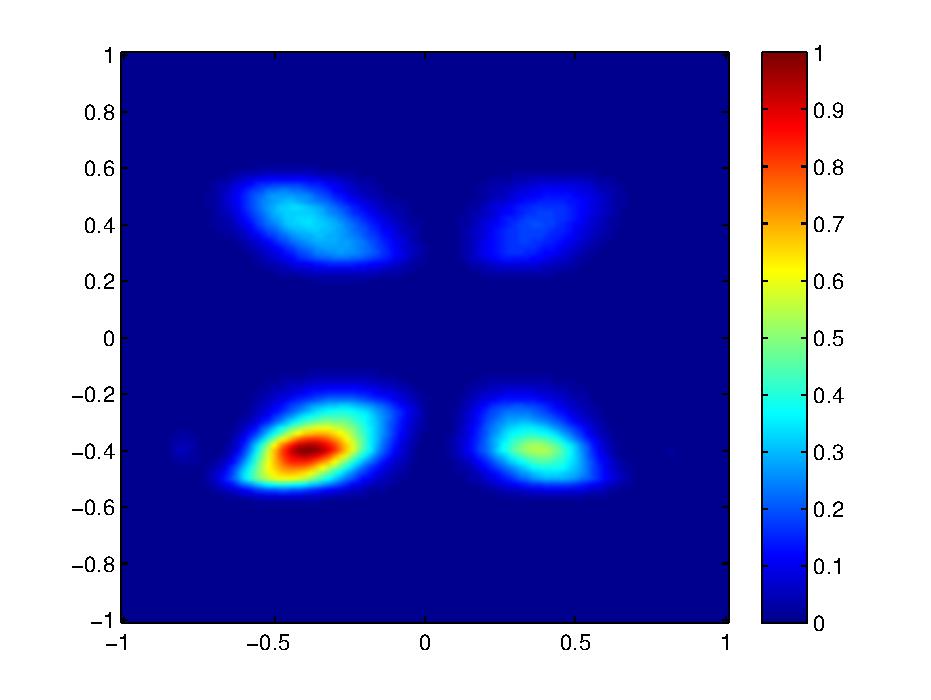}\hfill{}

 \hfill{}(d)\hfill{} \hfill{}(e)\hfill{} \hfill{}(f)\hfill{}

 \caption{\label{fig:EITnew2}\small{NMF decomposition of the DSM images from EIT in Example 8, with $\{\sigma_{i_l j_l} \}_{l=1}^3 = \{ 5.9647, 4.2460, 3.8970 \}$}}
 \end{figurehere}

\ss
\textbf{Example 9}. In this example, $2$ inclusions of the same size as in Example $7$ are introduced in the homogeneous background, and they are respectively placed at the positions $(-0.36,0.36)$ and $(0.36,0.36)$ inside the domain; see Figure \ref{fig:EITnew3} (a). The squared reconstructed images from the indices $I^2$ after normalization is given in Figure \ref{fig:EITnew3} (b).  The images of $\sigma_{i_l j_l} \, (\tilde{u}_p)_{i_l} \otimes (\tilde{v}_p)_{j_l}$, for $l = 1,2,3$ after NMF over the image $\widetilde{I}$ are shown in Figure \ref{fig:EITnew3} (c-e).
The generalized eigenvalues are respectively given as $\{\sigma_{i_l j_l} \}_{l=1}^3 = \{ 3.9194, 0, 0 \}$ in this example.
Figure \ref{fig:EITnew3} (f) presents the squared image of the approximation to $\mathcal{I}_{p,\tilde{p}}^{\alpha, \nu, \gamma} (I)$ after normalization.
From the figures, we can see that the principal components coming from the inclusions can be nicely obtained, and both the sizes and locations of inhomogeneities can be reasonably obtained from the NMF image after the introduction of a  appropriate cutoff.

\begin{figurehere}
\hfill{}\includegraphics[clip,width=0.3\textwidth]{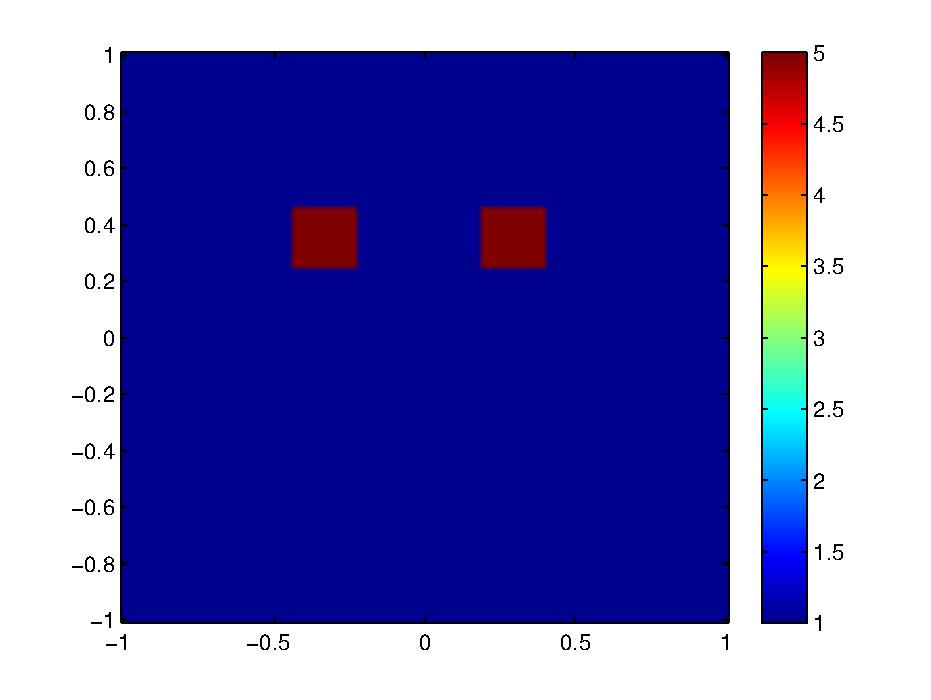}\hfill{}
\hfill{}\includegraphics[clip,width=0.3\textwidth]{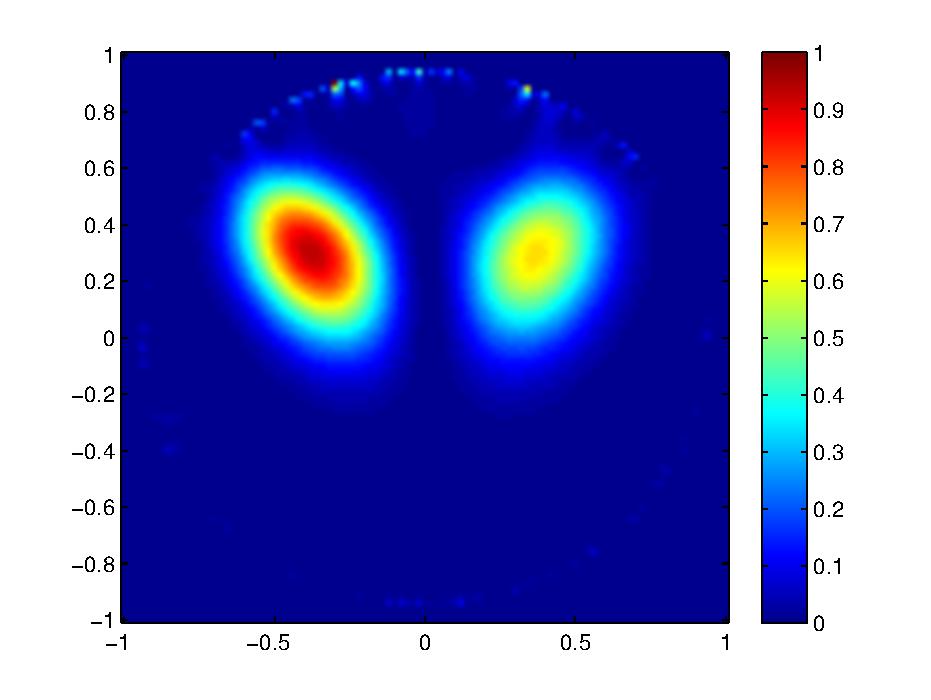}\hfill{}
\hfill{}\includegraphics[clip,width=0.3\textwidth]{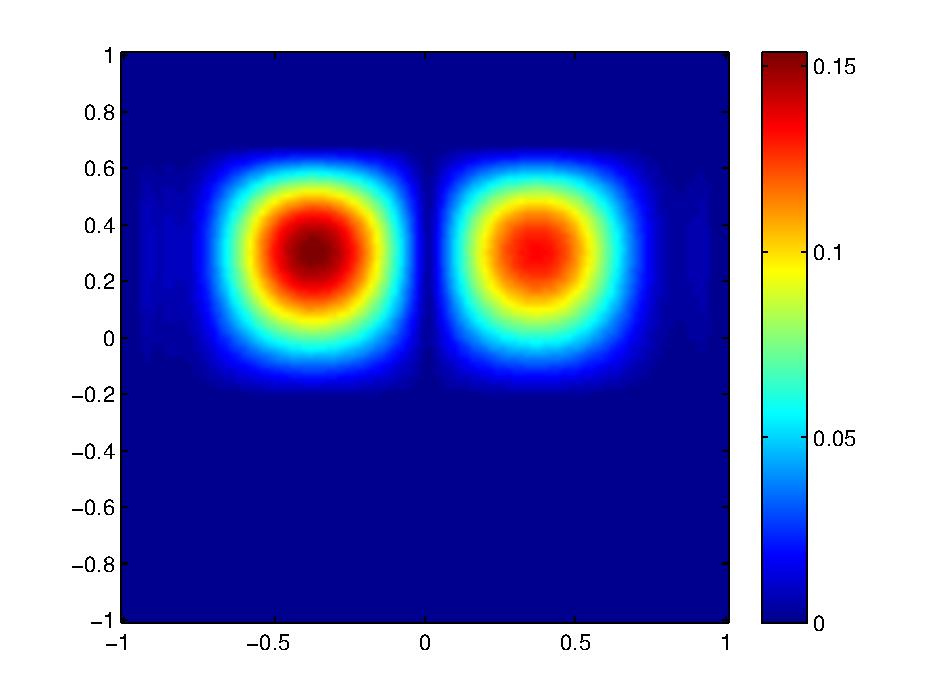}\hfill{}

 \hfill{}(a)\hfill{} \hfill{}(b)\hfill{} \hfill{}(c)\hfill{}

\hfill{}\includegraphics[clip,width=0.3\textwidth]{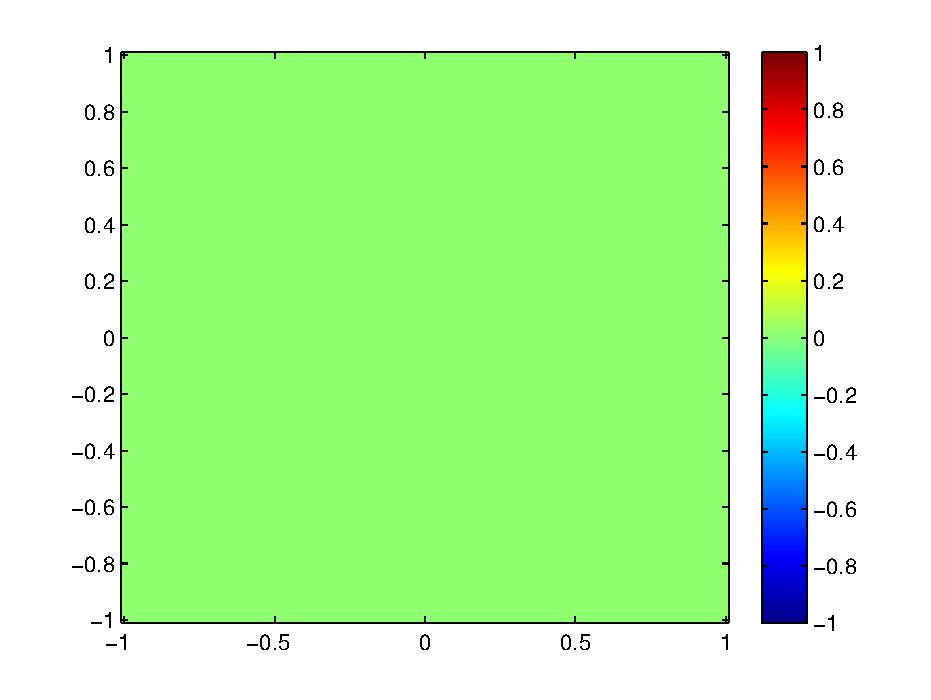}\hfill{}
\hfill{}\includegraphics[clip,width=0.3\textwidth]{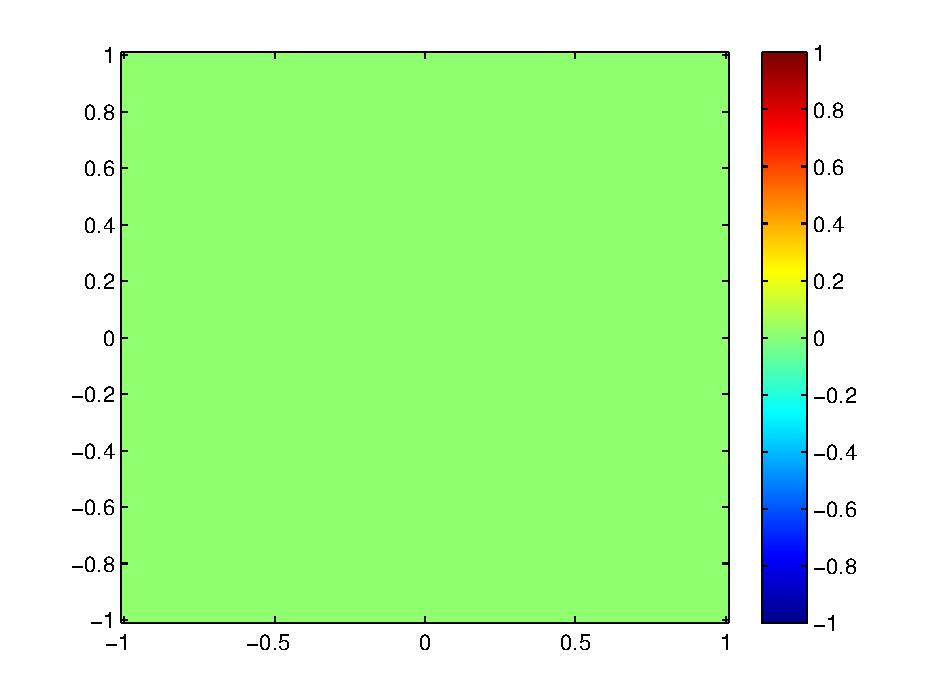}\hfill{}
\hfill{}\includegraphics[clip,width=0.3\textwidth]{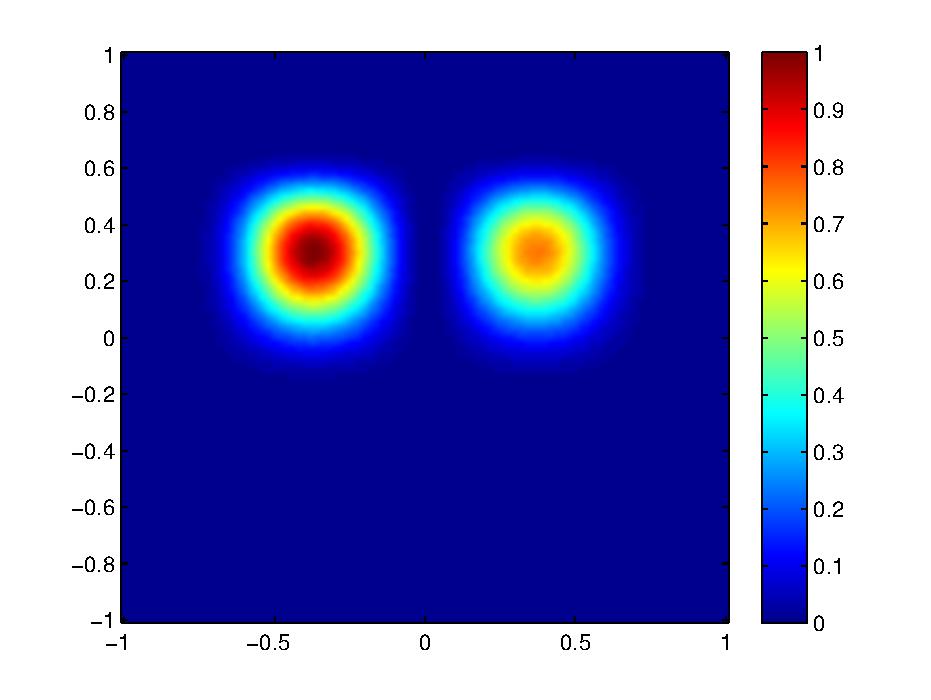}\hfill{}

 \hfill{}(d)\hfill{} \hfill{}(e)\hfill{} \hfill{}(f)\hfill{}

 \caption{\label{fig:EITnew3}\small{NMF decomposition of the DSM images from EIT in Example 9, with $\{\sigma_{i_l j_l} \}_{l=1}^3 = \{ 3.9194, 0, 0 \}$}}
 \end{figurehere}

\end{document}